\documentclass[11pt]{amsart}

\usepackage{amscd,latexsym,amsthm,amsfonts,amssymb,amsmath,amsxtra,eucal}%,IEEEtrantools}%mathdots}
\usepackage[all]{xy}

\usepackage[pdftex,bookmarks,colorlinks,pdfmenubar]{hyperref}

\usepackage{verbatim}
\usepackage{mathabx}
\usepackage{mathrsfs}
\usepackage{bbm}
\usepackage{relsize}
\usepackage[bbgreekl]{mathbbol}
\usepackage{amsfonts}

\DeclareMathOperator{\CP}{\mathcal{P}}

\DeclareSymbolFontAlphabet{\mathbb}{AMSb}
\DeclareSymbolFontAlphabet{\mathbbl}{bbold}

\renewcommand\theequation{\thesection.\arabic{equation}}

\newcommand{\BC}{{\mathbb {C}}}

\newcommand{\BZ}{{\mathbb {Z}}}

\newcommand{\Qlb}{\overline{\mathbb{Q}}_\ell}

\newcommand{\Ad}{{\mathrm{Ad}}}

\newcommand{\Ind}{{\mathrm{Ind}}}

\newcommand{\rank}{{\mathrm{rank}}}

\newcommand{\Spec}{{\mathrm{Spec}}}

\newcommand{\sgn}{{\mathrm{sgn}}}

\newcommand{\wt}{\widetilde}

\newcommand{\bs}{\backslash}

\newtheorem{thm}{Theorem}[section]
\newtheorem{cor}[thm]{Corollary}
\newtheorem{lem}[thm]{Lemma}
\newtheorem{prop}[thm]{Proposition}

\newtheorem {ques/conj}[thm]{Question/Conjecture}

\newtheorem{defn}[thm]{Definition}
\newtheorem{rmk}[thm]{Remark}

\usepackage[numbers]{natbib}

\newcommand{\Irr}{{\rm Irr}}

\newcommand{\select}[1]{{\it{#1}}}
\begin{document}
\renewcommand{\theequation}{\arabic{equation}}
\numberwithin{equation}{section}

\begin{comment}

\date{\today}

\author[Fang Shi]{Fang Shi}

\address{School of Mathematical Sciences, Xiamen University, Xiamen 361005, Fujian, P.R. China}

\email{shif@xmu.edu.cn}

\subjclass[2010]{Primary  20C33}

\maketitle

\begin{abstract}
We completely and explicitly classify the irreducible representations of finite general linear groups $\rm{GL}_{2n}(\mathbb{F}_q)$ and finite unitary groups $\rm{U}_{2n}(\mathbb{F}_q)$ that admit a Shalika model. Using Lusztig’s Jordan decomposition of irreducible representations, we establish a multiplicity formula for Shalika multiplicities in terms of the Weyl group of the centralizer of a semisimple element in the dual group. We also derive connections to symplectic models, linear models, and relative Langlands duality.
\end{abstract}

\section{Introduction and Main Results}
~

\end{comment}

\date{\today}

\title[Shalika Models for Finite General Linear and Unitary Groups]{Shalika models for finite general linear and unitary groups}

\author[Fang Shi]{Fang Shi}

\address{School of Mathematical Sciences, Xiamen University, Xiamen 361005, Fujian, P.R. China}

\email{11935007@zju.edu.cn}

 \author{Zhicheng Wang}
 \address{Department of Mathematics, Jilin University, Changchun 130012, Jilin, P. R. China}
 \email{wangzhicheng@jlu.edu.sg}

\subjclass[2010]{Primary  20C33}

\maketitle

\begin{abstract}
We completely and explicitly classify the irreducible representations of finite general linear groups ${\rm{GL}}_{2\frak{n}}(\mathbb{F}_q)$ and finite unitary groups ${\rm{U}}_{2\frak{n}}(\mathbb{F}_q)$ that admit a Shalika model. Using Lusztig's Jordan decomposition of irreducible representations, we establish a multiplicity formula for Shalika multiplicities in terms of the Weyl group of the centralizer of a semisimple element in the dual group. We also derive connections to symplectic models, linear models, and relative Langlands duality.
\end{abstract}

\section{Introduction}

Let $\mathbb{F}_q$ be a finite field, and let $\mathrm k$ be an algebraic closure of $\mathbb{F}_q$.
Let $G = \mathrm{GL}_{2\mathfrak{n}}$ be the general linear group over $\mathrm k$, endowed with a Frobenius endomorphism $F$ that is either split or non-split, so that the fixed-point group $G^F$ is either a finite general linear group $\mathrm{GL}_{2\mathfrak{n}}(\mathbb{F}_q)$ or a finite unitary group $\mathrm{U}_{2\mathfrak{n}}(\mathbb{F}_q)$. For any $F$-stable algebraic subgroup $H \subset G$ and irreducible representations $\pi \in \mathrm{Irr}(G^F)$ and $\pi' \in \mathrm{Irr}(H^F)$, we define the multiplicity scalar
\[
\langle \pi, \pi' \rangle_{H^F} := \dim_{\overline{\mathbb{Q}}_\ell} \mathrm{Hom}_{H^F}(\pi, \pi').
\]
By abuse of notation, we denote by $\pi$ the character of the irreducible representation $\pi$ when no confusion arises.

Let $V$ be the standard $2\mathfrak{n}$-dimensional representation of $G$.
Fix a basis $\mathbb{B} = \{v_i\}_{1 \leq i \leq 2\mathfrak{n}}$ of $V$, and set
\[
W = \mathrm{span}\{v_1, \dots, v_{\mathfrak {n}}\}, \quad W' = \mathrm{span}\{v_{\mathfrak{n}+1}, \dots, v_{2\mathfrak{n}}\}.
\]
Let $\mathrm{P} \subset G$ be the parabolic subgroup stabilizing the partial flag $0 \subset W \subset V$, with Levi subgroup $L$ and unipotent radical $N$ (see Definition \ref{def-various} for details).
We denote by $\mathrm S \subset \mathrm{P}$ the stabilizer of the isomorphism
$
i_W : W \stackrel{\sim}{\longrightarrow} V/W, \quad v_i \mapsto v_{i+\mathfrak{n}}.
$
Then the natural identification $V/W \cong W'$ induces an isomorphism $i_{W,W'}:W \longrightarrow W'$. We define a morphism
$\mathrm c : N \longrightarrow \mathbb{G}_a$
by sending $n \in N$ to the trace of the composition
\[
W' \xrightarrow{n - \mathrm{id}_V} W \xrightarrow{i_{W,W'}} W'.
\]
The subgroup $L \cap \mathrm S$ normalizes $N$ and preserves $\mathrm c$, so $\mathrm c$ extends to a morphism
\[\mathfrak{c} : \mathrm S \longrightarrow \mathbb{G}_a\]
that is trivial on $L \cap \mathrm S$.
Fix \emph{once and for all} a nontrivial additive character
$\psi : \mathbb{F}_q \longrightarrow \overline{\mathbb{Q}}_\ell^\times.$
We define a character $\psi_{\mathrm S} \in \mathrm{Irr}(\mathrm{S}^F)$ by
\[
\psi_{\mathrm S}(s) = \psi(\mathfrak{c}(s)).
\]
(Note that $\psi_{\mathrm{S}}$ here coincides with $\psi_{\mathrm{S}}^{(1)}$ 
in Definition~\ref{def-psiS}.)

\begin{defn}
 For $\pi \in \mathrm{Irr}(G^F)$, the Shalika multiplicity is defined as
\[
S(\pi) := \langle \pi, \psi_{\mathrm S} \rangle_{\mathrm {S}^F}.
\]
We say that an irreducible representation $\pi\in {\rm{Irr}}(G^F)$ has a \textit{Shalika model} if
 \[
S(\pi) \geq 1.
\]
\end{defn}

The Shalika model was first introduced in \cite{JS}
in order to characterize the pole at $s=1$ of the partial exterior square $L$-function
$L^S(s,\pi,\Lambda^2)$ attached to an irreducible cuspidal automorphic representation $\pi$
of $\mathrm{GL}_{2\mathfrak{n}}(\mathbb{A})$. For general linear groups over non-archimedean local fields. The multiplicity-freeness of Shalika models was established by Jacquet and Rallis \cite{JR}. In \cite{BW}, Beuzart-Plessis and Wan investigated the generalized Shalika model for ${\rm GL}_2(D)$ where $D$ is a central simple algebra over a local field.

In the finite field setting, Prasad computes a certain twisted Jacquet model for cuspidal representations in \cite{P}. In particular, his result yields the multiplicity formula for cuspidal representations with respect to Shalika models. The multiplicity-freeness of Shalika models was proved by Nien \cite{Nien}. However, the problem of explicitly classifying all representations admitting a Shalika model has remained open until now. In the present paper, we provide a complete and explicit answer to this question for both finite general linear groups and finite unitary groups.

\subsection{Main results}
To state our main results, we first briefly recall Lusztig’s parametrization of the irreducible representations.
Let $G^*$ be the dual group of $G$ in the sense of \cite[Definition 5.21]{DL}.
By abuse of notation, we also denote the Frobenius endomorphism of $G^*$ again by $F$.
Lusztig’s Jordan decomposition (see, e.g., \cite[Section 2]{LS} and \cite[Theorem 5.2]{Gec}) partitions $\Irr(G^F)$ into disjoint Lusztig series, indexed by the semisimple conjugacy classes in $G^{*F}$ (the finite group of $F$-fixed points of $G^*$):
\begin{equation}
\Irr(G^F)=\coprod_{(s)}\mathcal{E}(G,s),
\end{equation}
where $(s)$ runs over the $G^{*F}$-conjugacy classes of semisimple elements in $G^{*F}$,
and $\mathcal{E}(G,s)$ is the \textit{Lusztig series} attached to $s$.

The centralizer $G^*_s:= C_{G^*}(s)$ is a direct product of general linear groups. Let ${\mathbb W}_s$ be the canonical Weyl group of $G^*_s$ in the sense of Deligne-Lusztig \cite[Section 1.1]{DL}, which we briefly review in subsection \ref{subsec-canonical-rep}. 
We denote by ${\mathbb W}_s^\vee$ the set of isomorphism classes of irreducible representations of ${\mathbb W}_s$.
For each $\rho\in ({\mathbb W}_s^\vee)^F$, we denote by $\pi_{s,\rho}\in\mathcal{E}(G,s)$  the corresponding representation, characterized by the identity
\[
(-1)^{\sigma(G)+\sigma(G^*_s)} \pi_{s,\rho}
= |{\mathbb W}_s|^{-1}\sum_{w\in {\mathbb W}_s} {\rm Tr}\bigl(w\mathfrak{F},\widetilde{\rho}\bigr) R_{T_w^*,s},
\]
where:
\begin{itemize}
    \item $\sigma(G)$ denotes the $\mathbb{F}_q$-rank of $G$,
    \item  $\widetilde{\rho}$ is a representation of $\widetilde{{\mathbb W}}_s:={\mathbb W}_s \rtimes \langle \frak{F} \rangle
$ lifting $\rho$ as in Theorem~\ref{thm-G*s-unipotente-rep} (which rephrases \cite[Theorem 2.2]{LS}),
\item  $T^*_w$ is the $F$-stable maximal torus of $G^*_s$ corresponding to $w$ in the sense of \cite[Corollary 1.14]{DL},
\item $R_{T_w^*,s}$ is the Deligne–Lusztig character of $G^F$ associated to the pair $(s,T^*_w)$.
\end{itemize}
   Note that every irreducible representation of $G^F$ arises in this way. We remark here that the extended Weyl group $\widetilde{{\mathbb W}}_s$ records the natural Frobenius action on $\mathbb W_s$ (see Definition \ref{def-ext-caonical-weyl}).

\begin{defn}
We say that a semisimple element $s \in G^{*F}$ is \textit{symplectic} if the multiplicity of each eigenvalue $a$ of $s$ equals that of $a^{-1}$, and the multiplicities of the eigenvalue $\pm1$  are even. 
In other words, there exists a subgroup $H^* \cong \mathrm{Sp}_{2\mathfrak{n}}$ of $G^*$ such that $s\in H^{*F}$. (We fix one such $H^*$ for each symplectic $s\in G^{*F
}$.)
For a symplectic element $s$, set $H_s^*:=C_{H^*}(s)$ (the centralizer), and let ${\mathbb W}_s^{\mathrm{Sp}}$ denote the Weyl group of $H_s^*$ in the sense of Deligne-Lusztig \cite[Section 1.1]{DL} (see Proposition \ref{pro-mul-one-induced} and Remark \ref{618} for details; note that ${\mathbb W}_s^\mathrm{Sp}$ can be identified with the stabilizer group ${\mathrm {St}}_x$ introduced in Proposition \ref{pro-mul-one-induced} up to conjugation).
\end{defn}

\begin{rmk}
We can identify ${\mathbb W}_s^\mathrm{Sp}$ as a subgroup of ${\mathbb W}_s$.
\end{rmk}

The following theorem reformulates Theorem \ref{thm-general-irreducible-main}.
\begin{thm}\label{main1}
Let $\pi_{s,\rho}\in \mathcal{E}(G,s)$. We have
\begin{equation}\label{eq1}
S(\pi_{s,\rho})=\left\{
\begin{array}{ll}
  \left\langle \rho, {\rm sgn}\otimes {\rm{Ind}}_{{\mathbb W}_{s}^{\rm{Sp}}}^{{\mathbb W}_s}{  1}\right\rangle_{{\mathbb W}_s}   & \textrm{ if $s$ is symplectic;} \\
    0 & \textrm{otherwise,} 
\end{array}
\right.
\end{equation}
where $\rm{sgn}$ is the sign character of ${\mathbb W}_s$ and $1$ is the trivial character of ${{\mathbb W}_s^{\rm Sp}}$.
\end{thm}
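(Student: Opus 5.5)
Since $\pi_{s,\rho}$ is written as a uniform combination of Deligne--Lusztig characters, the plan is to compute $\langle R_{T^*_w,s},\psi_{\mathrm S}\rangle_{\mathrm S^F}$ for every $w\in{\mathbb W}_s$ and then sum. Write $\mathrm S=(L\cap\mathrm S)\ltimes N$ with $L\cap \mathrm S\cong \mathrm{GL}_{\mathfrak n}$ embedded diagonally. Then $\langle \pi,\psi_{\mathrm S}\rangle_{\mathrm S^F}$ is the multiplicity of the trivial character of $\mathrm{GL}_{\mathfrak n}^F$ in the $(N^F,\psi)$-twisted Jacquet module $\pi_{N,\psi}$. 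The stabilizer of $\psi|_{N^F}$ in $L^F$ is exactly the diagonal $\mathrm{GL}_{\mathfrak n}^F$, so we get
\[
S(\pi)=\langle \pi, \Ind_{\mathrm S^F}^{G^F}\psi_{\mathrm S}\rangle_{G^F}.
\]

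The first step is a character formula for $\langle R_{T,\theta},\psi_{\mathrm S}\rangle$. I would use the Deligne--Lusztig character formula, which expresses $R_{T,\theta}(su)$ through Green functions on $C_G(s)^{\circ F}$. Since $\psi_{\mathrm S}$ is trivial on $L\cap\mathrm S$, only the semisimple parts $g\in\mathrm{GL}_{\mathfrak n}^F$ appear diagonally. For fixed semisimple $g$, the inner sum is over unipotents $l\,n$ in $C_{\mathrm S}(g)$ weighted by $\psi$. This sum is a Shalika-type integral for the smaller group $C_G(g)$, which is a product of $\mathrm{GL}_{2m}$'s over extension fields, again with a Shalika pair. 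So the computation reduces, by Fourier analysis on $N^F$ and induction on $\mathfrak n$, to the regular semisimple (toral) contributions.

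Concretely, I expect this to give
\[
\langle R_{T,\theta},\psi_{\mathrm S}\rangle=\frac{1}{|W(T)^F|}\sum \epsilon\cdot(\text{number of } x \text{ with } {}^xT \text{ "symplectically positioned"}, \theta\circ\mathrm{Ad}(x) \text{ symplectic}).
\]
Here "symplectic" means that $\theta$ restricted to the relevant anisotropic pieces pairs with its inverse, mirroring the known result for linear and symplectic periods. This is where I would invoke Proposition~\ref{pro-mul-one-induced}, whose orbit/stabilizer description yields the subgroup ${\mathrm{St}}_x\simeq{\mathbb W}_s^{\rm Sp}$. This first step is the main obstacle. The twisted Jacquet functor is not a standard Harish-Chandra functor, so one must carefully check how the unipotent sum with the $\psi$-twist collapses. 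The route I would try first is to relate $\Ind_{\mathrm S^F}^{G^F}\psi_{\mathrm S}$ to the symplectic permutation module $\Ind_{\mathrm{Sp}_{2\mathfrak n}^F}^{G^F}1$, up to a sign twist on the Weyl group side. This could be done via a Mackey/orbit comparison of double cosets $\mathrm S\backslash G/T$ versus $\mathrm{Sp}\backslash G/T$, and it is consistent with the "$\mathrm{sgn}\otimes$" in \eqref{eq1}.

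Next I sum over $w$ with the weights ${\rm Tr}(w\mathfrak F,\widetilde\rho)$. If $s$ is not symplectic, no $\theta$ dual to $(T^*_w,s)$ can satisfy the pairing condition, so every term vanishes and $S(\pi_{s,\rho})=0$. If $s$ is symplectic, the surviving terms are indexed by $w\in{\mathbb W}_s$ conjugate into ${\mathbb W}_s^{\rm Sp}$, each carrying the sign $(-1)^{\sigma(G)+\sigma(T_w)}$. This sign cancels against $(-1)^{\sigma(G)+\sigma(G^*_s)}$ up to ${\rm sgn}(w)$. We then obtain
\[
S(\pi_{s,\rho})=|{\mathbb W}_s|^{-1}\sum_{w}{\rm Tr}(w\mathfrak F,\widetilde\rho)\,{\rm sgn}(w)\,\Ind_{{\mathbb W}_s^{\rm Sp}}^{{\mathbb W}_s}1(w\mathfrak F).
\]

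Finally, I apply Theorem~\ref{thm-G*s-unipotente-rep}, the orthogonality for $F$-twisted characters of $\widetilde{{\mathbb W}}_s$, to identify this sum with $\langle\rho,{\rm sgn}\otimes\Ind_{{\mathbb W}_s^{\rm Sp}}^{{\mathbb W}_s}1\rangle_{{\mathbb W}_s}$. In the unitary case, the $\mathfrak F$-action permutes factors and inverts, and the same bookkeeping applies with the twisted classes.
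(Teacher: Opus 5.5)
\textbf{The gap is the toral multiplicity formula.} Your outer architecture matches the paper's: compute $\langle R_{T^*_w,s},\psi_{\mathrm S}\rangle_{\mathrm S^F}$ for each $w\in\mathbb W_s$, then insert the Lusztig--Srinivasan expansion of $\pi_{s,\rho}$. But the formula that carries the whole argument, which in the paper reads
\[
\langle R_{T^*_w,s},\psi_{\mathrm S}\rangle_{\mathrm S^F}=(-1)^{\sigma(T^*_w)+\sigma(G)}\,|J^{*,F}_s(T^*_w)|=\mathrm{sgn}(w)\,(-1)^{\sigma(G^*_s)+\sigma(G)}\,\mathrm{Tr}(w\mathfrak F,\wt\Pi_s)
\]
(Theorems~\ref{thm-dual-main} and \ref{thm-dual-canonical-main}), is only conjectured in your proposal. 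Neither of your routes proves it.

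Route (a) cannot end in ``regular semisimple contributions.'' Every semisimple element of $\mathrm S$ is conjugate into $U=\{\mathrm{diag}(a,a)\}$, so $\mathrm S$ contains no regular semisimple element of $G$, and $\mathrm{Ind}_{\mathrm S^F}^{G^F}\psi_{\mathrm S}$ vanishes there. The contributing locus is the generic part of the conjugates of $U$. Their centralizers are forms of $\mathrm{GL}_2^{\mathfrak n}$, and the Green-function values $Q^{G_\iota}_{\gamma T\gamma^{-1}}(e)$ on these centralizers are what produce the sign $(-1)^{\sigma(T)+\sigma(G)}$.

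Your induction through centralizers also breaks down at central $g$ (e.g.\ $g=1$). There $C_G(g)=G$, and you would have to evaluate $\sum_u Q^G_T(u)\overline{\psi_{\mathrm S}(u)}$ over unipotent $u\in\mathrm S^F$ directly, which is as hard as the original problem for nonsplit $T$. Moreover, at fixed $q$ the unipotent and non-generic contributions do not vanish; they are only of lower order in $q$.

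Supplying exactly this is the content of Sections~\ref{sec-geo-formulation}--\ref{sec-computational-approach}:
\begin{enumerate}
\item Via character sheaves and Deligne's weights, $\mathfrak S_{T,\chi}(n)=q^{-n\dim G}\mathrm{Tr}((F^n)^*,H^{2\dim G}_c(G,\mathscr R_{T,\chi}\otimes\mathscr S))$. So the multiplicity is the leading term of the period sum over $\mathbb F_{q^n}$ (Lemma~\ref{lem-const}).
\item The Artin--Schreier vanishing on the strata $\mathcal Y^w_{T,B}$ with $w\notin\Phi_B$ localizes the top cohomology to the locus $\mathfrak V$ of conjugates of $U$.
\item A Reeder-type sum over semisimple elements, the dimension bound of Proposition~\ref{pro-index-injection}, and the bijection $\Delta_T\cong J(T)$ then yield Theorem~\ref{thm-mul-refined}.
\end{enumerate}

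\textbf{Route (b) is false outside the quadratic unipotent case.} For $\mathrm{GL}_2$, $\mathrm{Ind}_{\mathrm{SL}_2(\mathbb F_q)}^{\mathrm{GL}_2(\mathbb F_q)}1=\bigoplus_\chi\chi\circ\det$. On the other hand, the principal series $\pi(\chi,\chi^{-1})$ with $\chi^2\neq 1$ is generic with trivial central character, hence has a Shalika model (here $\mathrm S=ZN$). It is also fixed by Alvis--Curtis duality. So no sign twist on the Weyl-group side relates the two models in general. In the paper, the Shalika--symplectic relation (Theorem~\ref{un-dual}) is a consequence of Theorem~\ref{main1}, and holds only for quadratic unipotent $\pi$ in odd characteristic.

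\textbf{The last step needs more than twisted orthogonality.} The quantity $|\mathbb W_s|^{-1}\sum_w\mathrm{Tr}(w\mathfrak F,\wt\rho\otimes\mathrm{sgn}\otimes\wt\Pi_s)$ is the trace of $\mathfrak F$ on the $\mathbb W_s$-invariants, not their dimension. The paper closes this in two steps:
\begin{enumerate}
\item It proves $(\mathbb W_s,\mathrm{St}_x)$ is a Gelfand pair (Proposition~\ref{pro-mul-one-induced}), so the invariant space has dimension at most one and $\mathfrak F$ acts on it by a root of unity.
\item That root of unity must be $1$ when the space is nonzero, because the left side is a nonnegative integer.
\end{enumerate}
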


\begin{rmk}
 By the multiplicity-freeness of Shalika models \cite{Nien}, the right-hand side of \eqref{eq1} is at most $1$.
We also give an independent proof of this result (see Remark \ref{rm-mul-one-induced} and Corollary \ref{shalika-one}).
\end{rmk}

In the local setting, the Shalika model is related to the linear model via the theta correspondence. In particular, the multiplicity-freeness of Shalika models was proved by the fact that linear models are multiplicity-free, and the existence of a Shalika model for a representation of $\mathrm{GL}_{2\mathfrak{n}}$ implies the existence of a linear model (see \cite{JR}).
However, this does not always hold over finite fields.
The linear model in the finite field case has been explicitly described in \cite{H}, and is not multiplicity-free.
The difference arises from the unipotent parts of the representations.
According to \cite[Theorem~3.1.1 and 3.3.1]{H} and Theorem~\ref{main1}, we obtain the following corollary. 
\begin{cor} Assume that ${\mathbb F}_q$ is a finite field of odd characteristic.
    \begin{enumerate}
        \item [(1)] If $\pi\in {\rm{Irr}}(G^F)$ has a Shalika model, then 
        \[
        \left\langle \pi, {\rm Ind}^{G^F}_{G^{\prime F}} 1\right\rangle_{G^F}\ne 0,
        \]
        where 
        \[
        G^{\prime F}\cong 
        \left\{
        \begin{array}{ll}
         {\rm GL}_{\frak{n}}(\mathbb{F}_q)\times  {\rm GL}_{\frak{n}}(\mathbb{F}_q)   &  \textrm{ if $F$ is split;}\\
            {\rm U}_{\frak{n}}(\mathbb{F}_q)\times  {\rm U }_{\frak{n}}(\mathbb{F}_q)   &  \textrm{ if $F$ is non-split.}\\
        \end{array}
        \right.
        \]
        \item [(2)] Assume that $\pi\in \mathcal{E}(G,s)$, where the semisimple element $s$ has no eigenvalue $1$. Then
        \[
        S(\pi)=\left\langle \pi, {\rm Ind}^{G^F}_{G^{\prime F}} 1\right\rangle_{G^F}.
        \]
    \end{enumerate}
\end{cor}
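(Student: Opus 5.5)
We derive the corollary by comparing Theorem~\ref{main1} with Henderson's description of linear models \cite[Theorem~3.1.1 and 3.3.1]{H}, phrased in the same Lusztig parametrization $\pi=\pi_{s,\rho}$. Henderson's results state (in our notation) that $\langle \pi_{s,\rho}, {\rm Ind}^{G^F}_{G'^F}1\rangle_{G^F}$ vanishes unless $s$ is symplectic, i.e.\ conjugate to $s^{-1}$ with even multiplicity of $-1$. In that case it equals a Weyl-group pairing $\langle \rho, \Theta_s\rangle_{{\mathbb W}_s}$ for an explicit virtual-turned-genuine character $\Theta_s$ of ${\mathbb W}_s$. This $\Theta_s$ decomposes along the factors of $G^*_s$. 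For a pair of eigenvalues $\{a,a^{-1}\}$ with $a\neq\pm1$, the factor ${\rm GL}_m\times{\rm GL}_m$ (or its twisted Frobenius form) contributes the induction of ${\rm sgn}$ (or of $1$) from the diagonal/twisted ${\mathfrak S}_m$. The eigenvalue $-1$, of multiplicity $2k$, contributes ${\rm sgn}\otimes{\rm Ind}_{W(C_k)}^{{\mathfrak S}_{2k}}1$, matching the symplectic case. The eigenvalue $1$ instead contributes Henderson's sum over $j$ of ${\rm Ind}_{{\mathfrak S}_j\times W(C_{\cdot})}$-type terms, which is strictly larger than ${\rm sgn}\otimes{\rm Ind}_{W(C_k)}^{{\mathfrak S}_{2k}}1$.

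The first step is to write ${\mathbb W}_s=\prod_a {\mathbb W}_{s,a}$ over the Frobenius orbits of eigenvalues, and to correspondingly factor ${\mathbb W}^{\rm Sp}_s=\prod {\mathbb W}^{\rm Sp}_{s,a}$. The second step is to check factor by factor that ${\rm sgn}\otimes{\rm Ind}_{{\mathbb W}^{\rm Sp}_{s}}^{{\mathbb W}_s}1$ agrees with Henderson's local term for every eigenvalue $a\neq1$; this agreement is what yields (2). For eigenvalue $1$, the third step is to verify that ${\rm sgn}\otimes{\rm Ind}_{W(C_k)}^{{\mathfrak S}_{2k}}1$ occurs as a summand of Henderson's term, namely the summand with no ${\mathfrak S}_j$ contribution. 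Since all characters involved are genuine, the Shalika pairing is then bounded by the linear pairing, and (1) follows from Theorem~\ref{main1}: if $S(\pi)\ge1$ then $s$ is symplectic and the linear pairing is at least $S(\pi)\geq1$. In the non-split case the same comparison is run with $\widetilde{\mathbb W}_s$, using that $\rho$ is $\mathfrak F$-stable.

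The main obstacle is the bookkeeping in the second step. One has to match Henderson's conventions, namely his ordering of Lusztig's parametrization, sign twists by $(-1)^{\sigma(G)+\sigma(G^*_s)}$, and the unitary Ennola-type twists, with ours, so that the identification of the local factors is literally correct. In particular the sign character must appear on the same side in both formulas. I would handle this first by checking the case $s$ central: for $s=-1$, both sides reduce to the unipotent case of $-1$-twisted representations, and the known description (partitions with even columns, in the form $\rho$ attached to a partition with all parts appearing evenly after transposition) fixes the normalization. Having fixed it there, the general comparison would be completed via the product decomposition.
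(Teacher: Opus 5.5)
Your proposal takes the paper's route: the paper obtains this corollary simply by comparing Theorem~\ref{main1} with Henderson's formulas \cite[Theorems~3.1.1 and 3.3.1]{H}, and your factor-by-factor comparison spells out the bookkeeping the paper leaves implicit (identical local terms for every eigenvalue other than $1$, and the Shalika term ${\rm sgn}\otimes{\rm Ind}_{W(C_k)}^{\mathfrak S_{2k}}1$ appearing as one genuine summand of Henderson's eigenvalue-$1$ term). Two small corrections to that bookkeeping: ${\mathbb W}_s^{\rm Sp}$ factors over orbits of $r\mapsto r^{-1}$ on eigenvalues (Remark~\ref{618}), not over Frobenius orbits; and for a pair $\{a,a^{-1}\}$ the matching local term is ${\rm Ind}_{\Delta\mathfrak S_m}^{\mathfrak S_m\times\mathfrak S_m}1=\sum_\lambda\chi^\lambda\otimes\chi^\lambda$, because the sign of ${\mathbb W}_s$ is trivial on the diagonal, not the induction of the sign character of $\Delta\mathfrak S_m$, which would give $\sum_\lambda\chi^\lambda\otimes\chi^{\lambda'}$.
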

\begin{rmk}
The above corollary relies on the assumption that $\mathbb{F}_q$ has odd characteristic $q$, as \cite{H} does, whereas Theorem \ref{main1} in this paper does not require this assumption.
\end{rmk}

 In \cite{BZSV,SV}, Ben-Zvi, Sakellaridis, and Venkatesh proposed a relative Langlands duality for hyperspherical varieties, which is expressed as:
\[
\mathbf{G} \circlearrowright \mathbf{X} \longleftrightarrow \mathbf{X}^\vee \circlearrowleft \mathbf{G}^\vee,
\]
where:
\begin{itemize}
\item $\mathbf{G}$ is an arbitrary reductive group defined over a number field,
\item $\mathbf{G}^\vee$ is the Langlands dual group of $\mathbf{G}$,
\item $\mathbf{X}$ is a hyperspherical $\mathbf{G}$-variety,
\item $\mathbf{X}^\vee$ is a hyperspherical $\mathbf{G}^\vee$-variety.
\end{itemize}
\begin{comment}
\(\mathbf{G}\) is a arbitrary reductive group defined over a number field, \(\mathbf{G}^\vee\) denotes the Langlands dual group of \(\mathbf{G}\), and \(\mathcal{X}\) and \(\mathcal{X}^\vee\) are hyperspherical \(\mathbf{G}\)-variety and \(\mathbf{G}^\vee\)-variety, respectively.
\end{comment}
In the conjecture of the relative Langlands program, a representation has nonvanishing period integrals on one side if and only if its Arthur parameter can be factored through the other side; furthermore, the period integrals on one side are associated with the L-functions on the other side. Within the framework of the relative Langlands program, the hyperspherical varieties induced by the Shalika model and the restriction from $\mathrm{GL}_{2\frak{n}}$ to $\mathrm{Sp}_{2\frak{n}}$ are mutually dual \cite[Example 4.5.1]{BZSV}.
A cuspidal automorphic representation \(\pi\) of \(\operatorname{GL}_{2\mathfrak{n}}\) admits a Shalika model if and only if the Langlands parameter of \(\pi\) factors through the dual group \({\operatorname{SO}}^\vee_{2\mathfrak{n}+1} = \operatorname{Sp}_{2\mathfrak{n}}(\mathbb{C})\) \cite{CKPSS,GRS,JiS1, JiS2}. This places the classical Shalika characterization as a central instance of the duality for spherical pairs. In the case of finite fields, we can also observe a similar phenomenon as follows.

We say that an irreducible representation $\pi$ has a symplectic model if
\[
Sp(\pi):=\left\langle \pi , {\rm Ind}^{G^F}_{H^F} 1\right\rangle_{G^F}\ne 0,
\]
 where $H\cong \rm{Sp}_{2\frak{n}}$ is a subgroup of $G$. By combining Theorem \ref{main1} with Henderson's result  \cite[Theorem 2.1.1]{H} on symplectic model, and applying the Macdonald correspondence \cite{A,Ma}—a finite-field analog of the local Langlands correspondence for general linear groups—one can observe similar phenomena.
To be more precise, for an irreducible {\it non-unipotent} representation \(\pi\) (i.e., \(\pi\in \mathcal{E}({\rm GL}_{2\frak{n}},s)\) where \(s\) has no eigenvalues \(\pm 1\)), we have the following:
\begin{enumerate}
    \item \(\pi\) has a Shalika model if and only if \(\mathcal{M}_{2\mathfrak{n}}(\pi)\) factors through \({\rm Sp}_{2\frak{n}}(\mathbb{C})\)—that is, the restriction of $\mathcal{M}_{2\mathfrak{n}}(\pi)$ on the inertia subgroup is a symplectic representation—where \(\mathcal{M}_{2\frak{n}}\) is the Macdonald correspondence defined in \cite[Section 2.4]{A};
    \item \(\pi\) has a symplectic model if and only if \(\mathcal{M}_{2\frak{n}}(\pi)\) factors through \( K\times \left( \begin{smallmatrix} 0 & 1_\frak{n} \\ 0 &0 \end{smallmatrix} \right)\), where  \(1_\frak{n}\) is the $\frak{n}\times \frak{n}$ identity matrix,  $K:=\{{\rm diag}(g,g)\ |\ g\in {\rm GL}_{\frak{n}}(\mathbb{C}) \}$. Here ``factors through'' means that
the nilpotent part of the image of the inertia subgroup under $\mathcal{M}_{2\frak{n}}(\pi)$
has the form
$\left(\begin{smallmatrix} g & 1_\frak{n} \\ 0 & g \end{smallmatrix}\right)$ with $g$ nilpotent up to conjugation,
and the semisimple part is contained in $K$.
\end{enumerate}

In \cite{FGT}, Finkelberg, Ginzburg, and Travkin formulate a geometric conjecture realizing the relative Langlands duality as an isomorphism of Borel--Moore homology groups:
\begin{equation}\label{FGT1}
H_{\mathrm{top}}^{\mathrm{BM}}(\mathrm{St}_{\mathbf{X}}, \mathbb{C}) \cong H_{\mathrm{top}}^{\mathrm{BM}}(\mathrm{St}_{\mathbf{X}^\vee}, \mathbb{C})
\end{equation}
in the category of \({\mathbb W}_\mathbf{G} = {\mathbb W}_{\mathbf{G}^\vee}\)-modules, with \({\mathbb W}_\mathbf{G}\) and \({\mathbb W}_{\mathbf{G}^\vee}\) the Weyl groups of \(\mathbf{G}\) and \(\mathbf{G}^\vee\), where $\mathrm{St}_{\mathbf{X}}$ is defined in \cite[Section 1.4.1]{MQYZ} and \cite[Section 1.1]{FGT}.
For spherical varieties (over finite fields), Ma, Qiu, Zou, and Yun relate the Borel–Moore homology groups in \eqref{FGT1} to a \({\mathbb W}_\mathbf{G}\)-model arising from the unipotent principal series part of a certain $\mathbf{G}$-model associated with \(\mathbf{X}\). 

The realization of the relative Langlands duality in \cite{FGT} implies a duality relation between the multiplicity problems. In the Theta-Gan-Gross-Prasad setting over finite fields, a similar multiplicity duality phenomenon has been observed \cite{W}. In the Shalika-symplectic setting, we can also observe a similar phenomenon.

For quadratic unipotent representations --- that is, representations occurring in $\mathcal{E}(G,s)$ where $s$ has eigenvalues $\pm 1$ only --- the duality between the Shalika model and the symplectic model is established in a manner analogous to the Theta-Gan-Gross-Prasad duality \cite[Corollary~1.9]{W}.  
\begin{thm}\label{un-dual}
Assume that $\mathbb{F}_q$ is a finite field of odd characteristic. Let $\pi$ be an irreducible quadratic unipotent representation of $G^F$. Then $\pi$ has a Shalika model if and only if the Alvis--Curtis dual of $\pi$ \cite{Al,C} admits a symplectic model. In particular, for a unipotent representation $\pi=\pi_{1,\rho}$, where $1$ denote the identity element in $G^*$, and $\rho$ is an irreducible representation of the Weyl group ${\mathbb W}={\mathbb W}_1$ of $G^*$, we have
\[
S(\pi_{1,\rho})=Sp( \pi_{1,\sgn\otimes \rho})=\left\langle \rho, {\rm sgn}\otimes {\rm{Ind}}_{{\mathbb W}^{\rm{Sp}}}^{{\mathbb W}}{  1}\right\rangle_{{\mathbb W}},
\]
where ${\mathbb W}^{\rm{Sp}}$ is the group ${\mathbb W}^{\rm{Sp}}_s$ from Theorem \ref{main1} with $s=1$, and it is isomorphic to the Weyl group of $\rm{Sp}_{2\frak{n}}$. 
\end{thm}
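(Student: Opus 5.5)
The plan is to compute both sides through the Lusztig parametrization and compare them. Theorem~\ref{main1} evaluates the Shalika side, and Henderson's result \cite[Theorem 2.1.1]{H} evaluates the symplectic side; Alvis--Curtis duality links the two.

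\emph{Shalika side.} Let $\pi=\pi_{s,\rho}$ be quadratic unipotent, so $s$ has eigenvalues only $\pm1$. Then $G^*_s\cong \mathrm{GL}_a\times \mathrm{GL}_b$, with $a$ and $b$ the multiplicities of $1$ and $-1$, each taken of the appropriate split or unitary type. Hence ${\mathbb W}_s\cong S_a\times S_b$, and $\rho=\rho_1\boxtimes\rho_2$. By Theorem~\ref{main1}, $S(\pi)\neq0$ forces $a$ and $b$ to be even. In that case ${\mathbb W}_s^{\rm Sp}\cong W(C_{a/2})\times W(C_{b/2})$, embedded as the centralizer of a fixed-point-free involution. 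The formula then factors as
\[
S(\pi_{s,\rho})=\prod_{i=1,2}\left\langle \rho_i,\sgn\otimes\Ind_{W(C)}^{S}1\right\rangle .
\]
The induced representation $\Ind_{W(C_m)}^{S_{2m}}1$ is the classical multiplicity-free sum of $\chi^{2\lambda}$ over partitions $\lambda$ of $m$, i.e.\ over partitions with all parts even. Tensoring with $\sgn$ transposes the partitions, so the Shalika condition becomes: each $\rho_i$ corresponds to a partition whose columns all have even length.

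\emph{Alvis--Curtis side.} Up to sign, Alvis--Curtis duality sends $\pi_{s,\rho}$ to $\pi_{s,\sgn\otimes\rho}$; this follows from the character formula for $\pi_{s,\rho}$ together with $D_G(R_{T,s})=\pm R_{T,s}$. I will verify this directly from the defining identity. One point needs care in the unitary case: the lift $\widetilde\rho$ must be twisted compatibly, which should be a sign bookkeeping exercise with $(-1)^{\sigma(G)+\sigma(G^*_s)}$. The dual of $\pi$ is therefore $\pi_{s,\rho'}$ with $\rho'=\sgn\otimes\rho$, i.e.\ with both partitions transposed.

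\emph{Symplectic side and comparison.} Henderson's theorem \cite[Theorem 2.1.1]{H} describes $Sp(\pi_{s,\rho'})$ for quadratic unipotent characters in odd characteristic. This is why the hypothesis on $q$ appears. Translated into the present labeling, it should say that $Sp(\pi_{s,\rho'})\ne0$ iff $a$ and $b$ are even and both partitions of $\rho'$ have only even rows (resp.\ even columns in the unitary case, handled through Ennola-type sign changes). Equivalently, the multiplicity is $\langle\rho',\Ind_{{\mathbb W}_s^{\rm Sp}}{\mathbb W}_s 1\rangle$. Substituting $\rho'=\sgn\otimes\rho$ reproduces the Shalika formula, which proves the equivalence. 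The unipotent statement is the special case $s=1$, where $S(\pi_{1,\rho})=\langle\rho,\sgn\otimes\Ind_{{\mathbb W}^{\rm Sp}}^{{\mathbb W}}1\rangle=Sp(\pi_{1,\sgn\otimes\rho})$. Both sides are $0$ or $1$ by multiplicity freeness.

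The main obstacle is the dictionary between Henderson's parametrization and the $\pi_{s,\rho}$ normalization of this paper. Henderson's results are phrased via partitions and Lusztig's unipotent labeling, possibly differing from ours by a transpose. In the unitary case, $\rho$ is labeled through $\widetilde\rho$ with a Frobenius twist acting on $S_a$ by conjugation by $w_0$. First I would test the dictionary on small cases ($\mathfrak n=1$: the trivial and Steinberg representations, where the Steinberg representation has a Shalika model and the trivial one has a symplectic model). I would then match the normalizations uniformly using the known $\mathfrak n=1,2$ checks together with the transpose behavior under $\sgn$.
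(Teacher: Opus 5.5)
Your route is the one the paper intends. The paper gives no separate argument for Theorem~\ref{un-dual}; it combines Theorem~\ref{main1}, Henderson's symplectic results \cite[Theorem 2.1.1, 2.2.1]{H}, and the fact that Alvis--Curtis duality sends $\pi_{s,\rho}$ to $\pm\pi_{s,\sgn\otimes\rho}$. Your sign bookkeeping for that last step goes through. We have $(-1)^{\sigma(T^*_w)}=(-1)^{\sigma(G^*_s)}\sgn(w)$, and $\mathfrak F$ acts trivially on $\sgn$. Moreover, $\wt\rho\otimes\sgn$ and the preferred lift of $\sgn\otimes\rho$ differ by a scalar on $\mathfrak F$, and that scalar is forced to be $\pm1$ because the dual of an irreducible character is $\pm$ an irreducible character.

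One point needs correcting. Your parenthetical ``resp.\ even columns in the unitary case'' is wrong in the present labeling, and it contradicts the ``Equivalently'' formula you write immediately after it. For $F=F_{\mathrm U}$ one still has $\pi_{1,1}=1_{G^F}$: take $\wt\rho$ trivial, use $|\mathbb W|^{-1}\sum_{w}R_{T^*_w,1}=1_{G^F}$, and note that the sign $(-1)^{\sigma(G)+\sigma(G^*)}$ equals $+1$. The trivial character obviously has a symplectic model. On the other hand, $\langle 1,\sgn\otimes\Ind_{\mathbb W^{\rm Sp}}^{\mathbb W}1\rangle_{\mathbb W}=0$. So of your two candidate shapes, only $Sp(\pi_{s,\rho'})=\langle\rho',\Ind_{\mathbb W_s^{\rm Sp}}^{\mathbb W_s}1\rangle_{\mathbb W_s}$ (even rows) is compatible with this labeling for $\mathrm U_{2\mathfrak n}$. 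That is also the form the theorem presupposes for both Frobenius structures.

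The error is not cosmetic. If the unitary condition really were ``even columns'', combining it with Theorem~\ref{main1} would give $S(\pi)=Sp(\pi)$ instead of $S(\pi)=Sp(D_G\pi)$, and the statement would fail for unitary groups. Your own $\mathfrak n=1$ test already detects this. For $\mathrm U_2$, $\mathrm S^F$ is the centre times the unipotent radical of an $F$-stable Borel, so the Steinberg character has a Shalika model and the trivial character has a symplectic model. This normalization of which label is the trivial character is what fixes the dictionary with Henderson; further case-by-case testing is not needed.
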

\begin{rmk}
The above theorem relies on the assumption that $\mathbb{F}_q$ has odd characteristic $q$, as does the work in \cite[Theorem 2.1.1, Theorem 2.2.1]{H}.
\end{rmk}

%%\begin{rmk}
%%It is possible to state Conjecture \eqref{FGT1} for the Shalika and %%symplectic cases by extending the work of \cite{MQYZ} to the Shalika and %%symplectic settings, combined with Theorem \ref{un-dual}.
%%\end{rmk}

\subsection{Methods and organization of this paper} First, we note that the multiplicity $S(\cdot)$ of Shalika models can be extended linearly to  virtual characters. Hence it makes sense to speak of the multiplicity $S(R_{T,\theta})$  with respect to the Deligne-Lusztig character $R_{T,\theta}$. 

Section \ref{sec-doublecoset}, \ref{sec-geo-formulation}, \ref{sec-computational-approach} are dedicated to computing the multiplicity of Shalika models  with respect to  Deligne-Lusztig characters.

Section \ref{sec-doublecoset} essentially involves linear algebra techniques. In Section \ref{sec-doublecoset}, we study the double coset space associated with the Shalika subgroup $\mathrm S$ and a Borel subgroup of $G$. Geometrically, this reveals certain properties of $B$-stabilizer on the spherical variety $G/\mathrm S$ for a Borel subgroup $B$ of $G$.

We observe that Deligne-Lusztig characters can be informally viewed as characters of ``parabolic induction" from $F$-stable maximal tori. Combining the theory of character sheaves with the Grothendieck trace formula, it is routine to express the multiplicity in question in terms of Frobenius traces on certain cohomology spaces, as presented in Proposition \ref{pro-cohointerpretation-pre}. Using Deligne's theory of weight for $\Qlb$-sheaves, we deduce that the top cohomology space controls the multiplicity of Shalika models for Deligne-Lusztig characters, as shown in Proposition \ref{pro-cohointerpret-main}. Such an interpretation is not surprising: the underlying geometric object (which depends on the choice of a Borel subgroup $B$) of this top cohomology space has a natural partition into equidimensional irreducible subschemes indexed by the set of $B$-orbits on the spherical variety $G/\mathrm S$. Thus, Proposition \ref{pro-cohointerpret-main} can be viewed as a cohomological incarnation of the Mackey formula. The drawback of the cohomology interpretation  Proposition \ref{pro-cohointerpret-main} is that the choice of a Borel subgroup for such a geometric model is never canonical, and we cannot capture the Frobenius action directly. In the language of characters sheaves, the Frobenius action arises from the intermediate extension, which is technically hard to track.

To remedy this, we adopt a computational approach in Section \ref{sec-computational-approach} to trace the Frobenius action. This approach is systematically used by Reeder in \cite{R} to study the restriction problem for $(\mathrm {SO}_{2n+1},\mathrm {SO}_{2n})$. In \cite{LMS} and \cite{Shi}, some variants of this method are used to compute Fourier-Jacobi models and periods with respect to spherical varieties. As Theorem \ref{thm-passing-to-semisimple} shows that the ``leading terms" of the multiplicity are concentrated in certain loci within the set of semisimple elements, we establish the summation formula for such leading terms in Theorem \ref{thm-summation-main}. Although a priori complicated, this formula captures the Frobenius actions explicitly.
Combining the geometric observation from Section \ref{sec-geo-formulation} with the formula in Theorem \ref{thm-summation-main}, we ultimately obtain the multiplicity formula in Theorem \ref{thm-mul-refined} for Shalika models with respect to Deligne-Lusztig characters. Again, we may view this formula as a variant of the Mackey formula, with the additional advantage of eliminating the dependence on the choice of a Borel subgroup containing a given $F$-stable maximal torus.

Finally, in Section \ref{sec-mul-general-irreducible}, we compute the multiplicity of Shalika model for an arbitrary irreducible representation of $G^F$. After some technical preliminaries, our computation reduces to the result from \cite{LS}: every irreducible representation of $G^F$ can be expressed as a linear combination of Deligne-Lusztig representations, as reviewed in Subsection \ref{subsec-review-LS} and \ref{subsec-Lusztigmap}. Our main result is Theorem \ref{thm-general-irreducible-main}.

We remark that our calculation of the Shalika model multiplicities applies simultaneously to unitary groups and general linear groups. From our perspective, the main discrepancy between these two algebraic groups is the geometric Frobenius actions they equip, which we spell out in Section \ref{subsec-intro-Fro}.

\subsection{Convention}

Fix an algebraic closure $\mathrm k$ of $\mathbb F_q$.

Throughout this paper, the algebraic group $G$  is the one introduced in Section \ref{sec-doublecoset}; except for Subsection \ref{subsec-DL} and Subsection \ref{subsec-charactershea}, where $G$ denotes an arbitrary connected reductive group over $\mathrm k$ that is defined over $\mathbb F_q$.

By an algebraic scheme $X$ (over $\mathbb F_q$ or $\mathrm k$), we mean that it is separated and of finite type. 

For a linear operator $a$ on a vector space $V$, let $\mathrm {Tr}(a,V)$ denote the trace.

\subsection*{Acknowledgement} The authors would like to thank the organizers of the Seminar on Harmonic Analysis on Algebraic Groups and the Seminar on the Relative Langlands Program at Tianyuan Mathematics Research Center, Kunming, in August 2025. This paper was inspired by discussions and feedback from these two seminars. The authors also wish to thank Chuijia Wang for his insightful and valuable discussions. 
We acknowledge generous support provided by the National Natural Science Foundation of PR China (No. 12571012) and the Jilin Provincial Postdoctoral Foundation. Fang Shi is supported by the Postdoctoral Fellowship Program of CPSF (No. GZC20252027).

\section{Preliminaries: double coset space and Frobenius endomorphism}\label{sec-doublecoset}We introduce the notation adopted in this section.
We fix an algebraic closure $\mathrm k$ of $\mathbb F_q$. Let $V$ be a $2\mathfrak{n}$-dimensional vector space over $\mathbb F_q$ with basis $\mathbb B=\{v_i\}_{1\leq i\leq 2\mathfrak n}$. Let $G_0:=\mathrm {GL}(V)$, which we view as a reductive group over $\mathbb F_q$. Let $G$ be the pullback of $G_0$ to $\mathrm k$, which we view as a  group over $\mathrm k$ endowed with the geometric Frobenius $F_{\mathrm {GL}}:G\to G$. For a maximal torus $X$ in an algebraic group $Y$, let $W(X,Y):=N_{Y}(X)/C_Y(X)$ denote its Weyl group (where $N_Y(X)$ denotes the normalizer of $X$ in $Y$, and $C_Y(X)$ denotes the centralizer of $X$ in $Y$). We view these Weyl groups as finite abstract groups. % when the corresponding groups are defined over $\mathbb F_q$,  the Frobenius $F$ acts on such Weyl groups naturally.
\subsection{The double coset space $\mathrm B^{op}\backslash G/\mathrm S$}
\begin{defn}\label{def-various}
We define the following (algebraic) subgroup of $G$. (In what follows, we abuse the notation by denoting the scalar extension of $V$ to $\mathrm k$ again by $V$ and view $V$ as an algebraic representation of $G$.)
    \begin{itemize}
        \item $\mathrm B$ the Borel subgroup of $G$ stabilizes the maximal flag $V_0\subset \ldots \subset V_{2\mathfrak n}$, where $V_i$ is the span of $\{v_j\}_{1\leq j\leq i}$.
        \item $\mathrm P$ the parabolic subgroup of $G$ stabilizes the flag
        $0\subset W\subset V$, where $W=V_{\mathfrak{n}}$ is the span of $\{v_i\}_{1\leq i\leq \mathfrak  n}$.
        \item $N$ the unipotent radical of $\mathrm P$.
        \item $\mathrm  S$ the subgroup of $\mathrm P$ stabilizes the isomorphism $i_W:W\xrightarrow[]{\sim} V/W$ given by $v_i\mapsto v_{i+\mathfrak n}$ for $1\leq i\leq \mathfrak n$. (Here we abuse the notation by denoting the image of $v_{i+\mathfrak n}$ in $V/W$ again by $v_{i+\mathfrak n}$.)
        \item $L$ the subgroup of $\mathrm P$ stabilizes the decomposition $V=W\oplus W'$, where $W'$ is the span of $\{v_i\}_{\mathfrak n+1\leq i\leq 2\mathfrak n}$.
        \item $\mathrm T$ the maximal torus of $G$ with the set of  eigenvectors $\mathbb B$.
        \item $\mathrm B^{op}$ the opposite Borel subgroup of $\mathrm B$ with respect to the maximal torus $\mathrm T$.
    \end{itemize}
\end{defn}
Note that $L$ is a Levi subgroup of $\mathrm P$. Let $q_\mathrm P:\mathrm P\to L$ be the natural quotient map. 

Note that we have an identification $i_{W,W'}:W\to W'$ given by the composite $W\xrightarrow[]{i_W}V/W\xrightarrow[]{\sim} W'$, where the later isomorphic arrow is the inverse of the natural isomorphism $W'\xrightarrow[]{\sim} V/W$ induced by the quotient. Coordinatewisely, the map $i_{W,W'}$ is given by $v_i\mapsto v_{i+\mathfrak{n}}$ for $1\leq i\leq \mathfrak{n}$.
This in turn gives rise to an identification $\mathrm s_{W,W'}:\mathrm {GL}(W)\xrightarrow[]{\sim}\mathrm {GL}(W') $. 

The group $L\cap \mathrm S$ consists of pairs $(g,\mathrm s_{W,W'}(g))\in \mathrm {GL}(W)\times \mathrm {GL}(W')\cong L$. 

We have the following well-known lemma.
\begin{lem}\label{lem-BPdoublecoset}
    The double coset space $\mathrm B^{op}(\mathrm k)\backslash G(\mathrm k)/\mathrm P(\mathrm k)$ is in natural bijection with $W(\mathrm T,G)/W(\mathrm T,P)$.
\end{lem}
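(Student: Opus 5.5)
The plan is to reduce to the Bruhat decomposition of $G$ with respect to the opposite pair $(\mathrm B^{op},\mathrm T)$. Note that $\mathrm B^{op}$ is itself a Borel subgroup containing $\mathrm T$, and $\mathrm P\supset \mathrm B\supset \mathrm T$. The Bruhat decomposition $G=\coprod_{w\in W(\mathrm T,G)}\mathrm B^{op}\dot w\mathrm B$ (with $\dot w\in N_G(\mathrm T)$ any lift) holds for any pair of Borel subgroups containing a common maximal torus. To see this, write $\mathrm B^{op}=\dot w_0\mathrm B\dot w_0^{-1}$ with $w_0$ the longest element; then $\mathrm B^{op}\dot w\mathrm B=\dot w_0(\mathrm B\dot w_0^{-1}\dot w\mathrm B)$, and the usual Bruhat decomposition $G=\coprod_w \mathrm B\dot w\mathrm B$ transports under left multiplication by $\dot w_0$.

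Next we pass from $\mathrm B$ to $\mathrm P$ on the right. Since $\mathrm P=\mathrm B\,N_{\mathrm P}(\mathrm T)\,\mathrm B$, or more precisely $\mathrm P=\coprod_{v\in W(\mathrm T,\mathrm P)}\mathrm B\dot v\mathrm B$ by Bruhat in the Levi $L$ together with $\mathrm P=L\ltimes N$, every $\mathrm B^{op}$–$\mathrm P$ double coset has the form $\mathrm B^{op}\dot w\mathrm P$. This gives a surjection $W(\mathrm T,G)\to \mathrm B^{op}\backslash G/\mathrm P$, and it factors through $W(\mathrm T,G)/W(\mathrm T,\mathrm P)$ because $\dot v\in \mathrm P$ for $v\in W(\mathrm T,\mathrm P)=W(\mathrm T,L)$.

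The main point is injectivity: if $\mathrm B^{op}\dot w\mathrm P=\mathrm B^{op}\dot w'\mathrm P$, then $w'\in wW(\mathrm T,\mathrm P)$. We use the standard fact that $\mathrm B\dot x\mathrm B\cdot \mathrm B\dot v\mathrm B\subseteq\bigcup_{u\le v}\mathrm B\dot x\dot u\mathrm B$, in a form of the Tits system axiom, iterated over simple reflections of $W(\mathrm T,L)$. This gives $\mathrm B\dot x\mathrm P=\bigcup_{v\in W_L}\mathrm B\dot x\dot v\mathrm B$, and disjointness of Bruhat cells then yields $x'\in xW_L$. Translating by $\dot w_0$ as above converts this to the $\mathrm B^{op}$ version. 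Alternatively, and more concretely for $\mathrm{GL}_{2\mathfrak n}$, one can argue linear-algebraically: $G/\mathrm P$ is the Grassmannian of $\mathfrak n$-planes, and $\mathrm B^{op}$-orbits on it are Schubert cells indexed by the position data $\{\dim(U\cap V^{op}_i)\}$ for the opposite flag $V^{op}_i=\mathrm{span}\{v_{2\mathfrak n-i+1},\dots,v_{2\mathfrak n}\}$. These data correspond exactly to the $\mathfrak n$-subsets of $\{1,\dots,2\mathfrak n\}$, that is, to $S_{2\mathfrak n}/(S_{\mathfrak n}\times S_{\mathfrak n})=W(\mathrm T,G)/W(\mathrm T,\mathrm P)$. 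Each such subset is realized by the coordinate plane $\dot wW$, and the invariants separate orbits by a Gaussian elimination (row-echelon) argument. I would present the Tits-system argument and remark on the Grassmannian picture. The main obstacle, mild here, is ensuring the bijection is natural, meaning independent of lifts $\dot w$; this holds because $\mathrm T\subset \mathrm B^{op}\cap \mathrm P$.
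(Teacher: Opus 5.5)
Your argument is correct and is the standard one. The paper gives no proof of this lemma, calling it well-known, and later invokes it simply as the parabolic Bruhat decomposition $G(\mathrm k)=\bigcup_{w\in W(\mathrm T,G)/W(\mathrm T,\mathrm P)}\mathrm B^{op}(\mathrm k)w\mathrm P(\mathrm k)$. Your translation by $\dot w_0$ combined with the Tits-system argument (or the Schubert-cell argument on the Grassmannian) establishes exactly this, and your observation that $\mathrm T\subset \mathrm B^{op}\cap\mathrm P$ is the right justification for independence of the choice of lifts.
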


\begin{cor}\label{cor-interborel}
    Fix any $g\in G(\mathrm k)$. Then $q_\mathrm P(g\mathrm{B}g^{-1}\cap \mathrm P)$ is a Borel subgroup of $L$.
\end{cor}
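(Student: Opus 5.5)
We prove Corollary \ref{cor-interborel} by reducing to the case of a well-chosen representative of the double coset of $g$, using Lemma \ref{lem-BPdoublecoset}. The first observation is that the property "$q_\mathrm P(g\mathrm B g^{-1}\cap \mathrm P)$ is a Borel subgroup of $L$" depends only on the double coset $\mathrm P g \mathrm B$. Indeed, replacing $g$ by $gb$ with $b\in\mathrm B$ leaves $g\mathrm Bg^{-1}$ unchanged. Replacing $g$ by $pg$ with $p\in\mathrm P$ conjugates $g\mathrm Bg^{-1}\cap\mathrm P$ by $p$. Since $q_\mathrm P$ is a homomorphism, its image is then conjugated by $q_\mathrm P(p)\in L$, and a conjugate of a Borel subgroup of $L$ is again a Borel subgroup.

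Next we identify a convenient representative. Inverting Lemma \ref{lem-BPdoublecoset} gives $\mathrm P\backslash G/\mathrm B$, and $\mathrm B$ and $\mathrm B^{op}$ are conjugate by the longest element $w_0\in N_G(\mathrm T)$. It follows that every double coset $\mathrm P g\mathrm B$ contains an element $\dot w\in N_G(\mathrm T)$ representing some $w\in W(\mathrm T,G)$. This is the Bruhat decomposition $G=\coprod_w \mathrm P\dot w\mathrm B$. We may therefore assume $g=\dot w$, so that $B':=\dot w\mathrm B\dot w^{-1}$ is a Borel subgroup containing $\mathrm T$.

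For this choice we compute $B'\cap\mathrm P$ using root subgroups. Let $\Phi$ be the roots of $(G,\mathrm T)$ and $\Phi^+_w=w(\Phi^+)$ the positive system of $B'$. Write $\Phi_L$ for the roots of $L$ and $\Phi_N$ for those of $N$. Both $B'$ and $\mathrm P$ contain $\mathrm T$, so their intersection is connected and is generated by $\mathrm T$ together with the root subgroups $U_\alpha$ for $\alpha\in\Phi^+_w\cap(\Phi_L\cup\Phi_N)$. This is the standard fact that closed subgroups normalized by $\mathrm T$ and generated by root groups intersect in this way; one can also check it directly on matrix entries, since everything here is explicit in $\mathrm{GL}_{2\mathfrak n}$.

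The quotient map $q_\mathrm P:\mathrm P\to L$ is the identity on $\mathrm T$ and on $U_\alpha$ for $\alpha\in\Phi_L$, and it kills $U_\alpha$ for $\alpha\in\Phi_N$. Hence
\[
q_\mathrm P(B'\cap\mathrm P)=\bigl\langle \mathrm T,\ U_\alpha : \alpha\in\Phi^+_w\cap\Phi_L\bigr\rangle .
\]
The set $\Phi^+_w\cap\Phi_L$ is a positive system of the root system $\Phi_L$, because it is cut out by a linear functional that is regular on $\Phi$, hence regular on $\Phi_L$. The group generated by $\mathrm T$ and the root groups of a positive system of $L$ is a Borel subgroup of $L$, which finishes the proof.

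The only step needing any care is the description of $B'\cap\mathrm P$ via root subgroups. If one prefers to avoid it, there is a more linear-algebraic argument. $B'$ is the stabilizer of a full flag $F_\bullet$, and elements of $B'\cap\mathrm P$ stabilize both the induced flag $(F_i\cap W)$ on $W$ and the flag $\bigl((F_i+W)/W\bigr)$ on $V/W$; after removing repetitions, these are full flags. So the image lies in a Borel subgroup of $L\cong\mathrm{GL}(W)\times\mathrm{GL}(V/W)$. Equality can then be checked for $g=\dot w$ by a dimension count, which again comes down to the root-subgroup description.
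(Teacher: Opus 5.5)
Your proposal is correct and follows essentially the paper's route. Like the paper, you use the double coset decomposition of Lemma \ref{lem-BPdoublecoset} (inverted and conjugated by the longest element) to reduce to a representative $\dot w\in N_G(\mathrm T)$, and then compute $q_\mathrm P(\dot w\mathrm B\dot w^{-1}\cap\mathrm P)$ explicitly via root subgroups; this spells out the ``straightforward computation'' the paper leaves implicit.
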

\begin{proof}
    This follows from Lemma \ref{lem-BPdoublecoset} and a straightforward computation.
\end{proof}
\begin{rmk}\label{rm-classifydoublecoset}
     The double coset space $\mathrm B^{op}\backslash G/\mathrm S$ naturally classifies the equivalence classes of  the following triples $(F,F',\mathrm i)$:
    \begin{itemize}
        \item[(i)] A maximal flag $F:0\subset \mathcal V_1\subset \ldots \subset \mathcal V_{2\mathfrak{n}}=V$;
        \item[(ii)] A flag $F':0\subset \mathcal W\subset V$ with $\dim W=\mathfrak n$;
        \item[(iii)] An isomorphism $\mathrm i:\mathcal W\xrightarrow[]{\sim}V/\mathcal W$. 
    \end{itemize}
    (Here, the data corresponding to $\bar e$  is explicitly given by Definition \ref{def-bijBS} with respect to the identity element in $W(\mathrm T,G)$, where $\bar e$ is the double coset $\mathrm B^{op}\mathrm S$.)
\end{rmk}

\begin{defn}\label{def-torusU}
    Let $U$ be the subtorus of $\mathrm T$ such that:
\begin{itemize}
    \item It is isomorphic to $\mathbb G_m^{\mathfrak n}$ (the $\mathfrak n$-fold copy of $\mathbb G_m$), of which   we denote by $s=(s_1,\ldots,s_{\mathfrak n})$ a typical element;
    \item %$s\cdot v_i=s_{i} v_i$, if $i\leq \mathfrak n$; and $s\cdot v_j=s_{j-\mathfrak n} v_j$ if $\mathfrak n+1\leq j\leq 2\mathfrak n$.
    The action on the basis vectors is given by:
    $$s \cdot v_i = s_i v_i \text{ for } 1 \leq i \leq \mathfrak{n},$$
    $$s \cdot v_j = s_{j-n} v_j \text{ for } \mathfrak{n}+1 \leq j \leq 2\mathfrak{n}.$$
\end{itemize}
\end{defn}

Note that $U$ is a maximal torus of $L\cap \mathrm S$. And we may embed $W(U,L\cap \mathrm S)$ into $W(\mathrm T,G)$ in a natural way. (We may identify $W(\mathrm T,G)$ with the symmetric group $S_{2\mathfrak n}$. Under this identification, the group $W(U, L \cap \mathrm{S})$ corresponds to the subgroup $S_{\mathfrak{n}}$ embedded diagonally in $S_{2\mathfrak{n}}$.) %then $W(U,L\cap \mathrm S)$ is identified with the group $S_{\mathfrak n}$ embedded in a diagonal way

\begin{defn}\label{def-bijBS}
    For an element $w\in W(\mathrm T,G)/ W(U,L\cap \mathrm S)$, we construct a triple $\mathcal T_w=(F,F',\mathrm i)$ as follows:
    \begin{itemize}
        \item $F$ is the maximal flag  stabilized by $\mathrm B^{op}$;
        \item $F'$ is the flag  $0\subset \wt w \cdot W\subset V$;
        \item $\mathrm i$ is the isomorphism given by $\wt w\cdot v_i \mapsto \wt w\cdot v_{i+\mathfrak n}$ for $1\leq i\leq \mathfrak n$. 
    \end{itemize}Here, we choose a representative $\wt w\in G(\mathrm k)$ of $w$; we abuse the notation by denoting the image of $\wt  w\cdot v_{i+\mathfrak n}$ in $V/(\wt w\cdot W)$ again by $\wt w\cdot v_{i+\mathfrak n}$.
    (We easily verify that $\mathcal T_{ws}=\mathcal T_{w}$ for $w\in W(\mathrm T,G)$ and $s\in W(U,L\cap \mathrm S)$.)
\end{defn}

\begin{lem}\label{lem-BSdoublecoset}
 The assignment $w\mapsto \mathcal T_w$ exhibits a bijection between $W(\mathrm T,G)/W(U,L\cap \mathrm S)$ and  $\mathrm B^{op}(\mathrm k)\backslash G(\mathrm k)/\mathrm S(\mathrm k)$. 
\end{lem}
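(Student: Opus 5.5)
We will prove Lemma \ref{lem-BSdoublecoset} by translating double cosets into the triples of Remark \ref{rm-classifydoublecoset} and counting orbits of $\mathrm B^{op}$ on pairs $(\mathcal W,\mathrm i)$. First, $G/\mathrm S$ is identified with the set of pairs $(\mathcal W,\mathrm i)$, where $\mathcal W\subset V$ has dimension $\mathfrak n$ and $\mathrm i:\mathcal W\xrightarrow{\sim}V/\mathcal W$. Indeed, $G$ acts transitively on such pairs: choose a basis $w_1,\dots,w_{\mathfrak n}$ of $\mathcal W$ and lifts $w'_j$ of $\mathrm i(w_j)$; these form a basis of $V$. The stabilizer of $(W,i_W)$ is $\mathrm S$ by definition. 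Hence $\mathrm B^{op}\backslash G/\mathrm S$ is the set of $\mathrm B^{op}$-orbits on such pairs, which is the statement of Remark \ref{rm-classifydoublecoset}. The map $w\mapsto\mathcal T_w$ is well defined by the remark after Definition \ref{def-bijBS}: representatives of $w$ differ by $\mathrm T$, which stabilizes the flag, and right multiplication by $W(U,L\cap\mathrm S)$ permutes the pairs $(v_i,v_{i+\mathfrak n})$ simultaneously.

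\textbf{Surjectivity.} Given $(\mathcal W,\mathrm i)$, we show that $\mathrm B^{op}$ moves it to a \emph{coordinate} pair. By this we mean that $\mathcal W$ is spanned by $\mathfrak n$ basis vectors $v_{a_1},\dots,v_{a_{\mathfrak n}}$, and $\mathrm i$ sends $v_{a_k}$ to the class of $v_{b_k}$, where $\{a_k\}\sqcup\{b_k\}=\{1,\dots,2\mathfrak n\}$. Such a pair is exactly $\mathcal T_w$ for the permutation $w$ with $w(k)=a_k$ and $w(k+\mathfrak n)=b_k$, up to rescaling by $\mathrm T$. By Lemma \ref{lem-BPdoublecoset}, $\mathrm B^{op}$ already moves $\mathcal W$ to a coordinate subspace $\mathrm{span}\{v_a:a\in A\}$, with $|A|=\mathfrak n$. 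The stabilizer of this subspace in $\mathrm B^{op}$ then acts on the isomorphisms $\mathrm i$. Its image in $\mathrm{GL}(\mathcal W)\times\mathrm{GL}(V/\mathcal W)$ is a product $B_1\times B_2$ of Borel subgroups (Corollary \ref{cor-interborel}). This image acts on $\mathrm i$ by $\mathrm i\mapsto b_2\,\mathrm i\,b_1^{-1}$. In the bases indexed by $A$ and its complement $A^c$, the classical Bruhat decomposition for $\mathfrak n\times\mathfrak n$ invertible matrices under left and right triangular actions gives $B_2\backslash \mathrm{Iso}/B_1\cong S_{\mathfrak n}$. Each orbit contains a monomial matrix, which becomes a permutation matrix after a torus rescaling. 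This produces the coordinate form above.

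\textbf{Injectivity.} This is the main obstacle. Suppose $\mathcal T_w$ and $\mathcal T_{w'}$ lie in the same $\mathrm B^{op}$-orbit. By Lemma \ref{lem-BPdoublecoset} the subspaces $\widetilde w W$ and $\widetilde{w'}W$ are in the same $\mathrm B^{op}$-orbit, and two coordinate subspaces in one orbit are equal. So $A=A'$. An element $b$ carrying one pair to the other stabilizes this coordinate subspace, so the two isomorphisms are related by $\mathrm i'=b_2\,\mathrm i\,b_1^{-1}$ with $(b_1,b_2)$ in the image $B_1\times B_2$. The uniqueness part of the Bruhat decomposition then forces the two permutation matrices to coincide. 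Hence the bijections $\{a_k\}\to\{b_k\}$ agree, and $w'\in wW(U,L\cap\mathrm S)$.

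The point to check carefully is that the stabilizer of $\mathrm{span}\{v_a:a\in A\}$ in $\mathrm B^{op}$ surjects onto a product of Borel subgroups $B_1\times B_2$ of $\mathrm{GL}(\mathcal W)\times\mathrm{GL}(V/\mathcal W)$, namely the lower-triangular ones in the orderings induced on $A$ and $A^c$. We will verify this directly: $\mathrm B^{op}$ is lower triangular, and the stabilizer contains $\mathrm T$ together with the root groups $U_{ij}$, $i>j$, for $i,j$ both in $A$ or both in $A^c$. This is the computation behind Corollary \ref{cor-interborel}. Once it is in place, both parts reduce to the Bruhat decomposition for $\mathrm{GL}_{\mathfrak n}$.
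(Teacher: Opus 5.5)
Your proposal is correct and follows essentially the same route as the paper. Both arguments use the Bruhat decomposition for $\mathrm B^{op}\backslash G/\mathrm P$, then the fact that the relevant stabilizer maps onto a product $B_1\times B_2$ of Borel subgroups of the Levi (Corollary \ref{cor-interborel}), and then the Bruhat decomposition for $\mathrm{GL}_{\mathfrak n}$; you phrase this in terms of pairs $(\mathcal W,\mathrm i)$ rather than group elements. The only real difference is in injectivity: the paper shows that the elements of $\mathrm B^{op}w\mathrm S$ normalizing $\mathrm T$ lie in $wB_w(L\cap\mathrm S)$ and computes $wB_w(L\cap\mathrm S)\cap N_G(\mathrm T)=w\mathrm T N_{L\cap\mathrm S}(U)$, whereas you apply the uniqueness parts of the two Bruhat decompositions directly, which reaches the same conclusion more cleanly.
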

\begin{proof}
    By Bruhat decomposition, we have
    $$
    G(\mathrm k)=\bigcup_{w\in W(\mathrm T,G)/W(\mathrm T,\mathrm {P})}\mathrm B^{op}(\mathrm k) w \mathrm P(\mathrm k).
    $$

Fix $w\in W(\mathrm T,G)$ in the rest of this proof. By the above decomposition,
a typical element in $\mathrm B^{op}(\mathrm k) w \mathrm P(\mathrm k)$ can be written as $bwln$ for $b\in \mathrm B$, $l\in L$ and $n\in N$. 
We note that $L\cong \mathrm {GL}(W)\times \mathrm {GL}(W')$.
Also, a typical elements in $\mathrm B^{op}(\mathrm k) w \mathrm P(\mathrm k)$ can be written as $b'wl_Ws$, for $b'\in \mathrm B^{op}$, $l_W\in \mathrm {GL}(W)$ and $s\in \mathrm S$.

Let $B_w:=q_\mathrm P(w^{-1}\mathrm B^{op}w\cap \mathrm P)$ be the Borel subgroup of $L$ introduced in Corollary \ref{cor-interborel}. We have the decomposition $B_w=B_1\times B_2$, for $B_1$ ($B_2$, resp.) a Borel subgroup of $\mathrm {GL}(W)$ ($\mathrm {GL}(W')$, resp.). Moreover, we see that $T_1$ (resp., $T_2$) is a maximal torus of $B_1$ (resp., $B_2$), where $T_1$ (resp., $T_2$) is the subtorus of $B_1$ (resp., $B_2$) possessing $\{v_i\}_{1\leq i\leq n}$ (resp., $\{v_i\}_{\mathfrak n+1\leq i\leq 2\mathfrak n}$) as eigenvectors. Note that we have $\mathrm T=T_1\times T_2$ and $\mathrm s_{W,W'}(T_1)=T_2$.

Suppose that $l_1\in B_1l_2\mathrm s_{W,W'}^{-1}(B_2)$ for $l_1,l_2\in \mathrm {GL}(W)$.
%(We fix $l_1$ and $l_2$ in this paragraph.)
We see that $\mathrm B^{op}(k)wl_1\mathrm S(\mathrm k)=\mathrm B^{op}(k)wl_2\mathrm S(\mathrm k)$ by the former paragraph. Hence by Bruhat decomposition of $\mathrm{GL}(W)$ we have $\mathrm B^{op}(\mathrm k)wl_1\mathrm S(\mathrm k)=\mathrm B^{op}(\mathrm k)ww'\mathrm S(\mathrm k)$ for some $w'\in W(T_1,\mathrm {GL}(W))$. 
This also shows that the assignment we formulate yields a surjection from $W(\mathrm T,G)/W(U,L)$ onto $\mathrm B^{op}(\mathrm k)\backslash G(\mathrm k)/\mathrm S(\mathrm k)$.

Before preceding, we introduce some notation. Let $\mathrm P^{op}$ be the parabolic subgroup of $G$ stabilizes the flag $0\subset W'\subset V$. Let $N^{op}$ be the unipotent radical of $\mathrm P^{op}$.

Suppose that we have $\mathrm B^{op}(\mathrm k) w \mathrm S(\mathrm k)=\mathrm B^{op}(\mathrm k) w_2 \mathrm S(\mathrm k)$, for some $w_2\in W(\mathrm T,G)$. We fix $w_2$ in the rest of this proof. It remains to show $w_2\in wW(U,L\cap\mathrm S)$.

%By Bruhat decomposition, each element $g\in \mathrm B^{op}(\mathrm k) w \mathrm S(\mathrm k)$ can be uniquely written as $g=u_{\mathrm B^{op}}mn$ for $m\in w B_w(L\cap \mathrm S)$, $n\in N$ and $u_{\mathrm B^{op}}\in \mathrm B^{op}\cap w N^{op} w^{-1}$. 
  Let $$K:=\{x\in \mathrm B^{op}(\mathrm k) w\mathrm S(\mathrm k):\text{$x$ stabilizes $\mathrm T$ under conjugation}\}.$$  
Fix $k\in K$ in this paragraph. Then we have $k=u_km_k n_k$, for some $m_k\in w B_w(L\cap \mathrm S)$, $n_k\in N$ and $u_k\in \mathrm B^{op}\cap w N^{op} w^{-1}$. Note that such a decomposition is unique. By Bruhat decomposition, each element $g\in \mathrm B^{op}(\mathrm k)w \mathrm P$ can be uniquely written as $g=u_g m_g n_g$ for some $m_g\in L$, $n_g\in N$ and $u_g\in \mathrm B^{op}\cap w N^{op} w^{-1}$.
By the definition of $K$, for each $t\in \mathrm T$, there exists $t'\in \mathrm T$ such that $tk=kt'$, indicating $tu_kt^{-1} tm_k n_k=u_k m_k t' (t')^{-1}n_kt'$. Note that $tm_k,m_kt'\in L$.  We see (by the uniqueness of the decomposition) that $tu_kt^{-1}=u_k$, $tm_k=m_kt'$ and $(t')^{-1}n_kt'=n_k$. 
Hence $u_k$ is normalized by $\mathrm T$, manifesting $u_k=e$. A similar argument shows that $n_k=e$.  Therefore, we have $K\subset wB_w(L\cap \mathrm S)$.

Note that $N_{L\cap S}(U)=L\cap\mathrm S\cap N_G(\mathrm T)$. Using the decomposition $L\cong \mathrm {GL}(W)\times \mathrm {GL}(W')$, we easily verify that $\left(wB_w(L\cap \mathrm S)\right)\cap N_G(\mathrm T)=w\mathrm TN_{L\cap \mathrm S}(U) $.
 Consequently, the image in $W(\mathrm T,G)$ of the set $K$  is precisely the coset $wW(U,L\cap \mathrm S)$, yielding $w_2\in wW(U,L\cap\mathrm S)$. %This shows that our assignment induces an injection from  $W(T,G)/W(U,L\cap \mathrm S)$ to $\mathrm B^{op}(\mathrm k)\backslash G(\mathrm k)/\mathrm S(\mathrm k)$.

This completes the proof.
\end{proof}

\begin{rmk}
    A immediate consequence of Lemma \ref{lem-BSdoublecoset} is that $G/\mathrm S$ is a spherical variety with respect to left-$G$-action.
\end{rmk}

Let $\mathrm c: N\to \mathbb G_a$ denote the morphism sending $n\in N$ (viewed as an endomorphism of $V$) to the trace of the composite $W'\xrightarrow[]{n-\mathrm{id}_{V}}W\xrightarrow[]{\sim}W'$, where the latter isomorphic arrow is $i_{W,W'}$.
Note that $L\cap \mathrm S$ normalize $N$ and fixes the morphism $\mathrm c$. We see that $\mathrm c$ extends to a morphism $\mathfrak c :\mathrm S\to \mathbb G_a$ such that the restriction to $L\cap \mathrm S$ is trivial.

Let $\mathrm {St}(U)$ denote
$$
\{w\in W(\mathrm T,G):w\cdot U=U\}.
$$

Notice that $\mathrm {St}(U)$ is isomorphic to the Weyl group of $\mathrm {Sp}_{2\mathfrak n}$. Informally, we have $W(U,L\cap \mathrm S)\ltimes (S_2)^{\mathfrak n}\cong \mathrm {St}(U)$, where $S_2$ denotes the symmetric group of rank $2$.

\begin{prop}\label{pro-bijec-phi}
  We have a  quotient map $ \mathrm B^{op}(\mathrm k)\backslash G(\mathrm k)/\mathrm S(\mathrm k)\cong W(\mathrm T,G)/W(U,L\cap \mathrm S)\twoheadrightarrow W(\mathrm T,G)/\mathrm {St}(U) $   restricting to a bijection between 
  $$\Phi:=\{v\in \mathrm B^{op}(\mathrm k)\backslash G(\mathrm k)/\mathrm S(\mathrm k) : \text{ $v^{-1}_0\mathrm B^{op}v_0\cap N$ has trivial image under $\mathrm c$} \}$$
  and $W(\mathrm T,G)/\mathrm {St}(U)$. Moreover, for $w\in W(\mathrm T,G)$, the group $w Uw^{-1}$ is a maximal torus of $\mathrm B^{op}\cap w \mathrm S w^{-1}$; for  $v\in \Phi$,  the centralizer of $v_0 Uv_0^{-1}$ in $\mathrm B^{op}\cap v_0 \mathrm S v_0^{-1}$ is $v_0 Uv_0^{-1}$ itself.
  (In this proposition, for $v\in \mathrm B^{op}(\mathrm k)\backslash G(\mathrm k)/\mathrm S(\mathrm k)$, we denote some/any representative of $v$ by $v_0$.)
\end{prop}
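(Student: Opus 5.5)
We will work throughout with permutation representatives: by Lemma \ref{lem-BSdoublecoset} every double coset $v$ has a representative $v_0=\wt w$ with $w\in W(\mathrm T,G)\cong S_{2\mathfrak n}$, well defined modulo right multiplication by $W(U,L\cap\mathrm S)\cong S_{\mathfrak n}$ (diagonal). Since $W(U,L\cap\mathrm S)\subset \mathrm{St}(U)$, the quotient map $W(\mathrm T,G)/W(U,L\cap\mathrm S)\twoheadrightarrow W(\mathrm T,G)/\mathrm{St}(U)$ is well defined. The proof is then a combinatorial computation of the condition defining $\Phi$ in terms of $w$.

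\textbf{Step 1: the condition on $\mathrm c$.} The group $w^{-1}\mathrm B^{op}w\cap N$ is $\mathrm T$-stable, so it is a product of root subgroups $U_{ij}$ with $i\le\mathfrak n<j$ (entry in row $i$, column $j$) such that $w^{-1}\mathrm B^{op}w$ contains $U_{ij}$, i.e.\ $w(i)>w(j)$. The restriction of $\mathrm c$ to $N$ is the linear form $n\mapsto\sum_{i=1}^{\mathfrak n} n_{i,i+\mathfrak n}$. Hence $\mathrm c$ is trivial on $w^{-1}\mathrm B^{op}w\cap N$ if and only if $w(i)<w(i+\mathfrak n)$ for every $1\le i\le\mathfrak n$.

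\textbf{Step 2: the bijection.} The group $\mathrm{St}(U)$ is generated by the diagonal $S_{\mathfrak n}$ together with the transpositions $(i,\,i+\mathfrak n)$. The $\mathrm{St}(U)$-coset of $w$ is therefore determined by the unordered pairs $\{w(i),w(i+\mathfrak n)\}$, i.e.\ by a perfect matching of $\{1,\dots,2\mathfrak n\}$. Within a fixed $\mathrm{St}(U)$-coset, modulo $W(U,L\cap\mathrm S)$, exactly one choice of orientation of each pair satisfies $w(i)<w(i+\mathfrak n)$ for all $i$. This gives the bijection $\Phi\to W(\mathrm T,G)/\mathrm{St}(U)$. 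One should also check that the condition in Step 1 is independent of the choice of representative: replacing $\wt w$ by another element of the same double coset conjugates $v_0^{-1}\mathrm B^{op}v_0\cap N$ by an element of $\mathrm S$, and $\mathfrak c$ is $\mathrm S$-conjugation invariant up to the character condition. I expect this check to be the only subtle point; I would handle it by fixing permutation representatives throughout, which is legitimate since the paper's statement says ``some/any''.

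\textbf{Step 3: tori and centralizers.} Conjugating by $w$, it suffices to study $w^{-1}\mathrm B^{op}w\cap\mathrm S=(w^{-1}\mathrm B^{op}w\cap L\cap\mathrm S)\ltimes(w^{-1}\mathrm B^{op}w\cap N)$; this decomposition holds because both factors are $\mathrm T$-stable. The first factor is $(B_1\cap\mathrm s_{W,W'}^{-1}(B_2))$ embedded diagonally, where $B_1,B_2$ are the Borel subgroups of Corollary \ref{cor-interborel}. It is a connected solvable group containing the diagonal torus $U$, and $U$ is maximal in it since the rank of $L\cap\mathrm S$ is $\mathfrak n$. The unipotent part adds no torus, so $U$ is a maximal torus of $w^{-1}\mathrm B^{op}w\cap\mathrm S$.

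For the centralizer, write $\epsilon_i$ for the character of $U$ given by $s\mapsto s_i$. The $U$-weights on $\mathrm{Lie}$ of this group are as follows: on the $L\cap\mathrm S$ part they are $\epsilon_i-\epsilon_k$ with $i\ne k$; on $U_{i,j+\mathfrak n}$ they are $\epsilon_i-\epsilon_j$, which is zero exactly when $j=i$. So the centralizer of $U$ is $U$ times the product of the $U_{i,i+\mathfrak n}$ lying in $w^{-1}\mathrm B^{op}w$. By Step 1, for $w$ representing an element of $\Phi$ none of these lie in $w^{-1}\mathrm B^{op}w$, so the centralizer is $U$ itself. The centralizer of a torus in a connected solvable group is connected, so computing it at the Lie algebra level suffices.
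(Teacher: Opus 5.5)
Your proof is correct and is essentially the paper's argument. It uses the same criterion $w(i)<w(i+\mathfrak n)$ via the root groups $U_{i,i+\mathfrak n}$ (the paper's $\mathbb G_i$), the same rank argument for $U$, and the same zero-weight observation for the centralizer. The differences are minor: your matching/orientation count spells out the bijection the paper asserts with ``hence'', and the paper computes $C_{\mathrm S}(U)=U\times\prod_i\mathbb G_i$ directly at the group level and intersects with $w^{-1}\mathrm B^{op}w$, whereas you use Lie algebras and connectedness.

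Two small fixes:
\begin{itemize}
\item $L\cap\mathrm S$ (hence in general your first factor) is not $\mathrm T$-stable. Derive the semidirect decomposition instead from $w^{-1}\mathrm B^{op}w\cap\mathrm P=(w^{-1}\mathrm B^{op}w\cap L)(w^{-1}\mathrm B^{op}w\cap N)$ together with uniqueness of the Levi decomposition in $\mathrm P$.
\item Independence of the $\Phi$-condition from the representative holds exactly, not ``up to'' anything, since $\mathrm S$ normalizes $N$ and $\mathfrak c$ is a homomorphism to the commutative group $\mathbb G_a$. Only the final centralizer clause genuinely needs a good choice of representative, such as your permutation ones.
\end{itemize}
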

\begin{proof}The displayed quotient map is induced by Lemma \ref{lem-BSdoublecoset}.
   The rest is due to elementary linear algebra, as we now elaborate.

   We may identify $W(\mathrm T,G)$ with the permutation group $S_{2\mathfrak n}$ acting on $\mathbb B$. 

   Fix $w\in W(\mathrm T,G)$ in this paragraph. Under the identification in the former paragraph, we may write $w\cdot v_{i}=v_{w(i)}$ for $1\leq i\leq 2\mathfrak n$.
   We see that $w^{-1}\mathrm B^{op}w$ is the group stabilizes the flag $w^{-1} F_{\mathrm B^{op}}$, where $F_{\mathrm B^{op}}$ is the maximal flag stabilized by $\mathrm B^{op}$. For $1\leq i \leq \mathfrak n$, we define $\mathbb G_i$ to be the algebraic subgroup of $G$, characterized by the following:\begin{itemize}
       \item[(1)] It fixes $v_j$ for  $j\neq i+\mathfrak n$;
       \item[(2)] It maps $v_{i+\mathfrak n}$ to $v_{i+\mathfrak n}+xv_i$, where $x$ varies in $\mathbb G_a$.\;
       \item[(3)] The assignment $\mathbb G_i\to \mathbb G_a$ that maps the variable $x$ in (2) to $x\in \mathbb G_a$ is an isomorphism. 
   \end{itemize}   It is straightforward to see:\begin{itemize}
       \item  $w^{-1}\mathrm B^{op}w\cap N$ has trivial image under $\mathrm c$ if and only if $w\mathbb G_iw^{-1}$ is not contained in $\mathrm B^{op}$ for every $1\leq i\leq \mathfrak n$.
   \end{itemize}
   Or equivalently:\begin{itemize}
       \item  $w^{-1}\mathrm B^{op}w\cap N$ has trivial image under $\mathrm c$ if and only if $w(i)\leq w(i+\mathfrak n)$ for every $1\leq i\leq \mathfrak n$.
   \end{itemize}

  Hence,  the quotient map that we construct restricts to a bijection between $\Phi$ and $W(\mathrm T,G)/\mathrm {St}(U)$. Fix $w\in W(\mathrm T,G)$ in the rest of this paragraph. Since $w Uw^{-1}$ is a maximal torus of $w \mathrm S w^{-1}$, it is likewise a maximal torus of $\mathrm B^{op}\cap w \mathrm S w^{-1}$. The multiplication exhibits an isomorphism from  $w U w^{-1}\times \prod_{1\leq i\leq \mathfrak n} w\mathbb G_iw^{-1}$ to the centralizer $C_{w\mathrm Sw^{-1}}(w U w^{-1})$. If we further assume that $w\in \Phi$, we see immediately from this isomorphism that $C_{\mathrm B^{op}\cap w\mathrm Sw^{-1}}(w U w^{-1})=w U w^{-1}$. These prove the last sentence of this proposition by obvious symmetry.

   This completes the proof.
\end{proof}

\begin{rmk}\label{rm-phi-selfcen}For $w\in \mathrm B^{op}(\mathrm k)\backslash G(\mathrm k)/\mathrm S(\mathrm k)$, let $w_0\in W(\mathrm T,G)/W(U,L\cap \mathrm S)$ be the image of $w$ under the bijective map $\mathrm B^{op}(\mathrm k)\backslash G(\mathrm k)/\mathrm S(\mathrm k)\xrightarrow[]{\sim} W(\mathrm T,G)/W(U,L\cap \mathrm S)$ introduced in Proposition \ref{pro-bijec-phi}. Note that $U$ is a maximal torus of $w_0^{-1}\mathrm B^{op}w_0\cap \mathrm S$.
    By the proof of the above proposition, we may rephrase $\Phi$ as 
    $$
  \{  w\in \mathrm B^{op}(\mathrm k)\backslash G(\mathrm k)/\mathrm S(\mathrm k):C_{w_0^{-1}\mathrm B^{op}w_0\cap \mathrm S}(U)=U\} ,
    $$
    where  $C_{w_0^{-1}\mathrm B^{op}w_0\cap \mathrm S}(U)$ denotes the corresponding centralizer.
    Unraveling the definition, we may also rephrase $\Phi$ as
    $$
     \{  w\in \mathrm B^{op}(\mathrm k)\backslash G(\mathrm k)/\mathrm S(\mathrm k):\text{ $w_0^{-1}\mathrm B^{op}w_0\cap \mathrm S$ has trivial image under $\mathfrak c$}\}. 
    $$
    Fix $w\in \Phi$, we also verify that $C_{w_0^{-1}\mathrm B^{op}w_0\cap \mathrm S}(s)=U$ for a general point $s\in U(\mathrm k)$. Consequently, we have
    $$
    \Phi=\{w\in \mathrm B^{op}(\mathrm k)\backslash G(\mathrm k)/\mathrm S(\mathrm k):\text{ a general element in $w_0^{-1}\mathrm B^{op}w_0\cap \mathrm S$ is semisimple}\}.
    $$
\end{rmk}

\begin{rmk}\label{rm-ker-connected-unipo}
Fix $w\in \mathrm B^{op}(\mathrm k)\backslash G(\mathrm k)/\mathrm S(\mathrm k)$.
Let $$\mathrm{c}_w:w^{-1}\mathrm B^{op}w\cap N\to \mathbb G_a$$
be the restriction of $\mathrm c$.
Then the kernel of $\mathrm c_w$ is a connected unipotent group.
This claim follows from the classification of double coset space Lemma \ref{lem-BSdoublecoset} and a straightforward computation.
\end{rmk}

\subsection{Frobenius endomorphisms on $G$}\label{subsec-intro-Fro}
We introduce another Frobenius endomorphism $F_{\mathrm U}$ on $G$, which arises from an outer form of $G$ defined over $\mathbb F_q$. 

We need some notations. Set
\[\mathrm {E_{2\mathfrak{n}}}:= \begin{pmatrix} &1_{\mathfrak n}\\-1_{\mathfrak n}&\end{pmatrix},\]
where $1_{\mathfrak{n}}$ denotes the identity matrix of rank $\mathfrak{n}$. 

Using the basis $\mathbb B=\{v_i\}_{1\leq i \leq 2\mathfrak{n}}$ of $V$, we identify $G$ with $\mathrm{GL}_{2\mathfrak n}$ over $\mathrm k$, which can be identified with the variety of invertible matrices of rank $2\mathfrak{n}$. In the remainder of this subsection, we fix this natural identification. 

Under this identification, the Frobenius $F_{\mathrm {GL}}$ sends $(a_{ij})_{1\leq i,j \leq 2\mathfrak{n}}$ to $(a^q_{ij})_{1\leq i, j\leq 2\mathfrak{n}}$. Let $\mathfrak{I}:G\to G$ be the involution given by
$$
g\mapsto \mathrm {E}_{2\mathfrak{n}}(^tg)^{-1} \mathrm{E}_{2\mathfrak{n}}^{-1},
$$
where we identify $g$ with the matrix representing it, and $^tg$ denotes the transpose of $g$.

\begin{defn}\label{def-Fro-Unitary}Let the notation be as above.
    We define $F_{\mathrm U}=\mathfrak{I}\circ F_{\mathrm {GL}}$ to be the composition.
\end{defn}

 \begin{rmk}
     The algebraic group $G$ endowed with the Frobenius $F_{\mathrm U}$ is isomorphic to the  unitary group of rank $2\mathfrak{n}$. Note that our construction does not exclude the case that the characteristic of $\mathbb F_q$ is $2$.
 \end{rmk}

\begin{rmk}
    It is straightforward to see that the algebraic subgroups $\mathrm {P},~N,~\mathrm{S},~L,~\mathrm T$ of $G$ defined in Definition \ref{def-various} and the group $U$ defined in Definition \ref{def-torusU} are both $F_{\mathrm{GL}}$-stable and $F_{\mathrm U}$-stable. Moreover, the morphism $\mathfrak{c}:\mathrm S\to \mathbb G_a$ is defined over $\mathbb F_q$ with respect to both Frobenius structures.
\end{rmk}

\section{A Geometrical Formulation}\label{sec-geo-formulation}
In this section, we reformulate the Shalika periods of Deligne-Lusztig characters in geometrical terms. Such an interpretation identifies the involved multiplicities  with the trace of Frobenius on specific cohomology spaces, while leaving the action of the Frobenius mysterious. We will overcome this problem in the next section, using a computational approach. On a first reading, the reader is advised to read Definition \ref{def-mathfrakS}, Theorem \ref{thm-passing-to-semisimple} and Remark \ref{rm-passing-to-semisimple}, then skip this section.

We introduce the notation adopted throughout this section.
For a scheme $X_0$ over $\mathbb F_q$, denote by $X$  its pullback to $\mathrm k$. For Weil sheaves $\mathscr L$ on $X$, let $K^n_{\mathscr L}$ denote the function $X^{F^n}\to \Qlb$ given by ``trace of Frobenius" (also known as the sheaf-function correspondence in Grothendieck's sense) for positive integers $n$. More generally, for a complex $\mathscr K$ of Weil sheaves on $X$, let $K^n_{\mathscr K}$ denote the function $\sum_{i\in \mathbb Z}(-1)^iK^n_{\mathscr H^{i}(\mathscr K)}$. For a morphism $f:X\to Y$ of schemes, all functors $f^!,f_!,f_*,f^*$ are understood in the derived sense.

Throughout this section, the reductive group $G$ is as introduced in Section \ref{sec-doublecoset}, and $F$ denotes either $F_{\mathrm {GL}}$ or $F_{\mathrm U}$ introduced in Subsection \ref{subsec-intro-Fro}; except in Subsection \ref{subsec-DL} and \ref{subsec-charactershea}, where $G$ refers to an arbitrary connected reductive group over $\mathrm k$ that is defined over $\mathbb F_q$.

\subsection{Deligne-Lusztig characters}\label{subsec-DL}
In this subsection, $G_0$ is an arbitrary  reductive group over $\mathbb F_q$. Let $G$ be the pullback of $G_0$ to $\mathrm k$. 
For an $F$-stable maximal torus $T$ of $G$ and a character $\chi:T^F\to \Qlb^\times$, the virtual character $R_{T}^{\chi}$ of $G^F$ is defined in \cite{DL}.

\begin{rmk}
  Fix an $F$-stable maximal torus $T$ of $G$ and a character $\chi:T^F\to \Qlb^\times$.  When we say that $R_T^\chi$ is a Deligne-Lusztig character of $G^F$, we implicitly specify the reductive group $G_0$ over the finite field $\mathbb F_q$ and the field extension $\mathbb F_q\subset \mathrm k$. Hence we also say that $R_T^\chi$ is a Deligne-Lusztig character with respect to the pair $(\mathbb F_q\subset \mathrm k,G_0)$.
\end{rmk}

For a positive integer $n$,
    we say that a virtual character $\rho$ of $G^{F^n}$ is a Deligne-Lusztig character if it is a Deligne-Lusztig character with respect to the pair $(\mathbb F_{q^n}\subset \mathrm k,G_0\times_{\mathbb F_q}\mathbb F_{q^n} )$. (To ease the notation, we also say that $\rho$ is a Deligne-Lusztig character of $G^{F^n}$.)

  Fix a positive integer $n$ in this paragraph.  For an $F^n$-stable maximal torus $T$ of $G$  and a character $\chi:T^{F^n}\to \Qlb^\times$, we denote the corresponding Deligne-Lusztig character of $G^{F^n}$ by $R_{T,\chi}^{(n)}$. If $n=1$, we also denote $R_{T,\chi}:=R_{T,\chi}^{(1)}=R_T^\chi$ for convenience.

\subsection{Sheaves on tori}\label{subsec-sheavesontori}In this subsection, $T_0$ is a torus over $\mathbb F_q$. Let $T$ be the pullback of $T_0$ to $\mathrm k$.
    We have a homomorphism $\mathrm l^T:T\to T$ defined by $t\mapsto F(t)\cdot t^{-1}$. 
    We verify that $\mathrm l^T$ is  finite \'etale and compatible with the geometric Frobenius endomorphism $F$. We have the decomposition of Weil sheaves $$
   \mathrm l^T_* \Qlb = \mathrm l^T_! \Qlb \cong \bigoplus_{\eta\in (T^F)^\vee} \mathscr L_\eta^T,
    $$
    where $\mathscr L_\eta^T$ is the $1$-rank local system on $T$ satisfying the following property: (We also denote $\mathscr L_\eta=\mathscr L_\eta^T$ if it causes no confusion.)
    \begin{itemize}
        \item For any positive integer $n$, the corresponding function $K_{\mathscr L _\eta}^n: T^{F^n}\to \Qlb$  is explicitly given by the assignment
        $$
        T^{F^n}\ni t\mapsto \eta \circ \mathrm N_T^n(t),
        $$
        where $\mathrm N_T^n:T^{F^n}\to T^F$ is the norm map given by $T^{F^n}\ni t\mapsto t\cdot F(t) \cdot \ldots \cdot F^{n-1}(t)$.
    \end{itemize}
\begin{comment}
 Note that each $\mathscr L_\eta$ is indeed a local system introduced in \cite[1.3 and 1.6]{L5} endowed with a ``natural normalization'' in the sense of \select{loc. cit.}. Consequently, we see that $\{\mathscr L_\eta:\eta\in (T^F)^\vee\}$ gives a set of representatives of $\mathfrak L(T)^F$ (introduced in Remark \ref{rm-kummersheaf}). (We have $(\mathscr L_\eta)^{\otimes |T^F|}\cong \Qlb$, and the integer $|T^F|$ is coprime to $p$, the radical of $q$.)
\end{comment}
\subsection{character sheaves}\label{subsec-charactershea}In this subsection, $G_0$ is an arbitrary connected reductive group over $\mathbb F_q$. Let $G$ be the pullback of $G_0$ to $\mathrm k$.
For a Borel pair $T\subset B$ of $G$ with $F$-stable $T$, set $\mathrm d^B_T:B\to T$ to be the map witnessing $T$ as the reductive quotient and providing a section for the inclusion $T\subset B$.
We introduce the following commutative diagram and fix the notation.
\begin{equation}\label{diag-charshf}
\xymatrix{
&T_{reg}\ar[d]_{j_T}&\wt{G}_{rss}\ar[l]_{\rho_{rss}}\ar[d]^{\wt j}\ar[r]^{\pi_{rss}}&G_{rss}\ar[d]_j\\
&T &\wt{G}\ar[l]_{\rho}\ar[r]^{\pi}&G
}    
\end{equation}
where \begin{itemize}
    \item $T_{reg}:=\{t\in T: (C_G(t))^\circ=T\}$;
    \item $G_{rss}=\bigcup_{h\in G}h T_{reg}h^{-1}$;
    \item $\wt G_{rss}:=\{(g,hT)\in G_{rss}\times G/T:h^{-1}gh\in T_{reg}\}$;
    \item $\wt G:=\{(g,hB)\in G\times G/B: h^{-1}gh\in B\}$;
    \item $\rho_{rss}$ sends $(g,hT)\in \wt G_{rss}$ to $h^{-1}gh$;
    \item $\pi_{rss}$ sends $(g, hT)\in \wt G_{rss}$ to $g$;
    \item $\rho$ sends $(g,hB)\in \wt G$ to $\mathrm d^B_T(hgh^{-1})$;
    \item $\pi$ sends $(g,hB)\in \wt G$ to $g$;
    \item $\wt j$ sends $(g,hT)\in \wt G_{rss}$ to $(g,hB)\in \wt G$;
    \item $j_T$ and $j$ are natural inclusions.
    \end{itemize}

\begin{rmk}
     For convenience, we also denote $\wt G$ by $\wt G_{T,B}$ to emphasize the Borel pair $T\subset B$.
\end{rmk}

\begin{defn}\label{def-scRTchi}
    For a character $\eta:T^F\to \Qlb^\times$, recall the sheaf $\mathscr L_\eta$ introduced in Subsection \ref{subsec-sheavesontori}. We set $$\mathscr R_{T,\eta}:= \left(j_{!*}(\pi_{rss})_!\rho^*_{rss}j_T^* \mathscr L_\eta [\dim G]\right)[-\dim G],$$ where $j_{!*}$ is the intermediate extension along $j$. 
\end{defn}
We note that the sheaf $\mathscr L_\eta$ and the upper row of the Diagram (\ref{diag-charshf}) is indeed defined over $\mathbb F_q$. Hence $\mathscr R_{T,\eta}$ has a Weil structure $\phi_{T,\eta}: F^* \mathscr R_{T,\eta}\xrightarrow{\sim} \mathscr R_{T,\eta}$ arising from the intermediate extension $j_{!*}$.

\begin{prop}\label{prop-charactersheaf-main}
    The following statements hold:
    \begin{itemize}
        \item [(i)] Fix a positive integer $n$.  The function $K^n_{\mathscr R_{T,\eta}}$ corresponding to $\mathscr R_{T,\eta}$ is the Deligne-Lusztig character $R_{T,\eta\circ \mathrm N_T^n}^{(n)}:G^{F^n}\to \Qlb$ (see Subsection \ref{subsec-DL} for the notation), where $\mathrm N_T^n:T^{F^n}\to T^F$ is the norm map;
        \item [(ii)] There is a canonical isomorphism $\mathscr R_{T,\eta}\cong \pi_! \rho^* \mathscr L_\eta$ of $\Qlb$-sheaves.
    \end{itemize}
\end{prop}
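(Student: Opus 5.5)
The natural order is to prove (ii) first and then deduce (i) from it, since (ii) converts the intermediate extension into a proper pushforward whose Frobenius trace can be computed by the Grothendieck--Lefschetz trace formula.

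\textbf{Step 1: the sheaf $\pi_!\rho^*\mathscr L_\eta$ is perverse and satisfies the support condition of $j_{!*}$.} The map $\pi:\wt G\to G$ is the Grothendieck--Springer resolution. It is proper, and $\wt G$ is smooth of dimension $\dim G$, being a vector bundle over $G/B$ with fiber $B$. Since $\rho^*\mathscr L_\eta$ is a rank one local system on $\wt G$, the complex $\rho^*\mathscr L_\eta[\dim G]$ is perverse. The map $\pi$ is small in the sense of Goresky--MacPherson: the fiber over $g$ is the variety of Borel subgroups containing $g$, and the locus where this fiber has dimension $d$ has codimension at least $2d+1$ for $d>0$. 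This is Lusztig's result, and I would invoke it rather than reprove it. By the decomposition theorem for small maps, $\pi_!\rho^*\mathscr L_\eta[\dim G]$ is the intermediate extension of its restriction to any dense open set over which $\pi$ is finite étale. The regular semisimple locus $G_{rss}$ is such a set.

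\textbf{Step 2: identify the restriction to $G_{rss}$ with the upper row of the diagram.} The map $\wt j:\wt G_{rss}\to \pi^{-1}(G_{rss})$, $(g,hT)\mapsto(g,hB)$, is an isomorphism. Indeed, for regular semisimple $g$ each Borel subgroup containing $g$ contains the unique maximal torus $C_G(g)^\circ$. Moreover, for $b\in B$ with semisimple part conjugate into $T_{reg}$, we have $\mathrm d^B_T(b)$ equal to the $T$-conjugate representative, so $\rho\circ\wt j=j_T\circ\rho_{rss}$ after adjusting $h$ within $B$. Base change then gives
\[
j^*\pi_!\rho^*\mathscr L_\eta\cong(\pi_{rss})_!\rho_{rss}^*j_T^*\mathscr L_\eta .
\]
Combined with Step 1, this yields the canonical isomorphism in (ii). The isomorphism is canonical because $j_{!*}$ is characterized by a universal property, and it is compatible with the Weil structures, since all the maps and $\mathscr L_\eta$ are defined over $\mathbb F_q$ (the $F$-stability of $B$ is not needed after passing to $G/B$ with the twisted structure).

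\textbf{Step 3: compute the trace function to obtain (i).} By (ii) and the Grothendieck trace formula, $K^n_{\mathscr R_{T,\eta}}(g)$ equals the alternating trace of $F^n$ on $H^*_c(\pi^{-1}(g),\rho^*\mathscr L_\eta)$. By Lusztig's character formula (the realization of $R_T^\theta$ via the induction of $\mathscr L$ along $\wt G$, as in Lusztig's work on character sheaves, Part I, and Kazhdan--Lusztig style arguments), this is $R^{(n)}_{T,\eta\circ\mathrm N^n_T}(g)$. Here the function of $\mathscr L_\eta$ over $\mathbb F_{q^n}$ is $\eta\circ\mathrm N_T^n$, as recorded in Subsection \ref{subsec-sheavesontori}. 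When $T$ is not split, $B$ is not $F$-stable, and one must use the standard twisted Weil structure on $\wt G_{T,B}$ coming from the $G$-equivariant identification of $G/B$ with $G/T$ fibrations. This identification is where the Deligne--Lusztig variety enters, giving the character formula of \cite{DL}.

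The main obstacle is Step 3 for non-split $T$: matching the Weil structure induced by $j_{!*}$ with the one computed on the fibers. I would first verify (i) on $G_{rss}^{F^n}$ directly from the upper row. There $K^n$ is $\sum_{hT,\,h^{-1}gh\in T}\eta(\mathrm N(h^{-1}gh))$, which visibly agrees with the Deligne--Lusztig character formula on regular semisimple elements. I would then extend to all of $G^{F^n}$ by citing Lusztig's theorem that characteristic functions of these character sheaves equal Deligne--Lusztig characters (valid for $\mathrm{GL}$ and unitary groups without restriction on $q$).
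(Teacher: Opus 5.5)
Your proposal is correct and takes essentially the paper's route: the paper cites Laumon's Th\'eor\`eme 1.2.2 for (ii) and Shoji's extension of Lusztig's theorem for (i). Your Steps 1--2 are the standard smallness argument for the Grothendieck--Springer map that underlies Laumon's result, and your Step 3 rests on the same Lusztig--Shoji identification; the generality you invoke, arbitrary $q$ for $\mathrm{GL}$ and unitary groups, is exactly what Shoji's extension supplies. Two small corrections: $\widetilde G$ is a fibre bundle over $G/B$ with fibre $B$, not a vector bundle. Also, the Weil-compatibility claim at the end of Step 2 is not needed for (ii) and is not justified as stated, since for non-$F$-stable $B$ the map $\rho$ in the lower row is not compatible with $F$; this is precisely why the paper defines the Weil structure on $\mathscr R_{T,\eta}$ through $j_{!*}$.
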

\begin{proof}
    (ii) is \cite[Th\'eor\`eme 1.2.2]{Lau}. Note that the complex $\mathscr R_{T,\eta}$ is the $[-\dim G]$ shift of  the corresponding perverse sheaf in  \select{loc. cit.}.

    (i) follows from \cite[(1.7.1) and (1.9.1)]{Sho} and \cite[Theorem 5.5]{Sho2}, which extend \cite[Theorem 1.14]{L5} in specific cases.
\end{proof}

\begin{rmk}\label{rm-FstableBchar}
    Suppose  that both $T$ and $B$ are $F$-stable, then the whole diagram (\ref{diag-charshf}) is defined over $\mathbb F_q$. Consequently, for characters $\eta:T^F\to \Qlb^\times$, we have a Weil structure $\phi^\circ_{T,\eta}:F^* \left(  \pi_!\rho^* \mathscr L_\eta   \right)\xrightarrow{\sim} \pi_! \rho^* \mathscr L_\eta$ given by Grothendieck's construction. It is elementary to see that $\phi_{T,\eta}^\circ$ coincide with $\phi_{T,\eta}$ (under the identification (ii) of Proposition \ref{prop-charactersheaf-main}) by the formal property of the intermediate extension.
\end{rmk}

\subsection{Sheaves encoding inductions}\label{subsec-shasheaf}In this subsection, we retain the notation in Section \ref{sec-doublecoset}.
 Recall the subgroup $\mathrm S$ of $G=\mathrm {GL}(V)$ and the morphism $\mathfrak c:\mathrm S\to \mathbb G_a$. We fix \textbf{once for all} a nontrivial character $\psi:\mathbb F_q\to \Qlb^\times$. Let $\mathscr L_\psi$ be the Artin–Schreier sheaf on $\mathbb G_a$ such that $K_{\mathscr L_\psi}^1=\psi$.

\begin{rmk}\label{rm-ssheafvalue}
Fix a positive integer $n$.
    For $x\in \mathbb G_a^{F^n}\cong\mathbb F_{q^n}$, we have 
    $$
    K^n_{\mathscr L_\psi}(x)=\psi\circ \mathrm N_{\mathbb G_a,n}(x),
    $$
    where $\mathrm {N}_{\mathbb G_a,n}:\mathbb G_a^{F^n}\to \mathbb G_a^F$ is given by $a\mapsto a+F(a)+\ldots+F^{n-1}(a)$.
\end{rmk}

We introduce the following diagram and fix the notation.
$$
\xymatrix{
G&\mathcal S\ar[l]_{\mathrm p}\ar[r]^{\mathrm q}&\mathbb G_a 
}
$$
where
\begin{itemize}
    \item $\mathcal S$ classifies pairs $(g,x\mathrm S)\in G\times G/\mathrm S$ such that $x^{-1}gx\in \mathrm S$;
    \item $\mathrm p$ sends $(g,x\mathrm S)\in \mathcal S$ to $g$;
    \item $\mathrm q$ sends $(g,x\mathrm S)\in \mathcal S$ to $\mathfrak c(x^{-1}gx)$. (We verify that $\mathrm q$ is well-defined.)
\end{itemize}

\begin{defn}\label{def-shasheaf}
    We set $\mathscr S:=\mathrm p_!\mathrm q^* \mathscr L_\psi$. Note that $\mathscr S$ has been endowed with a natural Weil structure.
\end{defn}

\begin{rmk}\label{rm-shasheafvalue}
    Fix a positive integer $n$. Let $g\in G^{F^n}$. 
   Let $\psi^{(n)}:\mathbb G_a^{F^n}\to \Qlb^\times$ be the composite  $\psi\circ \mathrm N_{\mathbb G_a,n}$ (see Remark \ref{rm-ssheafvalue} for the notation).
    By Grothendieck trace formula and Lang's theorem (note that $\mathrm S$ is a connected algebraic group), we see that 
    $$
    K_{\mathscr S}^n(g)=\mathrm {Ind}_{\mathrm S^{F^n}}^{G^{F^n}}(\psi^{(n)}\circ \mathfrak{c})(g)
    $$
    for $g\in G^{F^n}$,
    where $\mathrm {Ind}_{\mathrm S^{F^n}}^{G^{F^n}}(\psi^{(n)}\circ \mathfrak{c})$ denotes the character of the corresponding induced representation.
\end{rmk}

\subsection{Multiplicities and top cohomology spaces}\label{subsec-mul-top}In this subsection, we retain the notation in Subsection \ref{subsec-shasheaf}. The main result in this subsection is Proposition \ref{pro-cohointerpret-main}. Roughly speaking, Proposition \ref{pro-cohointerpret-main} interprets the multiplicities of Shalika models into the traces of Frobenius on some cohomology spaces.

\begin{defn}\label{def-psiS}
    For a positive integer $n$, let $\psi^{(n)}_{\mathrm S}:\mathrm S^{F^n}\to \Qlb^\times$ be the character given by
    $$
    s\mapsto \left(\psi^{(n)}\circ\mathfrak{c}\right)(s).
    $$
\end{defn}

\begin{defn}\label{def-mathfrakS}
    For an $F$-stable maximal torus $T$ of $G$, a character $\chi:T^F\to \Qlb^\times$ and a positive integer $n$, we set 
    $$
    \mathfrak S_{T,\chi}(n):=\langle R_{T,\chi\circ \mathrm N_T^n}^{(n)},\psi^{(n)}_{\mathrm S}\rangle_{\mathrm S^{F^n}}.
    $$
    In particular, we have $\mathfrak{S}_{T,\chi}(1)=\langle R_{T,\chi},\psi^{(1)}_{\mathrm S}\rangle_{\mathrm S^F}$.
\end{defn}

\begin{prop}\label{pro-cohointerpretation-pre}Fix an $F$ stable maximal torus $T$ of $G$ and a character $ \chi:T^F\to \Qlb^\times$. For positive integers $n$,
    we have 
    $$
    |G^{F^n}|\mathfrak{S}_{T,\chi}(n)=\mathrm {Tr}((F^n)^*,H^\bullet_c(G,\mathscr R_{T,\chi}\otimes\mathscr S)).
    $$
\end{prop}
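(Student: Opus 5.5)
The identity follows from the Grothendieck--Lefschetz trace formula on $G$, applied to $\mathscr R_{T,\chi}\otimes\mathscr S$, combined with Frobenius reciprocity. First I write the multiplicity as a sum over the finite group. By Frobenius reciprocity,
\[
\mathfrak S_{T,\chi}(n)=\langle R^{(n)}_{T,\chi\circ\mathrm N_T^n},\mathrm{Ind}_{\mathrm S^{F^n}}^{G^{F^n}}\psi^{(n)}_{\mathrm S}\rangle_{G^{F^n}}
=\frac{1}{|G^{F^n}|}\sum_{g\in G^{F^n}} R^{(n)}_{T,\chi\circ\mathrm N_T^n}(g)\,\overline{\mathrm{Ind}_{\mathrm S^{F^n}}^{G^{F^n}}\psi^{(n)}_{\mathrm S}(g)}.
\]
Here the pairing is linear in the first slot, and I use that $\langle\pi,\pi'\rangle$ for the virtual character is the bilinear extension of the Hom dimension. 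The complex conjugate of the induced character equals its value at $g^{-1}$. The Deligne--Lusztig character is rational-valued on the relevant elements only up to Galois conjugation, so I handle the conjugation as follows. I can replace $\psi$ by $\psi^{-1}$ in the definition of $\mathscr S$, which amounts to using $\mathscr L_{\psi^{-1}}$. Alternatively, I note that the induced character is real-valued: $\mathrm{Ind}\,\psi_{\mathrm S}(g^{-1})=\mathrm{Ind}\,\psi_{\mathrm S}(g)$, because conjugating by the element $\mathrm{diag}(1_{\mathfrak n},-1_{\mathfrak n})$ of $L^F$ normalizes $\mathrm S$ and sends $\mathfrak c$ to $-\mathfrak c$. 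In characteristic $2$ no sign issue arises, since $\psi=\psi^{-1}$. With either route, $\mathfrak S_{T,\chi}(n)$ is $|G^{F^n}|^{-1}\sum_g R^{(n)}(g)\,\mathrm{Ind}(\psi^{(n)}\circ\mathfrak c)(g)$. I expect this bookkeeping of conjugation to be the only subtle point.

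Next I identify the summand with a trace function. By Proposition \ref{prop-charactersheaf-main}(i), $K^n_{\mathscr R_{T,\chi}}=R^{(n)}_{T,\chi\circ\mathrm N^n_T}$. By Remark \ref{rm-shasheafvalue}, $K^n_{\mathscr S}=\mathrm{Ind}_{\mathrm S^{F^n}}^{G^{F^n}}(\psi^{(n)}\circ\mathfrak c)$. Trace functions are multiplicative under tensor product of complexes of Weil sheaves, so $K^n_{\mathscr R_{T,\chi}\otimes\mathscr S}$ is the product of the two characters.

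Finally I apply the Grothendieck trace formula. For a bounded constructible complex $\mathscr K$ of Weil sheaves on the separated finite-type scheme $G$,
\[
\sum_{g\in G^{F^n}}K^n_{\mathscr K}(g)=\mathrm{Tr}((F^n)^*,H^\bullet_c(G,\mathscr K)),
\]
with the alternating sign convention built into $\mathrm{Tr}$ on $H^\bullet_c$. Applying this with $\mathscr K=\mathscr R_{T,\chi}\otimes\mathscr S$, using the Weil structure $\phi_{T,\chi}$ on $\mathscr R_{T,\chi}$ and the natural one on $\mathscr S$, and multiplying the first display by $|G^{F^n}|$ gives the claim.
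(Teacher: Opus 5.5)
Your proposal is correct and follows the paper's own proof, which combines Proposition~\ref{prop-charactersheaf-main}(i) and Remark~\ref{rm-shasheafvalue} with the Grothendieck trace formula and Frobenius reciprocity. Your handling of the complex conjugate, namely that $\mathrm{Ind}_{\mathrm S^{F^n}}^{G^{F^n}}\psi^{(n)}_{\mathrm S}$ is real-valued because conjugation by $\mathrm{diag}(1_{\mathfrak n},-1_{\mathfrak n})$ normalizes $\mathrm S$ and negates $\mathfrak c$, makes explicit a step the paper leaves tacit; it is this route, rather than replacing $\psi$ by $\psi^{-1}$ (which alters $\mathscr S$), that proves the identity for $\mathscr S$ as defined. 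One small fix: since $F_{\mathrm U}(\mathrm{diag}(1_{\mathfrak n},-1_{\mathfrak n}))=-\mathrm{diag}(1_{\mathfrak n},-1_{\mathfrak n})$, this element is not in $L^{F^n}$ for $F=F_{\mathrm U}$, $n$ odd and $q$ odd; use instead $\lambda\,\mathrm{diag}(1_{\mathfrak n},-1_{\mathfrak n})$ with $\lambda\in\mathrm k^\times$ and $\lambda^{q^n+1}=-1$, which lies in $L^{F^n}$ and induces the same conjugation.
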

\begin{proof}
    Combine Remark \ref{rm-shasheafvalue} and Proposition \ref{prop-charactersheaf-main}, our assertion follows from Grothendieck trace formula and Frobenius reciprocity.
\end{proof}

\begin{defn}\label{def-YTB}
    For a Borel pair $T\subset B$ of $G$, we set $\mathcal Y_{T,B}:=\wt G_{T,B}\times_{G}\mathcal S$. Let $\mathrm {pr}_1:\mathcal Y_{T,B}\to \wt G_{T,B}$ and $\mathrm {pr}_2 :\mathcal Y_{T,B}\to \mathcal S$ be the projections.
\end{defn}

\begin{prop}\label{pro-badmodel}Fix a Borel pair $T\subset B$ of $G$ with $T$ being $F$-stable. Let the notation be as in Definition \ref{def-YTB}.  Fix a character $ \chi:T^F\to \Qlb^\times$.
    We have a canonical identification in the derived category of $\Qlb$-sheaves (see Subsection \ref{subsec-charactershea} for the notation $\rho$, $\pi$ and Subsection \ref{subsec-shasheaf} for the notation $\mathrm q$)
    $$(\pi \circ\mathrm {pr}_1)_!(\mathrm {pr}_1^*\rho^*\mathscr L_\chi\otimes \mathrm{pr}_2^* \mathrm q^*\mathscr L_\psi)\cong \mathscr R_{T,\chi}\otimes \mathscr S.$$ Suppose further that $B$ is $F$-stable; then the left-hand side of the above identification has a natural Weil structure, and the above identification is compatible with the Weil structures (where on the right-hand side, the Weil structure is induced by those of $\mathscr{R}_{T,\chi}$ and $\mathscr S$).
\end{prop}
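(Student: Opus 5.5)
The identification is formal: I will derive it from base change and the projection formula, starting from Proposition \ref{prop-charactersheaf-main}(ii) and the definition $\mathscr S=\mathrm p_!\mathrm q^*\mathscr L_\psi$.

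\textbf{Step 1.} By Proposition \ref{prop-charactersheaf-main}(ii) we have $\mathscr R_{T,\chi}\cong \pi_!\rho^*\mathscr L_\chi$. Hence
$$\mathscr R_{T,\chi}\otimes\mathscr S\cong \pi_!(\rho^*\mathscr L_\chi)\otimes \mathrm p_!(\mathrm q^*\mathscr L_\psi).$$

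\textbf{Step 2.} Apply the projection formula along $\pi$. This gives
$$\pi_!(\rho^*\mathscr L_\chi)\otimes \mathscr S\cong \pi_!\bigl(\rho^*\mathscr L_\chi\otimes \pi^*\mathscr S\bigr).$$

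\textbf{Step 3.} Consider the cartesian square with $\mathcal Y_{T,B}=\wt G_{T,B}\times_G\mathcal S$, $\mathrm{pr}_1$, $\mathrm{pr}_2$, $\pi$ and $\mathrm p$. Proper base change (valid for $!$-pushforward with no properness hypothesis) gives
$$\pi^*\mathrm p_!\mathrm q^*\mathscr L_\psi\cong (\mathrm{pr}_1)_!\,\mathrm{pr}_2^*\mathrm q^*\mathscr L_\psi.$$

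\textbf{Step 4.} Apply the projection formula again, now along $\mathrm{pr}_1$:
$$\rho^*\mathscr L_\chi\otimes(\mathrm{pr}_1)_!\mathrm{pr}_2^*\mathrm q^*\mathscr L_\psi\cong(\mathrm{pr}_1)_!\bigl(\mathrm{pr}_1^*\rho^*\mathscr L_\chi\otimes\mathrm{pr}_2^*\mathrm q^*\mathscr L_\psi\bigr).$$
Finally, $\pi_!\circ(\mathrm{pr}_1)_!=(\pi\circ\mathrm{pr}_1)_!$. Composing the isomorphisms of Steps 1--4 yields the claimed canonical identification.

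\textbf{Weil structures.} Assume now that $B$ is $F$-stable. Then $\wt G_{T,B}$, $\rho$ and $\pi$ are defined over $\mathbb F_q$. So are $\mathcal S$, $\mathrm p$, $\mathrm q$ and $\mathscr L_\psi$, since $\mathrm S$ and $\mathfrak c$ are $F$-stable. Consequently $\mathcal Y_{T,B}$, the projections and both pulled-back sheaves descend to $\mathbb F_q$, and the left side acquires Grothendieck's Weil structure.

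The base change and projection formula isomorphisms are canonical, so they commute with the Frobenius pullback isomorphisms. The resulting Weil structure on the right side is therefore the one coming from $\phi^\circ_{T,\chi}$ on $\pi_!\rho^*\mathscr L_\chi$ tensored with that of $\mathscr S$. By Remark \ref{rm-FstableBchar}, $\phi^\circ_{T,\chi}$ agrees with $\phi_{T,\chi}$ under Proposition \ref{prop-charactersheaf-main}(ii).

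The only step needing care is this Weil-structure compatibility. It reduces entirely to Remark \ref{rm-FstableBchar}, which compares the intermediate-extension structure with the naive one. Beyond that, one only has to check that each canonical isomorphism is Frobenius-equivariant, which is standard.
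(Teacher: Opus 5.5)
Your proposal is correct and follows the paper's proof. The paper likewise combines Proposition~\ref{prop-charactersheaf-main}(ii), the definition of $\mathscr S$ and proper base change, and then cites Remark~\ref{rm-FstableBchar} for the Weil structures; you have simply written out the two projection-formula steps that the paper leaves implicit.
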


\begin{proof}
    The identification follows from (ii) of Proposition \ref{prop-charactersheaf-main}, the definition of $\mathscr S$ and the proper base change. The last statement follows from Remark \ref{rm-FstableBchar}.
\end{proof}

\begin{prop} \label{pro-weight-main}
We have the following:
\begin{itemize}
    \item[(i)] The cohomology space
    $H^i_c(G,\mathscr R_{T,\chi}\otimes\mathscr S)$ vanishes unless $0\leq i\leq 2\dim G$.
    \item[(ii)] For eigenvalues $\alpha$ of $F^*$ on $H^i_c(G,\mathscr R_{T,\chi}\otimes\mathscr S)$, we have $|\alpha|\leq q^{i/2}$.
    \item[(iii)] Let  $\beta$  be an eigenvalue of $F^*$ on $H^{2\dim G}_c(G,\mathscr R_{T,\chi}\otimes\mathscr S)$, then $\beta/q^{\dim G}$ is a root of unity.
\end{itemize}

\end{prop}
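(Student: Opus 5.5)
We reduce to a geometric model where Deligne's weight theory applies directly. We may assume without loss of generality that $T\subset B$ with $B$ an $F^m$-stable Borel subgroup for some $m\geq 1$. Replacing $F$ by $F^m$ only raises eigenvalues to the $m$-th power, and statements (i)–(iii) are insensitive to this: for (iii), $\beta^m/q^{m\dim G}$ being a root of unity forces $\beta/q^{\dim G}$ to be one. By Proposition~\ref{pro-badmodel} and proper base change, we then have
\[
H^i_c(G,\mathscr R_{T,\chi}\otimes\mathscr S)\cong H^i_c(\mathcal Y_{T,B},\mathscr F),\qquad \mathscr F:=\mathrm{pr}_1^*\rho^*\mathscr L_\chi\otimes \mathrm{pr}_2^*\mathrm q^*\mathscr L_\psi,
\]
compatibly with Weil structures. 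Here $\mathscr F$ is a rank one lisse sheaf, pure of weight $0$, since the Kummer and Artin–Schreier sheaves are pure of weight $0$.

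Next we compute $\dim\mathcal Y_{T,B}$. A point of $\mathcal Y_{T,B}$ is a triple $(g,hB,x\mathrm S)$ with $h^{-1}gh\in B$ and $x^{-1}gx\in\mathrm S$. Projecting to $(hB,x\mathrm S)\in G/B\times G/\mathrm S$, the fibre is $hBh^{-1}\cap x\mathrm Sx^{-1}$. Decompose the base according to $G$-orbits on $G/B\times G/\mathrm S$, which correspond to $B\backslash G/\mathrm S$ and are finite in number by Lemma~\ref{lem-BSdoublecoset}. The orbit through $(B,w\mathrm S)$ has dimension $\dim G-\dim(B\cap w\mathrm Sw^{-1})$, and the fibre over it has dimension $\dim(B\cap w\mathrm Sw^{-1})$. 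Hence each stratum has dimension exactly $\dim G$, so $\dim\mathcal Y_{T,B}=\dim G$. This gives the vanishing range in (i), since $H^i_c$ of a variety of dimension $d$ vanishes outside $[0,2d]$.

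For (ii), Deligne's Weil II gives that $H^i_c(\mathcal Y_{T,B},\mathscr F)$ is mixed of weights $\leq i$ because $\mathscr F$ is pure of weight $0$. This yields $|\alpha|\leq q^{i/2}$ for every eigenvalue $\alpha$.

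The main obstacle is (iii). The top cohomology $H^{2\dim G}_c$ is a sum over the irreducible components of $\mathcal Y_{T,B}$ of top-dimensional dimension of $H^{2\dim G}_c(\text{component},\mathscr F)$. Each such term is zero unless $\mathscr F$ is geometrically trivial on that component, and in that case it is one-dimensional. Frobenius permutes these components, and on each cycle of length $r$ it acts on the corresponding summand by a scalar. Geometric triviality implies that $\mathscr F|_{\text{component}}$ is the pullback of a rank one sheaf on a point. Both $\mathscr L_\chi$ and $\mathscr L_\psi$ have finite-order monodromy and finite-order Frobenius traces on such loci, so this scalar equals a root of unity times $q^{r\dim G}$. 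Consequently every eigenvalue $\beta$ satisfies that $\beta/q^{\dim G}$ is a root of unity, which is (iii). To make this rigorous, I would use the stratification above, where each stratum is a bundle with connected group fibres (Remark~\ref{rm-ker-connected-unipo}), and apply Lang's theorem to identify the Frobenius action on the component classes.
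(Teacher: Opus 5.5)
Your proposal is correct and is essentially the paper's proof: you pass to a power of $F$ so that $B$ is $F$-stable, transport the cohomology to $\mathcal Y_{T,B}$ via Proposition~\ref{pro-badmodel}, and get (i) from $\dim\mathcal Y_{T,B}=\dim G$ (your orbit-stratification count is exactly Lemma~\ref{lem-dim-YTBw} and Corollary~\ref{cor-dimYTB}), (ii) from Weil II, and (iii) from the top-degree Poincar\'e duality argument that the paper isolates as Lemma~\ref{lem-tpduality}. Two minor points: on an $r$-cycle of components it is $(F^r)^*$, not $F^*$, that acts on each summand by a scalar (so $\beta$ is an $r$-th root of $\zeta q^{r\dim G}$ with $\zeta$ a root of unity); and the closing appeal to Lang's theorem and Remark~\ref{rm-ker-connected-unipo} is unnecessary, because the root-of-unity Frobenius traces of $\mathscr L_\chi$ and $\mathscr L_\psi$ already suffice.
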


\begin{proof}
  The statement (i) is due to Proposition \ref{pro-badmodel} and the fact that $\dim \mathcal Y_{T,B}=\dim G$. (See Corollary \ref{cor-dimYTB} below.)
Fix a Borel subgroup $B$ of $G$ containing $T$.  In proving (ii) and (iii), we may replace $F$ by powers. Hence in the rest of this proof, we do assume that  $B$ is $F$-stable. 
    
    By Proposition \ref{pro-badmodel}, we have a canonical identification 
    $$
    H^i_c(G,\mathscr R_{T,\chi}\otimes \mathscr S) \cong H^i_c(\mathcal Y_{T,B},\mathrm {pr}_1^*\rho^*\mathscr L_\chi\otimes \mathrm{pr}_2^* \mathrm q^*\mathscr L_\psi)
    $$
    that is compatible with the action of the Frobenius endomorphism. The statement (ii) is a trivial instance of Deligne's weight theory \cite[Th\'eor\`eme 3.3.1]{D}. The statement (iii) follows from Poincar\'e duality (see Lemma \ref{lem-tpduality} below).
\end{proof}

\begin{lem}\label{lem-tpduality}
    Let $X_0$ be an algebraic scheme of dimension $\leq n$ over $\mathbb F_q$. %Let $K$ be the number of $n$-dimensional irreducible components of $X$.
    Let $X$ be the pullback of $X_0$ to $\mathrm k$. Let $\mathscr F$ be a Weil sheaf on $X$ that is pure of weight $0$ such that all characteristic functions $K_\mathscr F^m$ (for all positive integers $m$) take values in roots of unity. Suppose that $\alpha$ is an eigenvalue of $F^*$ on $H^{2 n}_c(X,\mathscr F)$, then the number $\alpha/q^{ n}$ is a root of unity.
\end{lem}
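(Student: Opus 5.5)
We reduce to the top-degree cohomology with coefficients in a sheaf that is (generically) geometrically constant, and then invoke the trace-class / Poincaré duality description of $H^{2n}_c$. First, we may replace $X_0$ by its reduced subscheme, and then discard all irreducible components of dimension $<n$. This uses the excision sequence $H^{2n}_c(U,\mathscr F)\to H^{2n}_c(X,\mathscr F)\to H^{2n}_c(Z,\mathscr F)$ for $Z$ closed of dimension $<n$, together with $H^{2n}_c(Z,\mathscr F)=0$. Hence we may assume $X_0$ is equidimensional of dimension $n$. Choosing a dense open $U_0\subset X_0$ that is smooth, and on which $\mathscr F$ is lisse, the same sequence (now with the complement of dimension $<n$) shows that $H^{2n}_c(U,\mathscr F)\to H^{2n}_c(X,\mathscr F)$ is surjective and Frobenius-equivariant. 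It therefore suffices to treat $U$: a smooth $n$-dimensional scheme carrying a lisse sheaf $\mathscr F$. Shrinking further, we may assume the connected components of $U$ are geometrically irreducible after a finite extension. Replacing $F$ by a power only replaces $\alpha$ by a power, which does not affect the root-of-unity conclusion, so we may also assume each component is defined over $\mathbb F_q$.

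Next, by Poincaré duality on each smooth connected component $U_i$,
\[
H^{2n}_c(U_i,\mathscr F)\cong \bigl((\mathscr F^\vee_{\bar x})^{\pi_1(U_i,\bar x)}\bigr)^\vee(-n),
\]
that is, the geometric coinvariants of the stalk twisted by $(-n)$. Thus every eigenvalue $\alpha$ equals $q^n\lambda$, where $\lambda$ is an eigenvalue of Frobenius on the space of geometric $\pi_1$-coinvariants of $\mathscr F_{\bar x}$. If $U_i$ has no rational point, first pass to a power of $F$; on the coinvariants the arithmetic Frobenius at any rational point acts through the quotient $\pi_1^{\rm arith}/\pi_1^{\rm geom}\cong\hat{\mathbb Z}$.

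It remains to show that $\lambda$ is a root of unity, and this is the main obstacle. Purity of weight $0$ gives only $|\lambda|=1$ under every complex embedding. To obtain more, we use the hypothesis that all $K^m_{\mathscr F}$ take root-of-unity values. Let $M$ denote the coinvariant quotient; it is a Frobenius-stable quotient of $\mathscr F_{\bar x}$ on which the geometric monodromy acts trivially. We argue via Chebotarev: for any closed point $y$ of degree $m$, the characteristic polynomial of ${\rm Frob}_y$ on $\mathscr F_{\bar y}$ has coefficients that are algebraic integers. This integrality holds because the traces of all its powers, namely the values $K^{mk}_{\mathscr F}(y)$, are roots of unity. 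Moreover, all conjugates of its roots have absolute value $1$. Kronecker's theorem then shows that every eigenvalue of every ${\rm Frob}_y$ is a root of unity.

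Since Frobenius elements are dense in $\pi_1^{\rm arith}$, the closure of the image of monodromy consists of elements whose eigenvalues are roots of unity of bounded order; the order is bounded because the characteristic polynomials lie in a finite set determined by the rank and degree. It follows that the Frobenius action on the quotient $M$ has root-of-unity eigenvalues. Indeed, choose $y$ of degree $m$; then ${\rm Frob}_y$ acts on $M$ as $F^m$, whose eigenvalues $\lambda^m$ are among the roots of unity of ${\rm Frob}_y$ on $\mathscr F_{\bar y}$. Hence $\lambda$ is a root of unity, and so is $\alpha/q^n$.
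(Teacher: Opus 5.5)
Your argument is correct and is essentially the paper's proof: you shrink to a smooth dense open on which $\mathscr F$ is lisse (excision in degree $2n$), use Poincar\'e duality to identify $H^{2n}_c$ with the $(-n)$-twisted geometric coinvariants (the paper's dual of $H^0(V,(j^*\mathscr F)^\vee)$), and then get root-of-unity eigenvalues from the trace hypothesis. The paper dismisses that last step as ``easy'', and you make it explicit via Kronecker applied to $\mathrm{Frob}_y$ together with the fact that $\mathrm{Frob}_y$ acts on coinvariants as the $(\deg y)$-th power of Frobenius. Two small remarks: the integrality of the eigenvalues of $\mathrm{Frob}_y$ deserves one more line, since Newton's identities alone only make $k!\,e_k$ integral and a $v$-adic radius-of-convergence argument on $\sum_k p_k t^k$ is what settles it; and your density/bounded-order remark is not needed.
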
 
\begin{proof}
    We may replace $X_0$ by its reduced subscheme with the same underlying topological space. Hence we assume $X_0$ is reduced. We choose an open subscheme $V_0$ of $X_0$ such that (Let $V$ be the pullback of $V_0$ to $\mathrm k$ and let $j:V\hookrightarrow X$ be the inclusion.)
    \begin{itemize}
        \item $V_0$ contains the generic points of all irreducible components  of $X_0$ that is of dimension $n$. 
        \item $V_0$ is smooth over $\mathbb F_q$.
        \item The restriction of $\mathscr F$ to $V$ is lisse.
    \end{itemize}
    We verify that such $V_0$ exists. Let $i:Z\hookrightarrow X$ be the complement of $j$, with $Z$ reduced. Note that $\dim Z\leq n-1$.
     We have the distinguished triangle
     $$
     j_!j^* \mathscr F\to \mathscr F\to i_*i^*\mathscr F\to.
     $$
     This gives rise to an identification $H^{2n}_c(V,j^*\mathscr F)\cong H^{2n}_c(X,\mathscr F)$ that is compatible with the action of the Frobenius. Note that the Frobenius permute the irreducible components of $V$. Since the characteristic functions of $\mathscr F$ take values in roots of unity, it is easy to see that the eigenvalues of the operator $F^*$ on $H^0(V,(j^*\mathscr F)^\vee)$ are all roots of unity, where $(j^*\mathscr F)^\vee$ is the local system on $V$ such that it is dual to the restriction of $j^*\mathscr F$ over each irreducible component of $V$.  Using Poincar\'e duality, we conclude the proof.
\end{proof}

Similarly, we have the following proposition. The proof is essentially identical to that of Proposition \ref{pro-weight-main}, and we omit it.

\begin{prop}\label{pro-weight-sub}
    Let $i:C\hookrightarrow G$ be an $F$-stable locally closed subscheme. Then we have the following:\begin{itemize}
    \item[(i)] The cohomology space
    $H^j_c\left(C,i^*(\mathscr R_{T,\chi}\otimes\mathscr S)\right)$ vanishes unless $0\leq j\leq 2\dim G$.
    \item[(ii)] For eigenvalues $\alpha$ of $F^*$ on $H^j_c\left(C,i^*(\mathscr R_{T,\chi}\otimes\mathscr S)\right)$, we have $|\alpha|\leq q^{j/2}$.
    \item[(iii)] Let  $\beta$  be an eigenvalue of $F^*$ on $H^{2\dim G}_c\left(C,i^*(\mathscr R_{T,\chi}\otimes\mathscr S)\right)$, then $\beta/q^{\dim G}$ is a root of unity.
\end{itemize}
\end{prop}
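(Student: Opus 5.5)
The plan is to run the proof of Proposition \ref{pro-weight-main} again, restricting everything to $C$. First, (ii) and (iii) are only statements about eigenvalues of $F^*$, so I may replace $F$ by a power $F^m$. Hence I fix a Borel subgroup $B\supset T$ and assume both $T$ and $B$ are $F$-stable, with $C$ still $F$-stable.

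Next I pull back the isomorphism of Proposition \ref{pro-badmodel} along $i$. Set $\mathcal Y_C:=\mathcal Y_{T,B}\times_G C$, with $\pi_C:\mathcal Y_C\to C$ the restriction of $\pi\circ\mathrm{pr}_1$. Proper base change gives
$$i^*(\mathscr R_{T,\chi}\otimes\mathscr S)\cong (\pi_C)_!\,\mathscr F_C,$$
where $\mathscr F_C$ is the restriction of $\mathrm{pr}_1^*\rho^*\mathscr L_\chi\otimes\mathrm{pr}_2^*\mathrm q^*\mathscr L_\psi$ to $\mathcal Y_C$. The Weil structures match, by Remark \ref{rm-FstableBchar} and the last sentence of Proposition \ref{pro-badmodel}. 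It follows that
$$H^j_c\bigl(C,i^*(\mathscr R_{T,\chi}\otimes\mathscr S)\bigr)\cong H^j_c(\mathcal Y_C,\mathscr F_C),$$
compatibly with $F^*$.

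The sheaf $\mathscr F_C$ is a rank one local system, since it is a tensor product of pullbacks of a Kummer-type sheaf $\mathscr L_\chi$ and the Artin–Schreier sheaf $\mathscr L_\psi$. It is pure of weight $0$, and its trace functions $K^m_{\mathscr F_C}$ take values in roots of unity, by Subsection \ref{subsec-sheavesontori} and Remark \ref{rm-ssheafvalue}. Also $\mathcal Y_C$ is a locally closed subscheme of $\mathcal Y_{T,B}$, so $\dim\mathcal Y_C\le\dim\mathcal Y_{T,B}=\dim G$, by the dimension count cited in Proposition \ref{pro-weight-main} (Corollary \ref{cor-dimYTB}).

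The three claims then follow in turn:
\begin{itemize}
\item[(i)] Compactly supported cohomology of a sheaf on a scheme of dimension $\le\dim G$ vanishes outside degrees $[0,2\dim G]$.
\item[(ii)] Since $\mathscr F_C$ is pure of weight $0$, hence mixed of weights $\le 0$, Deligne's theorem \cite[Th\'eor\`eme 3.3.1]{D} shows that $H^j_c$ is mixed of weights $\le j$, i.e.\ $|\alpha|\le q^{j/2}$.
\item[(iii)] I apply Lemma \ref{lem-tpduality} to $X_0=\mathcal Y_C$ (defined over $\mathbb F_q$ after the above reduction), with $n=\dim G$ and $\mathscr F=\mathscr F_C$. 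If $\dim\mathcal Y_C<\dim G$, the top group vanishes and (iii) holds vacuously.
\end{itemize}

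The only step needing care is the dimension bound $\dim\mathcal Y_C\le\dim G$. It is inherited from $\mathcal Y_{T,B}$, but this depends on the dimension statement for $\mathcal Y_{T,B}$ established later, which I use exactly as Proposition \ref{pro-weight-main} does. A second, minor point concerns (ii) and (iii) after passing to a power $F^m$. There I need the fact that if $\alpha^m$ satisfies the bound (resp.\ the root-of-unity property) relative to $q^m$, then $\alpha$ satisfies it relative to $q$. This is immediate.
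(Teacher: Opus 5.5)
Your proposal is correct and is exactly the argument the paper means when it says the proof is ``essentially identical'' to that of Proposition~\ref{pro-weight-main}. You pass to a power of $F$ so that $B\supset T$ is $F$-stable, base-change the identification of Proposition~\ref{pro-badmodel} along $i$ to reduce to $H^\bullet_c(\mathcal Y_{T,B}\times_G C,\mathscr F_C)$, and then use $\dim\mathcal Y_{T,B}=\dim G$ for (i), Deligne's weight bound for (ii), and Lemma~\ref{lem-tpduality} for (iii); your extra remarks (that $\mathscr F_C$ is a rank one local system pure of weight $0$ with root-of-unity traces, and that the bounds for $F^m$ descend to $F$) only make explicit what the paper leaves implicit.
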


Let $\mathbb Z_+$ denote the set of positive integers.
We introduce the following definition to invoke Lemma \ref{lem-const}.
\begin{defn} \label{def-gtype}
Let  $\CP\subset \BZ_+$ be an arithmetic progression. A function $M: \CP \to \BC $ is said to be of geometric type if it is of the form
\[
M(\nu) = \frac{\sum^k_{i=1} a_i \alpha_i^\nu}{\sum^l_{j=1} b_j \beta_j^\nu } ,\quad \nu\in\CP,
\]
where $a_i, \alpha_i, b_j, \beta_j \in\BC$, and the denominator is nonzero for every $\nu\in \CP$. If the denominator is a nonzero constant, we say that $M$ is of trace type.  
\end{defn}
In the remainder of this paper, we fix an identification $\Qlb\cong \mathbb C$.
We have the following elementary lemma, see  \cite[Lemma 2.2]{LMS}. 

\begin{lem} \label{lem-const}
Let $M$ be a function of geometric type defined on an arithmetic progression $\CP\subset \mathbb Z_+$. If $M$ is integer-valued and  has a finite limit $L\in \mathbb C$ as $\nu\to\infty$ through $\CP$, then 
$M$ is a constant function taking the value $L$. 
\end{lem}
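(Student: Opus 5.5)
The plan is a standard argument on linear recurrences, in two steps. First we show that $M$ has no polynomial-growth pathology, so that it is eventually given by a generalized power sum. Then we use integrality together with the existence of the limit to force constancy.

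\emph{Step 1: reduce to a power sum.} Write $M(\nu)=A(\nu)/B(\nu)$ with $A(\nu)=\sum_i a_i\alpha_i^\nu$ and $B(\nu)=\sum_j b_j\beta_j^\nu$. Reparametrize $\mathcal P=\{\nu_0+d m: m\geq 0\}$ and replace $\alpha_i,\beta_j$ by $\alpha_i^d,\beta_j^d$, so that we may take $\mathcal P=\mathbb Z_{\geq 0}$. Merge equal bases and discard zero coefficients. Let $\beta_1,\dots,\beta_r$ be the bases of maximal modulus $R$ in $B$. Since $M(\nu)\to L$, we have $A(\nu)-LB(\nu)=o(|B(\nu)|)$. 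Note that $B$ is not $o(R^\nu)$: the mean of $|\sum_{j\le r} b_j(\beta_j/R)^\nu|^2$ over $\nu$ tends to $\sum_{j\le r}|b_j|^2>0$, so $|B(\nu)|\geq cR^\nu$ along a set of $\nu$ of positive density. Apply the same averaging argument to the power sum $A-LB$. On that positive-density set it is $o(R^\nu)$, and because its top-modulus part is almost periodic, this forces every base of $A-LB$ of modulus $\geq R$ to have vanishing coefficient. Hence $A=LB+E$, where $E$ is a power sum whose bases all have modulus $<R$.

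\emph{Step 2: integrality forces constancy.} Set $D(\nu)=M(\nu)-L=E(\nu)/B(\nu)$. Then $D(\nu)\to 0$, and $D(\nu)+L\in\mathbb Z$. If $L\notin\mathbb Z$, then $M(\nu)$ cannot converge to $L$ while staying in $\mathbb Z$, so $L\in\mathbb Z$. Consequently $D(\nu)\in\mathbb Z$ and $D(\nu)\to 0$, so $D(\nu)=0$ for all $\nu\geq N$. Thus $E(\nu)=0$ for $\nu\geq N$. A power sum vanishing on a tail vanishes identically, because the Vandermonde matrix in the distinct bases is invertible. So $E\equiv 0$, and therefore $M(\nu)=L$ for every $\nu\in\mathcal P$, using $B(\nu)\neq 0$.

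\emph{Main obstacle and a shortcut.} The only delicate point is the positive-density lower bound in Step 1. However, once we note that integrality and convergence already give $M(\nu)=L$ for all large $\nu$, Step 1 becomes unnecessary. From $M(\nu)=L$ for $\nu\ge N$ we get $A(\nu)-LB(\nu)=0$ on a tail. The Vandermonde argument applied to the power sum $A-LB$ then gives $A-LB\equiv 0$, hence $M\equiv L$ on all of $\mathcal P$. I would present this shorter route, which is essentially \cite[Lemma 2.2]{LMS}.
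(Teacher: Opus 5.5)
Your shortcut is correct and complete. Since $\mathbb Z$ is discrete in $\mathbb C$, you get $L\in\mathbb Z$ and $M(\nu)=L$ on a tail of $\mathcal P$. Then write $\nu=\nu_0+dm$, absorb $\alpha_i^{\nu_0},\beta_j^{\nu_0}$ into the coefficients, and merge coinciding bases $\alpha_i^d,\beta_j^d$; you rightly do this, since distinct bases can collide after taking $d$-th powers. The Vandermonde argument then gives $A-LB\equiv 0$ on $\mathcal P$, whence $M\equiv L$ because $B(\nu)\neq 0$.

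The paper gives no proof of its own and simply quotes \cite[Lemma 2.2]{LMS}, for which this is the standard argument. Your Step 1 is superfluous and can be dropped.
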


The following is a refinement of Proposition \ref{pro-cohointerpretation-pre}.
\begin{prop}\label{pro-cohointerpret-main}Fix an $F$ stable maximal torus $T$ of $G$ and a character $ \chi:T^F\to \Qlb^\times$. 
For each positive integer $n$, we have 
    $$
    \mathfrak{S}_{T,\chi}(n)=\frac{1}{q^{n\dim G}}\mathrm {Tr}((F^n)^*,H^{2\dim G}_c(G,\mathscr R_{T,\chi}\otimes\mathscr S)).
    $$
\end{prop}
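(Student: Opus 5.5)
We start from Proposition \ref{pro-cohointerpretation-pre}, which for every $n\in\mathbb Z_+$ gives
\[
\mathfrak S_{T,\chi}(n)=\frac{1}{|G^{F^n}|}\sum_{i=0}^{2\dim G}(-1)^i\,\mathrm{Tr}\bigl((F^n)^*,H^i_c(G,\mathscr R_{T,\chi}\otimes\mathscr S)\bigr),
\]
where the range of $i$ is justified by Proposition \ref{pro-weight-main}(i). The plan is to split off the top degree and show that everything else vanishes in the limit $n\to\infty$. We then upgrade this limit statement to an exact identity using Lemma \ref{lem-const}.

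First we control the denominator. Whether $F=F_{\mathrm{GL}}$ or $F=F_{\mathrm U}$, the order $|G^{F^n}|$ is a polynomial expression $q^{n\dim G}\prod_{j=1}^{2\mathfrak n}(1-\epsilon_j^n q^{-nj})$ with $\epsilon_j\in\{\pm1\}$. Hence $|G^{F^n}|=\sum_j b_j\beta_j^n$ is of the shape required in Definition \ref{def-gtype}, is nonzero for all $n$, and satisfies $|G^{F^n}|/q^{n\dim G}\to 1$.

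Next we write the numerator as $\sum_k a_k\alpha_k^n$, where the $\alpha_k$ run over the Frobenius eigenvalues on all $H^i_c$. This shows that $\mathfrak S_{T,\chi}(n)$ is a function of geometric type on $\mathcal P=\mathbb Z_+$, and it is integer valued because it is a multiplicity of virtual characters. By Proposition \ref{pro-weight-main}(ii), every eigenvalue on $H^i_c$ with $i<2\dim G$ has absolute value at most $q^{(2\dim G-1)/2}$. After dividing by $|G^{F^n}|\sim q^{n\dim G}$, these contributions are $O(q^{-n/2})$ and so tend to $0$. Consequently
\[
\mathfrak S_{T,\chi}(n)-\frac{1}{q^{n\dim G}}\mathrm{Tr}\bigl((F^n)^*,H^{2\dim G}_c\bigr)\longrightarrow 0 .
\]

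The main obstacle is that the top-degree term need not converge, since by Proposition \ref{pro-weight-main}(iii) its normalized eigenvalues are only roots of unity. We handle this by passing to a sub-progression. Let $m$ be a common order of all the roots of unity $\beta/q^{\dim G}$, fix a residue $r$, and set $\mathcal P_r=\{r+km\}\subset\mathbb Z_+$. On $\mathcal P_r$ the normalized top trace is the constant $c_r=\mathrm{Tr}\bigl((F^r)^*,H^{2\dim G}_c\bigr)/q^{r\dim G}$, provided we define $(F^*/q^{\dim G})^m$ to act as the identity. That would fail if $H^{2\dim G}_c$ carried a nontrivial Jordan block, so we first replace $H^{2\dim G}_c$ by its semisimplification; the trace is unchanged by this replacement. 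On $\mathcal P_r$ the function $\mathfrak S_{T,\chi}$ is still integer valued and of geometric type, and it has the finite limit $c_r$. Lemma \ref{lem-const} then says $\mathfrak S_{T,\chi}$ is constant, equal to $c_r$, on $\mathcal P_r$. Taking $n=r$ gives the identity for every $n$, since each $n$ lies in $\mathcal P_n$.

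A residual concern remains: the trace of a unipotent Jordan block grows polynomially in $n$, not boundedly, even after normalization. If $H^{2\dim G}_c$ is not semisimple, we argue as follows. The normalized trace on $\mathcal P_r$ is then $c_r+\mathrm{poly}(n)$, while $\mathfrak S_{T,\chi}(n)$ stays bounded, because it is at most $\dim$ of a fixed-rank space times $\deg R_{T,\chi}$ divided by that degree. Boundedness forces the polynomial part to vanish, so the argument above applies.
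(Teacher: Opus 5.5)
Your argument is correct and is essentially the paper's proof. Restrict to the progression $\{n+km\}_{k\geq 0}$, on which the normalized top-degree Frobenius eigenvalues all become $1$. Compute the limit of $\mathfrak S_{T,\chi}$ there using the weight bounds of Proposition \ref{pro-weight-main} together with $|G^{F^n}|/q^{n\dim G}\to 1$. Then conclude with Lemma \ref{lem-const}.

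The Jordan-block worry in your last paragraph is unfounded and should be deleted. For any operator $A$ one has $\mathrm{Tr}(A^k)=\sum_\lambda\lambda^k$, summed over eigenvalues with algebraic multiplicity. So a unipotent Jordan block of size $d$ has trace exactly $d$ in every power, with no polynomial growth. This makes both the semisimplification step and the boundedness argument unnecessary; the boundedness argument is also not justified as written.
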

\begin{proof}
   Note that $\mathfrak{S}_{T,\chi}(n)$ is an integer by definition. By Proposition \ref{pro-weight-main}, we may choose a integer $N$ such that: for all eigenvalues $\beta$ of $F^*$ on $H^{2\dim G}_c(G,\mathscr R_{T,\chi}\otimes\mathscr S))$, we have $\beta^N/q^{N\dim G}=1$.  By Proposition \ref{pro-weight-main} and Proposition \ref{pro-cohointerpretation-pre}, we have
   $$
   \lim_{i\to \infty} \mathfrak{S}_{T,\chi}(n+iN)=\frac{1}{q^{n\dim G}}\mathrm {Tr}((F^n)^*,H^{2\dim G}_c(G,\mathscr R_{T,\chi}\otimes\mathscr S)).
   $$
   We apply Lemma \ref{lem-const} to the function $f:\mathbb Z_+\to \mathbb C$ given by $j\mapsto \mathfrak{S}_{T,\chi}(n-N+jN)$ and complete the proof. (Note that, by Proposition \ref{pro-cohointerpretation-pre}, the function $f$ is indeed of geometric type in the sense of Definition \ref{def-gtype}.)
\end{proof}

\subsection{Digression: computing top cohomology}
In this subsection, we collect some elementary lemmas for later use.
\begin{lem}\label{lem-topcohogeneral}
    Let $X$ be an algebraic scheme of dimension $n$ over $\mathrm k$. Let $\mathscr F$ be a $\Qlb$-sheaf on $X$. Let $\{X_i\}_{i\in I}$ be the collection of $n$-dimensional irreducible components of $X$, where $I$ is a index set. Let $\mathscr F_i$ for $i\in I$ be the restriction of $\mathscr F$ to $X_i$. 
    Then we have a canonical identification 
$$
H^{2n}_c(X,\mathscr F)\cong \bigoplus_{i\in I}H^{2n}_c(X_i,\mathscr F_i).
$$
Moreover, in the above identification, we may replace $X_i$ by an arbitrary open dense subscheme $U_i$.
\end{lem}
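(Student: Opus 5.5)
The plan is to pass from $X$ to the union of its top-dimensional components through excision, and then split that union along disjoint open pieces. Let $Z\subset X$ be the union of all irreducible components of dimension $<n$, together with all pairwise intersections $X_i\cap X_j$ for $i\neq j$ in $I$. This is a closed subscheme of dimension $\le n-1$. Put $V:=X\setminus Z$, an open subscheme, and set $V_i:=X_i\setminus Z$. Then $V$ is the disjoint union of the $V_i$, and each $V_i$ is open in $V$ and open dense in $X_i$. With $j:V\hookrightarrow X$ and $i:Z\hookrightarrow X$, the excision triangle
$$
j_!j^*\mathscr F\to \mathscr F\to i_*i^*\mathscr F\to
$$
gives a long exact sequence in compactly supported cohomology. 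The dimension bound gives $H^k_c(Z,i^*\mathscr F)=0$ for $k>2(n-1)$, so in particular for $k=2n-1$ and $k=2n$. Hence
$$
H^{2n}_c(V,j^*\mathscr F)\xrightarrow{\sim} H^{2n}_c(X,\mathscr F),
$$
and this map is canonical, being the extension-by-zero map. Since $V=\coprod_i V_i$, the left-hand side is $\bigoplus_i H^{2n}_c(V_i,\mathscr F|_{V_i})$.

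Next I run the same excision argument on each $X_i$ with the closed complement $X_i\setminus V_i$, which has dimension $\le n-1$. This gives a canonical isomorphism $H^{2n}_c(V_i,\mathscr F_i|_{V_i})\cong H^{2n}_c(X_i,\mathscr F_i)$. The maps are compatible because extension by zero from $V_i$ to $X$ factors through $X_i$. Combining the two steps yields the displayed identification.

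For the last assertion, let $U_i\subset X_i$ be any open dense subscheme. Then $X_i\setminus U_i$ is closed of dimension $\le n-1$, since $X_i$ is irreducible of dimension $n$. The same excision argument gives $H^{2n}_c(U_i,\mathscr F|_{U_i})\cong H^{2n}_c(X_i,\mathscr F_i)$, so $X_i$ may be replaced by $U_i$.

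The only inputs are the vanishing $H^k_c(Y,-)=0$ for $k>2\dim Y$, which is standard for constructible sheaves, and the excision triangle. The single point needing care is that the index set $I$ and the identification are canonical. This holds because all the maps are extension-by-zero maps, and because each $V_i$ is independent of choices up to shrinking, a change that the last assertion shows is harmless. I expect no real obstacle here; the only subtlety is making sure the removed locus $Z$ contains every intersection of distinct top-dimensional components, so that $V$ genuinely splits as a disjoint union.
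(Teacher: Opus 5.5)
Your proof is correct and follows essentially the paper's argument: remove a closed locus of dimension $\le n-1$ so that the remainder is a disjoint union of open dense pieces $V_i$ of the top-dimensional components, use the vanishing of $H^{2n-1}_c$ and $H^{2n}_c$ on that locus, and then repeat the excision on each $X_i$. The paper lets the $V_i$ be any open dense subsets of $X_i$ that are open in $X$ and then checks independence of this choice, whereas your $Z$ is canonical, so canonicity is automatic in your version.

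One small correction to your wording. Since $X_i$ is closed in $X$, there is no extension-by-zero map that ``factors through $X_i$''. The correct compatibility is that the composite $H^{2n}_c(V_i)\to H^{2n}_c(X)\to H^{2n}_c(X_i)$, namely extension by zero followed by restriction, equals extension by zero from $V_i$ to $X_i$.
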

\begin{proof}
    For each $i\in I$, we choose an open dense subscheme $V_i$ of $X_i$ so that $V_i$ is likewise open in $X$. We set $V$ to be the union of $V_i$ for $i\in I$. Let $j:V\hookrightarrow X$ be the inclusion and $i:Z\hookrightarrow X$ be the complement of $j$, where $Z$ is reduced. Note that $Z$ is of dimension $\leq n-1$.
    Applying $H_c^\bullet$ to the distinguished triangle
    $$
    j_!j^* \mathscr F\to \mathscr F\to i_*i^*\mathscr F\to,
    $$
    we see that $\bigoplus_{i\in I}H^{2n}_c(V_i,\mathscr F|{V_i})\cong H^{2n}_c(V,j^*\mathscr F)\cong H^{2n}_c(X,\mathscr F)$. A similar argument shows that $H^{2n}_c(V_i, \mathscr F|{V_i})\cong H^{2n}_c(X_i,\mathscr F_i)$. We verify that the yielding identification $H^{2n}_c(X,\mathscr F)\cong \bigoplus_{i\in I}H^{2n}_c(X_i,\mathscr F_i)$ is independent of the choice of $V_i$. This completes the proof.
\end{proof}

\begin{lem}\label{lem-topcoho-sliced}
    Let $f:X\to Y$ be a morphism between algebraic schemes over $\mathrm k$. Let $n$ be the dimension of $X$. Let $\mathscr F$ be a $\Qlb$-sheaf on $X$. Let $\{X_i\}_{i\in I}$ be the collection of $n$-dimensional irreducible components of $X$, where $I$ is a index set. Let $\mathscr F_i$ for $i\in I$ be the restriction of $\mathscr F$ to $X_i$.  Fix an arbitrary locally closed subscheme $C$ of $Y$, and let $C_X:=C\times_Y X$. Set $$I_C:=\{i\in I:\text{$C_X$ contains the generic point of $X_i$}\}.$$ Let $\mathscr F_i$ for $i\in I$ be the restriction of $\mathscr F$ to $X_i$.  Let $\mathscr F_C$ be the restriction of $\mathscr F$ to $C_X$.
    Then we have
    $$
    H^{2n}_c(C_X,\mathscr F_C)\cong \bigoplus_{i\in I_C} H^{2n}_c(X_i,\mathscr F_i).
    $$
    Moreover, in the above identification, we may replace $X_i$ by an arbitrary open dense subscheme $U_i$.
\end{lem}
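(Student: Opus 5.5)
The plan is to reduce Lemma \ref{lem-topcoho-sliced} to Lemma \ref{lem-topcohogeneral}, applied to the scheme $C_X$ instead of $X$. First observe that $C_X\to X$ is a locally closed immersion, since it is the base change of the locally closed immersion $C\hookrightarrow Y$. Hence $C_X$ is a locally closed subscheme of $X$ with $\dim C_X\le n$. If $\dim C_X<n$, then $H^{2n}_c(C_X,\mathscr F_C)=0$ because cohomological dimension is bounded by twice the dimension. In that case $I_C$ is also empty: if $C_X$ contained the generic point of some $X_i$, it would contain a dense open subset of $X_i$ and so have dimension $n$. Both sides are therefore zero, and we may assume $\dim C_X=n$.

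Next, identify the $n$-dimensional irreducible components of $C_X$. Write $C_X=Z\cap O$ with $Z$ closed and $O$ open in $X$. An $n$-dimensional irreducible component $D$ of $C_X$ has closure $\overline D$ in $X$ which is irreducible of dimension $n$, so $\overline D=X_i$ for a unique $i$. Then $D$ contains the generic point of $X_i$, so $i\in I_C$. Conversely, for $i\in I_C$ the generic point $\eta_i$ lies in $C_X\subset O$, so $X_i\cap O$ is a dense open subset of $X_i$. Moreover $\eta_i\in Z$ with $Z$ closed, so $X_i\subset Z$. Consequently $X_i\cap C_X=X_i\cap O$ is an irreducible, open dense subset of $X_i$, of dimension $n$, contained in $C_X$. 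It is thus an $n$-dimensional irreducible component of $C_X$. This gives a bijection $i\mapsto X_i\cap C_X$ between $I_C$ and the set of $n$-dimensional irreducible components of $C_X$.

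Now apply Lemma \ref{lem-topcohogeneral} to $C_X$ with the sheaf $\mathscr F_C$. This gives
$H^{2n}_c(C_X,\mathscr F_C)\cong\bigoplus_{i\in I_C}H^{2n}_c(X_i\cap C_X,\mathscr F_i|_{X_i\cap C_X})$.
Each summand may be computed on any open dense subscheme of the component. Since $X_i\cap C_X$ is open dense in $X_i$, the second part of Lemma \ref{lem-topcohogeneral} applied to $X_i$ identifies the summand with $H^{2n}_c(X_i,\mathscr F_i)$, and equally with $H^{2n}_c(U_i,\mathscr F_i|_{U_i})$ for any open dense $U_i\subset X_i$. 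Concretely, this is the excision triangle for $U_i\cap C_X\subset X_i$, whose complement has dimension at most $n-1$ and so contributes nothing in degree $2n$ or $2n-1$.

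The only step needing real care is the middle one. One must check that "contains the generic point" is exactly the condition for $X_i$ to contribute, and that a component of $X$ cannot be cut by $C_X$ into a top-dimensional piece in some other way. This is handled by the closure argument above, which uses that $C_X$ is locally closed in $X$.
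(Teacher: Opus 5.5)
Your proof is correct and follows essentially the same route as the paper's: apply Lemma~\ref{lem-topcohogeneral} to $C_X$, identify its $n$-dimensional irreducible components with $C_X\cap X_i$ for $i\in I_C$, and use that each of these is open dense in $X_i$. Beyond the paper, you make explicit two points it leaves implicit: the degenerate case $\dim C_X<n$, and the bijection between $I_C$ and the top-dimensional components of $C_X$, which you obtain via the closure argument.
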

\begin{proof}
    The proof is essentially identical to that of Lemma \ref{lem-topcohogeneral}, as we show in what follows.

   Let $C_i:=C_X\cap X_i$ for $i\in I$.
     We see from Lemma \ref{lem-topcohogeneral} that 
    $$
    H^{2n}_c(C_X,\mathscr F_C)\cong \bigoplus_{i\in I_C} H^{2n}_c(C_i,\mathscr F|C_i).
    $$
    Since $C_i$ is a locally closed subscheme of $X_i$ containing the generic point, we see that $C_i$ is indeed an open dense subscheme of $X_i$, yielding
    $$
    H^{2n}_c(C_i,\mathscr F|C_i)\cong H^{2n}_c(X_i,\mathscr F_i).
    $$
    The last statement follows similarly to Lemma \ref{lem-topcohogeneral}.
    This completes the proof.
\end{proof}

\begin{comment}
\begin{rmk}\label{rm-slices-Fstructure}
    In Lemma \ref{lem-topcohogeneral} (\ref{lem-topcoho-sliced}, resp.), suppose that the encountered schemes $X$ ($X$, $Y$ and $C$, resp.) and morphisms are defined over $\mathbb F_q$, and that $\mathscr F$ is a Weil sheaf, then the displayed isomorphism is compatible with the action of the Frobenius.
\end{rmk}
\end{comment}
\subsection{The scheme $\mathcal Y_{T,B}$}\label{subsec-geoYTB} We retain the notation in Subsection \ref{subsec-mul-top}. Throughout this subsection, we fix a Borel pair $T\subset B$ of $G$ with $F$-stable $T$.
In this subsection we study the geometry of $\mathcal Y_{T,B}$ (see Definition \ref{def-YTB}).

\begin{defn}\label{def-YTbw}
    For $w\in B(\mathrm k)\backslash G(\mathrm k)/\mathrm S(\mathrm k)$, we define
    $$
    \mathcal Y_{T,B}^w:=\{(g,xB,y\mathrm S)\in G\times G/B\times G/\mathrm S:\text{$x^{-1}gx\in B$, $y^{-1}gy\in \mathrm S$ and $x^{-1}y\in Bw\mathrm S$}\}.
    $$
    We set $\mathrm v_w:\mathcal Y_{T,B}^w\to G$ to be the projection to the first factor.
    
    Fix $w\in B(\mathrm k)\backslash G(\mathrm k)/\mathrm S(\mathrm k)$ in this paragraph.  More concretely, let  $\mathcal O_w$ be the orbit of $(eB,w_0\mathrm S)\in G/B\times G/\mathrm S$ under the action of $G$ given by $g\cdot (xB,y\mathrm S)\mapsto (gxB,gy\mathrm S)$, where $w_0\in G(\mathrm k)$ is a representative of $w$. (We endow $\mathcal O_w$ with the reduced scheme structure.)
    Then we have a pullback square  
    \begin{equation*}
        \xymatrix{
&\mathcal Y_{T,B}^w\ar[r]\ar[d]& \mathcal O_w\ar[d]\\
 &G\times \mathcal O_w\ar[r]& \mathcal O_w\times \mathcal O_w       
        }
    \end{equation*}
    where \begin{itemize}
        \item the left vertical map is the  inclusion;
        \item the lower horizontal map is given by $(g,r)\mapsto (g\cdot r,r)$; (the action is introduced above)
        \item the right vertical map is the diagonal embedding;
        \item the upper horizontal map is given by $(g,xB,y\mathrm S)\mapsto (xB,y\mathrm S)$.
    \end{itemize}
\end{defn}

   Unwinding the definition, we see that $\mathcal Y_{T,B}^w$ is a locally closed subscheme of $\mathcal Y_{T,B}$. (Since each $\mathcal O_w$ is a locally closed subscheme of $G/B\times G/\mathrm S$.) We have an obvious partition  (note that the index set is finite by Lemma \ref{lem-BSdoublecoset})\begin{equation}\label{eq-partitionYTB}
       \mathcal Y_{T,B}=\bigcup_{w\in  B(\mathrm k)\backslash G(\mathrm k)/\mathrm S(\mathrm k)}\mathcal Y_{T,B}^w.
   \end{equation}

\begin{lem}\label{lem-dim-YTBw}
    For $w\in B(\mathrm k)\backslash G(\mathrm k)/\mathrm S(\mathrm k)$, we have 
    $\dim \mathcal Y_{T,B}^w=\dim G$. Further, the scheme $\mathcal Y_{T,B}^w$ is irreducible and has dimension $\dim G$.
\end{lem}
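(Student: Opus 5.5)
The plan is to exhibit $\mathcal Y_{T,B}^w$ as a fibration over the homogeneous space $\mathcal O_w$ with irreducible fibres of constant dimension, and then compute the dimension by adding the dimension of the base and the fibre.

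\emph{Step 1: the fibres.} Use the pullback square in Definition \ref{def-YTbw}. The upper horizontal map $\mathcal Y_{T,B}^w\to\mathcal O_w$, $(g,xB,y\mathrm S)\mapsto(xB,y\mathrm S)$, has fibre over $(xB,y\mathrm S)$ equal to
\[
xBx^{-1}\cap y\mathrm Sy^{-1},
\]
which is the stabilizer in $G$ of the point $(xB,y\mathrm S)$. Since $G$ acts transitively on $\mathcal O_w$, all these fibres are conjugate to the stabilizer of the base point $(eB,w_0\mathrm S)$, namely $B\cap w_0\mathrm S w_0^{-1}$.

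\emph{Step 2: realize $\mathcal Y_{T,B}^w$ as a quotient.} Let $H_w:=B\cap w_0\mathrm Sw_0^{-1}$. I would identify $\mathcal Y_{T,B}^w$ with the associated bundle
\[
G\times^{H_w}H_w,\qquad (h,b)\mapsto (hbh^{-1},\,hB,\,hw_0\mathrm S),
\]
with $H_w$ acting on $H_w$ by conjugation. This is an isomorphism onto $\mathcal Y_{T,B}^w$ with its reduced structure, because $G\to G/H_w\cong\mathcal O_w$ is a locally trivial fibration in the étale topology. It follows that
\[
\dim\mathcal Y_{T,B}^w=\dim G-\dim H_w+\dim H_w=\dim G.
\]

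\emph{Step 3: irreducibility.} The scheme $G\times H_w$ surjects onto $\mathcal Y_{T,B}^w$. Since $G$ is connected, it suffices to show that $H_w$ is connected. Both $B$ and $w_0\mathrm Sw_0^{-1}$ are connected solvable-type groups, and $\mathrm S\cong \mathrm{GL}_{\mathfrak n}\ltimes N$. I would check that $H_w$ decomposes as a product of a torus and a connected unipotent group. Take $w_0$ to be a permutation representative, which is possible by Lemma \ref{lem-BSdoublecoset} after conjugating $B$ to $\mathrm B^{op}$; we may pass between $B$ and $\mathrm B^{op}$ by conjugation, since all Borels are conjugate and the statement is invariant under this change. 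Then $H_w$ is normalized by $w_0Uw_0^{-1}$, which is a maximal torus of $H_w$ by Proposition \ref{pro-bijec-phi}. A closed subgroup of $\mathrm B^{op}$ stable under $w_0\mathrm Tw_0^{-1}$-conjugation is generated by $T$-fixed parts and root subgroups, hence connected. Alternatively, one can use $H_w=(w_0Uw_0^{-1})\ltimes R_u(H_w)$: the unipotent radical of a solvable group is connected exactly when its Lie-algebra weight decomposition lifts, and this follows as in Remark \ref{rm-ker-connected-unipo}.

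The main obstacle I expect is exactly this connectedness of $H_w$, including the care needed in characteristic $2$ and over nonreduced intersections. I would handle it with the explicit linear-algebra description from the proof of Proposition \ref{pro-bijec-phi}. That description writes $H_w$ as $w_0Uw_0^{-1}$ times a product of root groups of $\mathrm B^{op}$ together with some of the diagonal-type groups $w_0\mathbb G_iw_0^{-1}$ and twisted pieces from $N$. Each factor is an affine space or a torus, so $H_w$ is connected, and therefore $\mathcal Y_{T,B}^w$ is irreducible of dimension $\dim G$.
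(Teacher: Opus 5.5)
Your Steps 1 and 2 are the paper's argument. The paper base-changes the smooth surjection $G\times\mathcal O_w\to\mathcal O_w\times\mathcal O_w$, whose fibres are $\cong H_w:=B\cap w_0\mathrm Sw_0^{-1}$, along the diagonal. It then adds $\dim\mathcal O_w=\dim G-\dim H_w$ to $\dim H_w$. Your model $G\times^{H_w}H_w$ is the same fibration. You are also right that irreducibility requires $H_w$ to be connected: otherwise $G\times^{H_w}H_w^\circ$ would be a proper closed subset of full dimension. The paper uses this fact tacitly.

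The gap is in your justification of that connectedness. The lemma you invoke is about subgroups stable under $w_0\mathrm Tw_0^{-1}=\mathrm T$, but $H_w$ is not $\mathrm T$-stable, because $\mathrm T$ does not normalize $w_0\mathrm Sw_0^{-1}$. Already for $\mathfrak n=2$ and $w_0=e$, one has $\mathrm B^{op}\cap\mathrm S=\{\mathrm{diag}(g,g): g \text{ lower triangular}\}$. Conjugation by $\mathrm{diag}(a_1,a_2,b_1,b_2)$ rescales the two copies of $g_{21}$ by $a_2/a_1$ and $b_2/b_1$ respectively.

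The torus that does normalize $H_w$, namely $w_0Uw_0^{-1}$, centralizes the groups $w_0\mathbb G_iw_0^{-1}$, so no weight argument relative to it gives connectedness. Moreover, a criterion that allows ``$T$-fixed parts'' would not give connectedness in any case, since finite subgroups of $\mathrm T$ are $\mathrm T$-stable. The ``Lie-algebra weight decomposition lifts'' alternative is not a real criterion. Also, $R_u(H_w)$ is connected by definition, so writing $H_w=(w_0Uw_0^{-1})\ltimes R_u(H_w)$ presupposes what you want to prove.

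Your third route points in the right direction, but it places the pieces wrongly. The diagonally embedded (``twisted'') root groups lie in $L\cap\mathrm S$, while the part inside $N$ consists of honest root groups, the $\mathbb G_i$ among them.

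The clean fix is a direct coordinate description. After your reduction to a permutation representative, $B':=w_0^{-1}Bw_0$ contains $\mathrm T$. So in the basis $\mathbb B$, $B'$ is the set of invertible matrices whose entries vanish off a fixed pattern. Likewise, $\mathrm S$ is the set of invertible matrices of the form $\left(\begin{smallmatrix} a & x\\ 0 & a\end{smallmatrix}\right)$. Hence $w_0^{-1}H_ww_0=B'\cap\mathrm S$, even scheme-theoretically, is the set of invertible elements of a linear subspace of $\mathrm M_{2\mathfrak n}$ that contains $1$. It is therefore a nonempty open subscheme of an affine space, hence smooth and irreducible in every characteristic. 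This proves connectedness, and it also settles your concern about nonreduced intersections (giving $G/H_w\cong\mathcal O_w$). With that, Step 3 goes through.
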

\begin{proof}Fix $w\in B(\mathrm k)\backslash G(\mathrm k)/\mathrm S(\mathrm k)$ in this proof.
    Let $\mathfrak p_w:\mathcal Y_{T,B}^w\to G/B\times G/\mathrm S$ temporarily be the projection. We see that the image of $\mathfrak{p}_w$ is the orbit $\mathcal O_w$ of $(eB,w_0\mathrm S)\in G/B\times G/\mathrm S$ under the action of $G$ given by $g\cdot (xB,y\mathrm S)\mapsto (gxB,gy\mathrm S)$, where $w_0\in G(\mathrm S)$ is a representative of $w$. 

    Recall the following cartesian diagram introduced in Definition \ref{def-YTbw}
    \begin{equation*}
        \xymatrix{
&\mathcal Y_{T,B}^w\ar[r]\ar[d]& \mathcal O_w\ar[d]\\
 &G\times \mathcal O_w\ar[r]& \mathcal O_w\times \mathcal O_w       
        }
    \end{equation*}
   \begin{comment} where \begin{itemize}
        \item the left vertical map is given by $\mathcal Y_{T,B}^w\ni p=(g,xB,y\mathrm S)\mapsto \left(g,\mathrm p_w(p)\right)$;
        \item the lower horizontal map is given by $(g,r)\mapsto (g\cdot r,r)$; (the action is introduced in the previous paragraph)
        \item the right vertical map is the diagonal;
        \item the upper horizontal is $\mathrm p_w$.
    \end{itemize}\end{comment}
    The lower horizontal is smooth and surjective, of which each fibre is isomorphic to $B\cap w_0 \mathrm Sw_0^{-1}$. Note that $\mathcal O_w$ is smooth and irreducible of dimension $\dim G-\dim (B\cap w_0 \mathrm S w^{-1}_0)$.
    Consequently the upper horizontal map is likewise smooth, indicating that $\mathcal Y_{T,B}^w$ is smooth and irreducible of dimension $\dim G$. 
\begin{comment}
The image $\mathcal O_w$ of $\mathfrak{p}_w$ is irreducible and has dimension $\dim G-\dim B\cap w_0 \mathrm S w^{-1}_0$. It is clear that a nonempty fibre of $\mathfrak{p}_w$ is irreducible and of dimension $\dim (B\cap w_0\mathrm Sw^{-1}_0)$. 
    This shows that $\dim \mathcal Y_{T,B}^w=\dim G$ and $\mathcal Y_{T,B}^w$ has a unique irreducible component of dimension $\dim G$. Combining the fact that $\mathcal Y_{T,B}^w$ is equidimensional, we see that it is  irreducible, as desired.
\end{comment}
\end{proof}

\begin{cor}\label{cor-dimYTB}
 We have   $\dim \mathcal Y_{T,B}=\dim G$.
\end{cor}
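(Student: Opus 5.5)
The statement to prove is Corollary \ref{cor-dimYTB}, $\dim \mathcal Y_{T,B}=\dim G$. I plan to read it off directly from the partition \eqref{eq-partitionYTB} and Lemma \ref{lem-dim-YTBw}.

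First, recall that $\mathcal Y_{T,B}$ is the fibre product $\wt G_{T,B}\times_G \mathcal S$. Its $\mathrm k$-points are triples $(g,xB,y\mathrm S)$ with $x^{-1}gx\in B$ and $y^{-1}gy\in\mathrm S$. Each such triple has a well-defined relative position $x^{-1}y\in B(\mathrm k)\backslash G(\mathrm k)/\mathrm S(\mathrm k)$. This gives the set-theoretic partition \eqref{eq-partitionYTB} of $\mathcal Y_{T,B}$ into the locally closed pieces $\mathcal Y^w_{T,B}$.

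The index set is finite. Indeed, $B$ is conjugate to $\mathrm B^{op}$, so Lemma \ref{lem-BSdoublecoset} identifies the double coset space with the finite set $W(\mathrm T,G)/W(U,L\cap\mathrm S)$. Since dimension is computed on the underlying topological space, a finite union of locally closed subsets has dimension equal to the maximum of the dimensions of the pieces. Hence
\[
\dim\mathcal Y_{T,B}=\max_{w}\dim\mathcal Y^w_{T,B}.
\]

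By Lemma \ref{lem-dim-YTBw}, every $\mathcal Y^w_{T,B}$ is irreducible of dimension exactly $\dim G$. The index set is nonempty (it contains the identity coset), so the maximum equals $\dim G$.

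The only point that needs care is that each $\mathcal Y^w_{T,B}$ really is locally closed in $\mathcal Y_{T,B}$. It is the preimage of the $G$-orbit $\mathcal O_w\subset G/B\times G/\mathrm S$. Orbits of an algebraic group action are locally closed, so this holds, as already noted after Definition \ref{def-YTbw}. As a byproduct, the argument also shows that the closures of the $\mathcal Y^w_{T,B}$ are exactly the top-dimensional irreducible components of $\mathcal Y_{T,B}$, indexed by $B\backslash G/\mathrm S$. This is the input needed later, together with Lemma \ref{lem-topcohogeneral}, for the top-cohomology computation.
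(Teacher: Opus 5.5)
Your proposal is correct and takes the paper's approach: the paper's proof is the one line ``immediate consequence of Lemma~\ref{lem-dim-YTBw} and the partition \eqref{eq-partitionYTB}.'' You additionally spell out the finiteness of $B(\mathrm k)\backslash G(\mathrm k)/\mathrm S(\mathrm k)$ and the local closedness of the pieces, which the paper leaves implicit. Your byproduct remark, that the closures of the $\mathcal Y^w_{T,B}$ are exactly the irreducible components, is also correct, since distinct pieces are disjoint and each is open dense in its closure.
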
   
\begin{proof}
    This is a immediate consequence of  Lemma \ref{lem-dim-YTBw} and the partition (\ref{eq-partitionYTB}).
\end{proof}

Motivated by Lemma \ref{lem-BSdoublecoset} and Proposition \ref{pro-bijec-phi}, we make the following definition.

\begin{defn}\label{def-PhiB} We set
    $$\Phi_{B}:=\{w\in B(\mathrm k)\backslash G(\mathrm k)/\mathrm S(\mathrm k) : \text{ $w^{-1}Bw\cap N$ has trivial image under $\mathrm c$} \},$$
   where we abuse the notation by denoting some/any representative of $w\in B(\mathrm k)\backslash G(\mathrm k)/\mathrm S(\mathrm k)$ again by $w$.
\end{defn}
In particular, we have $\Phi_{\mathrm B^{op}}=\Phi$, where the right-hand side is defined in Proposition \ref{pro-bijec-phi} and $\mathrm B^{op}$ is defined in Definition \ref{def-various}.

\begin{rmk}\label{rm-phi-self-B}(We abuse the notation by denoting a representative of $w\in B(\mathrm k)\backslash G(\mathrm k)/\mathrm S(\mathrm k)$ again by $w$.)
    Parallel to Remark \ref{rm-phi-selfcen}, we may rephrase $\Phi_B$ as 
    $$\Phi_B=
  \{  w\in B(\mathrm k)\backslash G(\mathrm k)/\mathrm S(\mathrm k):C_{w^{-1} Bw\cap S}(U_w)=U_w\} ,
    $$
    where $U_w$ is some/any maximal torus of $w^{-1}Bw\cap \mathrm S$, and $C_{w^{-1}Bw\cap \mathrm S }(U_w)$ denotes the corresponding centralizer. Unraveling the definition, we may also rephrase $\Phi_B$ as
    $$\Phi_B=
     \{  w\in B(\mathrm k)\backslash G(\mathrm k)/\mathrm S(\mathrm k):\text{ $w^{-1}Bw\cap \mathrm S$ has trivial image under $\mathfrak c$}\}. 
    $$
    Fix $w\in \Phi_B$ and an maximal torus $U_w$ of $w^{-1}Bw\cap \mathrm S$, we also verify that $C_{w^{-1}Bw\cap \mathrm S }(s)=U_w$ for a general $s\in U_w(\mathrm k)$. Consequently, we have 
    $$
    \Phi_B=\{w\in B(\mathrm k)\backslash G(\mathrm k)/\mathrm S(\mathrm k):\text{ a general element in $w^{-1}Bw\cap \mathrm S$ is semisimple}\}.
    $$
\end{rmk}

\subsection{The locus $\mathfrak{V}$}\label{subsec-locus} We retain the notation in Subsection \ref{subsec-mul-top}.  Recall the torus $U$ introduced in Definition \ref{def-torusU}. Let $\kappa:G\times U\to G$ by the map given by $(g,u)\mapsto gug^{-1}$. Note that $\kappa$ is a finite-type morphism between noetherian schemes. By a theorem of Chevalley, we see that the (set-theoretic) image of $\kappa$ is a constructible subset of $G$. Since the source $G\times U$ of $\kappa$ is integral and the morphism $\kappa$ is defined over $\mathbb F_q$, we may choose an $F$-stable integral locally closed  subscheme $\mathfrak V$ of $G$ such that:
\begin{itemize}
    \item $\mathfrak{V}$ is contained in the scheme-theoretic image of $\kappa$;
    \item $\mathfrak{V}$ is dense in the (underlying topological space of the) scheme-theoretic image of $\kappa$.
\end{itemize}
Let $i_{\mathfrak{V}}:\mathfrak{V}\hookrightarrow G$ be the inclusion.

Roughly speaking, the rest of this section is to show `` $\mathfrak{V}$ is the locus contributing to the multiplicity of Shalika models".

In the rest of this subsection, we fix a Borel pair $T\subset B$ of $G$ with $F$-stable $T$.
\begin{prop}\label{pro-YTBw-toG}
    For $w\in B(\mathrm k)\backslash G(\mathrm k)/\mathrm S(\mathrm k)$, we have:
    (Recall the map $\mathrm v_w$ introduced in Definition \ref{def-YTbw})
    \begin{itemize}
        \item If $w\in \Phi_B$, then $\mathrm v_w$ maps a general point of $\mathcal Y_{T,B}^w$ to $\mathfrak{V}$. In particular, the scheme $\mathcal Y_{T,B}^w\times_{\mathrm v_w,G} \mathfrak{ V}$ contains the  generic point  of $\mathcal Y_{T,B}^w$. 
        \item If $w\notin \Phi_B$, then $\mathrm v_w$ maps a general point of $\mathcal Y_{T,B}^w$ to the complement of $\mathfrak{V}$ in $G$. In particular, the scheme $\mathcal Y_{T,B}^w\times_{\mathrm v_w,G} \mathfrak{ V}$ is of dimension $\leq \dim G-1$. 
    \end{itemize}
\end{prop}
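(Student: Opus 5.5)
The plan is to describe the image of $\mathrm v_w$ explicitly. Fix a representative $w_0$ of $w$. A point of $\mathcal Y_{T,B}^w$ is, up to the $G$-action, a triple $(g, xB, y\mathrm S)$ with $(xB,y\mathrm S)\in\mathcal O_w$; translating by $x$ (or $y$), the fibre of $\mathcal Y^w_{T,B}\to\mathcal O_w$ over $(eB,w_0\mathrm S)$ is the group $B\cap w_0\mathrm Sw_0^{-1}$, by the cartesian square in Definition \ref{def-YTbw}. Hence the image of $\mathrm v_w$ is the $G$-saturation $\bigcup_{h\in G}h\,(B\cap w_0\mathrm S w_0^{-1})\,h^{-1}$, equivalently the $G$-saturation of $H_w:=w_0^{-1}Bw_0\cap\mathrm S$. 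Since $\mathcal Y^w_{T,B}$ is irreducible (Lemma \ref{lem-dim-YTBw}) and the map $G\times H_w\to\mathcal Y^w_{T,B}$, $(h,s)\mapsto (hw_0 s w_0^{-1}h^{-1}, hB, hw_0\mathrm S)$, is surjective, a general point of $\mathcal Y^w_{T,B}$ maps to $hsh^{-1}$ with $s$ a general element of the connected solvable group $H_w$. So everything reduces to deciding where a general element of $H_w$ lies relative to $\mathfrak V$.

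First case, $w\in\Phi_B$. By Remark \ref{rm-phi-self-B}, a general element of $H_w$ is semisimple, hence conjugate in $H_w$ into a maximal torus $U_w$ of $H_w$. The maximal tori of $\mathrm S$ are all $\mathrm S$-conjugate to $U$ (as $U$ is a maximal torus of $L\cap\mathrm S$, a Levi factor of $\mathrm S=(L\cap\mathrm S)\ltimes N$), and $U_w$ is a torus of $\mathrm S$ of dimension $\mathfrak n=\dim U$ (by Proposition \ref{pro-bijec-phi} transported to $B$, or since $\mathrm S$ has rank $\mathfrak n$ and $w_0^{-1}Bw_0$ contains a maximal torus of $\mathrm S$ after conjugation). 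Hence general elements of $H_w$ are $G$-conjugate to general elements of $U$, and moreover general elements of $U$ hit a dense subset of the image of $\kappa$; as $\kappa(G\times U)$ is irreducible and $\mathfrak V$ is a dense locally closed (hence open dense) subset of its closure, the map $G\times U_w\to \overline{\kappa(G\times U)}$ is dominant, so a general point lands in $\mathfrak V$. This gives the first bullet, and the generic point statement follows since the preimage of $\mathfrak V$ is then a nonempty open (constructible containing a dense open) subset of the irreducible $\mathcal Y^w_{T,B}$.

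Second case, $w\notin\Phi_B$. Then $\mathfrak c$ is nontrivial on $H_w$, and by Remark \ref{rm-phi-self-B} a general element of $H_w$ is not semisimple. Elements of $\mathfrak V$ are semisimple (conjugates of elements of $U$), so a general element of the image lies outside $\mathfrak V$. Concretely, I will check that the Jordan decomposition $s=s_ss_u$ of a general $s\in H_w$ has $s_u\neq 1$: by Remark \ref{rm-phi-self-B} the centralizer of a maximal torus $U_w$ in $H_w$ strictly contains $U_w$, containing some conjugate of a root group $\mathbb G_i$, so general elements of $U_w\cdot C_{H_w}(U_w)^\circ$, which form a dense subset of $H_w$ after $H_w$-conjugation, have nontrivial unipotent part. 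Then the set of points of $\mathcal Y^w_{T,B}$ mapping into $\mathfrak V$ is a constructible subset not containing the generic point, so has dimension $\le \dim G-1$.

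The main obstacle is the density argument: that general elements of a connected solvable group $H$ are $H$-conjugate into $U_w\cdot C_H(U_w)$, which I will obtain from the standard fact that $\bigcup_{h\in H} h\,C_H(U_w)\,h^{-1}$ is dense in $H$ for connected solvable $H$ (every element lies in a Cartan-type subgroup $C_H(\text{torus})$), together with the explicit description $C_{w\mathrm Sw^{-1}}(wUw^{-1})\cong wUw^{-1}\times\prod w\mathbb G_iw^{-1}$ from the proof of Proposition \ref{pro-bijec-phi}.
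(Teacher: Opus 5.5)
Your proposal is correct and is essentially the paper's argument: the image of $\mathrm v_w$ is the $G$-saturation of $w_0^{-1}Bw_0\cap\mathrm S$, and by Remark~\ref{rm-phi-self-B} a general element of this group is semisimple exactly when $w\in\Phi_B$. In that case it is $G$-conjugate into $U$ and lands in $\mathfrak V$ by dominance onto $\overline{\mathrm{Im}(\kappa)}$; otherwise it is non-semisimple and hence lies outside $\mathfrak V\subset\mathrm{Im}(\kappa)$.

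The paper just chooses $w_0$ so that $U$ itself is a maximal torus of $B_w$, and reads off both cases from the conjugation map $N_w\times U\to B_w$. So your supplementary Cartan-subgroup check in the second case is not needed; as you phrase it, it would also still need a dimension count.
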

\begin{proof}
%For each $w\in B(\mathrm k)\backslash G(\mathrm k)/\mathrm S(\mathrm k)$, set $B_w:= w_0^{-1}Bw_0\cap \mathrm S$  for convenience, where $w_0$ is a representative of $w$. 
Appropriately choosing a representative $w_0$ for each $w\in B(\mathrm k)\backslash G(\mathrm k)/\mathrm S(\mathrm k)$, we may assume that $U$ is a maximal torus of $w_0^{-1}Bw_0\cap \mathrm S$ by (an obvious variant of) Proposition \ref{pro-bijec-phi}. In the rest of this proof, we fix one such representative $w_0$ for each $w\in B(\mathrm k)\backslash G(\mathrm k)/\mathrm S(\mathrm k)$, and set $B_w:= w_0^{-1}Bw_0\cap \mathrm S$  for convenience.

We fix $w\in B(\mathrm k)\backslash G(\mathrm k)/\mathrm S(\mathrm k)$ in this paragraph. 
   % Let $\mathfrak{p}_w:\mathcal Y_{T,B}^w\to \wt G_{T,B}\times G/\mathrm S$ be the morphism introduced in the proof of Lemma \ref{lem-dim-YTBw}. %Let $p=(eB,w_0\mathrm S)$ be the point introduced in \select{loc. cit.}.
    We verify that the image of $\mathrm v_w$ is the image of $\mathfrak{i}_w:G\times B_w \to G$, where $\mathfrak{i}_w$ is given by $(g,h)\mapsto ghg^{-1}$.
 Hence we have (note that $\kappa$ is the restriction of $\mathfrak i_w$)
\begin{equation}\label{eq-imagecompar}
    \mathrm {Im}(\kappa) \subset \mathrm {Im}(\mathrm v_w),
\end{equation}
where $\mathrm {Im}$ denotes the set-theoretic image. Note that $\mathfrak{V}\subset \mathrm {Im}(\kappa)$ set-theoretically.
    
    Consequently, it suffices to show:
    \begin{itemize}
        \item[(1)] a general point of $B_w$ is conjugate (under $G$) to an element in $U$ if $w\in \Phi_B$;
        \item[(2)] a general point of $B_w$ is not in $\mathfrak{V}$ if $w\notin \Phi_B$.
    \end{itemize}

Let $N_w$ be the kernel of the obvious map $B_w\twoheadrightarrow U$. Let $\mathfrak{j}_w:N_w\times U\to B_w$  be the map given by $(n,s)\mapsto nsn^{-1}$. By  Remark \ref{rm-phi-self-B}, we have $C_{B_w}(U)=U$ if and only if $w\in \Phi_B$. This indicates
$$
\Phi_B=\{w\in B(\mathrm k)\backslash G(\mathrm k)/\mathrm S(\mathrm k):\text{$\mathfrak{j}_w$ has dense image} \}.
$$
%(Indeed, for $w\in \Phi_B$, we can verify that $C_{B_w}(s)=U$ for a general element $s\in U(\mathrm k)$.)

Consequently, we have:
\begin{itemize}
    \item [(i)] a general point of $B_w$ is conjugate (under $N_w$) to an element in $U$ if $w\in \Phi_B$;
    \item [(ii)] a general element of $B_w$ is not semisimple if $w\notin \Phi_B$; in particular, a general element of $B_w$ is not in $\mathfrak{V}$ if $w\notin \Phi_B$.
\end{itemize}
We see from the definition of $\mathfrak{V}$ that: (i) implies (1), and (ii) implies (2). This completes the proof.
\end{proof}

In the rest of this subsection, we fix a character $\chi:T^F\to \Qlb^\times$. Recall the identification of $\Qlb$-sheaves
$$(\pi \circ\mathrm {pr}_1)_!(\mathrm {pr}_1^*\rho^*\mathscr L_\chi\otimes \mathrm{pr}_2^* \mathrm q^*\mathscr L_\psi)\cong \mathscr R_{T,\chi}\otimes \mathscr S$$
introduced in Proposition \ref{pro-badmodel}. We set $\mathscr F_{T,B,\chi}:=\mathrm {pr}_1^*\pi^*\mathscr L_\chi\otimes \mathrm{pr}_2^* \mathrm q^*\mathscr L_\psi$ to be the sheaf on $\mathcal Y_{T,B}$ for convenience. For $w\in B(\mathrm k)\backslash G(\mathrm k)/\mathrm S(\mathrm k)$, let $\mathscr F_{T,B,\chi}^w$ be the restriction of $\mathscr F_{T,B,\chi}$ to $\mathcal Y_{T,B}^w$.

\begin{prop}\label{pro-vanishing-YTBw-ninPhi} Fix $w\in B(\mathrm k)\backslash G(\mathrm k)/\mathrm S(\mathrm k)$.
    Suppose that $w\notin \Phi_B$, then we have
    $$
    H^{i}_c(\mathcal Y_{T,B}^w,\mathscr F_{T,B,\chi}^w)=0
    $$for all integers $i$.
\end{prop}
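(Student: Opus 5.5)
We use the standard vanishing mechanism: find a free action of $\mathbb G_a$ on $\mathcal Y_{T,B}^w$ along which $\mathscr F_{T,B,\chi}^w$ restricts to a nontrivial Artin--Schreier sheaf. By the pullback square of Definition \ref{def-YTbw}, $\mathcal Y_{T,B}^w$ fibres over the homogeneous space $\mathcal O_w\cong G/(B\cap w_0\mathrm S w_0^{-1})$. Concretely, after translating by $G$, we identify
$$
\mathcal Y_{T,B}^w\cong G\times^{B_w'} B_w',\qquad B_w':=B\cap w_0\mathrm S w_0^{-1},
$$
where $B_w'$ acts on itself by conjugation. Under this identification a point $(x,h)$ maps to $g=xhx^{-1}$, with $xB$ and $xw_0\mathrm S$ as the two flags. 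It is convenient to conjugate by $w_0$ and work instead with $B_w=w_0^{-1}Bw_0\cap\mathrm S$, as in the proof of Proposition \ref{pro-YTBw-toG}.

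In these coordinates the sheaf $\mathscr F^w_{T,B,\chi}$ pulled back to $G\times B_w$ is the exterior tensor product
$$
\Qlb\boxtimes\bigl(\mathrm m^*\mathscr L_\chi\otimes \mathfrak c^*\mathscr L_\psi\bigr),
$$
where $\mathrm m:B_w\to T$ is the composite $h\mapsto \mathrm d^B_T(w_0hw_0^{-1})$ and $\mathfrak c:\mathrm S\to\mathbb G_a$ is restricted to $B_w$. Since $G\to G/B_w$ is a $B_w$-torsor, it is enough to show that
$$
H^\bullet_c\bigl(B_w,\mathrm m^*\mathscr L_\chi\otimes\mathfrak c^*\mathscr L_\psi\bigr)=0
$$
equivariantly, or better, to exhibit the vanishing fibrewise over some quotient.

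Now we use $w\notin\Phi_B$. By Remark \ref{rm-phi-self-B}, $\mathfrak c$ restricted to $B_w$ is then nontrivial. Moreover $\mathfrak c$ is trivial on $L\cap\mathrm S$ and is a homomorphism on $\mathrm S$, so on $B_w$ it factors through the unipotent part $B_w\cap N$ with image all of $\mathbb G_a$. Choose a one-parameter subgroup $\mathbb G_i$-type root subgroup $Z\cong\mathbb G_a$ inside $B_w\cap N$ on which $\mathfrak c$ is an isomorphism; Proposition \ref{pro-bijec-phi} gives such a $w\mathbb G_iw^{-1}\subset B$. We then let $Z$ act on $B_w$ by right multiplication. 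Since $Z$ is unipotent and lies in $\ker(B_w\to T)$, $\mathrm m$ is $Z$-invariant. Since $\mathfrak c$ is a homomorphism, $\mathfrak c(hz)=\mathfrak c(h)+\mathfrak c(z)$.

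Hence $B_w\to B_w/Z$ is a trivial $\mathbb G_a$-bundle, because $B_w$ is solvable and connected, so $B_w\cong Z\times(B_w/Z)$ as varieties. On each fibre the sheaf is a constant sheaf tensored with $\mathscr L_\psi$ (translated). Since $H^\bullet_c(\mathbb G_a,\mathscr L_\psi)=0$, the projection formula and proper base change give vanishing.

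The main obstacle is making the descent from $G\times B_w$ to $\mathcal Y^w_{T,B}$ compatible. Right multiplication by $Z$ does not commute with the $B_w$-conjugation defining the twisted product. To handle this, I would first apply the vanishing to the fibres of the map $\mathcal Y^w_{T,B}\to\mathcal O_w$: these are the translates $xB_wx^{-1}$, and the restricted sheaf is the one above, so the argument applies fibrewise without any equivariance. Proper base change along $\mathcal Y^w_{T,B}\to\mathcal O_w$, together with the Leray spectral sequence, then gives $H^i_c=0$ for all $i$.
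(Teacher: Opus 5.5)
Your argument is correct and is essentially the paper's: you reduce, via proper base change along $\mathcal Y^w_{T,B}\to\mathcal O_w$, to the vanishing of $H^\bullet_c(B_w,\mathrm d_w^*\mathscr L_\chi\otimes\mathfrak c_w^*\mathscr L_\psi)$. You then kill that cohomology by fibring $B_w$ so that, on each fibre, the sheaf is a constant sheaf tensored with a translate of $\mathscr L_\psi$ on $\mathbb G_a$. The one variation is that you fibre by right cosets of a single root subgroup $Z=\mathbb G_i\subset w_0^{-1}Bw_0\cap N$ on which $\mathfrak c$ is an isomorphism, whereas the paper fibres over $L$ via $q_w$, with fibres the cosets of $w_0^{-1}Bw_0\cap N$, and invokes Remark \ref{rm-ker-connected-unipo}. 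Your version avoids that remark, and neither your ``descent'' worry nor the triviality of $B_w\to B_w/Z$ is needed, since proper base change only uses the fibres.
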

\begin{proof}Fix one such $w$ throughout of this proof.
Note that $G$ acts on $\mathcal Y_{T,B}^w$ via $g\cdot(g',xB.y\mathrm S)=(gg'g^{-1},gxB,gy\mathrm S)$. We verify that the sheaf $\mathscr F_{T,B,\chi}^w$ is equivariant with respect to this action.
    Let $\mathfrak p_w:\mathcal Y_{T,B}^w\to G/B\times G/\mathrm S$ be the projection introduced in the proof of Lemma \ref{lem-dim-YTBw}. Note that $G$ acts on $G/B\times G/\mathrm S$ via $g\cdot(xB,y\mathrm S)=(gxB,gy\mathrm S)$. Then we verify that $\mathfrak{p}_w$ is equivariant with respect to $G$-action. 
    
    By a spectral sequence, it suffices to show $(\mathfrak{p}_w)_! \mathscr F_{T,B,\chi}^w$ vanishes; or equivalently, for any point $r\in G/B\times G/\mathrm S$, the stalk $\left((\mathfrak{p}_w)_! \mathscr F_{T,B,\chi}^w\right)_r$ vanishes. For $r\in G/B\times G/\mathrm S$, let $f_r$ denote the fibre of $\mathfrak p_w$ over $r$.  By the proper base change, it suffices to show $H^\bullet_c(f_r,\mathscr F_{T,B,\chi}^w|f_r)$ vanish for any $r\in G/B\times G/\mathrm S$.
    By the equivariance introduced in the previous paragraph and the fact that the image of $\mathfrak{p}_w$ forms a single $G$-orbit, it suffices to show $H^\bullet_c(f
    _p,\mathscr F_{T,B,\chi}^w|f_p)$ for $p=(w_0^{-1}B,e\mathrm S)$, where $w_0\in G(\mathrm k)$ is a representative of $w$.
    
    We observe that the fibre $f_p$ can be identified with $B_w:=w_0^{-1}Bw_0\cap \mathrm S$ in a natural way. Under this identification, the sheaf $\mathscr F_{T,B,\chi}^w|f_p$ is identified with $\mathrm d_w^*\mathscr L_\chi\otimes \mathfrak{c}^*_w\mathscr L_\psi$, where
    \begin{itemize}
        \item $\mathscr L_\chi$ is introduced in Subsection \ref{subsec-sheavesontori};
        \item $\mathrm d_w:B_w\to T$  is given by $b\mapsto \mathrm d_{T}^B(w_0bw_0^{-1})$; (Here, the morphism $\mathrm d_T^B$ is introduced in Subsection \ref{subsec-charactershea}.)
        \item $\mathscr L_\psi$ is the Artin–Schreier sheaf on $\mathbb G_a$;
        \item $\mathfrak{c}_w:B_w\to \mathbb G_a$ is the restriction of $\mathfrak{c}$  introduced above Proposition \ref{pro-bijec-phi}.
    \end{itemize}

    Consequently, it remains to show that $H^{\bullet}_c(B_w,\mathrm d_w^*\mathscr L_\chi\otimes \mathfrak{c}^*_w\mathscr L_\psi)$ vanish.

    Recall the quotient map $q_\mathrm P:\mathrm P\to L$ introduced below Definition \ref{def-various}. Let $q_w:B_w\to L$ be the restriction of $q_\mathrm P$. By a spectral sequence, it suffices to show $(q_w)_! (\mathrm d_w^*\mathscr L_\chi \otimes \mathfrak{c}_w^*\mathscr L_\psi)=0$. Passing to stalks, it suffices to show $H^\bullet_c(\mathrm {Ker}(q_w),c^*\mathscr L_\psi)=0$, where $c:\mathrm {Ker}(q_w)\to \mathbb G_a$ denotes the restriction of $\mathfrak{c}_w:B_w\to \mathbb G_a$. 
    We have $\mathrm {Ker}(q_w)=w_0^{-1}Bw_0\cap N$. The map $c$ is a surjective (by Definition \ref{def-PhiB}) group homomorphism with a connected unipotent kernel (see Remark \ref{rm-ker-connected-unipo}). Consequently, we may identify $c_!c^*\mathscr L_\psi$ with $\mathscr L_\psi$ up to shifts and Tate twists. Since $H^{\bullet}_c(\mathbb G_a,\mathscr L_\psi)=0$, we likewise have $H^{\bullet}_c(\mathrm {Ker}(q_w),c^*\mathscr L_\psi)=H^\bullet_c(\mathbb G_a,c_!c^*\mathscr L_\psi)=0$. This completes the proof.
\end{proof}

\begin{prop}\label{pro-coho-passingto-locus}
    We have a canonical identification of cohomology spaces that is compatible with the action of Frobenius $F^*$
    $$
    H^{2\dim G}_c(G,\mathscr R_{T,\chi}\otimes \mathscr S)\cong H^{2\dim G}_c\left(\mathfrak{ V},i_{\mathfrak{V}}^*(\mathscr R_{T,\chi}\otimes \mathscr S)\right).
    $$
\end{prop}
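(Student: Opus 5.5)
The plan is to define the identification intrinsically on $G$, so that it is visibly defined over $\mathbb F_q$, and then to check that it is an isomorphism geometrically, after pulling back to $\mathcal Y_{T,B}$ for an auxiliary Borel subgroup $B \supset T$. Write $\mathscr K := \mathscr R_{T,\chi}\otimes\mathscr S$ and $d:=\dim G$. Let $Z$ be the closure of $\mathfrak V$ in $G$, so that $\mathfrak V$ is open in $Z$ and both are $F$-stable. We have two natural maps:
\[
H^{2d}_c(G,\mathscr K)\longrightarrow H^{2d}_c(Z,\mathscr K|_Z)\longleftarrow H^{2d}_c(\mathfrak V,i_{\mathfrak V}^*\mathscr K).
\]
The first is restriction to a closed subscheme; the second is extension by zero from an open subscheme. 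Both come from the excision triangles $j_!j^*\to \mathrm{id}\to i_*i^*\to$, which are defined over $\mathbb F_q$, so both maps commute with $F^*$. It therefore suffices to show that each of them is an isomorphism after base change to $\mathrm k$. For this we are free to forget the Frobenius and fix any Borel subgroup $B\supset T$; its $F$-stability plays no role.

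Fix such a $B$. By Proposition \ref{pro-badmodel} and proper base change along $\pi\circ\mathrm{pr}_1:\mathcal Y_{T,B}\to G$, for every locally closed $C\subset G$ (namely $G$, $Z$, $\mathfrak V$) we have
\[
H^\bullet_c(C,\mathscr K|_C)\cong H^\bullet_c(C_{\mathcal Y},\mathscr F_{T,B,\chi}|_{C_{\mathcal Y}}),\qquad C_{\mathcal Y}:=C\times_G\mathcal Y_{T,B}.
\]
These identifications are compatible with the restriction and extension-by-zero maps, since the excision triangles pull back. By Corollary \ref{cor-dimYTB} and Lemma \ref{lem-dim-YTBw}, $\mathcal Y_{T,B}$ has dimension $d$. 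Its $d$-dimensional irreducible components are exactly the closures $X_w$ of the strata $\mathcal Y_{T,B}^w$ in the partition (\ref{eq-partitionYTB}). Lemma \ref{lem-topcoho-sliced}, with the open dense pieces $U_w=\mathcal Y^w_{T,B}$, then gives
\[
H^{2d}_c(C_{\mathcal Y},\mathscr F)\cong\bigoplus_{w\in I_C}H^{2d}_c(\mathcal Y^w_{T,B},\mathscr F^w_{T,B,\chi}),
\]
where $I_C$ is the set of $w$ such that $C_{\mathcal Y}$ contains the generic point of $\mathcal Y^w_{T,B}$. One must check that the restriction and extension maps are, under these decompositions, the obvious inclusion and projection of summands. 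This holds because all these identifications are built from the same excision triangles, with open dense subsets chosen compatibly, as in the proof of Lemma \ref{lem-topcohogeneral}.

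Now compute the index sets. For $C=G$ we get all $w$. By Proposition \ref{pro-YTBw-toG}, for $C=\mathfrak V$ and also for $C=Z$ we get exactly $\Phi_B$. Indeed, when $w\in\Phi_B$ a general point of $\mathcal Y^w_{T,B}$ lands in $\mathfrak V$. When $w\notin\Phi_B$, a general point maps to a non-semisimple element; it then lies outside $Z$, which is contained in the closure of the image of $\kappa$, so its points are generically semisimple. This last point needs a small extra argument, for instance via (ii) in the proof of Proposition \ref{pro-YTBw-toG}, combined with the fact that the conjugates of elements of $U$ form a constructible set whose closure meets a general non-semisimple element of $B_w$ only in lower dimension. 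Finally, Proposition \ref{pro-vanishing-YTBw-ninPhi} kills the summands with $w\notin\Phi_B$. Hence both displayed maps are the identity on $\bigoplus_{w\in\Phi_B}H^{2d}_c(\mathcal Y^w_{T,B},\mathscr F^w)$, and so they are isomorphisms; composing one with the inverse of the other gives the claimed Frobenius-compatible identification.

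The main obstacle is bookkeeping. We must ensure that the maps from the excision triangles correspond exactly to the summand inclusions and projections in Lemma \ref{lem-topcoho-sliced}. We must also handle the case $w\notin\Phi_B$ for $C=Z$, since Proposition \ref{pro-YTBw-toG} only addresses $\mathfrak V$. If that case is awkward, I would avoid $Z$ altogether: apply Lemma \ref{lem-topcoho-sliced} directly to the stratification of $G$ by $\mathfrak V$ and its complement, and use only the vanishing for $w\notin\Phi_B$.
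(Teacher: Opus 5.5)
Your proposal is correct and follows essentially the paper's proof. The paper also passes through the closure $\bar{\mathfrak V}$ and uses the restriction map to $\bar{\mathfrak V}$ and the extension-by-zero map from $\mathfrak V$, both defined over $\mathbb F_q$, to get Frobenius compatibility; it shows they are isomorphisms via the decomposition over the strata $\mathcal Y^w_{T,B}$, using Lemma \ref{lem-topcoho-sliced}, Proposition \ref{pro-YTBw-toG} and Proposition \ref{pro-vanishing-YTBw-ninPhi}.

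Your worry about $C=Z$ is unnecessary. You only need $\Phi_B\subseteq I_{\mathfrak V}\subseteq I_Z$, which is automatic since $\mathfrak V\subseteq Z$, because every summand with $w\notin\Phi_B$ vanishes by Proposition \ref{pro-vanishing-YTBw-ninPhi}. So you need not prove $I_Z=\Phi_B$ exactly, and your ``generically semisimple'' heuristic would not have proved it anyway.
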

\begin{proof}
    This is a combination of Proposition \ref{pro-vanishing-YTBw-ninPhi} and Lemma \ref{lem-topcoho-sliced}, as we show in the following. 

During this proof, we will implicitly use Lemma \ref{lem-dim-YTBw} and the partition (\ref{eq-partitionYTB}). Note that each piece of the partition  (\ref{eq-partitionYTB}) is an irreducible locally closed subscheme of $\mathcal Y_{T,B}$ with dimension $\dim G$.
    
Set $\mathscr F:=\mathscr R_{T,\chi}\otimes \mathscr S$ in this proof for convenience.
    By Lemma \ref{cor-dimYTB}, Lemma \ref{lem-topcohogeneral} and Proposition \ref{pro-badmodel}, we have  identifications (some detail omitted)$$
    H^{2\dim G}_c(G,\mathscr F)\cong H^{2\dim G}_c(\mathcal Y_{T,B},\mathscr F_{T,B,\chi})\cong  \bigoplus_{w\in B(\mathrm k)\backslash G(\mathrm k)/\mathrm S(\mathrm k)} H^{2\dim G}_c(\mathcal Y_{T,B}^w,\mathscr F_{T,B,\chi}^w).
    $$
    By Proposition \ref{pro-vanishing-YTBw-ninPhi}, we see that
    $$H^{2\dim G}_c(\mathcal Y_{T,B}^w,\mathscr F_{T,B,\chi}^w)
    $$
    vanishes unless $w\in \Phi_B$.

By Proposition \ref{pro-YTBw-toG} and Lemma \ref{lem-topcoho-sliced} (applied to the morphism $\mathcal Y_{T,B}\xrightarrow[]{\pi\circ\mathrm {pr}_1} G$, the locally closed subset $\mathfrak{V}$ and the sheaf $\mathscr F_{T,B,\chi}$), we have     $$
H^{2\dim G}_c(\mathfrak{V},i_{\mathfrak{V}}^*\mathscr F)\cong \bigoplus_{w\in \Phi_B} H^{2\dim G}_c(\mathcal Y_{T,B}^w,\mathscr F_{T,B,\chi}^w).
$$

Putting all things together, we get the desired     $$
    H^{2\dim G}_c(G,\mathscr F)\cong H^{2\dim G}_c\left(\mathfrak{ V},i_{\mathfrak{V}}^*\mathscr F\right).
    $$

It remains to show this identification is compatible with the action of $F^*$. For this, we track the identification in a functorial way. Let $\bar {\mathfrak{V}}$ denote the closure of $\mathfrak{V}$ in $G$, with the reduced scheme structure. Let $i_{\bar {\mathfrak{V}}}: \bar {\mathfrak{V}}\hookrightarrow G$ be the closed immersion, and let $j_{\mathfrak{V}}:\mathfrak{V}\hookrightarrow \bar {\mathfrak{V}}$ be the open immersion. We have the following diagram
$$
\mathscr F\rightarrow (i_{\bar {\mathfrak{V}}})_*i_{\bar {\mathfrak{V}}}^* \mathscr F \leftarrow  (i_{\bar {\mathfrak{V}}})_*(j_{\mathfrak{V}})_!j_{\mathfrak{V}}^*i_{\bar {\mathfrak{V}}}^* \mathscr F,
$$
where the  arrow on the left is given by the adjunction $id\to (i_{\bar {\mathfrak{V}}})_*i_{\bar {\mathfrak{V}}}^*$, and the arrow on the right is given by the adjunction $(j_{\mathfrak{V}})_!j_{\mathfrak{V}}^*\rightarrow id$.
Note that these adjunctions are defined over $\mathbb F_q$. Hence we can apply $H^{2\dim G}_c(G,\_)$ to the above diagram, yielding morphisms that are compatible with the action of $F^*$:
$$
H^{2\dim G}_c(G,\mathscr F)\rightarrow H^{2\dim G}_c(G,(i_{\bar {\mathfrak{V}}})_*i_{\bar {\mathfrak{V}}}^* \mathscr F) \leftarrow H^{2\dim G}_c(G, (i_{\bar {\mathfrak{V}}})_*(j_{\mathfrak{V}})_!j_{\mathfrak{V}}^*i_{\bar {\mathfrak{V}}}^* \mathscr F).
$$
By Proposition \ref{pro-YTBw-toG}, Proposition \ref{pro-vanishing-YTBw-ninPhi} and Lemma \ref{lem-topcoho-sliced} (applied to the morphism $\mathcal Y_{T,B}\xrightarrow[]{\pi\circ\mathrm {pr}_1} G$, the locally closed subset $\mathfrak{V}$, $\bar{\mathfrak{V}}$ and the sheaf $\mathscr F_{T,B,\chi}$), the above arrows linking $H^{2\dim G}_c(G,\_)$ are all isomorphisms. We verify that the resulting isomorphism $H^{2\dim G}_c(G,\mathscr F)\cong H^{2\dim G}_c(G, (i_{\bar {\mathfrak{V}}})_*(j_{\mathfrak{V}})_!j_{\mathfrak{V}}^*i_{\bar {\mathfrak{V}}}^* \mathscr F)= H^{2\dim G}_c(\mathfrak{V},i^*_{\mathfrak{V}}\mathscr F)$ is precisely the isomorphism we constructed in the previous paragraph. This shows the compatibility of the actions of $F^*$ and completes the proof. 
\end{proof}

\subsection{Passing to semisimple elements}\label{subsec-passing-to-semisimple} We retain the notation in Subsection \ref{subsec-locus}. In particular, we fix a Borel pair $T\subset B$ of $G$, with $F$-stable $T$. And we fix a character $\chi:T^F\to\Qlb^\times$.

Recall the torus $\mathrm T$ of $G$ intrdoduced in Definition \ref{def-various}. Let $\mathrm s:G\times \mathrm T\to G$ be the map given by $(g,t)\mapsto gtg^{-1}$. Note that $\mathrm s$ is indeed defined over $\mathbb F_q$. The set-theoretic image $\mathrm {Im}(\mathrm s)$ of $\mathrm s$ is a constructible subset of $G$ containing $\mathfrak{V}$. Consequently, there exists a finite partition
$$\mathrm {Im}(\mathrm s)=\mathfrak{V}\sqcup \left(\bigsqcup_{j\in J} S_j\right),$$ where\begin{itemize}
    \item $J$ is a finite index set;
    \item for $j\in J$, the set $S_j$ is an $F$-stable locally closed subset of $G$. 
\end{itemize}
(Such a partition can be done over $\mathbb F_q$, then the pullback to $\mathrm k$ is a desired one.)

\textbf{We fix one such partition in the rest of this section.}

For $j\in J$, we endow $S_j$ with the reduced scheme structure and denote the corresponding scheme again by $S_j$ (by abusing the notation). Note that, the set of closed points of $\mathrm {Im}(\mathrm s)$ is precisely the set of semisimple element in $G(\mathrm k)$.  Let $\mathfrak{i}_j: S_j\hookrightarrow G$ denote the inclusion for $j\in J$. Then $\mathfrak{i}_j^*(\mathscr R_{T,\chi}\otimes \mathscr S)$ is a complex of Weil sheaves, where the Weil structure is induced by that of $\mathscr R_{T,\chi}\otimes \mathscr S$.

\begin{lem}\label{lem-vanishing-Sj}
    Let $J$ be as above.  Fix $j\in J$. Then:
    \begin{itemize}
        \item[(1)] The cohomology space $H_c^{i}(S_j,\mathfrak{i}_j^*(\mathscr R_{T,\chi}\otimes \mathscr S))$ vanishes unless $0\leq i\leq 2\dim G-1$;
        \item[(2)] For eigenvalues $\alpha$ of $F^*$ on $H_c^{i}(S_j,\mathfrak{i}_j^*(\mathscr R_{T,\chi}\otimes \mathscr S))$, we have $|\alpha|\leq q^{i/2}$.
    \end{itemize}
\end{lem}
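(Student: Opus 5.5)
Part (2) is a weight statement and will follow exactly as in Proposition \ref{pro-weight-sub}(ii). I would pull back along $\pi\circ\mathrm{pr}_1:\mathcal Y_{T,B}\to G$ via Proposition \ref{pro-badmodel} and proper base change, after replacing $F$ by a power so that $B$ is $F$-stable. Then I would apply Deligne's weight bound \cite[Th\'eor\`eme 3.3.1]{D} to the sheaf $\mathscr F_{T,B,\chi}$, which is pure of weight $0$ and lisse of rank one, restricted to $\mathcal Y_{T,B}\times_G S_j$. Passing to a power of $F$ is harmless: the bound $|\alpha|\le q^{i/2}$ for $F^*$ is equivalent to $|\alpha^m|\le q^{mi/2}$ for $(F^m)^*$.

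The real content is (1), specifically the vanishing in degree $2\dim G$; vanishing outside $[0,2\dim G]$ is Proposition \ref{pro-weight-sub}(i). By Proposition \ref{pro-badmodel} and base change,
\[
H^{2\dim G}_c\bigl(S_j,\mathfrak i_j^*(\mathscr R_{T,\chi}\otimes\mathscr S)\bigr)\cong H^{2\dim G}_c\bigl(\mathcal Y_{T,B}\times_G S_j,\ \mathscr F_{T,B,\chi}|\bigr),
\]
where $\dim\mathcal Y_{T,B}=\dim G$ by Corollary \ref{cor-dimYTB}. Lemma \ref{lem-topcoho-sliced}, applied to $\pi\circ\mathrm{pr}_1$, the locally closed subset $C=S_j$ and the irreducible top-dimensional pieces $\mathcal Y^w_{T,B}$ of Lemma \ref{lem-dim-YTBw}, identifies the right-hand side with $\bigoplus_w H^{2\dim G}_c(\mathcal Y^w_{T,B},\mathscr F^w_{T,B,\chi})$. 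Here $w$ ranges over those double cosets for which $S_j\times_G\mathcal Y_{T,B}$ contains the generic point of $\mathcal Y^w_{T,B}$. One caveat: the $\mathcal Y^w_{T,B}$ are only locally closed and need not be components. I would therefore pass to their closures, which are exactly the $\dim G$-dimensional irreducible components because the partition is finite. The argument in Lemma \ref{lem-topcohogeneral} applies verbatim.

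It then suffices to show that each summand vanishes.
\begin{itemize}
\item If $w\notin\Phi_B$, the summand is zero by Proposition \ref{pro-vanishing-YTBw-ninPhi}.
\item If $w\in\Phi_B$, Proposition \ref{pro-YTBw-toG} says $\mathrm v_w$ sends the generic point of $\mathcal Y^w_{T,B}$ into $\mathfrak V$. Since $S_j\cap\mathfrak V=\emptyset$ by the chosen partition, $S_j$ cannot contain the image of that generic point, so $w$ does not occur in the index set at all.
\end{itemize}
Hence the degree-$2\dim G$ group vanishes, which gives (1).

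The main obstacle is the bookkeeping in the middle step. One has to check that Lemma \ref{lem-topcoho-sliced} really applies with the strata $\mathcal Y^w_{T,B}$ in place of irreducible components, as handled above. One also has to make precise the phrase ``$\mathrm v_w$ maps a general point into $\mathfrak V$'' as a statement about the generic point, so that disjointness of $S_j$ and $\mathfrak V$ rules those strata out. Both points are available from the constructions in the paper.
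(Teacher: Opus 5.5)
Your proposal is correct and follows the paper's proof essentially verbatim. Part (2) comes from Proposition \ref{pro-weight-sub}(ii). For (1), you reduce to degree $2\dim G$ and decompose via Lemma \ref{lem-topcoho-sliced} over the strata $\mathcal Y^w_{T,B}$ whose generic point lands in $S_j$. This index set avoids $\Phi_B$ by Proposition \ref{pro-YTBw-toG} and $S_j\cap\mathfrak V=\emptyset$, so every summand vanishes by Proposition \ref{pro-vanishing-YTBw-ninPhi}. Your remark about replacing the locally closed strata by their closures (the top-dimensional components) only makes explicit a step the paper leaves implicit.
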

\begin{proof}
  The statement  (2) follows from (ii) of Proposition \ref{pro-weight-sub}. 
    
    We will implicitly use the fact (Lemma \ref{lem-dim-YTBw} and Corollary \ref{cor-dimYTB}) that $\dim \mathcal Y_{T,B}=\dim G$ in the rest of this proof. Using Proposition \ref{pro-badmodel} and proper base change, it is elementary to show that the space $H_c^{i}(S_j,\mathfrak{i}_j^*(\mathscr R_{T,\chi}\otimes \mathscr S))$ vanishes unless $0\leq i\leq 2\dim G$. Hence it suffices to show the vanishing of $H_c^{2\dim G}(S_j,\mathfrak{i}_j^*(\mathscr R_{T,\chi}\otimes \mathscr S))$.
    By Lemma \ref{lem-topcoho-sliced} (applied to the morphism $\mathcal Y_{T,B}\to G$, the locally closed subset $S_j$ and the sheaf $\mathscr F_{T,B,\chi}=\mathrm {pr}_1^*\rho^*\mathscr L_\chi\otimes \mathrm{pr}_2^* \mathrm q^*\mathscr L_\psi$) and Proposition \ref{pro-badmodel}, we have $$H_c^{2\dim G}(S_j,\mathfrak{i}_j^*(\mathscr R_{T,\chi}\otimes \mathscr S))\cong \bigoplus\limits_{w\in K}H_w,$$ where %(Note that by Lemma \ref{lem-dim-YTBw}, each $\mathcal Y_{T,B}^w$ has a unique generic point of dimension $\dim G$.) 
    $$K:=\{w\in B(\mathrm k)\backslash G(\mathrm k)/\mathrm S(\mathrm k):\text{$\mathrm v_w$ maps the  generic point of $\mathcal Y_{T,B}^w$ into $S_j$}\}$$
    (see Definition \ref{def-YTbw} for the definition of $\mathrm v_w$)
    and $H_w:= H^{2\dim G}_c(\mathcal Y_{T,B}^w, \mathscr F_{T,B,\chi}^w)$.
    %(1) follows from Lemma \ref{lem-topcoho-sliced}, 
    By Proposition \ref{pro-YTBw-toG}, the set $K$ is disjoint from $\Phi_B$.
    Hence by Proposition \ref{pro-vanishing-YTBw-ninPhi}, for each $w\in K$, the space $H_w$ vanishes. This shows (1), as desired. 
     %wrong(Note that we can indeed show that ``$H_c^{i}(S_j,\mathfrak{i}_j^*(\mathscr R_{T,\chi}\otimes \mathscr S))$ vanishes unless $0\leq i\leq 2\dim G-2$'', but we do not require this.)
\end{proof}

%We set $\mathfrak K:=\left(|B(\mathrm k)\backslash G(\mathrm k)/\mathrm S(\mathrm k)|\right)!\cdot (2\mathfrak{n})!$.
\begin{thm}\label{thm-passing-to-semisimple} Fix a positive integer $n$.
    The following numbers coincide: (See the proof for the existence of the relevant limits.)
    \begin{itemize}
        \item[(1)]  $\mathfrak{S}_{T,\chi}(n)$;
        \item[(2)] $\frac{1}{q^{n\dim G}}\mathrm {Tr}((F^n)^*,H^{2\dim G}_c(G,\mathscr R_{T,\chi}\otimes\mathscr S))$;
        \item[(3)] $\frac{1}{q^{n\dim G}}\mathrm {Tr}((F^n)^*,H^{2\dim G}_c\left(\mathfrak{V},{i}_{\mathfrak{ V}}^*(\mathscr R_{T,\chi}\otimes\mathscr S))\right)$;
        \item[(4)] $
\lim\limits_{i\to \infty}\frac{1}{|G^{F^{n+i\mathfrak{K}}}|}\sum\limits_{s\in G^{F^{n+i\mathfrak{K}}}}\left( R_{T,\chi\circ \mathrm N_T^{n+i\mathfrak{K}}}^{(n+i\mathfrak{K})}\cdot \mathrm {Ind}_{\mathrm S^{F^{n+i\mathfrak{K}}}}^{G^{F^{n+i\mathfrak{K}}}}(\psi^{(n+i\mathfrak{K})}_{\mathrm{S}})\right)(s)
$ for sufficiently divisible $\mathfrak{K}$;     
\item [(5)]
$
\lim\limits_{i\to \infty}\frac{1}{|G^{F^{n+i\mathfrak{K}}}|}\sum\limits_{s\in \mathfrak{V}^{F^{n+i\mathfrak{K}}}}\left( R_{T,\chi\circ \mathrm N_T^{n+i\mathfrak{K}}}^{(n+i\mathfrak{K})}\cdot \mathrm {Ind}_{\mathrm S^{F^{n+i\mathfrak{K}}}}^{G^{F^{n+i\mathfrak{K}}}}(\psi^{(n+i\mathfrak{K})}_{\mathrm{S}})\right)(s)
$ for sufficiently divisible $\mathfrak{K}$;       
\item[(6)] $
\lim\limits_{i\to \infty}\frac{1}{|G^{F^{n+i\mathfrak{K}}}|}\sum\limits_{s}\left( R_{T,\chi\circ \mathrm N_T^{n+i\mathfrak{K}}}^{(n+i\mathfrak{K})}\cdot \mathrm {Ind}_{\mathrm S^{F^{n+i\mathfrak{K}}}}^{G^{F^{n+i\mathfrak{K}}}}(\psi^{(n+i\mathfrak{K})}_{\mathrm{S}})\right)(s)
$ for sufficiently divisible $\mathfrak{K}$, where  $s$ runs over  semisimple elements of $G^{F^{n+i\mathfrak{K}}}$ in the inner summation.
    \end{itemize}
    (The reader is advised to review Remark \ref{rm-shasheafvalue} and Subsection \ref{subsec-DL} for the notation.)
\end{thm}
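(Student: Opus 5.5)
The plan is to prove the chain (1)$=$(2)$=$(3), then identify (4), (5), (6) with these via the Grothendieck trace formula and the weight estimates already established. The equality (1)$=$(2) is exactly Proposition \ref{pro-cohointerpret-main}. The equality (2)$=$(3) follows by taking traces of $(F^n)^*$ on both sides of the $F^*$-compatible identification of Proposition \ref{pro-coho-passingto-locus}.

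For (4), I apply Proposition \ref{pro-cohointerpretation-pre} with $n$ replaced by $m:=n+i\mathfrak K$. Together with Remark \ref{rm-shasheafvalue}, Proposition \ref{prop-charactersheaf-main}(i) and Frobenius reciprocity, the summand expression in (4) equals $\frac{1}{|G^{F^m}|}\sum_{s\in G^{F^m}}R^{(m)}\cdot\mathrm{Ind}(\psi^{(m)}_{\mathrm S})(s)=\mathfrak S_{T,\chi}(m)$. Hence (4) is the limit of $\mathfrak S_{T,\chi}(n+i\mathfrak K)$. By (1)$=$(2) applied at $m$, this equals $q^{-m\dim G}\mathrm{Tr}((F^m)^*,H^{2\dim G}_c)$. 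I choose $\mathfrak K$ divisible by the common order $N$ of the roots of unity $\beta/q^{\dim G}$ from Proposition \ref{pro-weight-main}(iii), so this is constant in $i$ and equal to (2). In fact the sequence is then constant, so the limit trivially exists.

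For (5), the Grothendieck trace formula on $\mathfrak V$ gives
\[
\frac{1}{|G^{F^m}|}\sum_{s\in\mathfrak V^{F^m}}(\cdots)(s)=\frac{1}{|G^{F^m}|}\sum_j(-1)^j\mathrm{Tr}\bigl((F^m)^*,H^j_c(\mathfrak V,i_{\mathfrak V}^*(\mathscr R_{T,\chi}\otimes\mathscr S))\bigr).
\]
The cohomology vanishes outside $[0,2\dim G]$, and eigenvalues on $H^j_c$ have absolute value at most $q^{j/2}$ by Proposition \ref{pro-weight-sub}. Since $|G^{F^m}|=q^{m\dim G}(1+O(q^{-m}))$, all terms with $j<2\dim G$ contribute $O(q^{-m/2})\to 0$. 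The top term, for $\mathfrak K$ divisible by the analogous root-of-unity order from Proposition \ref{pro-weight-sub}(iii), equals (3) for every $i$. So the limit exists and equals (3).

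For (6), the semisimple elements of $G^{F^m}$ are the $F^m$-points of $\mathrm{Im}(\mathrm s)=\mathfrak V\sqcup\bigsqcup_j S_j$. The sum therefore splits as the sum in (5) plus sums over $S_j^{F^m}$. Each $S_j$-sum equals, by the trace formula, an alternating sum of traces on $H^i_c(S_j,\mathfrak i_j^*(\cdots))$. By Lemma \ref{lem-vanishing-Sj}, these groups vanish for $i\ge 2\dim G$ and have eigenvalues of absolute value at most $q^{i/2}$. After division by $|G^{F^m}|$, each such sum is $O(q^{-m/2})$ and tends to $0$. Since $J$ is finite, (6) equals (5).

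The main technical point is bookkeeping. First, "sufficiently divisible" $\mathfrak K$ must simultaneously kill the finitely many roots of unity appearing on the top cohomology of $G$ and of $\mathfrak V$. Second, the uniform estimate $|G^{F^m}|\sim q^{m\dim G}$ must hold for both $F_{\mathrm{GL}}$ and $F_{\mathrm U}$; this is standard from the order formula. Everything else is a direct application of the cited propositions.
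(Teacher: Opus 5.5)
Your proposal is correct and follows the paper's proof essentially step for step: (1)$=$(2) and (2)$=$(3) come from Propositions \ref{pro-cohointerpret-main} and \ref{pro-coho-passingto-locus}, and (4), (5), (6) come from the Grothendieck trace formula together with the weight bounds of Propositions \ref{pro-weight-main}, \ref{pro-weight-sub} and Lemma \ref{lem-vanishing-Sj}, applied over the partition $\mathrm{Im}(\mathrm s)=\mathfrak V\sqcup\bigsqcup_j S_j$. Your only deviation is harmless: in (4) you note that, once $\mathfrak K$ kills the roots of unity, the sequence is literally constant, equal to $\mathfrak S_{T,\chi}(n+i\mathfrak K)=\mathfrak S_{T,\chi}(n)$. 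Also, in (5) the top-degree term equals (3) only after the factor $q^{m\dim G}/|G^{F^m}|\to 1$ is taken into account, which your estimate on $|G^{F^m}|$ already covers.
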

\begin{proof}
 By  Proposition \ref{pro-cohointerpret-main}, we see that (1) and (2) are identical. By Proposition \ref{pro-coho-passingto-locus}, we see (2) and (3) coincide. 
 
   Proposition \ref{pro-weight-main} indicates that: for each eigenvalue $\alpha$ of $F^*$ on $H^{2\dim G}_c(G,\mathscr R_{T,\chi}\otimes\mathscr S)$, the number $\alpha/q^{\dim G}$ is a  root of unity. Hence (2) and (4) are identical by Grothendieck trace formula and Proposition \ref{pro-weight-main} (let the integer $\mathfrak{K}$ be such that $(\alpha/q^{\dim G})^{\mathfrak{K}}=1$ for each eigenvalue $\alpha$ of $F^*$ on $H^{2\dim G}_c(G,\mathscr R_{T,\chi}\otimes\mathscr S)$). 
Similarly, by Proposition \ref{pro-weight-sub} (ii) and Grothendieck trace formula, we see that (3) and (5) coincide.

 By Grothendieck trace formula and Lemma \ref{lem-vanishing-Sj},  for each $j\in J$, we have $$
\lim\limits_{i\to \infty}\frac{1}{|G^{F^{n+i\mathfrak{K}}}|}\sum_{s\in S_j^{F^{n+i\mathfrak{K}}}}\left( R_{T,\chi\circ \mathrm N_T^{n+i\mathfrak{K}}}^{(n+i\mathfrak{K})}\cdot \mathrm {Ind}_{\mathrm S^{F^{n+i\mathfrak{K}}}}^{G^{F^{n+i\mathfrak{K}}}}(\psi^{(n+i\mathfrak{K})}_{\mathrm{S}})\right)(s)=0.
$$ 
Subtracting (5) from (6),  it is immediate to show that (6) and (5) are identical given the partition introduced at the beginning of this subsection.  

This completes the proof.
\end{proof}

\begin{rmk}  We will not use this Remark in the rest of this paper.
    In Theorem \ref{thm-passing-to-semisimple} (4), (5) and (6), we may take $\mathfrak{K}=2(2\mathfrak n)!$. This can be seen as follows:
    \begin{itemize}
       \item[(i)] It suffices to show that: if $F=F_{\mathrm {GL}}$, then in Theorem \ref{thm-passing-to-semisimple} (4), (5) and (6), we may take $\mathfrak{K}=(2\mathfrak n)!$. In the following two items, we assume that $F=F_{\mathrm{GL}}$.
        \item[(ii)] Suppose that $T$  is  contained in an $F$-stable Borel $B$ in this item. Fix $w\in B(\mathrm k)\backslash G(\mathrm k)/\mathrm S(\mathrm k)$ in this item. Then  $\mathcal Y_{T,B}^{w}$ is $F$-stable and $\mathscr F_{T,B,\chi}^w$ is a Weil sheaf (with the natural Weil structure). We verify that the sheaf $\mathscr F_{T,B,\chi}^w$ is either the constant Weil sheaf or  is not geometrically constant. Consequently (by Poincar\'e duality), the operator $F^*$ acts as the multiplication by $q^{\dim G}$ on $H^{2\dim G}_c(\mathcal Y_{T,B}^w,\mathscr F_{T,B,\chi}^w)$. By Lemma \ref{lem-topcohogeneral} and Proposition \ref{pro-badmodel}, we see that $F^*$ acts as multiplication by $q^{\dim G}$ on $H^{2\dim G}_c(G,\mathscr R_{T,\chi}\otimes \mathscr L)$.

        \item[(iii)] For a general $F$-stable maximal torus $T$ of $G$, we see by the explicit classification of $F$-stable maximal tori of $G$ \cite[Corollary 1.14]{DL} that $T$ is contained in an $F^{(2\mathfrak{n})!}$-stable Borel subgroup of $G$. Hence by (ii), the operator $(F^{(2\mathfrak{n})!})^*$ acts as the multiplication by $q^{(2\mathfrak{n})!\dim G}$ on $H^{2\dim G}_c(G,\mathscr R_{T,\chi}\otimes \mathscr L)$, as desired.
        
    \end{itemize}
   
\end{rmk}

\begin{rmk}\label{rm-passing-to-semisimple}
    The coincidence of (1) and (6) in Theorem \ref{thm-passing-to-semisimple} can be roughly rephrased as follows: the multiplicity of Shalika models with respect to Deligne-Lusztig characters is the ``leading term" of the period summation over semisimple elements.
\end{rmk}

\section{A Computational Approach}\label{sec-computational-approach}

Throughout this section, let $G$ be as introduced in Section \ref{sec-doublecoset}, and $F$ be either $F_{\mathrm {GL}}$ or $F_{\mathrm U}$ introduced in Subsection \ref{subsec-intro-Fro}.

This section is dedicated to computing (6) of Theorem \ref{thm-passing-to-semisimple}; that is, $$
\lim\limits_{i\to \infty}\frac{1}{|G^{F^{n+i\mathfrak{K}}}|}\sum\limits_{s}\left( R_{T,\chi\circ \mathrm N_T^{n+i\mathfrak{K}}}^{(n+i\mathfrak{K})}\cdot \mathrm {Ind}_{\mathrm S^{F^{n+i\mathfrak{K}}}}^{G^{F^{n+i\mathfrak{K}}}}(\psi^{(n+i\mathfrak{K})})\right)(s)
$$ for sufficiently divisible $\mathfrak{K}$, where $s$ runs over semisimple elements of $G^{F^{n+i\mathfrak{K}}}$ in the inner sum. 

In computing this, we establish a Reeder formula for the Shalika model. The procedure is similar to that of \cite{Shi}.

For convenience, we denote $\mathcal X=G/\mathrm S$ (see Section \ref{sec-doublecoset} for the notation).

\subsection{Transporters of semisimple elements}
In this subsection, we retain the notation in Section \ref{sec-doublecoset}. The main reference is \cite[Section 2]{Shi}. 

For any $s\in G(\mathrm k)$, let $\mathfrak T_\circ(s,\mathrm S)$ be the closed subscheme of $G$ representing the  functor via Yoneda embedding sending an algebra $R$ over $\mathrm k$ to
$$
\{g\in G(R) :  g^{-1} s g \in \mathrm S(R) \},
$$
where we view $s$ as an $R$-point via the inclusion $G(\mathrm k) \hookrightarrow G(R)$. Hence, we have a pullback square
$$
\xymatrix{
\mathfrak T_\circ(s,\mathrm S) \ar[r]\ar[d]  & \mathrm S\ar[d]\\
G\ar[r]&G\\
}
$$
where the right vertical arrow is the inclusion and the bottom arrow sends $g$ to $g^{-1} s g$. Let $\mathfrak T(s,\mathrm S)$ be the closed subscheme of $\mathfrak T_\circ(s,\mathrm S)$ with the same underlying topological space and the reduced scheme structure.

\begin{rmk}
    We will not use this Remark in the rest of this paper. In this remark, we fix a semisimple $s\in G(\mathrm k)$. Then we can show that $\mathfrak T_\circ(s,H)$ is smooth. (Consequently, the schemes $\mathfrak T_\circ(s,\mathrm S)$ and $\mathfrak T(s,\mathrm S)$ coincide.)
    We note that the following commutative diagram is cartesian: % (Recall $\mathcal X=G/\mathrm S$)
    $$
\xymatrix{
\mathfrak T_\circ(s,\mathrm S) \ar[r]\ar[d]  & \mathcal X ^{s}\ar[d]\\
G\ar[r]&\mathcal X\\
}
$$
where
\begin{itemize}
    \item $\mathcal X^s$ denotes the fixed-point scheme of $\mathcal X$ under s;
    \item the horizontal maps are the natural quotients;
    \item the vertical maps are the natural inclusions.
\end{itemize}
By \cite[Theorem 13.1]{Mi}, the scheme $\mathcal X^s$ is smooth. The lower horizontal map is visibly smooth, hence the upper horizontal map is likewise smooth. Consequently, we see that $\mathfrak T_\circ(s,\mathrm S)$ is smooth.
\end{rmk}

\subsection{$\mathrm k$-points of $\mathfrak{T}(s,\mathrm S)$} We retain the notation as in the previous subsection. In this subsection, let $T$ be a (not necessarily $F$-stable) maximal torus of $G$. Recall the maxmial torus $U$ of $\mathrm S$ introduced in Definition \ref{def-various}.

For $t\in T(\mathrm k)$, we set $N(t,U,T):=\{g\in G(\mathrm k):t \in gUg^{-1}(\mathrm k), gUg^{-1}\subset T\}$.% Let $G_t$ be the identity component of the centralizer $C_G(t)$.

For semisimple elements $r\in G(\mathrm k)$, let $G_r$ denotes the centralizer $C_G(r)$ for convenience.

The following lemma is a trivial instance of \citep[Lemma 2.1]{Shi}.
\begin{lem}\label{kpt} Let $s\in T(\mathrm k)$.
The set of the $\mathrm k$-points of $\mathfrak T(s,\mathrm S)$ is the image of $G_s(\mathrm k)\times N(s,U,T)\times \mathrm S(\mathrm k)$ under the multiplication of $G(\mathrm k)$.
\end{lem}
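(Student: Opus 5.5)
The plan is to prove the two inclusions separately. The containment of the image in $\mathfrak T(s,\mathrm S)(\mathrm k)$ is formal. Take $c\in G_s(\mathrm k)$, $g\in N(s,U,T)$ and $h\in\mathrm S(\mathrm k)$. Then
\[
(cgh)^{-1}s(cgh)=h^{-1}g^{-1}(c^{-1}sc)gh=h^{-1}(g^{-1}sg)h .
\]
Since $s\in gUg^{-1}$, the element $g^{-1}sg$ lies in $U\subset\mathrm S$. Hence the whole conjugate lies in $\mathrm S(\mathrm k)$, so $cgh\in\mathfrak T(s,\mathrm S)(\mathrm k)$.

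For the reverse inclusion, let $x\in G(\mathrm k)$ with $x^{-1}sx\in\mathrm S(\mathrm k)$, and set $s':=x^{-1}sx$, a semisimple element of $\mathrm S$.

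The first step is to conjugate $s'$ into $U$ inside $\mathrm S$. Since $\mathrm S$ is a connected linear algebraic group, every semisimple element lies in a maximal torus. All maximal tori of $\mathrm S$ are $\mathrm S$-conjugate, and $U$ is one of them (Definition \ref{def-torusU} and the remark that $U$ is a maximal torus of $L\cap\mathrm S$, hence of $\mathrm S=(L\cap\mathrm S)\ltimes N$, because $N$ is unipotent). So there is $h\in\mathrm S(\mathrm k)$ with $u:=hs'h^{-1}\in U(\mathrm k)$. Put $y:=xh^{-1}$. Then $y^{-1}sy=u\in U$, so $s\in yUy^{-1}$.

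The second step is to move $yUy^{-1}$ into $T$ using the centralizer of $s$. Both $yUy^{-1}$ and $T$ are tori containing $s$, hence both lie in $G_s=C_G(s)$; note $yUy^{-1}$ is commutative and contains $s$. For $G=\mathrm{GL}_{2\mathfrak n}$ the group $G_s$ is connected, being a product of general linear groups. Since $T$ is a maximal torus of $G$ contained in $G_s$, it is a maximal torus of $G_s$. The torus $yUy^{-1}$ lies in some maximal torus of $G_s$, and that torus is $G_s$-conjugate to $T$. Thus there is $c\in G_s(\mathrm k)$ with $c^{-1}yUy^{-1}c\subset T$.

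Now set $g:=c^{-1}y$. Then $gUg^{-1}\subset T$, and
\[
s=c^{-1}sc\in c^{-1}yUy^{-1}c=gUg^{-1},
\]
so $g\in N(s,U,T)$. Finally $x=yh=c\,g\,h$ with $c\in G_s(\mathrm k)$, $g\in N(s,U,T)$ and $h\in\mathrm S(\mathrm k)$, as required.

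The only step needing care is the first: we need that $U$ is a maximal torus of $\mathrm S$ itself and not merely of $L\cap\mathrm S$. I would justify this by the Levi decomposition $\mathrm S=(L\cap\mathrm S)\ltimes N$, whose kernel is unipotent, so the maximal tori of $\mathrm S$ are the maximal tori of $L\cap\mathrm S\cong\mathrm{GL}_{\mathfrak n}$, up to conjugation. In the second step, connectedness of $G_s$ is automatic for $\mathrm{GL}$; otherwise one would work in $G_s^\circ$, which contains every torus through $s$.
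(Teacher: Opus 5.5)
Your proposal is correct and follows the paper's proof step for step. Both arguments first conjugate $x^{-1}sx$ into $U$ by an element $h\in\mathrm S(\mathrm k)$, and then conjugate the torus $xh^{-1}U(xh^{-1})^{-1}\subset G_s$ into $T$ by an element of $G_s(\mathrm k)$. The only difference is that you spell out why $U$ is a maximal torus of $\mathrm S$ itself and why $G_s$ is connected, which the paper takes for granted.
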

\begin{proof}
    For completeness we elaborate on the proof.

It is clear that the image of $G_s(\mathrm k)\times N(s,U,T)\times \mathrm S(\mathrm k)$ under the multiplication is contained in the set of the $\mathrm k$-points of $\mathfrak T(s,\mathrm S)$.
    
   Fix $g\in G(\mathrm k)$. Suppose that we have $g^{-1}sg\in \mathrm S(\mathrm k)$. Since $U$ is a maximal torus of $\mathrm S$, there exists $h\in \mathrm S(\mathrm k)$ such that $h^{-1}g^{-1}sgh\in  U(\mathrm k)$. Hence we have $ghUh^{-1}g^{-1}\subset G_s$. Since $T$ is a maximal torus in $G_s$, there exists $\widetilde{g}\in G_s(\mathrm k)$ such that $\widetilde{g}ghUh^{-1}g^{-1}\widetilde{g}^{-1}\subset T$. (Note that $gh Uh^{-1}g^{-1}$ is a torus in $G_s$.) It is easy to verify that $\wt g g h\in N(s,U,T)$, as desired.
\end{proof}

\subsection{A partition of $T$}\label{subsec-partition-T}
We retain the notation as in the previous subsection. In particular, we fix a (not necessary $F$-stable) maximal torus $T$ of $G$.

Let $J(T)$ be the set of subtori $K$ of $T$ such that $K$ is conjugate to $U$ under $G(\mathrm k)$-conjugation.
\begin{defn}
     For any nonempty subset $\jmath \subset J(T)$, let $T_\jmath$ be the following locally closed subscheme of $T$ $$\bigcap\limits_{K\in\jmath}K-\bigcup_{R\in  J(T) -\jmath}R$$
equipped with the reduced scheme structure. If $\jmath$ is the empty set, we set $T_\jmath=T-\bigcup\limits_{K \in  J(T)} K$.
\end{defn}

We record the following trivial lemma for future use. We denote the power set of $ J(T)$ by $2^{ J(T)}$.
\begin{lem}\label{lem-jus-singleton}
    For nonempty $\jmath\in 2^{J(T)}$, we have $\dim T_\jmath\leq \mathfrak{n}$. The equation holds if and only if $\jmath$ is a singleton set.
\end{lem}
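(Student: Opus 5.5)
The first observation is that every $K\in J(T)$ is $G(\mathrm k)$-conjugate to $U\cong\mathbb G_m^{\mathfrak n}$, so each $K$ is an irreducible closed subvariety of $T$ of dimension $\mathfrak n$. The bound is then immediate. For nonempty $\jmath$ and any $K\in\jmath$ we have $T_\jmath\subset\bigcap_{K'\in\jmath}K'\subset K$, hence $\dim T_\jmath\le\dim K=\mathfrak n$. Here one must take $\dim\emptyset=-\infty$ (or simply note that the empty scheme does not attain $\mathfrak n$) so that empty strata cause no trouble.

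For the equality criterion, suppose first that $\jmath$ has at least two elements $K_1\neq K_2$. Then $T_\jmath\subset K_1\cap K_2$. Because $K_1,K_2$ are distinct irreducible closed subvarieties of the same dimension $\mathfrak n$, neither contains the other, so $K_1\cap K_2$ is a proper closed subset of the irreducible $K_1$. Therefore $\dim(K_1\cap K_2)\le\mathfrak n-1$ and equality cannot hold.

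Conversely, suppose $\jmath=\{K\}$. Then $T_\jmath=K-\bigcup_{R\in J(T)-\{K\}}R=K-\bigcup_{R\neq K}(K\cap R)$, and by the previous step each $K\cap R$ with $R\neq K$ has dimension $\le\mathfrak n-1$. To conclude that this open subset of $K$ is nonempty, hence dense of dimension $\mathfrak n$, the only input needed is that $J(T)$ is finite. I will prove finiteness as follows. A subtorus $K\subset T$ is determined by its cocharacter lattice, a saturated sublattice of $X_*(T)$. If $K=gUg^{-1}\subset T$, then both $T$ and $g\mathrm Tg^{-1}$ are maximal tori of the connected centralizer $C_G(K)$, which contains $\mathrm T$-conjugates. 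After conjugating inside $C_G(K)$, we may therefore assume $K=n U n^{-1}$ with $n\in N_G(T)$ composed with a fixed identification $\mathrm T\cong T$. Concretely, in $\mathrm{GL}_{2\mathfrak n}$ with $T$ diagonal in some eigenbasis, $K$ is determined by a pairing of the $2\mathfrak n$ eigenlines into $\mathfrak n$ pairs on which $K$ acts by the same character. There are finitely many such pairings, so $J(T)$ is finite, of size at most $(2\mathfrak n)!/(2^{\mathfrak n}\mathfrak n!)$.

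The only step needing care is this finiteness of $J(T)$; once it is in hand, both directions follow from the irreducibility and equal dimension of the members of $J(T)$.
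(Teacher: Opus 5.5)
Your proof is correct and is essentially the paper's argument written out: the paper reduces to $T=\mathrm T$ and omits the ``elementary linear algebra,'' which is exactly your description of members of $J(T)$ as tori determined by pairings of the $2\mathfrak n$ eigenlines. Your only refinement is to note that the bound and the strict inequality for $|\jmath|\geq 2$ follow just from the members of $J(T)$ being distinct irreducible $\mathfrak n$-dimensional subtori, so finiteness of $J(T)$ is needed only to make $T_{\{K\}}$ nonempty and dense in $K$.
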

\begin{proof}
    By the obvious symmetry, we may assume $T$ is the torus $\mathrm T$ introduced in Definition \ref{def-various}. The rest of this proof is due to elementary linear algebra, which we omit.
\end{proof}

\begin{defn}\label{def-barnugs}
%Recall that for $s\in T(\mathrm k)$, we denote the identity component of the centralizer $C_G(s)$ by $G_s$.
For $r\in G(\mathrm k)$, we set $N(r,T)$ to be the reduced subscheme of $G$ whose set of  $\mathrm k$-points is $\{g\in G(\mathrm k):g^{-1}r g \in T(\mathrm k)\}$. 
For a semisimple element $s\in G(\mathrm k)$, we define $\bar N(s,T):=G_s\backslash N(s,T)$, and view it as a finite set. 
% Let $\mathcal U(G_s^F)$ be the set of $\Ad(G_s^F)$-orbits of unipotent elements in $G^F_s$. 
 \end{defn}

\begin{defn}\label{def-I(T)}
    Let $I(T)$ be an index set for the set of  subgroups $\{G_t: t \in T(\mathrm k)\}$. For $\iota \in I(T)$, let $G_\iota$ be the corresponding connected centralizer, and we set $T_\iota$ to be the locally closed reduced  subscheme of $T$ whose set of $\mathrm k$-points is
$$\{t\in T(\mathrm k):G_t =G_\iota\}.
$$
\end{defn}

The scheme $N(r,T)$ introduced in Definition \ref{def-barnugs} remain unchanged as $r$ runs over $T_\iota(\mathrm k)$ for a fix $\iota\in I(T)$, and we denote $N(\iota,T):=N(r,T)$ for some/all $r\in T_\iota(\mathrm k)$. Similarly, we set $\bar N(\iota,T):=\bar N(r,T)$ for some/all $r\in T_{\iota}(\mathrm k)$. %Similarly, we set $G_\iota:=G_r$ for some/all $r\in T_\iota (\mathrm k)$.

\begin{defn} For $\jmath\in 2^{J(T)}$ and $\iota\in I(T)$, we set $T_{\jmath,\iota}=T_\jmath \cap T_\iota$ with the reduced scheme structure. 
\end{defn}

The following proposition rephrases \cite[Proposition 2.3]{Shi}.
\begin{prop}\label{pro-T-partition}
The set $\{T_{\jmath,\iota}  \}_{(\jmath,\iota)}$ of reduced locally closed subschemes of $T$ indexed by the set $2^{ J(T)} \times I(T)$ forms a finite partition of $T$. For each pair $(\jmath,\iota)\in 2^{ J(T)}\times I(T)$, the following statement holds:\\
For any $t_1,t_2 \in T_{\jmath,\iota}(\mathrm k)$, we have $$\mathfrak T(t_1,\mathrm S)=\mathfrak T(t_2,\mathrm S).$$
\end{prop}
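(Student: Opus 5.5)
The argument has two parts: finiteness of the partition, and constancy of $\mathfrak T(t,\mathrm S)$ on each piece. The second follows from Lemma \ref{kpt} once we know that $G_t$ and $N(t,U,T)$ depend only on $(\jmath,\iota)$.

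\emph{Finiteness and the partition property.} The set $J(T)$ is finite. Indeed, a subtorus of $T$ that is $G(\mathrm k)$-conjugate to $U$ is determined by a sublattice of the cocharacter lattice of $T$, and $N_G(T)$-conjugacy leaves only finitely many possibilities. To see this, identify $T$ with $\mathrm T$ after conjugation. Two tori inside $\mathrm T$ that are conjugate in $G$ are already conjugate by $W(\mathrm T,G)$, since both are maximal tori of the centralizer of one of them. Hence $J(T)$ is the $W$-orbit of a single subtorus, and is finite. Similarly, $I(T)$ is finite: the connected centralizer $G_t$ is determined by the set of roots $\alpha$ of $(G,T)$ with $\alpha(t)=1$, and there are finitely many such subsets. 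Each $T_\iota$ is locally closed, being a finite boolean combination of the closed kernels $\ker\alpha$. Each $T_\jmath$ is locally closed by definition. The family $\{T_\jmath\}_\jmath$ partitions $T$, since every $t$ lies in $T_{\jmath(t)}$ with $\jmath(t)=\{K\in J(T): t\in K\}$. Likewise $\{T_\iota\}_\iota$ partitions $T$. Intersecting the two families gives a finite partition into reduced locally closed subschemes.

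\emph{Constancy on a piece.} Fix $(\jmath,\iota)$ and take $t_1,t_2\in T_{\jmath,\iota}(\mathrm k)$. Then $G_{t_1}=G_{t_2}=G_\iota$ by the definition of $T_\iota$. Now rewrite $N(t,U,T)$. A point $g\in G(\mathrm k)$ lies in $N(t,U,T)$ exactly when $K:=gUg^{-1}$ is a subtorus of $T$ conjugate to $U$, that is $K\in J(T)$, and $t\in K$. Hence
\[
N(t,U,T)=\bigcup_{K\in J(T),\,t\in K}\{g: gUg^{-1}=K\}=\bigcup_{K\in\jmath(t)}\{g:gUg^{-1}=K\}.
\]
This depends only on $\jmath(t)$, and $\jmath(t_1)=\jmath(t_2)=\jmath$. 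So $N(t_1,U,T)=N(t_2,U,T)$.

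\emph{Conclusion.} By Lemma \ref{kpt}, $\mathfrak T(t_i,\mathrm S)(\mathrm k)=G_\iota(\mathrm k)\cdot N(t_i,U,T)\cdot\mathrm S(\mathrm k)$, and these sets agree for $i=1,2$. Both schemes are reduced closed subschemes of $G$, and $\mathrm k$ is algebraically closed, so they are determined by their $\mathrm k$-points. Therefore $\mathfrak T(t_1,\mathrm S)=\mathfrak T(t_2,\mathrm S)$.

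One point needs care here. Lemma \ref{kpt} uses $G_s=C_G(s)$, the full centralizer, whereas $I(T)$ is described through connected centralizers. For $\mathrm{GL}$ the centralizer of a semisimple element is connected, so the two agree.

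The main obstacle is the finiteness of $J(T)$. The conjugacy argument above handles it; if that argument has a gap, the fallback is to use the explicit description of $U$ inside $\mathrm T$ as pairing coordinates $i\leftrightarrow i+\mathfrak n$, together with the fact that $W(\mathrm T,G)\cong S_{2\mathfrak n}$ is finite.
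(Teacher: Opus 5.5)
Your proposal is correct and matches the paper's proof. In both, the key step is Lemma~\ref{kpt}, combined with $G_{t_1}=G_{t_2}=G_\iota$ and $N(t,U,T)=\bigcup_{K\in\jmath}\{g: gUg^{-1}=K\}$, which shows the two reduced subschemes have the same $\mathrm k$-points. The paper calls the finiteness and partition statement ``clear''; your extra justification, that $J(T)$ is a single $W(T,G)$-orbit by conjugating maximal tori inside $C_G(K)$, is a valid filling-in of that step.
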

\begin{proof}
    For completeness, we elaborate on the proof. The first statement is clear. We prove the last statement.

Fix $(\jmath,\iota)\in 2^{J(T)}\times I(T)$ and $t_1,t_2\in T_{\jmath,\iota}$.   
Since $\mathfrak T(t_1,\mathrm S)$ and $\mathfrak T(t_2,\mathrm S)$ are reduced subschemes of $G$, we need to show
$$\mathfrak T(t_1,\mathrm S) (\mathrm k)=\mathfrak T(t_2,\mathrm S)(\mathrm k);
$$
that is, their sets of $\mathrm k$-points coincide.

Thanks to Lemma \ref{kpt}, it remains to show $G_{t_1}=G_{t_2}$ and $N(t_1,U,T)=N(t_2,U,T)$. By definition we have $G_{t_1}=G_{t_2}=G_\iota$.  We see $N(t_1,U,T)=\bigcup\limits_{K\in \jmath} N_K=N(t_2,U,T)$, where $N_K:=\{g\in G(\mathrm k):g Ug^{-1}=K\}$. This completes the proof.
\end{proof}

\begin{rmk}\label{rm-F-stable-parition}
Suppose in this remark  that $T$ is $F$-stable.
Then the Frobenius $F$ acts naturally on $ J(T)$, which gives rise to an action on $2^{ J(T)}$. This action satisfies that $F(T_\jmath)=T_{F(\jmath)}$ for $\jmath \in 2^{ J(T)}$. Similarly, the Frobenius $F$ acts naturally on $I(T)$.  We have $F(T_\iota)=T_{F(\iota)}$ and $F(T_{\jmath,\iota})=T_{F(\jmath),F(\iota)}$ for $\iota \in I(T)$ and $\jmath \in  J(T)$. The set of $F$-invariant elements in $2^{ J(T)}$ (resp. $I(T)$) is denoted by $2^{\mathcal J(T),F}$ (resp. $I(T)^F$).
\end{rmk}

\begin{defn}\label{def-X-jmath}
   Fix $(\jmath,\iota)\in 2^{J(T)}\times I(T)$. We set $\mathfrak{T}(\jmath,\mathrm S):=\mathfrak{T}(t,\mathrm S)$ for some/all $t\in T_\jmath(\mathrm k)$. Similarly, we set $\mathcal X^{\jmath}:=\mathcal X^s$  for some/all $s\in T_\jmath(\mathrm k)$, where $\mathcal X^s$ denotes the fixed-point scheme of $\mathcal X$ by $s$. (To justify the notation $\mathcal X^{\jmath}$, we note: for $s
   _1,s_2\in T_\jmath(\mathrm k)$, the schemes $\mathcal X^{s_1}$ and $\mathcal X^{s_2}$ are smooth by \cite[Theorem 13.1]{Mi}, and their set of $\mathrm k$-points coincide by Proposition \ref{pro-T-partition}; hence $\mathcal X^{s_1}$ and $\mathcal X^{s_2}$ coincide.)
  % We verify that they are well-defined by Proposition \ref{pro-T-partition}.
\end{defn}

\subsection{Character values at semisimple elements}
In this subsection we retain the notation as in the previous subsection. We further assume that $T$ is $F$-stable throughout this subsection.

\begin{defn}\label{def-Green-function}
    Fix a reductive group $K_0$ over $\mathbb F_q$ and let $K$ be its pullback to $\mathrm k$. For an $F$-stable maximal torus $S$ of $K$, let $Q_{S}^K$ denote the Green function in the sense of \cite{DL}. Hence $Q_S^K$ is by definition  the restriction of the Deligne-Lusztig character $R_{S,1}^K$ to the set of unipotent elements in $K^F$.

    Fix a positive integer $n$. We set $Q_{S}^{K,(n)}$ to be the restriction of $R_{S,1}^{(n)}$ to the set of unipotent elements in $K^{F^n}$, where $R_{S,1}^{(n)}$ is the virtual character of $K^{F^n}$ introduced at the end of Subsection \ref{subsec-DL} for the reductive group $G_0=K_0$ and the trivial character $\chi=1$.
\end{defn}

The following proposition rephrases \cite[Theorem 4.2]{DL}.
\begin{prop}\label{pro-value-DL}Let $\chi:T^F\to \Qlb^\times$ be a character. The following equation holds for a Jordan decomposition $g=su\in G^F$, where $s$ (resp. $u$) is semisimple (resp. unipotent). (In the following, the element $\gamma$ is a representative of the corresponding $\bar \gamma$.)
    \begin{equation}
R_{T,\chi}^G(su)= \sum_{\bar \gamma \in \bar N(s,T)^F} \chi(\gamma^{-1}s \gamma) Q_{\gamma T\gamma^{-1}}^{G_s}(u).
\end{equation}
In particular, for any semisimple element $t\in G^F$, we have 
\begin{equation}
R_{T,\chi}^G(t)= \sum_{\bar \gamma \in \bar N(t,T)^F} \chi(\gamma^{-1}t \gamma) Q_{\gamma T\gamma^{-1}}^{G_s}(e).
\end{equation}
\end{prop}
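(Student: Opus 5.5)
This is the Deligne–Lusztig character formula of \cite[Theorem 4.2]{DL}, rewritten in the indexing used here, so the plan is to derive it from that theorem rather than reprove it from scratch. I will quote Deligne–Lusztig's formula in its original form:
\[
R_{T,\chi}^G(su)=\frac{1}{|G_s^{F}|}\sum_{\substack{x\in G^F\\ x^{-1}sx\in T}}\chi(x^{-1}sx)\,Q^{G_s}_{xTx^{-1}}(u),
\]
where $G_s$ is the connected centralizer of $s$. The task is then to collapse the sum over $x$ into a sum over $\bar N(s,T)^F$.

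\textbf{Step 1 (the summand is constant on cosets).} The set $\{x\in G^F : x^{-1}sx\in T\}$ is exactly $N(s,T)^F=N(s,T)(\mathrm k)\cap G^F$. The group $G_s^F$ acts on it freely by left multiplication. For $h\in G_s^F$ we have $(hx)^{-1}s(hx)=x^{-1}sx$, so the $\chi$-factor is unchanged. For the Green function, $hxT(hx)^{-1}=h(xTx^{-1})h^{-1}$, and Green functions of $G_s^F$ are invariant under $G_s^F$-conjugation of the torus, so $Q^{G_s}_{hxTx^{-1}h^{-1}}(u)=Q^{G_s}_{xTx^{-1}}(u)$. Note that $xTx^{-1}\subset G_s$ because $s\in xTx^{-1}$ and $G$ is $\mathrm{GL}$, where centralizers of semisimple elements are connected; this is why $G_s=C_G(s)$ here. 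Hence the summand depends only on the coset $G_s^F x$.

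\textbf{Step 2 (identify the cosets).} I need to show that $G_s^F\backslash N(s,T)^F$ is in bijection with $\bar N(s,T)^F$. Injectivity of $G_s^F\backslash N(s,T)^F\to (G_s\backslash N(s,T))^F$ is immediate. For surjectivity, suppose $G_s\gamma$ is $F$-stable. Then $F(\gamma)\gamma^{-1}\in G_s$, and $G_s$ is connected, so Lang's theorem applied in $G_s$ gives $h\in G_s$ with $h\gamma\in G^F$. This lifting is the only point needing care, and it is guaranteed by the connectedness of centralizers in $\mathrm{GL}_{2\mathfrak n}$. The identical argument works for the unitary Frobenius $F_{\mathrm U}$, since connectedness is a geometric statement.

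\textbf{Step 3 (conclude).} Dividing by $|G_s^F|$ then turns the sum over $x\in N(s,T)^F$ into the displayed sum over $\bar\gamma\in\bar N(s,T)^F$, with the representative-independence established in Step 1. The semisimple case is the specialization $u=e$. I expect no real obstacle beyond checking that the conventions agree: the paper writes $\gamma T\gamma^{-1}$ with $\gamma^{-1}s\gamma\in T$, which matches $x^{-1}sx\in T$ in \cite{DL}.
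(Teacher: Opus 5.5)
Your proposal is correct and follows the paper's route. The paper simply cites \cite[Theorem 4.2]{DL}, and just before Proposition~\ref{prop-summationf} it asserts without proof the identification $\bar N(s,T)^F = G_s^F\backslash N(s,T)^F$; your Step~2 supplies exactly this via Lang's theorem in the connected group $G_s$, and Step~1 correctly checks that the summand does not depend on the choice of $F$-fixed representative $\gamma$.
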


Recall the character $\mathrm {Ind}_{\mathrm S^F}^{G^F}(\psi_{\mathrm S}^{(1)}):G^F\to \Qlb$ introduced in Remark \ref{rm-shasheafvalue}. 
\begin{lem}\label{lem-shasheaf-value-semisimple}
    Let $t\in G^F$ be a semisimple element. Then we have the following equation.
    $$
    \mathrm {Ind}_{\mathrm S^F}^{G^F}(\psi_{\mathrm S}^{(1)})(t)=|\mathfrak{T}(t,\mathrm S)^F|/|\mathrm S^F|=|(\mathcal X^t)^F|,
    $$
    where $\mathfrak{T}(t,\mathrm S)^F$ denotes the fixed-point set of $\mathfrak{T}(t,\mathrm S)$ under $F$.
\end{lem}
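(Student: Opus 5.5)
We start from the Frobenius formula for an induced character:
\[
\mathrm{Ind}_{\mathrm S^F}^{G^F}(\psi_{\mathrm S}^{(1)})(t)=\frac{1}{|\mathrm S^F|}\sum_{\substack{g\in G^F\\ g^{-1}tg\in \mathrm S^F}}\psi_{\mathrm S}^{(1)}(g^{-1}tg).
\]
The index set of the sum is exactly $\mathfrak T(t,\mathrm S)^F$: an element $g\in G^F$ with $g^{-1}tg\in\mathrm S(\mathrm k)$ automatically has $g^{-1}tg\in \mathrm S^F$. The first equality will therefore follow once we show that every summand equals $1$, i.e.\ that $\mathfrak c(g^{-1}tg)=0$ for every such $g$.

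Here we use that $t$ is semisimple, so $s:=g^{-1}tg\in\mathrm S$ is a semisimple element of $\mathrm S$. Since $U$ is a maximal torus of $\mathrm S$, the element $s$ is $\mathrm S(\mathrm k)$-conjugate into $U(\mathrm k)$, as in the proof of Lemma \ref{kpt}. The map $\mathfrak c:\mathrm S\to\mathbb G_a$ is a homomorphism: it extends $\mathrm c$ on $N$, is trivial on $L\cap\mathrm S$, and $L\cap \mathrm S$ preserves $\mathrm c$. A homomorphism to the commutative group $\mathbb G_a$ is conjugation invariant, and $U\subset L\cap \mathrm S$ lies in its kernel. Hence $\mathfrak c(s)=0$. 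Alternatively, without conjugating: $\mathfrak c(s)$ has finite additive order dividing $p$, while $s$ has order prime to $p$; thus $\mathfrak c(s)$ is killed by an integer prime to $p$ and is therefore $0$. This gives $\mathrm{Ind}(t)=|\mathfrak T(t,\mathrm S)^F|/|\mathrm S^F|$.

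For the second equality, consider the quotient map $\mathfrak T(t,\mathrm S)\to \mathcal X^t$, $g\mapsto g\mathrm S$, which is a right $\mathrm S$-torsor; this is the cartesian square in the remark on smoothness of $\mathfrak T_\circ(t,\mathrm S)$, and only $\mathrm k$-points matter here. The map is $F$-equivariant. Since $\mathrm S$ is connected, Lang's theorem shows that every $F$-fixed point $x\mathrm S\in \mathcal X^F$ lifts to some $g\in G^F$, and the fibre over it in $G^F$ is $g\mathrm S^F$. If $x\mathrm S$ is fixed by $t$, all these lifts lie in $\mathfrak T(t,\mathrm S)^F$. Thus $\mathfrak T(t,\mathrm S)^F\to (\mathcal X^t)^F$ is surjective with fibres of size $|\mathrm S^F|$, which yields $|\mathfrak T(t,\mathrm S)^F|=|\mathrm S^F|\,|(\mathcal X^t)^F|$.

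The main point requiring care is the vanishing of $\psi_{\mathrm S}$ on semisimple elements of $\mathrm S$, specifically checking that $\mathfrak c$ is a group homomorphism. This follows from $\mathrm S=(L\cap\mathrm S)\ltimes N$, since $L\cap\mathrm S$ preserves $\mathrm c$ and $\mathrm c$ is additive on the abelian group $N$. The Lang-theorem step is routine.
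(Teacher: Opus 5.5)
Your proof is correct and follows essentially the paper's argument: the Frobenius formula and Lang's theorem for the connected group $\mathrm S$, together with the vanishing of $\mathfrak c$ on semisimple elements of $\mathrm S$. The only variation is in that last step. The paper observes that $\mathfrak c$ restricted to a maximal torus $U'$ containing $x^{-1}tx$ is a homomorphism from a torus to $\mathbb G_a$, hence trivial. You instead conjugate into $U\subset L\cap\mathrm S$, or use the order argument. You also explicitly check that $\mathfrak c$ is a homomorphism, which the paper takes for granted.
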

\begin{proof}
Combined with Lang's theorem (note that $\mathrm S$ is connected), an immediate computation reduces us to show: for each $x\in \mathfrak{T}(t,\mathrm S)$, the image of $x^{-1}tx$ under $\mathfrak{c}:\mathrm S\to \mathbb G_a$ is trivial. We fix $x\in \mathfrak{T}(t,\mathrm S)$ in the rest of this proof. 

    Since $x^{-1}tx$ is semisimple, it is contained in some maximal torus $U'$ of $\mathrm S$. The restriction of $\mathfrak{c}$ to $U'$ must be trivial by general nonsense (this restriction is an algebraic group homomorphism from a torus to a unipotent group). This in particular shows that $x^{-1}tx$ has trivial image under $\mathfrak{c}$, as desired.
\end{proof}

\subsection{Summation over $G^F$}
In this subsection, we reformulate Section 5.1 of \cite{R}. 

Suppose we have a function $f: G^F \to \mathbb C$, invariant  under conjugation by $G^F$, with the property that
\begin{itemize}\label{fp}
\item[(*)] if $g \in G^F$ has Jordan decomposition $g=su$, then $f(g)=0$ unless the conjugacy class $\Ad(G^F)\cdot s$ meets $T^F$. (Here, the element $s\in G$ is semisimple and $u\in G$ is unipotent.)
\end{itemize}

 For $s\in T^F$, we see that the fixed-point set  of $\bar N(s,T)$ (see Definition \ref{def-barnugs}) under the Frobenius $F$ is $G_s^F \bs N(s,T)^F$. The following is (5.2) of \cite{R}.
\begin{prop}\label{prop-summationf}Let $f: G^F \to \mathbb C$ be a class function satisfying (*) required above.  For semisimple $s\in G^F$, let $\mathcal U(G_s^F)$ be the set of $\Ad(G_s^F)$-orbits of unipotent elements in $G^F_s$.
The following equation holds:
 $$
\frac{1}{|G^F|}\sum_{g\in G^F}f(g)=\sum_{s\in T^F}\frac{1}{|\bar N(s,T)^F|}\sum_{[u]\in \mathcal U(G_s^F)} \frac{1}{|C_{G_s}(u)^F|}f(su).
$$
\end{prop}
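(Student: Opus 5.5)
We regroup the sum over $G^F$ according to the semisimple part of each element. By the Jordan decomposition, every $g\in G^F$ factors uniquely as $g=su$ with $s,u\in G^F$, $s$ semisimple, $u\in C_G(s)$ unipotent. Since $f$ is a class function satisfying (*), only those $g$ whose semisimple part is $G^F$-conjugate to an element of $T^F$ contribute. So we may write
\[
\frac{1}{|G^F|}\sum_{g\in G^F}f(g)=\frac{1}{|G^F|}\sum_{(s)}\ \sum_{s'\in (s)}\ \sum_{u}f(s'u),
\]
where $(s)$ runs over $G^F$-classes of semisimple elements meeting $T^F$, and $u$ runs over unipotent elements of $G_{s'}^F$. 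Here $G_{s'}=C_G(s')$ is connected, since $G=\mathrm{GL}_{2\mathfrak n}$.

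\textbf{Step 1: fix the semisimple class.} For a fixed class $(s)$, conjugation by $G^F$ identifies each inner sum with the one for $s$. The class has $|G^F|/|G_s^F|$ elements, so the contribution becomes
\[
\frac{1}{|G_s^F|}\sum_{u\in G_s^F\ \mathrm{unip}}f(su).
\]
Grouping the $u$ into $G_s^F$-orbits, each orbit $[u]$ has size $|G_s^F|/|C_{G_s}(u)^F|$, which gives
\[
\sum_{[u]\in\mathcal U(G_s^F)}\frac{f(su)}{|C_{G_s}(u)^F|}.
\]

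\textbf{Step 2: convert the sum over classes into a sum over $T^F$.} We count how many $t\in T^F$ lie in a given class $(s)$. The set $\{g\in G^F: g^{-1}sg\in T\}$ is $N(s,T)^F$, and $t=g^{-1}sg$ determines $g$ up to left multiplication by $G_s^F$. Hence $|(s)\cap T^F|=|G_s^F\backslash N(s,T)^F|$.

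\textbf{Step 3: identify this count with $|\bar N(s,T)^F|$.} By the remark preceding the proposition, $G_s^F\backslash N(s,T)^F=\bar N(s,T)^F$. This is Lang's theorem applied to the connected group $G_s$: every $F$-stable coset $G_s g$ contains an $F$-fixed point. Therefore
\[
\sum_{(s)}X_{(s)}=\sum_{t\in T^F}\frac{1}{|\bar N(t,T)^F|}X_{(t)},
\]
and the right-hand side does not depend on the representative, since $|\bar N(t,T)^F|$ and $X$ are both invariant under $G^F$-conjugation. Combining Steps 1–3 yields Reeder's formula.

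\textbf{Main obstacle.} The only delicate point is Step 3: checking that the fixed points of $\bar N(s,T)$ are exactly the $G_s^F$-cosets of $F$-fixed elements. This is where connectedness of $G_s$ is used; for non-connected centralizers one would need the $\pi_0$ correction.
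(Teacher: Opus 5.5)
Your argument is correct, and it is the standard counting argument behind Reeder's formula (5.2), which the paper quotes without giving a proof. You regroup by semisimple part, use (*) to discard classes not meeting $T^F$, and use $|(s)\cap T^F| = |G_s^F\backslash N(s,T)^F| = |\bar N(s,T)^F|$, where the last equality is Lang's theorem for the connected, $F$-stable group $G_s$; that is the only place connectedness is needed.
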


The same argument yields the following corollary.

\begin{cor}\label{cor-summation-semisimple}
    Let $f$ be as in  Proposition \ref{prop-summationf}. The following equation holds:
 $$
\frac{1}{|G^F|}\sum_{t\in G^F ~\text{ss.}}f(t)=\sum_{s\in T^F}\frac{1}{|\bar N(s,T)^F|\cdot|G_s^F|}f(s),
$$
where the subscript of $\sum\limits_{t\in G^F ~\text{ss.}}$ means that $t$ runs over the set of semisimple elements in $G^F$. 
\end{cor}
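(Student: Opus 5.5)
The plan is to specialize the argument behind Proposition \ref{prop-summationf} (Reeder's (5.2)) to semisimple elements, i.e.\ to the terms with $u=e$. First I rewrite the left-hand side by grouping the semisimple $t\in G^F$ into $G^F$-conjugacy classes. Since $f$ is a class function, we get
\[
\frac{1}{|G^F|}\sum_{t\in G^F~\text{ss.}}f(t)=\sum_{(t)}\frac{f(t)}{|C_{G^F}(t)|}=\sum_{(t)}\frac{f(t)}{|G_t^F|},
\]
where $(t)$ runs over the semisimple classes. Here $G_t=C_G(t)$ is connected, because $G=\mathrm{GL}_{2\mathfrak n}$ has connected centralizers of semisimple elements. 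By $(*)$, only the classes meeting $T^F$ contribute.

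Next I fix such a class $(t)$ and count its intersection with $T^F$. The set $\{g\in G^F: g^{-1}tg\in T\}$ is $N(t,T)^F$. The map $g\mapsto g^{-1}tg$ sends $N(t,T)^F$ onto $(t)\cap T^F$, and its fibres are left cosets of $C_{G^F}(t)=G_t^F$. Hence $|(t)\cap T^F|=|G_t^F\backslash N(t,T)^F|$. As remarked just before Proposition \ref{prop-summationf}, the latter equals $|\bar N(t,T)^F|$; this uses Lang's theorem for the connected group $G_t$, which ensures that $F$-fixed cosets in $G_t\backslash N(t,T)$ contain $F$-fixed representatives.

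Now I reindex the sum over classes as a sum over $s\in T^F$. Each class $(t)$ meeting $T^F$ is counted $|(t)\cap T^F|=|\bar N(s,T)^F|$ times, for any representative $s\in(t)\cap T^F$. Moreover, $f(s)$ and $|G_s^F|$ depend only on the class, and $|\bar N(s,T)^F|$ is constant on the class. Therefore
\[
\sum_{(t)}\frac{f(t)}{|G_t^F|}=\sum_{s\in T^F}\frac{f(s)}{|\bar N(s,T)^F|\cdot|G_s^F|},
\]
which is the corollary.

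The only delicate point is the identification of $(\bar N(s,T))^F$ with $G_s^F\backslash N(s,T)^F$. This needs connectedness of $G_s$ and Lang's theorem, and the paper has already asserted it. Alternatively, one could derive the corollary directly from Proposition \ref{prop-summationf} by applying it to $f\cdot\mathbf 1_{\text{ss}}$, which is again a class function satisfying $(*)$. Only $[u]=[e]$ then survives, and $C_{G_s}(e)^F=G_s^F$ gives the formula immediately.
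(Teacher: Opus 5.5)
Your proposal is correct and is essentially the paper's argument: the paper only says that the proof of Proposition~\ref{prop-summationf} (Reeder's (5.2)) carries over. Your class-by-class count is exactly that argument restricted to $u=e$, using $|(s)\cap T^F|=|G_s^F\backslash N(s,T)^F|=|\bar N(s,T)^F|$ via Lang's theorem for the connected centralizer $G_s$. Your alternative of applying Proposition~\ref{prop-summationf} to $f\cdot\mathbf 1_{\mathrm{ss}}$, so that only $[u]=[e]$ survives, is equally valid and is the shortest route.
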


\begin{rmk}\label{rm-f-class-property}
We see that 
the function $f_0 \cdot R_{T,\chi}^G$ has the property (*) required at the beginning of this subsection  for any $F$-stable maximal torus $T$ of $G$, any character $\chi: T^F \to \Qlb^\times$ and any class function $f_0$ of $G^F$. This can be seen from Proposition \ref{pro-value-DL}.
\end{rmk}

\subsection{The summation in question}\label{subsec-summation-in-ques}In this subsection, we retain the notation as in Subsection \ref{subsec-passing-to-semisimple}. In particular, we fix an $F$-stable maximal torus $T$ of $G$ and a character $\chi:T^F\to \Qlb^\times$.

\begin{thm}\label{thm-summation-main}
    Fix a positive integer $n$. We have the following equation. (The reader is advised to review Subsection \ref{subsec-partition-T} and Subsection \ref{subsec-passing-to-semisimple} for the notation.)
    \begin{align*}
         &\frac{1}{|G^{^{F^n}}|} \sum_{t\in G^{F^n} \text{ss.}}\left(R_{T,\chi\circ \mathrm N_{T}^n}^{(n)}\cdot \mathrm {Ind}_{\mathrm S^{F^n}}^{G^{F^n}}(\psi_{\mathrm S}^{(n)})\right)(t)\\
   &=\sum_{(\jmath,\iota)\in 2^{J(T),F^n}\times I(T)^{F^n}}\sum_{\bar \gamma \in \bar N(\iota,T)^{F^n}}\sum_{s\in T_{\jmath,\iota}^{F^n}}\frac{\chi\circ \mathrm N_T^n(\gamma^{-1}s \gamma)\cdot Q_{\gamma T\gamma^{-1}}^{G_\iota,(n)}(e)\cdot |(\mathcal X^{\jmath})^{F^n}|}{|\bar N(\iota,T)^{F^n}|\cdot|G_\iota^{F^n}|},
    \end{align*}
    where the subscript of $
    \sum\limits_{t\in G^{F^n} \text{ss.}}
    $ means that $t$ runs over the set of semisimple elements in $G^{F^n}$.
\end{thm}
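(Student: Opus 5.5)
The plan is to apply Corollary~\ref{cor-summation-semisimple} over the base field $\mathbb F_{q^n}$, i.e.\ with $F$ replaced by $F^n$, to the class function
\[
f:=R_{T,\chi\circ \mathrm N_T^n}^{(n)}\cdot \mathrm{Ind}_{\mathrm S^{F^n}}^{G^{F^n}}(\psi_{\mathrm S}^{(n)}).
\]
By Remark~\ref{rm-f-class-property} (with $F^n$ in place of $F$), $f$ satisfies property (*), so the left-hand side becomes
\[
\sum_{s\in T^{F^n}}\frac{f(s)}{|\bar N(s,T)^{F^n}|\cdot |G_s^{F^n}|}.
\]
Everything else is a matter of evaluating $f(s)$ for $s\in T^{F^n}$ and regrouping the sum along the partition of Proposition~\ref{pro-T-partition}.

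Next I evaluate the two factors of $f(s)$. For the Deligne--Lusztig factor I use Proposition~\ref{pro-value-DL} at the semisimple element $s$, applied to $G^{F^n}$ and the character $\chi\circ\mathrm N_T^n$:
\[
R^{(n)}_{T,\chi\circ\mathrm N^n_T}(s)=\sum_{\bar\gamma\in\bar N(s,T)^{F^n}}\chi\circ\mathrm N_T^n(\gamma^{-1}s\gamma)\,Q^{G_s,(n)}_{\gamma T\gamma^{-1}}(e).
\]
For the induced factor I use Lemma~\ref{lem-shasheaf-value-semisimple} over $\mathbb F_{q^n}$. The character $\psi_{\mathrm S}^{(n)}$ is still $\psi^{(n)}\circ\mathfrak c$, and $\mathfrak c$ kills semisimple elements, so the lemma's proof carries over verbatim and gives $|(\mathcal X^s)^{F^n}|$.

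Then I regroup over $s\in T^{F^n}$ using the partition $T=\bigsqcup T_{\jmath,\iota}$. A point $s$ is $F^n$-fixed only if its piece is $F^n$-stable, since $F^n(T_{\jmath,\iota})=T_{F^n\jmath,F^n\iota}$ by Remark~\ref{rm-F-stable-parition} and the pieces are disjoint. So $T^{F^n}=\bigsqcup_{(\jmath,\iota)\in 2^{J(T),F^n}\times I(T)^{F^n}}T^{F^n}_{\jmath,\iota}$. On each piece the following data are constant:
\begin{itemize}
\item $G_s=G_\iota$, by Definition~\ref{def-I(T)};
\item $N(s,T)=N(\iota,T)$, so $\bar N(s,T)^{F^n}=\bar N(\iota,T)^{F^n}$;
\item $\mathcal X^s=\mathcal X^\jmath$, by Definition~\ref{def-X-jmath}.
\end{itemize}
Substituting, interchanging the finite sums over $\bar\gamma$ and $s$, and pulling the $s$-independent factors $Q^{G_\iota,(n)}_{\gamma T\gamma^{-1}}(e)$, $|(\mathcal X^\jmath)^{F^n}|$ and the denominators outside, gives exactly the stated formula.

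The only step needing care is the bookkeeping behind "$N(s,T)$ depends only on $\iota$". This needs $N(s,T)=\{g: g^{-1}sg\in T\}$ to be determined by $G_s$, which I expect to hold for $\mathrm{GL}$: the condition is equivalent to $T\subset g^{-1}G_sg$ and the centralizers are connected. I also need that $\bar\gamma\in \bar N(\iota,T)^{F^n}$ admits representatives $\gamma$ for which $\gamma T\gamma^{-1}$ is an $F^n$-stable torus of $G_\iota$; this holds by Lang's theorem in the connected group $G_\iota$, exactly as in Proposition~\ref{pro-value-DL}. Beyond these checks the argument is routine substitution.
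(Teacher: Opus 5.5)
Your proposal is correct and follows the paper's own (one-line) proof: apply Corollary~\ref{cor-summation-semisimple} with $F^n$ in place of $F$, evaluate the two factors at $s\in T^{F^n}$ via Proposition~\ref{pro-value-DL} and Lemma~\ref{lem-shasheaf-value-semisimple}, and regroup along the partition $\{T_{\jmath,\iota}\}$. Your extra checks are exactly the bookkeeping the paper leaves implicit: that $N(s,T)$ depends only on $G_s$, since $g^{-1}sg\in T$ iff $gTg^{-1}\subset G_s$, and the Lang-theorem choice of $F^n$-fixed representatives $\gamma$.
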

\begin{proof}
    In view of Proposition \ref{pro-value-DL} and Lemma \ref{lem-shasheaf-value-semisimple} (and their corresponding versions for $F$ replaced by powers), the current theorem follows from Corollary \ref{cor-summation-semisimple} (and its corresponding version for $F$ replaced by powers) and the partition of $T$ introduced in Subsection \ref{subsec-partition-T}.
\end{proof}

\begin{rmk}
    Theorem \ref{thm-summation-main} is a priori complicated. The beneficial point is that: it is adaptive to the limit procedure (Theorem \ref{thm-passing-to-semisimple}), as we will show in the subsequent subsections. 
\end{rmk}

\subsection{Dimension estimation}In this subsection, we retain the notation as in Subsection \ref{subsec-partition-T}. In particular, we fix a (not necessarily $F$-stable) maximal torus $T$ of $G$.

Let $\mathcal B$ be the flag variety of $G$. For each Borel subgroup $B$ of $G$, there exists a canonical isomorphism $\mathcal B\cong G/B$. In the rest of this subsection, we fix a Borel subgroup $B$ containing the torus $T$.

\begin{defn}\label{def-flag-fixed-s}
    For semisimple $s\in G(\mathrm k)$, let $\mathcal B_s$ be the subscheme of $\mathcal B$ fixed by $s$.
\end{defn}

\begin{rmk}\label{rm-qcomponent}
    We may rephrase Proposition 4.4 of \cite{DL} as follows. (See Definition \ref{def-barnugs} for the definition of $\bar N(s,T)$)
    \begin{itemize}
        \item For each  $\bar \gamma \in \bar N(s,T)$, there exists an irreducible component $\mathcal Q_{\bar \gamma}^s$ (defined over $\mathrm k$) of $\mathcal B_{s}$ consisting of points of the form $gB\in G/B\cong \mathcal B$, such that $g\in N(s,T)$ is a representative of $\bar \gamma$.
        \item The scheme $\mathcal B_{s}$ (as a scheme over $\mathrm k$) is the disjoint union of its irreducible components $\mathcal Q_{\bar \gamma}^s$ parameterized by $\bar \gamma \in \bar N(s,T)$.
    \end{itemize}
\end{rmk}

Consequently, the following is well-defined.

\begin{defn}\label{def-flag-fixed-s}
    Fix $\iota\in I(T)$.\begin{itemize}
        \item Let $\mathcal B_\iota$ be the subscheme of $\mathcal B$ fixed by some/all $s\in T_\iota(\mathrm k)$;
        \item For each  $\bar \gamma \in \bar N(s,T)$,  let $\mathcal Q_{\bar \gamma}^\iota$ be the irreducible component $\mathcal Q_{\bar \gamma}^s$ introduced in  Remark \ref{rm-qcomponent} for some/all $s\in T_\iota(\mathrm k)$.
    \end{itemize}   
\end{defn}

\begin{defn}\label{def-conjugacy-map}
    Fix $(\jmath,\iota)\in 2^{J(T)}\times I(T)$. Let $\mathrm c_{\jmath,\iota}:G\times T_{\jmath,\iota}\times  \mathcal B_\iota \times \mathcal X^\jmath\to \mathcal Y_{T,B}$ be the map given by $(g,t,xB,y\mathrm S)\mapsto(gtg^{-1},gxB,gy\mathrm S)$. 
    (See Definition \ref{def-X-jmath} for the definition of $\mathcal X^\jmath$, see Definition \ref{def-YTB} for $\mathcal Y_{T,B}$.)
\end{defn}

\begin{lem}\label{lem-fibre-dim-conj}
    Fix $(\jmath,\iota)\in 2^{J(T)}\times I(T)$. The fibre of $\mathrm c_{\jmath,\iota}$ is either empty or of dimension $\dim G_\iota$. (See Definition \ref{def-I(T)} for the definition of $G_\iota$.)
\end{lem}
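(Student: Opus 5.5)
Fix a point $p=(g_0t_0g_0^{-1},g_0x_0B,g_0y_0\mathrm S)$ in the image of $\mathrm c_{\jmath,\iota}$ with $(g_0,t_0,x_0B,y_0\mathrm S)$ in the source. The plan is to describe the fibre $\mathrm c_{\jmath,\iota}^{-1}(p)$ explicitly as a finite disjoint union of translates of $G_\iota$. After translating by $g_0$ (the map is visibly $G$-equivariant for left multiplication on the first factor and conjugation on the target), we may assume $g_0=e$.

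Let $(g,t,xB,y\mathrm S)$ lie in the fibre. The first coordinate gives $gtg^{-1}=t_0$ with $t,t_0\in T_{\jmath,\iota}$, so $g\in N(t_0,T)$. Since $t\in T_\iota$, we have $G_t=G_\iota=G_{t_0}$, and the set $\{g:\ g^{-1}t_0g\in T\}$ is a finite union of cosets $G_\iota\gamma$ with $\bar\gamma\in\bar N(\iota,T)$, by Definition~\ref{def-barnugs} and the finiteness of $\bar N(\iota,T)$. Once $g$ is fixed, $t=g^{-1}t_0g$ is determined, and so are $xB=g^{-1}x_0B$ and $y\mathrm S=g^{-1}y_0\mathrm S$. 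It then only remains to impose the constraints $t\in T_{\jmath,\iota}$, $xB\in\mathcal B_\iota$ and $y\mathrm S\in\mathcal X^\jmath$. Consequently the projection of the fibre to the $G$-factor is a closed immersion (it is a section-type graph), and the fibre is identified with a locally closed subset of $\bigsqcup_{\bar\gamma}G_\iota\gamma$. Its dimension is therefore at most $\dim G_\iota$.

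For the lower bound, we check that the whole coset $G_\iota\cdot e$ lies in the fibre. Take $h\in G_\iota$. Since $G_\iota$ is the centralizer of $t_0$, we get $h^{-1}t_0h=t_0\in T_{\jmath,\iota}$. Moreover $h^{-1}x_0B$ is fixed by $t_0$ because $x_0B$ is, so $h^{-1}x_0B\in\mathcal B_\iota$. Likewise $h^{-1}y_0\mathrm S\in\mathcal X^{t_0}=\mathcal X^\jmath$, using Definition~\ref{def-X-jmath} and the fact that $t_0\in T_\jmath$. Hence $h\mapsto(h^{-1},t_0,h^{-1}x_0B,h^{-1}y_0\mathrm S)$, or more precisely the element with first coordinate $g=h^{-1}$ chosen so that $gt_0g^{-1}=t_0$, lands in the fibre. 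This gives a copy of $G_\iota$ inside it, so every nonempty fibre has dimension exactly $\dim G_\iota$.

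The main point needing care is the well-definedness used along the way: that $\mathcal B_\iota$ and $\mathcal X^\jmath$ depend only on the stratum, and that $G_{t}$ is literally constant (rather than only conjugate) for $t\in T_\iota$. Both are built into Definitions~\ref{def-I(T)} and~\ref{def-X-jmath} (the latter via Proposition~\ref{pro-T-partition}), so the argument reduces to the bookkeeping above. The remaining scheme-theoretic subtlety is irrelevant, because only dimensions of the underlying reduced spaces are claimed.
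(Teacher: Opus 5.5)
Your argument is correct and is essentially the paper's. The paper observes directly that a nonempty fibre is a disjoint union of finitely many copies of $G_\iota$, one for each $t\in T_{\jmath,\iota}(\mathrm k)$ conjugate to the first coordinate of the point. You instead sandwich the fibre between a single coset and the finite union $N(t_0,T)=\bigsqcup G_\iota\gamma$.

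One small slip: the point of the fibre attached to $h\in G_\iota$ is $(h,t_0,h^{-1}x_0B,h^{-1}y_0\mathrm S)$, not $(h^{-1},t_0,h^{-1}x_0B,h^{-1}y_0\mathrm S)$. This is because $\mathrm c_{\jmath,\iota}$ multiplies the last two coordinates on the left by the first one.
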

\begin{proof}
    Fix $p=(s,xB,y\mathrm S)\in \mathcal Y_{T,B}(\mathrm k)$. It is immediate from Definition \ref{def-conjugacy-map} that the fibre $\mathrm c_{\jmath,\iota}^{-1}(\{p\})$ is isomorphic to the disjoint union of $m$ copies of $G_\iota$, where $m$ is the cardinality of the set
    $$
    \{t\in T_{\jmath,\iota}(\mathrm k):\text{$t$ is $G(\mathrm k)$-conjugate to $s$}\}.
    $$
    This completes the proof. (Note that $m$ must be finite.)
\end{proof}

\begin{prop}\label{pro-index-injection}
    Fix $(\jmath,\iota)\in 2^{J(T)}\times I(T)$. Then:
    \begin{itemize}
        \item[(i)]  we have $\dim T_{\jmath,\iota}+\dim \mathcal B_\iota+\dim \mathcal X^\jmath \leq\dim G_\iota$;
        \item[(ii)] if the equation in (i) holds, then $\jmath=\{R\}$ is a singleton set  and $T_{\jmath,\iota}$ is an open  dense subscheme of $R$.  (Here, the torus $R$ is a $G(\mathrm k)$-conjugation of $U$ contained in $T$ by the definition of $J(T)$.)
    \end{itemize}
\end{prop}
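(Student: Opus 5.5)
The plan is to compare dimensions through the map $\mathrm c_{\jmath,\iota}$ of Definition \ref{def-conjugacy-map}. Its target is $\mathcal Y_{T,B}$, which has dimension $\dim G$ by Corollary \ref{cor-dimYTB}. By Lemma \ref{lem-fibre-dim-conj}, every nonempty fibre has dimension $\dim G_\iota$.

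First I must check that the image is nonempty, so that the fibre-dimension count applies. Take any $t\in T_{\jmath,\iota}$, any point of $\mathcal B_\iota$, and any point of $\mathcal X^\jmath$. The fixed-point scheme $\mathcal X^\jmath$ is nonempty whenever $\jmath\neq\emptyset$. If $\jmath=\emptyset$, then $\mathcal X^\jmath$ is empty: $t$ is not conjugate into any torus conjugate to $U$, hence by Lemma \ref{kpt} not conjugate into $\mathrm S$. In that case (i) is vacuous, with the convention $\dim\emptyset=-\infty$.

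Granting nonemptiness, the fibre-dimension inequality for the morphism $\mathrm c_{\jmath,\iota}$ gives
\[
\dim G+\dim T_{\jmath,\iota}+\dim\mathcal B_\iota+\dim\mathcal X^\jmath\leq \dim\mathcal Y_{T,B}+\dim G_\iota=\dim G+\dim G_\iota ,
\]
which is exactly (i).

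For (ii), suppose equality holds. I would avoid trying to show that $\mathrm c_{\jmath,\iota}$ is dominant. Instead I would bound $\dim\mathcal X^\jmath$ and $\dim\mathcal B_\iota$ directly in terms of $G_\iota$:
\begin{itemize}
\item $\mathcal B_\iota$ is a disjoint union of flag varieties of $G_\iota$ (Remark \ref{rm-qcomponent}), so $\dim\mathcal B_\iota=\dim G_\iota-\dim B_{G_\iota}$, where $B_{G_\iota}$ is a Borel subgroup of $G_\iota$.
\item $\mathcal X^\jmath=\mathcal X^t$ is a union of $G_\iota$-orbits $G_\iota y\mathrm S$. Each orbit has dimension $\dim G_\iota-\dim(G_\iota\cap y\mathrm S y^{-1})$.
\item The group $G_\iota\cap y\mathrm Sy^{-1}$ contains the torus $y U y^{-1}\ni t$ (after conjugating inside $y\mathrm Sy^{-1}$), so its dimension is at least $\mathfrak n$.
\end{itemize}
Substituting into the equality then forces, for some orbit, $\dim T_{\jmath,\iota}+\dim G_\iota-\dim B_{G_\iota}\geq\dim(G_\iota\cap y\mathrm Sy^{-1})\geq\mathfrak n$.

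This is the step I expect to be the main obstacle, and I would handle it by Lemma \ref{lem-jus-singleton}. We have $\dim T_{\jmath,\iota}\leq\dim T_\jmath\leq\mathfrak n$, with equality only for $\jmath$ a singleton $\{R\}$. In that case $T_{\jmath}$ is open dense in $R$, and then $T_{\jmath,\iota}$, being a piece of maximal dimension, is open dense in $R$.

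So it suffices to show that the stabiliser term $\dim(G_\iota\cap y\mathrm Sy^{-1})-(\dim G_\iota-\dim B_{G_\iota})$ is at least $\dim T_\jmath$'s deficit. I would make this explicit by putting $T=\mathrm T$ and using elementary linear algebra on eigenspace decompositions: $G_\iota=\prod\mathrm{GL}(V_a)$, and a fixed point $y\mathrm S$ amounts to an $\mathfrak n$-dimensional $t$-stable $\mathcal W$ with a $t$-equivariant isomorphism $\mathcal W\cong V/\mathcal W$. From this, compute $\dim\mathcal X^t$ and compare with $\dim G_\iota-\dim\mathcal B_\iota-\dim T_{\jmath,\iota}$. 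Equality should force the eigenvalue multiplicities to be generic, which is the singleton, open-dense situation.
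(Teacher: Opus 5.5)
\textbf{Part (i)} is the paper's argument: you compare the fibre dimension $\dim G_\iota$ from Lemma \ref{lem-fibre-dim-conj} with $\dim\mathcal Y_{T,B}=\dim G$ from Corollary \ref{cor-dimYTB}. Your handling of $\jmath=\emptyset$ is fine.

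\textbf{Part (ii) is not proved.} The only inequality you establish is
\[
\dim T_{\jmath,\iota}+\dim\mathcal B_\iota\geq\dim(G_\iota\cap y\mathrm Sy^{-1})\geq\mathfrak n .
\]
This carries no information. For $t\in T_\jmath$ with $\jmath=\{R,\dots\}$ nonempty, $G_\iota$ contains $C_G(R)\cong\mathrm{GL}_2^{\mathfrak n}$, which has the same rank, so $\dim\mathcal B_\iota\geq\mathfrak n$ automatically. What you need is $\dim(G_\iota\cap y\mathrm Sy^{-1})-\dim\mathcal B_\iota\geq\mathfrak n$. It is in fact exactly $\mathfrak n$. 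Your reformulation ``at least $\dim T_\jmath$'s deficit'' is not this inequality. You then defer the decisive comparison to a computation you do not carry out (``equality should force\dots''). That computation is the whole content of (ii).

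\textbf{How to fill the gap.} Your route does work, and the missing computation is short. Write the eigenvalue multiplicities of $t$ on $V$ as $2k_a$; they are even because $t$ lies in a conjugate of $U$, and $\sum_a k_a=\mathfrak n$. Then:
\begin{itemize}
\item $\dim G_\iota=\sum_a 4k_a^2$;
\item $\dim\mathcal B_\iota=\sum_a k_a(2k_a-1)$;
\item your description of fixed points (a $t$-stable $\mathcal W$ with a $t$-equivariant $\mathcal W\cong V/\mathcal W$) gives $\mathcal X^t\cong\prod_a\mathrm{GL}_{2k_a}/\mathrm S_{k_a}$, with $\mathrm S_k$ the Shalika subgroup of $\mathrm{GL}_{2k}$. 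Hence $\dim\mathcal X^\jmath=\sum_a 2k_a^2$.
\end{itemize}
So $\dim G_\iota-\dim\mathcal B_\iota-\dim\mathcal X^\jmath=\sum_a k_a=\mathfrak n$. Inequality (i) becomes $\dim T_{\jmath,\iota}\leq\mathfrak n$, and equality becomes $\dim T_{\jmath,\iota}=\mathfrak n$. Lemma \ref{lem-jus-singleton} and your open-density remark then finish.

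\textbf{Comparison with the paper.} Once completed, your argument is a valid alternative, and it yields (i) without Lemma \ref{lem-fibre-dim-conj}. The paper instead observes that equality makes the image of $\mathrm c_{\jmath,\iota}$ contain the generic point of some stratum $\mathcal Y^w_{T,B}$. The map $\mathrm d_{\mathcal Y}$ sends that stratum onto an $\mathfrak n$-dimensional torus in $J(T)$, while $\mathrm d_{\mathcal Y}\circ\mathrm c_{\jmath,\iota}$ is essentially the projection to $T_{\jmath,\iota}$. This forces $\dim T_{\jmath,\iota}\geq\mathfrak n$ with no eigenvalue bookkeeping. The paper's route reuses the stratification of $\mathcal Y_{T,B}$; yours is more explicit but depends on the linear-algebraic description of $\mathcal X^t$.
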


\begin{proof}
    By Lemma \ref{lem-fibre-dim-conj}, the fibre of $\mathrm c_{\jmath,\iota}$ is either empty or of dimension $\dim G_\iota$. The target of $\mathrm c_{\jmath,\iota}$ is of dimension $\dim G$ by Corollary \ref{cor-dimYTB}. Consequently, we have $$\dim G+\dim G_\iota \geq \dim G+\dim T_{\jmath,\iota}+\dim \mathcal B_\iota +\dim \mathcal X^\jmath,$$ yielding (i).

Before proceeding, we introduce some notation.
For $w\in B(\mathrm k)\backslash G(\mathrm k)/\mathrm S(\mathrm k)$, recall the scheme $\mathcal Y_{T,B}^w$ introduced in Definition \ref{def-YTbw}. We set $\mathrm d_{\mathcal Y}:\mathcal Y_{T,B}\to T$ to be the morphism given by $(g,xB,y\mathrm S)\mapsto \mathrm d_T^B(x^{-1}gx)$. For $w\in B(\mathrm k)\backslash G(\mathrm k)/\mathrm S(\mathrm k)$, let $\mathrm d_w:\mathcal Y_{T,B}^w\to T$ be the restriction of $\mathrm d_{\mathcal Y}$. 
It is easy to verify that $\mathrm d_w$ maps $\mathcal Y_{T,B}^w$ onto $\mathrm d_T^B(B\cap w\mathrm Sw^{-1})$. Note that $\mathrm d_T^B(B\cap w\mathrm Sw^{-1})$ is a subtorus of $T$ that is conjugate to $U$: that is, we have $\mathrm d_T^B(B\cap w\mathrm Sw^{-1})\in J(T)$. Note that $\dim \left( \mathrm{d}_T^B (B\cap w\mathrm Sw^{-1})\right)=\mathfrak{n}.$

    By the  proof of (i), we see that the equation in (i) holds if and only if the image of $\mathrm c_{\jmath,\iota}$ is of dimension $\dim G$. Therefore, by the partition (\ref{eq-partitionYTB}) and Lemma \ref{lem-dim-YTBw}, the image of $\mathrm c_{\jmath,\iota}$  contains the generic point of $\mathcal Y_{T,B}^w$ for some $w\in B(\mathrm k)\backslash G(\mathrm k)/ \mathrm S(\mathrm k)$. We fix one such $w$ in the rest of this proof.
    Combining Lemma \ref{lem-dim-YTBw} and the fact that $\mathcal Y_{T,B}^w$ has the image of dimension $\mathfrak{n}$ under $\mathrm d_w$, we see that the image of the composite $\mathrm d_\mathcal Y\circ \mathrm c_{\jmath,\iota}$ is of dimension $\geq\mathfrak{n}$. Unwinding the definition, we see that $\mathrm d_\mathcal Y\circ \mathrm c_{\jmath,\iota}:G\times T_{\jmath,\iota}\times \mathcal B_\iota\times \mathcal X^\jmath\to T$ is given by $(g,t,xB,y\mathrm S)\mapsto t$. Consequently, we see that $\dim T_{\jmath}\geq \dim T_{\jmath,\iota}\geq \mathfrak{n}$. Note that $\mathcal X^\jmath=\mathfrak{T}(\jmath,\mathrm S)/\mathrm S$ is nonempty, indicating that $\jmath$ is nonempty by Lemma \ref{kpt}. We see that $\mathfrak{n}\geq \dim T_{\jmath}$ by Lemma \ref{lem-jus-singleton}. Hence $\dim T_\jmath=\dim T_{\jmath,\iota}=\mathfrak{n}$.
    Again by Lemma \ref{lem-jus-singleton}, we see that $\jmath$ is a singleton. Say that $\jmath=\{R\}$, where $R$ is a $G(\mathrm k)$-conjugate of $U$ contained in $T$. Since we have that $\dim R=\mathfrak{n}$, and that $T_{\jmath,\iota}$ is a locally closed subscheme of $R$, we conclude that $T_{\jmath,\iota}$ is a dense open subscheme of $R$.
\end{proof}

\begin{defn}\label{def-Delta-T}
    Let $\Delta_T$ be the set consisting of triples $(\jmath,\iota,C)$ satisfying the following conditions:
    \begin{itemize}
        \item $(\jmath,\iota)\in 2^{J(T)}\times I(T)$;
        \item $C$ is an irreducible component of $\mathcal X^\jmath$ such that $\dim T_{\jmath,\iota}+\dim B_\iota+\dim C=\dim G_\iota$.
    \end{itemize}
    If we assume further that $T$ is $F$-stable, then $\Delta_T$ has been endowed with a natural action of the Frobenius, given by $(\jmath,\iota,C)\mapsto (F(\jmath),F(\iota),F(C))$. Under such circumstance, we denote the fixed-point set by $\Delta_T^F$. (See Remark \ref{rm-F-stable-parition}.)
\end{defn}

\begin{rmk}Fix $(\jmath,\iota,C)\in \Delta_T$.
    It is immediate from Proposition \ref{pro-index-injection} that the closure of $T_{\jmath,\iota}$ in $T$ is a $G(\mathrm k)$-conjugate of $U$.  
\end{rmk}

\begin{defn}
    For $\delta=(\jmath,\iota,C)\in \Delta_T$, let $T_{\delta}$ be the closure of $T_{\jmath}$. By Proposition \ref{pro-index-injection} and the above remark, we have $T_\delta\in J(T)$ and $\jmath=\{T_\delta\}$.
\end{defn}

 %For $w\in B(\mathrm k)\backslash G(\mathrm k)/\mathrm S(\mathrm k)$, let $\mathrm o(w)$ be the $B$-orbit $Bw\mathrm S$ on $\mathcal X=G/\mathrm S$. 
 For $\delta=(\jmath,\iota,C)\in \Delta_T$, let $\mathrm I_\delta$ denote the set-theoretic image of $K_\delta:=G\times T_{\jmath,\iota}\times \mathcal Q_1^\iota\times C$ under $\mathrm c_{\jmath,\iota}$. Recall the set $\Phi_B$ introduced in Definition \ref{def-PhiB}.
\begin{cor}\label{cor-index-injection}Let the notation be as above.
    We have a injection $\mathrm b_{T,B}:\Delta_T\hookrightarrow \Phi_B$ characterized by the following property:
    \begin{itemize}
        \item $\delta$ maps to $w$ if and only if $\mathrm I_\delta\cap \mathcal Y_{T,B}^w$ is dense in $\mathcal Y_{T,B}^w$.
    \end{itemize}
\end{cor}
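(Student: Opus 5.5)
We need to prove Corollary \ref{cor-index-injection}: there is an injection $\mathrm b_{T,B}:\Delta_T\hookrightarrow\Phi_B$, where $\delta$ maps to $w$ exactly when $\mathrm I_\delta\cap\mathcal Y_{T,B}^w$ is dense in $\mathcal Y_{T,B}^w$. The plan has three parts: show such a $w$ exists for each $\delta$, show it is unique and lies in $\Phi_B$, and show the assignment is injective. The inputs are the proof of Proposition \ref{pro-index-injection}, the stratification (\ref{eq-partitionYTB}), and Proposition \ref{pro-YTBw-toG}.

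\emph{Existence.} Fix $\delta=(\jmath,\iota,C)\in\Delta_T$. The source $K_\delta=G\times T_{\jmath,\iota}\times\mathcal Q^\iota_1\times C$ is irreducible. Here $T_{\jmath,\iota}$ is open dense in the torus $T_\delta$ by Proposition \ref{pro-index-injection}(ii), $\mathcal Q_1^\iota$ is irreducible by Remark \ref{rm-qcomponent}, and $C$ is irreducible by definition. Hence $\overline{\mathrm I_\delta}$ is irreducible.

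By Lemma \ref{lem-fibre-dim-conj}, every nonempty fibre of $\mathrm c_{\jmath,\iota}$ has dimension $\dim G_\iota$. Every component $\mathcal Q^\iota_{\bar\gamma}$ of $\mathcal B_\iota$ has the same dimension, since they are $G_\iota$-translates. The defining equality of $\Delta_T$ therefore gives $\dim\overline{\mathrm I_\delta}=\dim G$.

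The stratification (\ref{eq-partitionYTB}) is finite, so the generic point of $\overline{\mathrm I_\delta}$ lies in some $\mathcal Y_{T,B}^w$. By Lemma \ref{lem-dim-YTBw}, $\mathcal Y^w_{T,B}$ is irreducible of dimension $\dim G$. So $\overline{\mathrm I_\delta}\cap\mathcal Y_{T,B}^w$ has dimension $\dim G$ and is dense in $\mathcal Y^w_{T,B}$. The image $\mathrm I_\delta$ is constructible and dense in $\overline{\mathrm I_\delta}$, so it contains an open dense subset of $\overline{\mathrm I_\delta}$. It follows that $\mathrm I_\delta\cap\mathcal Y_{T,B}^w$ is dense in $\mathcal Y^w_{T,B}$.

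\emph{Uniqueness.} The strata are disjoint and each has dimension $\dim G$. The irreducible set $\overline{\mathrm I_\delta}$ has dimension $\dim G$, so it is the closure of exactly one stratum. Hence $w$ is uniquely determined.

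\emph{$w\in\Phi_B$.} Every point of $\mathrm I_\delta$ has first coordinate $gtg^{-1}$ with $t\in T_{\jmath,\iota}\subset T_\delta$. Since $T_\delta$ is $G$-conjugate to $U$, this first coordinate lies in the image of $\kappa$. By density, the map $\mathrm v_w$ sends a general point of $\mathcal Y^w_{T,B}$ into $\mathrm{Im}(\kappa)$.

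If $w\notin\Phi_B$, the proof of Proposition \ref{pro-YTBw-toG}(ii) shows that a general element of $B_w$ is not semisimple. Then a general point of $\mathcal Y_{T,B}^w$ maps to a non-semisimple element, which is a contradiction. Hence $w\in\Phi_B$.

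\emph{Injectivity.} This is the main obstacle. Suppose $\delta$ and $\delta'$ both map to $w$. Then $\overline{\mathrm I_\delta}=\overline{\mathcal Y^w_{T,B}}=\overline{\mathrm I_{\delta'}}$, and we recover $(\jmath,\iota,C)$ from a general point of $\mathcal Y^w_{T,B}$, in three steps.

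First, $T_\delta$ is recovered. Pick a general point $p=(g_0,xB,y\mathrm S)$ of $\mathcal Y_{T,B}^w$. By the argument in the proof of Proposition \ref{pro-index-injection}, $\mathrm d_{\mathcal Y}$ sends $\mathcal Y^w_{T,B}$ onto the torus $\mathrm d^B_T(B\cap w\mathrm Sw^{-1})$, while $\mathrm d_{\mathcal Y}\circ\mathrm c_{\jmath,\iota}$ is the projection to $t$ and takes values in $T_\delta$. Density then forces $T_\delta=\mathrm d^B_T(B\cap w\mathrm Sw^{-1})$, and likewise $T_{\delta'}$ equals the same torus. So $\jmath=\jmath'$.

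Second, $\iota$ is recovered. The datum $\iota$ is the centralizer type of a general point of $T_\delta$, and $T_{\jmath,\iota}$ is open dense in $T_\delta$. Hence $\iota$ is determined by $T_\delta$, so $\iota=\iota'$.

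Third, $C$ is recovered. Write $p=\mathrm c_{\jmath,\iota}(g,t,xB,y\mathrm S)$ with $t\in T_{\jmath,\iota}$ and $x\in\mathcal Q_1^\iota$; these conditions pin down $t$ up to the finitely many choices from Lemma \ref{lem-fibre-dim-conj}. The point $g^{-1}y\mathrm S$ then lies in $C$. It remains to check that the resulting component of $\mathcal X^\jmath$ does not depend on the choice. I would argue this by comparing two choices, which differ by an element of the normalizer stabilizing $T_\delta$ and the component $\mathcal Q_1^\iota$. Such an element lies in $G_\iota$. Since $G_\iota$ is connected, it preserves each irreducible component of $\mathcal X^\jmath$. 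Hence $C=C'$, and $\mathrm b_{T,B}$ is injective.
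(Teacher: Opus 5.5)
Your well-definedness argument matches the paper's: irreducibility and dimension $\dim G$ of $\overline{\mathrm I_\delta}$, uniqueness of the stratum, and $w\in\Phi_B$ via generic non-semisimplicity of $B_w$. One small correction there: the components $\mathcal Q^\iota_{\bar\gamma}$ of $\mathcal B_\iota$ are distinct $G_\iota$-orbits, not $G_\iota$-translates of one another. They are equidimensional because each is $G_\iota\gamma B/B$ with $\gamma\in N_G(T)$, hence isomorphic to the flag variety of $G_\iota$.

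The gap is in the third step of injectivity. You say the conditions $t\in T_{\jmath,\iota}$, $xB\in\mathcal Q^\iota_1$ determine $t$ only ``up to finitely many choices''. You then assert that two preimages differ by an element normalizing $T_\delta$ and stabilizing $\mathcal Q_1^\iota$, and that ``such an element lies in $G_\iota$''. That claim is true, but it carries the whole content of injectivity and is not formal: it amounts to $N_G(G_\iota)\cap G_\iota B= G_\iota$, i.e.\ $N_G(G_\iota)\cap B\subseteq G_\iota$, which you never address.

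The missing idea is that $t$ is uniquely determined by the image point. Suppose $p=(g_0,x_0B,y_0\mathrm S)=\mathrm c_{\jmath,\iota}(g,t,xB,y\mathrm S)$ with $xB\in\mathcal Q^\iota_1=G_\iota B/B$. You may take $x\in G_\iota=C_G(t)$, so $x_0=gxb$ with $b\in B$. Then $x_0^{-1}g_0x_0=b^{-1}tb$, whence $t=\mathrm d^B_T(x_0^{-1}g_0x_0)$.

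Apply this to a point $p\in\mathrm I_\delta\cap\mathrm I_{\delta'}$, which exists since both sets contain dense open subsets of the irreducible $\mathcal Y^w_{T,B}$. The two preimages have a common $t\in T_{\jmath,\iota}\cap T_{\jmath',\iota'}$, so $(\jmath,\iota)=(\jmath',\iota')$ by Proposition \ref{pro-T-partition}. Moreover $g_2^{-1}g_1\in C_G(t)=G_\iota$. Your connectedness argument then gives $C=C'$, using that the components of the smooth scheme $\mathcal X^\jmath$ are disjoint.

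This is exactly the paper's proof. It makes your Steps 1 and 2 unnecessary, though they are correct: Step 1 recovers $T_\delta$ from $\mathrm d_{\mathcal Y}(\mathcal Y^w_{T,B})$, which is essentially the argument of Corollary \ref{cor-Tdelta-vs-delta}, and Step 2 recovers $\iota$ by density.
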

\begin{proof}
We first show that such a map is well-defined. Fix $\delta =(\jmath,\iota,C)\in \Delta_T$ in this paragraph. By Lemma \ref{lem-fibre-dim-conj} and the Definition of $\Delta_T$, we see that the closure of $\mathrm I_\delta$ is irreducible and of dimension $\dim G$. Consequently by Lemma \ref{lem-dim-YTBw}, there exists a unique $w\in B(\mathrm k)\backslash G(\mathrm k)/\mathrm S(\mathrm k)$ such that $\mathrm I_\delta\cap \mathcal Y_{T,B}^w$ is dense in $\mathcal Y_{T,B}^w$. We want to show that $w\in \Phi_B$. Take a $\mathrm k$-point $p=(g,xB,y\mathrm S)\in \mathrm I_\delta\cap \mathcal Y_{T,B}^w$ such that $I_\delta\cap \mathcal Y_{T,B}^w$ contains an open neighborhood $V\subset \mathcal Y_{T,B}^w$ of $p$. Given the definition of $\mathrm c_{\jmath,\iota}$, we see that a general point $b\in xBx^{-1}\cap y\mathrm S y^{-1}$ is $G(\mathrm k)$-conjugate to an element in $T_{\jmath,\iota}$. By Remark \ref{rm-phi-self-B}, we see that a general point of $xBx^{-1}\cap y\mathrm S y^{-1}$ is not semisimple if $w\notin \Phi_B$. This shows that $w\in \Phi_B$, as desired. In particular, we have the well-defined map $\mathrm b_{T,B}$ characterized by the property displayed in our corollary.

It remains to show that $\mathrm b_{T,B}$ is an injection. Fix $\delta_1 =(\jmath_1,\iota_1,C_1)\in \Delta_T$, and $\delta_2 =(\jmath_2,\iota_2,C_2)\in \Delta_T$ in this paragraph. Assume that $\mathrm b_{T,B}(\delta_1)=\mathrm b_{T,B}(\delta_2)$. Let $q=(h,iB,j\mathrm S)\in \mathrm I_{\delta_1}\cap I_{\delta_2}$. 
Say, that $q=\mathrm c_{\jmath,\iota}((g_1,a_1,x_1B,y_1\mathrm S))=\mathrm c_{\jmath,\iota}((g_2,a_2,x_2B,y_2\mathrm S))$ for $(g_1,a_1,x_1B,y_1\mathrm S)\in K_{\delta_1}$ (resp. $(g_2,a_2,x_2B,y_2\mathrm S)\in K_{\delta_2}$).
Then it is immediate to see that $a_1=a_2=\mathrm d_T^B(i^{-1}hi)\in T_{\jmath_1,\iota_1}\cap T_{\jmath_2,\iota_2}$. This yields $(\jmath_1,\iota_1)=(\jmath_2,\iota_2)$ since $\{T_{\jmath,\iota}\}$ forms a partition of $T$ by Proposition \ref{pro-T-partition}. This in turns show that we may assume $x_1,x_2\in G_{\iota_1}=G_{\iota_2}$ by the definition of $\mathcal Q_1^\iota$ (Definition \ref{def-flag-fixed-s}). (We denote $G_\iota:=G_{\iota_1}=G_{\iota_2}$ in the rest of this proof.)
Consequently, we see that: 
\begin{itemize}
    \item $g_1x_1B=g_2x_2B$;
    \item $g_1g_2^{-1}$ centralizes $a_1=a_2$.
\end{itemize}
    This shows that $g_1g_2^{-1}\in G_\iota$ by Remark \ref{rm-qcomponent}. As $g_2^{-1}g_1y_1\mathrm S=y_2\mathrm S$, we see that $C_1=C_2$, since it is clear by definition that $C_1$ and $C_2$ are stable under the action of the connected algebraic group $G_\iota$.
\end{proof}

\begin{rmk}\label{rm-injectT}
Fix distinct $w_1,w_2\in \Phi_B$. Recall the map $\rho\circ \mathrm{pr}_1:\mathcal Y_{T,B}\to T$ used in Proposition \ref{pro-badmodel}, which is explicitly given by $(g,xB,y\mathrm S)\mapsto \mathrm d_{T}^B(x^{-1}gx)$.
   We claim that the images of $\mathcal Y_{T,B}^{w_1}$ and $\mathcal Y_{T,B}^{w_2}$ under $\pi\circ \mathrm{pr}_1$ are distinct. 

   In proving this claim, we may assume that $B$ is the Borel group $\mathrm B^{op}$ introduced in Definition \ref{def-various} by the obvious symmetry. Then this claim follows from Proposition \ref{pro-bijec-phi} and a straightforward calculation.
\end{rmk}

\begin{cor}\label{cor-Tdelta-vs-delta}
    Let $\delta_1$ and $\delta_2$ be distinct elements in $\Delta_T$. Then the tori $T_{\delta_1}$ and $T_{\delta_2}$ are likewise distinct.
\end{cor}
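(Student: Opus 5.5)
We argue by contraposition: suppose $\delta_1\neq\delta_2$ in $\Delta_T$ but $T_{\delta_1}=T_{\delta_2}=:R$, and derive a contradiction from Corollary \ref{cor-index-injection} together with Remark \ref{rm-injectT}. Because $\mathrm b_{T,B}$ is injective, the classes $w_i:=\mathrm b_{T,B}(\delta_i)$ are distinct elements of $\Phi_B$. By Remark \ref{rm-injectT}, the images of $\mathcal Y_{T,B}^{w_1}$ and $\mathcal Y_{T,B}^{w_2}$ under $\rho\circ\mathrm{pr}_1$, i.e. under $\mathrm d_{\mathcal Y}$, are distinct. The goal is therefore to show that each such image is determined by $T_{\delta_i}$.

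First I compute the image of $\mathcal Y_{T,B}^{w}$ under $\mathrm d_{\mathcal Y}$. As noted in the proof of Proposition \ref{pro-index-injection}, $\mathrm d_w$ maps $\mathcal Y_{T,B}^w$ onto the subtorus $\mathrm d_T^B(B\cap w\mathrm Sw^{-1})\in J(T)$, which is closed and of dimension $\mathfrak n$. Next I compute the same image through $\delta_i$. By the unwinding in the proof of Proposition \ref{pro-index-injection}, $\mathrm d_{\mathcal Y}\circ\mathrm c_{\jmath_i,\iota_i}$ is the projection $(g,t,xB,y\mathrm S)\mapsto t$. Hence $\mathrm d_{\mathcal Y}(\mathrm I_{\delta_i})=T_{\jmath_i,\iota_i}$ (note $K_{\delta_i}$ is nonempty, since $\mathcal Q_1^{\iota_i}$ and $C_i$ are nonempty). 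By Proposition \ref{pro-index-injection}(ii), $T_{\jmath_i,\iota_i}$ is open dense in $T_{\delta_i}$.

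The images can now be compared. By Corollary \ref{cor-index-injection}, $\mathrm I_{\delta_i}\cap\mathcal Y_{T,B}^{w_i}$ is dense in the irreducible $\mathcal Y_{T,B}^{w_i}$ (Lemma \ref{lem-dim-YTBw}). Continuity of $\mathrm d_{\mathcal Y}$ then gives that $\mathrm d_{\mathcal Y}(\mathcal Y_{T,B}^{w_i})$ lies in the closure of $\mathrm d_{\mathcal Y}(\mathrm I_{\delta_i}\cap \mathcal Y^{w_i}_{T,B})$. This closure is contained in $\overline{T_{\jmath_i,\iota_i}}=T_{\delta_i}$. Since the left side is a closed subtorus of dimension $\mathfrak n$, it equals $T_{\delta_i}$, which is irreducible of dimension $\mathfrak n$.

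Thus $T_{\delta_1}=T_{\delta_2}$ forces $\mathrm d_{\mathcal Y}(\mathcal Y_{T,B}^{w_1})=\mathrm d_{\mathcal Y}(\mathcal Y_{T,B}^{w_2})$. This contradicts Remark \ref{rm-injectT}, since $w_1\neq w_2$. The main subtle point is the containment step: one must check that $\mathrm d_{\mathcal Y}$ restricted to $\mathrm I_{\delta_i}$ lands in $T_{\jmath_i,\iota_i}$. This holds because a point of $\mathrm I_{\delta_i}$ has the form $(gtg^{-1},gxB,gy\mathrm S)$ with $x\in G_{\iota_i}$ representing $\mathcal Q^{\iota_i}_1$. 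For such $x$ one gets $x^{-1}tx\in B$ with $\mathrm d_T^B(x^{-1}tx)=t$, using that $x$ lies in $G_{\iota_i}\cap B$-cosets fixing $t$, as in Remark \ref{rm-qcomponent}.
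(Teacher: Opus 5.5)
Your argument is correct and is essentially the paper's proof. Both use that $\mathrm d_{\mathcal Y}\circ\mathrm c_{\jmath,\iota}$ is the projection onto $T_{\jmath,\iota}$ to recover $T_\delta$ from $\mathrm b_{T,B}(\delta)$ via $\rho\circ\mathrm{pr}_1$ on $\mathcal Y_{T,B}^{\mathrm b_{T,B}(\delta)}$, and then conclude from the injectivity of $\mathrm b_{T,B}$ together with Remark~\ref{rm-injectT}. Your irreducibility-plus-dimension step, showing that the whole image of $\mathcal Y^{w}_{T,B}$ equals $T_\delta$, just makes explicit why the paper's ``image of $V_1$ is dense in $T_{\jmath,\iota}$'' depends only on $w$.
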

\begin{proof}Let $\bar {\mathrm I}_{\delta_1}$ be the closure of $\mathrm I_{\delta_1}$, which is the underlying set of the scheme-theoretic image of $K_{\delta_1}$ under $\mathrm c_{\jmath,\iota}$. Consequently $\bar {\mathrm I}_{\delta_1}$ is irreducible and of dimension $\dim G$. Corollary \ref{cor-index-injection} shows that $\mathcal Y_{T,B}^{\mathrm b_{T,B}(\delta_1)}$ contains a nonempty open subset $V_1$ of $\bar {\mathrm I}_{\delta_1}$.

Let $w\in G(\mathrm k)$ be a representative of $\mathrm b_{T,B}(\delta_1)$. Let $\mathrm f:V_1\to T$ be the map given by $(g,xB,y\mathrm S)\mapsto \mathrm d_T^B(x^{-1}gx)$, which is the restriction of $\rho\circ \mathrm {pr}_1$.
Note that the composite $K_\delta\xrightarrow[]{\mathrm c_{\jmath,\iota}}\mathcal Y_{T,B}\xrightarrow{\rho\circ \mathrm{pr}_1}T$ equals the projection $K_\delta=G\times T_{\jmath,\iota}\times \mathcal Q_1^\iota\times C\to T_{\jmath,\iota}\hookrightarrow T$.

By the above argument, the image of $V_1$ under the map $\mathrm f$ is dense in $T_{\jmath,\iota}$. This procedure ensures that $T_{\delta_1}$ is indeed determined by $\mathrm b_{T,B}(\delta_1)$.  
In particular, this manifests that $T_{\delta_1}$ and $T_{\delta_2}$ are distinct by Remark \ref{rm-injectT}. 
\end{proof}

\begin{rmk}
    Latter (in Proposition \ref{pro-bij-delta} below) we will see that $\mathrm b_{T,B}$ is indeed a bijection.
\end{rmk}

\subsection{The limit in question}In this subsection, we retain the notation as in Subsection \ref{subsec-summation-in-ques}. In particular, we fix an $F$-stable maximal torus $T$ of $G$ and a character $\chi:T^F\to \Qlb^\times$. Note that $T$ is the pullback to $\mathrm k$ of a torus $T_0$ over $\mathbb F_q$.

The following lemma is trivial.
\begin{lem}\label{lem-the-gap}
    There exists a positive integer $\mathfrak{N}$ satisfying the following properties:
    \begin{itemize}
        \item $F^{\mathfrak{N}}$ acts trivially on $J(T)$ and $I(T)$;
        \item $F^{\mathfrak{N}}$ acts trivially on $\bar N(\iota,T)$ for all $\iota \in I(T)^F$;
        \item $F^{\mathfrak{N}}$ acts trivially on the set of irreducible components of $\mathcal X^\jmath$ for all $\jmath \in 2^{J(T),F}$;
        \item $T$ is $F^{\mathfrak{N}}$-split: that is, the torus $T_0\times_{\Spec \mathbb F_q}\Spec \mathbb F_{q^\mathfrak{N}}$ is split over $\mathbb F_{q^\mathfrak{N}}$;
        \item Fix any $\iota \in I(T)^F$. The number $\sigma_{1+i\mathfrak{N}}(G_\iota)$ remains constant as $i$ varies in the set of positive integers, where  $\sigma_{1+i\mathfrak{N}(G_\iota)}$ denote the $F^{1+i\mathfrak{N}}$-rank of $G_\iota$.
    \end{itemize}
\end{lem}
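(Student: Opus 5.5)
The plan is to obtain each of the five properties separately, each holding for all multiples of some positive integer, and then let $\mathfrak{N}$ be the product (or least common multiple) of these integers. The key observation is that every property is a condition invariant under passing to multiples. Once it holds for $\mathfrak{N}_k$, it holds for every multiple of $\mathfrak{N}_k$.

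\textbf{The first three properties.} These concern finite sets with a natural $F$-action. The sets are $J(T)$, $I(T)$, $\bar N(\iota,T)$ for $\iota\in I(T)^F$, and the sets of irreducible components of $\mathcal X^\jmath$ for $\jmath\in 2^{J(T),F}$.

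Finiteness comes from the earlier results:
\begin{itemize}
\item $J(T)$ is finite, since it consists of subtori of $T$ conjugate to $U$, and these correspond to a finite set of Weyl group translates.
\item $I(T)$ is finite by Proposition \ref{pro-T-partition}.
\item $\bar N(\iota,T)$ is finite by Definition \ref{def-barnugs}.
\item $\mathcal X^\jmath$ is a scheme of finite type, so it has finitely many irreducible components.
\end{itemize}

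For $\iota,\jmath$ that are $F$-stable, $F$ induces a permutation of each of these finite sets. This uses Remark \ref{rm-F-stable-parition} together with the facts that $F(N(\iota,T))=N(F(\iota),T)$ and $F(\mathcal X^\jmath)=\mathcal X^{F(\jmath)}$. A permutation of a finite set has finite order. Since there are only finitely many relevant $\iota,\jmath$, one common exponent $\mathfrak{N}_1$ kills all these permutations simultaneously.

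\textbf{The fourth property.} $T_0$ splits over some finite extension $\mathbb F_{q^{\mathfrak N_2}}$. Equivalently, $F^{\mathfrak N_2}$ acts trivially on $X^*(T)$, because $F$ acts on $X^*(T)$ through a finite-order automorphism twisted by $q$. The same then holds for every multiple of $\mathfrak N_2$.

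\textbf{The fifth property.} For $\iota\in I(T)^F$, $G_\iota$ is an $F$-stable connected reductive group containing $T$. Its $F^m$-rank is the rank of the maximal $F^m$-split subtorus of a maximally split torus. I would handle this property last, and I expect it to be the only non-formal step.

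The $F^m$-rank depends only on the action of $F^m$ on the character lattice, modulo the Weyl group, of a quasi-split form. More concretely, $F$ acts on the based root datum of $G_\iota$ through an automorphism $\tau$ of finite order $d$. Then the $F^m$-rank equals the dimension of the $\tau^m$-fixed part of $X^*\otimes\mathbb Q$ of a maximal torus of $G_\iota$, which is the number of $\tau^m$-orbits on simple roots plus the central contribution. This quantity depends only on $m \bmod d$.

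Take $\mathfrak N_3$ divisible by every such $d$ over the finitely many $\iota$. Then $1+i\mathfrak N_3\equiv 1 \pmod d$, so $\sigma_{1+i\mathfrak N_3}(G_\iota)=\sigma_1(G_\iota)$ is constant in $i$.

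Finally, set $\mathfrak N=\mathfrak N_1\mathfrak N_2\mathfrak N_3$. The fifth property still holds for $\mathfrak N$, since $1+i\mathfrak N\equiv1\pmod d$ as well.
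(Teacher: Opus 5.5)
Your proposal is correct, and it is the routine argument the paper has in mind: the paper calls the lemma trivial and gives no proof, and your observation that each property survives passing to multiples, combined with the fact that the $F^m$-rank of $G_\iota$ depends only on $m$ modulo the order of the Frobenius twist on its based root datum, settles it. Two small wording fixes: $F^{\mathfrak N_2}$ does not act trivially on $X^*(T)$ but as multiplication by $q^{\mathfrak N_2}$ (what is trivial is its finite-order part), and finiteness of $I(T)$ is better justified directly (the centralizer of $t\in T$ in $\mathrm{GL}_{2\mathfrak n}$ is determined by the subset of roots trivial on $t$) than by citing Proposition~\ref{pro-T-partition}, which already presupposes it.
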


\begin{rmk}\label{rm-green-limit}
    Fix a positive integer $\mathfrak{N}$ as in Lemma \ref{lem-the-gap}. Fix $\iota \in I(T)^F$ and $\bar \gamma \in \bar N(\iota,T)^F$. It follows from \cite[Theorem 7.1]{DL} that we have the following equation:
    $$
    \lim_{i\to \infty} \frac{Q_{\gamma T\gamma^{-1}}^{G_\iota,(1+i\mathfrak{N})}(e)}{q^{(1+i\mathfrak{N})\dim \mathcal B_\iota}}=(-1)^{\sigma(G_\iota)+\sigma(T)}.
    $$
\end{rmk}

The following two lemmas are trivial instances of Grothendieck trace formula and Deligne's theory on weight (apply to the constant sheaves ).

\begin{lem}\label{lem-lim-X}
    Fix a positive integer $\mathfrak{N}$ as in Lemma \ref{lem-the-gap}. Fix $\iota \in I(T)^F$   and $\jmath\in 2^{J(T),F}$. Then we have
 \[
    \lim_{i\to \infty} \frac{|(\mathcal X^\jmath)^{F^{1+i\mathfrak{N}}}|}{q^{(1+i\mathfrak{N})\dim \mathcal X^j}}=\mathcal C(\mathcal X^{\jmath}),\] where $\mathcal C(\mathcal X^{\jmath})$ denote the number of $F$-stable irreducible components of $\mathcal X^\jmath$ with dimension $\dim \mathcal X^\jmath$.
\end{lem}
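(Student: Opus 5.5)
The plan is to apply the Grothendieck trace formula to the constant sheaf $\Qlb$ on $\mathcal X^\jmath$, then use Deligne's weight bounds to isolate the top-degree cohomology. Note first that $\mathcal X^\jmath$ is $F$-stable, since $\jmath\in 2^{J(T),F}$ and $F(T_\jmath)=T_{F(\jmath)}$. Hence $\mathcal X^\jmath$ descends to a scheme over $\mathbb F_q$. It is separated and of finite type, being a closed subscheme of $\mathcal X=G/\mathrm S$; in fact it is smooth by \cite[Theorem 13.1]{Mi}. Set $d=\dim \mathcal X^\jmath$. For every positive integer $m$ the trace formula gives
$$|(\mathcal X^\jmath)^{F^m}|=\sum_{k=0}^{2d}(-1)^k\mathrm{Tr}((F^m)^*,H^k_c(\mathcal X^\jmath,\Qlb)).$$

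First I will dispose of the lower-degree terms. By Deligne \cite[Th\'eor\`eme 3.3.1]{D}, every eigenvalue of $F^*$ on $H^k_c$ has absolute value at most $q^{k/2}$. Therefore, for $k<2d$, each term is $O(q^{m(2d-1)/2})$. After dividing by $q^{md}$, these terms tend to $0$ as $m=1+i\mathfrak N\to\infty$.

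Next I will identify the top-degree term. By Lemma \ref{lem-topcohogeneral}, $H^{2d}_c(\mathcal X^\jmath,\Qlb)\cong\bigoplus_{C}H^{2d}_c(C,\Qlb)$, where $C$ runs over the $d$-dimensional irreducible components. Since $\mathcal X^\jmath$ is smooth, these components are disjoint. The Frobenius permutes the summands according to its action on the components, and on each $H^{2d}_c(C,\Qlb)\cong\Qlb(-d)$ it acts through $q^{d}$. Consequently $\mathrm{Tr}((F^m)^*,H^{2d}_c)=q^{md}\cdot\#\{C:F^m(C)=C\}$.

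It remains to compare $F^m$-stable components with $F$-stable ones for $m=1+i\mathfrak N$. By Lemma \ref{lem-the-gap}, $F^{\mathfrak N}$ acts trivially on the set of irreducible components of $\mathcal X^\jmath$, so $F^{1+i\mathfrak N}$ acts on that set exactly as $F$ does. Hence the number of $F^{1+i\mathfrak N}$-stable top-dimensional components equals $\mathcal C(\mathcal X^\jmath)$. In particular, the normalized top term is constant in $i$, and the limit equals $\mathcal C(\mathcal X^\jmath)$.

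The only delicate step is the claim that Frobenius acts on each top cohomology line by exactly $q^d$ times the permutation, with no extra twist. This follows from the trace map $H^{2d}_c(C,\Qlb)\cong\Qlb(-d)$ for a geometrically irreducible $C$ over the field of definition. Smoothness is not even essential here, since Lemma \ref{lem-topcohogeneral} lets us pass to dense open smooth subschemes.
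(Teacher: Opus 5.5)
Your proposal is correct and follows the paper's approach: the paper simply calls this a trivial instance of the Grothendieck trace formula and Deligne's weight theory applied to the constant sheaf. Your write-up fills in exactly those details: weights $\le k$ kill the lower-degree terms, top-degree cohomology gives $q^{md}$ times the number of $F^m$-stable top-dimensional components, and triviality of $F^{\mathfrak N}$ on components reduces this count to $\mathcal C(\mathcal X^\jmath)$.
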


\begin{lem}\label{lemn-lim-Giota}
    Fix $\iota \in I(T)^F$. Then we have
    \[
    \lim_{i\to \infty} \frac{|G_\iota^{F^i}|}{q^{i\dim G_\iota}}=1.
    \]
\end{lem}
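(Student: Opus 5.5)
The plan is to compute $|G_\iota^{F^i}|$ directly and compare it with $q^{i\dim G_\iota}$. Since $\iota\in I(T)^F$, the group $G_\iota$ is an $F$-stable connected reductive subgroup of $G$. It is the centralizer of a semisimple element of $\mathrm{GL}_{2\mathfrak n}$, which is connected; concretely it is a product of Levi-type general linear factors, possibly permuted or twisted by $F$. Hence the classical order formula of Steinberg applies, with $F$ replaced by $F^i$ (see e.g. \cite[Section 2]{DL} or Carter's book):
\[
|G_\iota^{F^i}| = q^{i N_\iota}\prod_{k=1}^{r}\bigl(q^{i d_k}-\varepsilon_k^{\,i}\bigr),
\]
where:
\begin{itemize}
\item $N_\iota$ is the number of positive roots of $G_\iota$ and $r$ is its rank;
\item $d_1,\dots,d_r$ are the degrees of the basic invariants of the Weyl group of $G_\iota$;
\item $\varepsilon_k$ are the eigenvalues of the (finite-order) action of $F$ on the space of basic invariants, so each $\varepsilon_k$ is a root of unity. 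Their $i$-th powers are the eigenvalues for $F^i$.
\end{itemize}

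The first step is to recall that $\dim G_\iota = 2N_\iota + r$ and that $\sum_k d_k = N_\iota + r$. Dividing the displayed formula by $q^{i\dim G_\iota}=q^{i(N_\iota+\sum_k d_k)}$ then gives
\[
\frac{|G_\iota^{F^i}|}{q^{i\dim G_\iota}}=\prod_{k=1}^{r}\bigl(1-\varepsilon_k^{\,i}q^{-id_k}\bigr).
\]
Since $|\varepsilon_k|=1$ and each $d_k\geq 1$, every factor tends to $1$ as $i\to\infty$, and so does the finite product.

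Alternatively, to stay within the paper's framework, one can argue via the Grothendieck trace formula. Apply it to the constant sheaf on the smooth connected variety $G_\iota$, and use Poincaré duality together with Deligne's weights \cite{D}. The top cohomology $H_c^{2\dim G_\iota}(G_\iota,\Qlb)$ is one-dimensional, because $G_\iota$ is irreducible, and $F^*$ acts on it by exactly $q^{\dim G_\iota}$, because the component is $F$-stable. All other $H_c^j$ have Frobenius eigenvalues of absolute value at most $q^{j/2}\le q^{(2\dim G_\iota-1)/2}$. Hence $|G_\iota^{F^i}|=q^{i\dim G_\iota}+O(q^{i(\dim G_\iota-1/2)})$, and the limit follows.

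The only step needing care is connectedness of $G_\iota$. Definition \ref{def-I(T)} uses the connected centralizer; for $\mathrm{GL}$ the centralizer of a semisimple element is already connected. Connectedness guarantees a unique top-dimensional component, which is automatically $F$-stable, so the top eigenvalue is exactly $q^{\dim G_\iota}$ with no root-of-unity twist. I expect this to be the only (mild) obstacle.
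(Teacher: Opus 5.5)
Your proposal is correct. Your second, cohomological argument is the paper's own. The paper gives no separate proof; it only remarks that this lemma and Lemma \ref{lem-lim-X} are trivial instances of the Grothendieck trace formula and Deligne's weight theory applied to the constant sheaf. Your observation that $G_\iota$ is geometrically irreducible is the key point: it makes $H_c^{2\dim G_\iota}(G_\iota,\Qlb)$ one-dimensional with $F^*$ acting by exactly $q^{\dim G_\iota}$, which is why the limit can be taken over all $i$.

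Your primary route, via Steinberg's order formula, is genuinely different. It replaces $\ell$-adic cohomology with the structure theory of finite reductive groups. In return it gives the exact identity $|G_\iota^{F^i}|/q^{i\dim G_\iota}=\prod_k(1-\varepsilon_k^{\,i}q^{-id_k})$, and hence an explicit rate of convergence. Two precisions are needed for it to be right. First, $r$ must be the absolute rank $\dim T$, so that central directions contribute basic invariants of degree $1$ (for instance the factor $q-1$ in $|\mathrm{GL}_1(\mathbb F_q)|$, or $q+1$ in $|\mathrm U_1(\mathbb F_q)|$). Second, the $\varepsilon_k$ are eigenvalues of the finite-order part $\tau$ of $F=q\tau$, which you do indicate.

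What the paper's formulation buys is uniformity with Lemma \ref{lem-lim-X}. There $\mathcal X^\jmath$ is not a group, may have several top-dimensional components permuted by $F$, and has no closed counting formula. That is why the companion lemma must count $F$-stable components and restrict to the progression $\{1+i\mathfrak N\}$. For the connected group $G_\iota$, neither step is needed.
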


%We fix one such $\mathfrak{ N}$ as in Lemma \ref{lem-the-gap} in the rest of this subsection.

\begin{lem}\label{lem-lim-chara}
     Let $C$ be an $F$-stable open dense subscheme of an $F$-stable subtorus $R$ of $T$. For each positive integer $i$, we set $M_{i}:=\sum\limits_{s\in C^{F^i}}\chi\circ \mathrm N_T^i(s)$.
      Then we have
    \begin{equation*}
        \lim\limits_{i\to \infty}\frac{M_{i}}{q^{i\dim R}}=\left\{ \begin{aligned}
&0& \text{if the~restriction~of~$\chi$~to ~$R^F$~ is~ nontrivial.}\\
&1 &\text{otherwise.}
\end{aligned}\right.
    \end{equation*}       
\end{lem}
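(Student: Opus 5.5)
The statement to prove is Lemma \ref{lem-lim-chara}. I will read $M_i$ as a Frobenius trace and use weights. By Subsection \ref{subsec-sheavesontori}, the function $s\mapsto \chi\circ\mathrm N_T^i(s)$ on $T^{F^i}$ is $K^i_{\mathscr L_\chi}$. Let $\mathscr L:=\mathscr L_\chi|_C$, which is $F$-stable because $C$ is. The Grothendieck trace formula then gives
$$M_i=\mathrm{Tr}\bigl((F^i)^*,H^\bullet_c(C,\mathscr L)\bigr).$$
Put $d=\dim R=\dim C$. Since $\mathscr L$ is a rank-one local system that is pure of weight $0$, Deligne's theorem \cite{D} says that eigenvalues on $H^j_c$ have absolute value at most $q^{j/2}$. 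So after dividing by $q^{id}$, only $H^{2d}_c(C,\mathscr L)$ can contribute to the limit.

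Next I compute the top cohomology. $C$ is open dense in the torus $R$, hence smooth and irreducible of dimension $d$. By Poincaré duality, $H^{2d}_c(C,\mathscr L)$ is dual, up to the Tate twist $(d)$, to $H^0(C,\mathscr L^\vee)$.

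If $\mathscr L|_C$ is geometrically trivial, this space is one-dimensional. If it is geometrically nontrivial, the space vanishes, because a rank-one local system with a nonzero global section is trivial.

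Since $C\subset R$ is open dense, $\pi_1(C)\to\pi_1(R)$ is surjective. Hence $\mathscr L_\chi|_C$ is geometrically trivial if and only if $\mathscr L_\chi|_R$ is.

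Now identify $\mathscr L_\chi|_R$. It is the Kummer-type sheaf $\mathscr L^R_{\chi|_{R^F}}$ attached to the restricted character. One way to see this is to compare trace functions on $R^{F^m}$ for all $m$: both equal $\chi\circ\mathrm N^m_T=\chi|_{R^F}\circ \mathrm N^m_R$, using that the norm on $R$ is the restriction of the norm on $T$. Then Chebotarev gives the isomorphism. Consequently $\mathscr L_\chi|_R$ is geometrically trivial exactly when $\chi|_{R^F}$ is trivial. This uses the standard fact that $\eta\mapsto\mathscr L^R_\eta$ is injective on isomorphism classes over $\mathrm k$, coming from the Lang torsor $\mathrm l^R$ with Galois group $R^F$.

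In the nontrivial case, the top cohomology vanishes. Every remaining eigenvalue then has absolute value at most $q^{(2d-1)/2}$, so $M_i/q^{id}\to0$.

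In the trivial case, $\mathscr L|_C$ is the constant sheaf, and its Weil structure is the trivial one: its trace function equals $1$ at every $F^i$-point, since $\chi\circ\mathrm N^i$ is trivial on $R^{F^i}$. So $F^*$ acts on $H^{2d}_c(C,\overline{\mathbb Q}_\ell)$ by $q^d$, because $C$ is geometrically irreducible. Hence $M_i/q^{id}\to1$.

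The main obstacle is the identification of $\mathscr L_\chi|_R$ with $\mathscr L^R_{\chi|_{R^F}}$, and the fact that geometric triviality of this sheaf happens only for the trivial character. Everything else is a direct application of weights.

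A fully elementary alternative avoids sheaves for the nontrivial case. Write $M_i=\sum_{R^{F^i}}-\sum_{(R\setminus C)^{F^i}}$. The first sum equals $|R^{F^i}|$ or $0$, according as $\chi|_{R^F}\circ\mathrm N^i_R$ is trivial or not; this character is nontrivial exactly when $\chi|_{R^F}$ is, since the norm $\mathrm N^i_R:R^{F^i}\to R^F$ is surjective by Lang's theorem. The second sum is $O(q^{i(d-1)})$. Together with $|R^{F^i}|/q^{id}\to1$, this gives both limits directly. I would actually present this version, since it is cleaner.
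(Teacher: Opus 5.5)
Your proof is correct, and the elementary version you say you would present is essentially the paper's argument: reduce from $C$ to $R$, then compute $\sum_{s\in R^{F^i}}\chi\circ\mathrm N_T^i(s)$ exactly as $0$ or $|R^{F^i}|$. Your first, sheaf-theoretic route via Poincar\'e duality and the monodromy of $\mathscr L_\chi$ is also valid, but heavier than needed. The paper does the reduction from $C$ to $R$ by identifying $H^{2\dim R}_c(C,i_C^*\mathscr L_\chi)$ with $H^{2\dim R}_c(R,i_R^*\mathscr L_\chi)$ and invoking Deligne's weight bounds, whereas you bound the contribution of $R\setminus C$ by $O(q^{i(\dim R-1)})$ points. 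You also make explicit the surjectivity of the norm $R^{F^i}\to R^F$ (Lang's theorem), which the paper leaves implicit.
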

\begin{proof}
    Recall the sheaf $\mathscr L_\chi$ introduced in Subsection \ref{subsec-sheavesontori}. It follows from Grothendieck trace formula that:
    $$
    M_{i}=\mathrm {Tr}((F^{i})^*,H_c^\bullet(C,i_{C}^*\mathscr L_\chi)),
    $$
    where $i_{C}:C\hookrightarrow T$ is the inclusion.
    Since $C$ is an open dense subscheme of the $\dim R$-dimesional torus $R$ by assumption, the cohomology spaces $H^{2\dim R}_c(R,i_{R}^*\mathscr L_\chi)$ and $H_c^{2\dim R}(C,i_{C}^*\mathscr L_\chi)$ are canonically identified, where $i_R:R\hookrightarrow T$ is the inclusion. In view of Grothendieck trace formula and Deligne's theory on weight \cite[Th\'eor\`eme 3.3.1]{D}, we see that for all positive integer $i$
    \[
    \lim\limits_{i\to \infty}\frac{M_{i}}{q^{i\dim R}}=\lim\limits_{i\to \infty}\frac{N_{i}}{q^{i\dim R}},
    \]
    where $N_{i}:=\sum\limits_{s\in R^{F^i}}\chi\circ \mathrm N_T^i(s)$. By the explicit description of $\mathscr L_\chi$ and $K_{\mathscr L}^n$ introduced in Subsection \ref{subsec-sheavesontori}, we see that for each positive integer $i$
    \begin{equation*}
      N_{i}  =\left\{ \begin{aligned}
&0& \text{if the~restriction~of~$\chi$~to ~$R^F$~ is~ nontrivial.}\\
&|R^{F^i}| &\text{otherwise.}
\end{aligned}\right.
    \end{equation*}  
    This completes the proof.
\end{proof}

\begin{defn}
    For $\delta=(\jmath,\iota,C)\in \Delta_T^F$, we set $\mathrm t_{\delta,\chi}=\langle\chi_{|T_{\delta}^F},1_{T_\delta^F}\rangle_{T_\delta^F}$, where $\chi_{|T_{\delta}^F}$ denotes the restriction of $\chi$ to $T_{\delta}^F$.
\end{defn}

For a positive integer $n$, let $\mathcal P_ n$ be the arithmetic progression $\{1+(i-1)n\}_{i\in \mathbb Z_+}$.

\begin{thm}\label{thm-limit-main}Fix a positive integer $\mathfrak{N}$ as in Lemma \ref{lem-the-gap}. We have
    $$\lim_{\mathcal P_{\mathfrak{N}}\ni n\to \infty}\frac{1}{|G^{^{F^n}}|} \sum_{t\in G^{F^n} \text{ss.}}\left(R_{T,\chi\circ \mathrm N_{T}^n}^{(n)}\cdot \mathrm {Ind}_{\mathrm S^{F^n}}^{G^{F^n}}(\psi_{\mathrm S}^{(n)})\right)(t)=\sum_{\delta\in \Delta_T^F}(-1)^{\sigma(T)+\sigma(G)}t_{\delta,\chi},$$
    where $\lim\limits_{\mathcal P_{\mathfrak{N}}\ni n\to \infty}$ means that $n\in \mathcal P_{\mathfrak{N}}$ tends to infinity, and $\sum\limits_{t\in G^{F^n} \text{ss.}}$ means that $t$ runs over the set of semisimple elements of $G^{F^n}$.
\end{thm}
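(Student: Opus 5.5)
Start from Theorem \ref{thm-summation-main}, applied with $n\in\mathcal P_{\mathfrak N}$. By Lemma \ref{lem-the-gap}, for such $n$ the Frobenius $F^n$ acts on $J(T)$, $I(T)$, $\bar N(\iota,T)$ and on the sets of irreducible components of the $\mathcal X^\jmath$ exactly as $F$ does. Hence the index sets $2^{J(T),F^n}\times I(T)^{F^n}$ and $\bar N(\iota,T)^{F^n}$ in that formula may be replaced by their $F$-fixed versions, independently of $n$. The left-hand side thus becomes a finite sum, over $(\jmath,\iota)$ and $\bar\gamma$, of terms
\[
\frac{Q^{G_\iota,(n)}_{\gamma T\gamma^{-1}}(e)\,|(\mathcal X^\jmath)^{F^n}|}{|\bar N(\iota,T)^{F}|\,|G_\iota^{F^n}|}\sum_{s\in T_{\jmath,\iota}^{F^n}}\chi\circ\mathrm N_T^n(\gamma^{-1}s\gamma).
\]
I then normalize each factor by the appropriate power of $q^n$.
\begin{itemize}
\item Remark \ref{rm-green-limit} gives $Q^{(n)}(e)\sim(-1)^{\sigma(G_\iota)+\sigma(T)}q^{n\dim\mathcal B_\iota}$.
\item Lemma \ref{lem-lim-X} gives $|(\mathcal X^\jmath)^{F^n}|\sim\mathcal C(\mathcal X^\jmath)q^{n\dim\mathcal X^\jmath}$.
\item Lemma \ref{lemn-lim-Giota} gives $|G_\iota^{F^n}|\sim q^{n\dim G_\iota}$.
\item The character sum over $T_{\jmath,\iota}^{F^n}$ is $O(q^{n\dim T_{\jmath,\iota}})$ by Deligne's weights.
\end{itemize}
So each term is $O(q^{n(\dim T_{\jmath,\iota}+\dim\mathcal B_\iota+\dim\mathcal X^\jmath-\dim G_\iota)})$. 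By Proposition \ref{pro-index-injection}(i) this exponent is $\le 0$, so every term with strict inequality tends to $0$.

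The surviving pairs are those with equality. By Proposition \ref{pro-index-injection}(ii), for these $\jmath=\{R\}$ is a singleton and $T_{\jmath,\iota}$ is open dense in $R$. Since only components of top dimension contribute to the limit in Lemma \ref{lem-lim-X}, the $F$-stable top-dimensional components $C$ of $\mathcal X^\jmath$ together with $(\jmath,\iota)$ are exactly the elements $\delta\in\Delta_T^F$. Thus $\mathcal C(\mathcal X^\jmath)$ is precisely the number of $\delta\in\Delta_T^F$ lying over $(\jmath,\iota)$.

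For the character sum, substitute $s\mapsto\gamma^{-1}s\gamma$, or better observe that $\gamma\in N(\iota,T)$ conjugates $T$ into $G_\iota$. I first want to check that $\chi(\gamma^{-1}s\gamma)$ may be handled via Lemma \ref{lem-lim-chara} applied to the torus $\gamma^{-1}R\gamma$, or to $R$ with the character $\chi\circ\mathrm{Ad}(\gamma^{-1})$. This gives the limit $1$ or $0$ according to whether $\chi$ is trivial on the relevant $F$-fixed points. The sign becomes $(-1)^{\sigma(G_\iota)+\sigma(T)}$, and I need to convert it to $(-1)^{\sigma(G)+\sigma(T)}$.

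\textbf{Main obstacle.} The hard step is the interplay between $\bar\gamma$ and the target formula. The right-hand side has no sum over $\bar\gamma$ and no factor $1/|\bar N(\iota,T)^F|$, and it carries the sign $\sigma(G)$ rather than $\sigma(G_\iota)$.

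What I would try first is to show that, on the equality locus, $T_{\jmath,\iota}$ is dense in $R\cong U$. A generic point of a conjugate of $U$ has centralizer $\prod\mathrm{GL}_2$, so $G_\iota$ is a product of $\mathfrak n$ copies of $\mathrm{GL}_2$, containing $T$. Then for $\gamma\in N(\iota,T)$, conjugation by $\gamma^{-1}$ normalizes $R$ appropriately, since it lies in the normalizer of $G_\iota$, which permutes the $\mathrm{GL}_2$ factors. Consequently $\chi\circ\mathrm{Ad}(\gamma^{-1})|_{R^F}$ is trivial iff $\chi|_{R^F}$ is, so each of the $|\bar N(\iota,T)^F|$ summands gives the same value $t_{\delta,\chi}$, and the averaging cancels.

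For the sign, $\sigma(G_\iota)$ and $\sigma(G)$ should agree: $G_\iota\cong\prod\mathrm{GL}_2$ (or a restriction of scalars), which contains a maximal torus of $G$, and $F$-rank is compatible under the relevant twisting. If that fails, I would instead use $(-1)^{\sigma(G)+\sigma(G_\iota)}=\varepsilon_G\varepsilon_{G_\iota}$ together with an explicit parity computation for products of $\mathrm{GL}_2$-type centralizers.

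Summing over $\delta\in\Delta_T^F$ then yields $\sum_{\delta}(-1)^{\sigma(T)+\sigma(G)}t_{\delta,\chi}$.
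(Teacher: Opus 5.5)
\paragraph{Where the proposal breaks.} Your outline matches the paper's. You start from Theorem~\ref{thm-summation-main}, freeze the index sets for $n\in\mathcal P_{\mathfrak N}$, and discard the terms with strict inequality in Proposition~\ref{pro-index-injection}(i). You then evaluate the survivors with Remark~\ref{rm-green-limit} and Lemmas~\ref{lem-lim-X}, \ref{lemn-lim-Giota}, \ref{lem-lim-chara}, and finish with Lemma~\ref{lem-sigma-cent}.

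The gap is in how you dispose of the $\bar\gamma$-sum, the step you yourself flag. It is false that $\gamma\in N(\iota,T)$ normalizes $G_\iota$. In fact $N(\iota,T)=G_\iota N_G(T)$, so $\bar N(\iota,T)\cong W(T,G_\iota)\backslash W(T,G)$, and $\gamma^{-1}R\gamma$ runs over \emph{other} members of $J(T)$.

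A concrete case: take split $\mathrm{GL}_4$, $T=\mathrm T$, and $R=U=\{t_1=t_3,\,t_2=t_4\}$. Of the six cosets, exactly two send $R$ to each of $U$, $\{t_1=t_2,\,t_3=t_4\}$ and $\{t_1=t_4,\,t_2=t_3\}$. If $\chi_1\chi_3=\chi_2\chi_4=1\neq\chi_1\chi_2$, then ${}^{\gamma}\chi|_{R^F}$ is trivial for some $\bar\gamma$ and not for others.

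So for a fixed $(\jmath,\iota)$ the limit is the average $|\bar N(\iota,T)^F|^{-1}\sum_{\bar\gamma}\mathrm t_{\gamma^{-1}R\gamma,\chi}$, times $(-1)^{\sigma(G_\iota)+\sigma(T)}\mathcal C(\mathcal X^\jmath)$. It is not $\mathrm t_{R,\chi}$ times that factor. The identity you want holds only after summing over all $(\jmath,\iota)$, which is where the paper's ``unwinding the definition'' happens.

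\paragraph{How to repair it.} The fix is a reindexing, best done before taking the limit.
\begin{enumerate}
\item In each $F^n$-stable coset $\bar\gamma$, pick $\gamma\in G^{F^n}$; this is possible by Lang's theorem in the connected group $G_s$. Put $s'=\gamma^{-1}s\gamma\in T^{F^n}$.
\item Then $Q^{G_s,(n)}_{\gamma T\gamma^{-1}}(e)=Q^{G_{s'},(n)}_T(e)$, $|(\mathcal X^s)^{F^n}|=|(\mathcal X^{s'})^{F^n}|$, and $|G_s^{F^n}|=|G_{s'}^{F^n}|$.
\item The map $\bar\gamma\mapsto s'$ is a bijection of $\bar N(s,T)^{F^n}$ onto $\mathrm{Ad}(G^{F^n})s\cap T^{F^n}$.
\end{enumerate}
Hence the $1/|\bar N(s,T)^{F^n}|$-weighted double sum collapses to $\sum_{s\in T^{F^n}}\chi(\mathrm N_T^n(s))\,Q^{G_s,(n)}_T(e)\,|(\mathcal X^s)^{F^n}|/|G_s^{F^n}|$. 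Your limit argument then runs with no $\gamma$ at all and gives $\sum_{\delta\in\Delta_T^F}(-1)^{\sigma(G_\iota)+\sigma(T)}\mathrm t_{\delta,\chi}$.

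\paragraph{The sign.} Your first guess $\sigma(G_\iota)=\sigma(G)$ is also false in general. In the split case, take $T^F\cong(\mathbb F_{q^{\mathfrak n}}^\times)^2$ and $R$ its diagonal copy. Then $G_\iota\cong\mathrm{Res}_{\mathbb F_{q^{\mathfrak n}}/\mathbb F_q}\mathrm{GL}_2$, whose $\mathbb F_q$-rank is $2\neq 2\mathfrak n$. Only the parities agree, which is exactly Lemma~\ref{lem-sigma-cent}, so your fallback is the right route there.
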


\begin{proof}
Fix $\iota \in I(T)^F$   and $\jmath\in 2^{J(T),F}$ in this paragraph. Set $M_{\jmath,\iota,\chi,i}:=\sum\limits_{s\in T_{\jmath,\iota}^{F^i}}\chi\circ \mathrm N_T^i(s)$. Then it is elementary to see that $\limsup\limits_{i\to \infty}\frac{M_{\jmath,\iota,\chi,i}}{q^{i\dim T_{\jmath,\iota}}}$ and $\liminf\limits_{i\to \infty}\frac{M_{\jmath,\iota,\chi,i}}{q^{i\dim T_{\jmath,\iota}}}$  are well-defined  by Lemma \ref{lem-tpduality}.
    By  our choice of $\mathfrak{N}$, Lemma \ref{lem-lim-X}  \ref{lemn-lim-Giota} \ref{lem-tpduality} and Remark \ref{rm-green-limit}, Proposition \ref{pro-index-injection} (i), the inner sum in Theorem \ref{thm-summation-main} $$
    \sum_{s\in T_{\jmath,\iota}^{F^n}}\frac{\chi\circ \mathrm N_T^n(\gamma^{-1}s \gamma)\cdot Q_{\gamma T\gamma^{-1}}^{G_\iota,(n)}(e)\cdot |(\mathcal X^{\jmath})^{F^n}|}{|\bar N(\iota,T)^{F^n}|\cdot|G_\iota^{F^n}|}$$ has a well-defined limit superior (resp. inferior) as $\mathcal P_\mathfrak{N} \ni n\to \infty$, and the  limit superior (resp. inferior) is nonzero  only if $\dim T_{\jmath,\iota}+\dim B_\iota+\dim \mathcal X^\jmath=\dim G_\iota$ (we assume this equation in the rest of this paragraph). By Proposition \ref{pro-index-injection}, we see that $T_{\jmath,\iota}$ is an open dense subscheme of some torus  $R_{\jmath,\iota}\in J(T)^F$. By Lemma \ref{lem-lim-chara} and various lemmas introduced above, we see that (for $\gamma$ a representative of $\bar \gamma \in\bar N(\iota,T)^F$)  $$\lim_{\mathcal P_{\mathfrak{N}}\ni n\to \infty}
    \sum_{s\in T_{\jmath,\iota}^{F^n}}\frac{\chi\circ \mathrm N_T^n(\gamma^{-1}s \gamma)\cdot Q_{\gamma T\gamma^{-1}}^{G_\iota,(n)}(e)\cdot |(\mathcal X^{\jmath})^{F^n}|}{|\bar N(\iota,T)^{F^n}|\cdot|G_\iota^{F^n}|}=\frac{\langle ^{\gamma}\chi_{\jmath,\iota} ,1_{R_{\jmath,\iota}^F}\rangle_{R_{\jmath,\iota}^F}\cdot (-1)^{\sigma (G_\iota)+\sigma(T)}\cdot \mathcal C(\mathcal X^\jmath) }{|\bar N(\iota,T)^{F}|},$$
    where $^{\gamma}\chi_{\jmath,\iota}$ denotes the character $R_{\jmath,\iota }^F\to\Qlb^\times$ given by $t\mapsto \chi(\gamma^{-1}t\gamma)$.
    
  Consequently, by Theorem \ref{thm-summation-main} we have (note that: by our choice of $\mathfrak N$, the index sets of the outer two sum remain unchanged while $n$ varies in $\mathcal P_{\mathfrak N}$)  $$\lim_{\mathcal P_{\mathfrak{N}}\ni n\to \infty}\frac{1}{|G^{^{F^n}}|} \sum_{t\in G^{F^n} \text{ss.}}\left(R_{T,\chi\circ \mathrm N_{T}^n}^{(n)}\cdot \mathrm {Ind}_{\mathrm S^{F^n}}^{G^{F^n}}(\psi_{\mathrm S}^{(n)})\right)(t)=\sum_{(\jmath,\iota)}\sum_{\bar \gamma \in \bar N(\iota,T)^{F}}\frac{\langle ^\gamma\chi _{\jmath,\iota},1_{R_{\jmath,\iota}}\rangle_{R_{\jmath,\iota}^F}\cdot (-1)^{\sigma (G_\iota)+\sigma(T)}\cdot \mathcal C(\mathcal X^\jmath) }{|\bar N(\iota,T)^{F}|},$$
  where $(\jmath,\iota)$ runs over the pairs in $2^{J(T),F}\times I(T)^F$ such that  $\dim T_{\jmath,\iota}+\dim B_\iota+\dim \mathcal X^\jmath=\dim G_\iota$.  Unwinding the definition, we have 
  $$ \sum_{(\jmath,\iota)}\sum_{\bar \gamma \in \bar N(\iota,T)^{F}}\frac{\langle ^\gamma\chi _{\jmath,\iota},1_{R_{\jmath,\iota}^F}\rangle_{R_{\jmath,\iota}^F}\cdot (-1)^{\sigma (G_\iota)+\sigma(T)}\cdot \mathcal C(\mathcal X^\jmath) }{|\bar N(\iota,T)^{F}|}=\sum_{\delta=(\jmath,\iota,C)\in\Delta_T^F}(-1)^{\sigma (T)+\sigma(G_\iota)} t_{\delta,\chi}.
  $$
  We win by using Lemma \ref{lem-sigma-cent}  below. (Note that $G_\iota$ is by definition the centralizer of some/all elements $t\in T_{\jmath,\iota}(\mathrm k)$. Hence $G_\iota$ equals the centralizer $C_{G}(R_{\jmath,\iota})$.)
\end{proof}

We use the following elementary lemma in the proof of Theorem \ref{thm-limit-main}, and we omit the proof.
\begin{lem}\label{lem-sigma-cent}
    Let $R\in J(T)^F$: that is, the $F$-stable subtorus $R$ of $T$ is $G(\mathrm k)$-conjugate to $U$. Then we have $(-1)^{\sigma(C_G(R))}=(-1)^{\sigma(G)}$.
\end{lem}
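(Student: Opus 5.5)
We plan to reduce the lemma to a direct description of the $F$-structure on $C_G(R)$. Since $R=gUg^{-1}$ for some $g\in G(\mathrm k)$, we have $C_G(R)=gC_G(U)g^{-1}$. By Definition \ref{def-torusU}, $U$ acts on $V$ through $\mathfrak n$ pairwise distinct characters, with $2$-dimensional eigenspaces $E_i=\mathrm{span}\{v_i,v_{i+\mathfrak n}\}$. Hence $C_G(U)\cong\prod_{i=1}^{\mathfrak n}\mathrm{GL}(E_i)$, a Levi subgroup of type $\mathrm{GL}_2^{\mathfrak n}$. Correspondingly, $C_G(R)=\prod_i\mathrm{GL}(gE_i)$, where the $gE_i$ are the $2$-dimensional weight spaces of $R$ on $V$, indexed by the $\mathfrak n$ distinct characters $\lambda_i\in X^*(R)$. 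Because $R$ is $F$-stable, $F$ permutes the factors $\mathrm{GL}(gE_i)$. We will read off $\sigma(C_G(R))$ orbit by orbit: an $F$-orbit of factors of length $d$ contributes the $\mathbb F_q$-rank of a restriction of scalars from $\mathbb F_{q^d}$ of a form of $\mathrm{GL}_2$. This is a standard reduction, and the $\mathbb F_q$-rank of such a factor equals the $\mathbb F_{q^d}$-rank of the underlying form.

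\emph{Split case $F=F_{\mathrm{GL}}$.} Here $F$ acts on $X^*(R)$ by $\lambda\mapsto q\cdot(\lambda\circ\text{twist})$, with no duality involved. Each orbit of factors of length $d$ yields a factor isomorphic over $\mathbb F_q$ to $\mathrm{Res}_{\mathbb F_{q^d}/\mathbb F_q}\mathrm{GL}_2$, which has $\mathbb F_q$-rank $2$. Therefore $\sigma(C_G(R))$ is even, and so is $\sigma(G)=2\mathfrak n$. This settles the split case.

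\emph{Unitary case $F=F_{\mathrm U}$.} Here $F$ composes $q$-power Frobenius with the involution $\mathfrak I$, and on characters of $R$ it sends the weight space for $\lambda$ to the weight space for (a Frobenius twist of) $-q\lambda$. We sort the $F$-orbits on the set of weight spaces into two types.
\begin{itemize}
\item[(a)] \emph{Orbits $\mathcal O$ not stable under $\lambda\mapsto-\lambda$.} These come in pairs $\mathcal O,-\mathcal O$. In all remaining cases the factor is a restriction of scalars of $\mathrm{GL}_2$ from some $\mathbb F_{q^{d'}}$, of rank $2$; and whenever it is not of that form, the orbit has even length.
\item[(b)] \emph{Self-dual orbits of odd length $d$.} Here $F^d$ acts on the single factor as a unitary Frobenius, giving $\mathrm{Res}_{\mathbb F_{q^d}/\mathbb F_q}\mathrm U_2$, of $\mathbb F_q$-rank $1$.
\end{itemize}
Hence $\sigma(C_G(R))\equiv\#\{\text{odd-length self-dual orbits}\}\pmod 2$. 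On the other hand, $\mathfrak n=\sum_{\mathcal O}|\mathcal O|$. Orbits of type (a) contribute even amounts, either because they are paired or because they have even length, while odd-length self-dual orbits contribute odd amounts. Therefore $\mathfrak n\equiv\sigma(C_G(R))\pmod 2$. Since $\sigma(G)=\sigma(\mathrm U_{2\mathfrak n})=\mathfrak n$, the lemma follows.

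The main obstacle is the case analysis in the unitary case. Specifically, one must verify carefully that a self-dual $F$-orbit of length $d$ gives a unitary $\mathrm U_2$ factor exactly when $d$ is odd, and a $\mathrm{GL}_2$ factor over $\mathbb F_{q^{d/2}}$ when $d$ is even. This comes down to checking whether $F^d$ acts on the factor as an outer or an inner (split) Frobenius. As a cross-check, or as an alternative route, we would use the sign identity $(-1)^{\sigma(G)+\sigma(T')}=\varepsilon$ for a maximal torus $T'\subset C_G(R)$ contained in an $F$-stable Borel subgroup of $C_G(R)$, so that $\sigma(T')=\sigma(C_G(R))$. Computing the sign attached to the $F$-class of $T'$ in the (twisted) Weyl group $S_{2\mathfrak n}$ then gives the same parity count.
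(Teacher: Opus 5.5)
Your split case is correct. The skeleton of your unitary case is also the natural argument: write $C_G(R)=\prod_{i=1}^{\mathfrak n}\mathrm{GL}(W_i)$ with $W_i$ the weight spaces of $R$, compute the $\mathbb F_q$-rank one $F$-orbit of factors at a time, and compare with $\mathfrak n=\sum_{\mathcal O}|\mathcal O|$. The paper omits the proof altogether. But the unitary case is the whole content of the lemma, and there your case distinction is wrong and the decisive step is left open as the ``main obstacle''.

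The weights $\lambda_1,\dots,\lambda_{\mathfrak n}$ of $R$ on $V$ form a $\mathbb Z$-basis of $X^*(R)$, so $-\lambda_i$ is never a weight. Hence there are no orbits ``stable under $\lambda\mapsto-\lambda$'' and no pairs $\mathcal O,-\mathcal O$ of weight spaces. Read literally, every orbit is of your type (a), and your count would force $\sigma(C_G(R))$ and $\mathfrak n$ to be even. This already fails for $\mathfrak n=1$, $R=U=Z(G)$, $G^F=\mathrm U_2(\mathbb F_q)$, where $\sigma(C_G(R))=1$. Your other assertions are also off. An orbit whose factor is not a restricted $\mathrm{GL}_2$ has \emph{odd} length, not even. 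An even-length orbit gives $\mathrm{GL}_2$ over $\mathbb F_{q^d}$, not over $\mathbb F_{q^{d/2}}$. The sign-of-$w$ cross-check is only sketched and does not supply the missing step.

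The missing idea is a one-line eigenvalue computation, which removes any need for self-duality bookkeeping.
\begin{itemize}
\item For $r\in R$, the element $F_{\mathrm U}(r)=\mathrm E_{2\mathfrak n}\,{}^{t}F_{\mathrm{GL}}(r)^{-1}\mathrm E_{2\mathfrak n}^{-1}\in R$ has eigenvalues $\lambda_i(r)^{-q}$.
\item Hence there is a permutation $\tau$ of $\{1,\dots,\mathfrak n\}$ with $\lambda_{\tau(i)}\circ F=-q\lambda_i$ and $F(\mathrm{GL}(W_i))=\mathrm{GL}(W_{\tau(i)})$. So $F^*$ acts on $X^*(R)$ as $-q$ times a permutation matrix (as $+q$ times one for $F_{\mathrm{GL}}$).
\item If $i$ lies in a $\tau$-orbit of length $d$, then $F^d$ stabilizes $\mathrm{GL}(W_i)$ and acts on the character group of its center $\mathbb G_m$ by $(-q)^d$.
\item Inner forms of $\mathrm{GL}_2$ over a finite field are split by Lang's theorem, and the outer form is $\mathrm U_2$. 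So the orbit contributes $\mathrm{Res}_{\mathbb F_{q^d}/\mathbb F_q}\mathrm U_2$ (rank $1$) when $d$ is odd, and $\mathrm{Res}_{\mathbb F_{q^d}/\mathbb F_q}\mathrm{GL}_2$ (rank $2$) when $d$ is even. No further case distinction is needed.
\end{itemize}
Therefore
$$\sigma(C_G(R))\equiv\#\{\text{odd-length orbits}\}\equiv\sum_{\mathcal O}|\mathcal O|=\mathfrak n=\sigma(G)\pmod 2,$$
which is exactly what your parity argument needs.
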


\begin{thm}\label{thm-mul-main}We have the following equation.
    $$\mathfrak{S}_{T,\chi}(1)=\langle R_{T,\chi},\psi^{(1)}_{\mathrm S}\rangle_{\mathrm S^F}=\sum_{\delta\in \Delta_T^F}(-1)^{\sigma (T)+\sigma(G)}\mathrm t_{\delta,\chi}$$
\end{thm}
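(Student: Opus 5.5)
The plan is to compare the constant sequence $\mathfrak S_{T,\chi}(n)$ along a suitable arithmetic progression with the limit computed in Theorem~\ref{thm-limit-main}. By Theorem~\ref{thm-passing-to-semisimple}, the equality of (1) and (6), for sufficiently divisible $\mathfrak K$ we have
\[
\mathfrak S_{T,\chi}(1)=\lim_{i\to\infty}\frac{1}{|G^{F^{1+i\mathfrak K}}|}\sum_{t\in G^{F^{1+i\mathfrak K}}\ \text{ss.}}\Bigl(R^{(1+i\mathfrak K)}_{T,\chi\circ\mathrm N_T^{1+i\mathfrak K}}\cdot \mathrm{Ind}_{\mathrm S^{F^{1+i\mathfrak K}}}^{G^{F^{1+i\mathfrak K}}}(\psi_{\mathrm S}^{(1+i\mathfrak K)})\Bigr)(t).
\]
The first step is to choose the period: replace $\mathfrak K$ by a common multiple of $\mathfrak K$ and the integer $\mathfrak N$ of Lemma~\ref{lem-the-gap}. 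Call it $\mathfrak M$. Enlarging $\mathfrak K$ to a multiple is harmless, because the limit in Theorem~\ref{thm-passing-to-semisimple}(6) exists, and equals $\mathfrak S_{T,\chi}(1)$, for every sufficiently divisible choice. Since $\mathfrak M$ is a multiple of $\mathfrak N$, the progression $\{1+i\mathfrak M\}$ is a subsequence of $\mathcal P_{\mathfrak N}$.

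The second step is to pass to that subsequence. By Theorem~\ref{thm-limit-main}, the limit along all of $\mathcal P_{\mathfrak N}$ exists and equals $\sum_{\delta\in\Delta_T^F}(-1)^{\sigma(T)+\sigma(G)}\mathrm t_{\delta,\chi}$. The limit along the subsequence $\{1+i\mathfrak M\}$ therefore takes the same value. Comparing this with the display above gives the claimed formula. The first equality in the statement is just Definition~\ref{def-mathfrakS} with $n=1$.

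The only point needing care is that $\mathfrak N$ is defined relative to $F$ and not $F^n$, and that the objects $\Delta_T^F$, $\sigma(T)$, $\sigma(G)$ and $\mathrm t_{\delta,\chi}$ appearing in Theorem~\ref{thm-limit-main} are the ones for $F$ itself. Here the choice of progression $\mathcal P_{\mathfrak N}$ starting at $1$ does the work. For $n\equiv 1 \pmod{\mathfrak N}$, $F^n$ acts on $J(T)$, $I(T)$, each $\bar N(\iota,T)$ and the component sets exactly as $F$ does, so these $F^n$-fixed index sets coincide with the $F$-fixed ones. Likewise the $F^n$-rank of each $G_\iota$ is constant in $n$ by Lemma~\ref{lem-the-gap}, and the norm maps turn $\chi\circ\mathrm N^n_T$-invariants on $T_\delta^{F^n}$ into $\chi$-invariants on $T_\delta^F$ (Lemma~\ref{lem-lim-chara}). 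All of this is already built into Theorem~\ref{thm-limit-main}, so I expect no genuine obstacle. The proof amounts to matching the two limits along a common arithmetic progression.
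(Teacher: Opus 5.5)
Your proposal is correct and is essentially the paper's own proof: the paper combines the equality of (1) and (6) in Theorem~\ref{thm-passing-to-semisimple} with Theorem~\ref{thm-limit-main}, taking the limit along the common progression $\{1+i\mathfrak N\mathfrak K\}$, which is a subsequence of $\mathcal P_{\mathfrak N}$. Your observation that replacing $\mathfrak K$ by a multiple is harmless is exactly the justification this step needs; the paper uses it without comment.
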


\begin{proof}
    Combine Theorem \ref{thm-limit-main} and \ref{thm-passing-to-semisimple}: in Theorem \ref{thm-limit-main}, we take the limit for $\mathcal P_{\mathfrak{NK}}\ni n\to \infty$, where:
    \begin{itemize}
    \item $\mathfrak{K}$ is the integer introduced in (6) of Theorem \ref{thm-passing-to-semisimple};
    \item $\mathcal P_{\mathfrak{NK}} $ is the arithmetic progression $\{1+i\mathfrak{NK}\}_{i\in \mathbb N}$.
     \end{itemize}
    
\end{proof}

\begin{prop}\label{pro-bij-delta}
    The map $\mathrm b_{T,B}:\Delta_T\to \Phi_B$ is a bijection.  The assignment $\delta\mapsto T_\delta$ yields a bijection $\mathrm z_T:\Delta_T\to J(T)$. Moreover, the bijection $\mathrm z_T$ is compatible with the action of Frobenius; in particular, it induces a bijection $\Delta^{F}_T\cong J(T)^F$.
\end{prop}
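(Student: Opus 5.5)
We already have an injection $\mathrm b_{T,B}:\Delta_T\hookrightarrow\Phi_B$ (Corollary \ref{cor-index-injection}). By Corollary \ref{cor-Tdelta-vs-delta}, the assignment $\delta\mapsto T_\delta$ is injective into $J(T)$. Hence it suffices to prove a cardinality inequality, namely $|\Phi_B|\le |\Delta_T|$ together with $|J(T)|\le|\Phi_B|$. Alternatively, we can show directly that every $w\in\Phi_B$ lies in the image of $\mathrm b_{T,B}$ and that every $R\in J(T)$ arises as some $T_\delta$.

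\textbf{Step 1: counting.} Reduce by symmetry to $T=\mathrm T$, $B=\mathrm B^{op}$. By Proposition \ref{pro-bijec-phi}, $|\Phi_B|=|W(\mathrm T,G)/\mathrm{St}(U)|$. On the other hand, $J(\mathrm T)$ consists of the subtori $wUw^{-1}$ for $w\in W(\mathrm T,G)$. Indeed, any conjugate of $U$ inside $\mathrm T$ is a conjugate by $N_G(\mathrm T)$, since two maximal tori of the centralizer $C_G(gUg^{-1})$ are conjugate there. Hence $J(\mathrm T)\cong W(\mathrm T,G)/\mathrm{St}(U)$. So $|J(T)|=|\Phi_B|$.

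\textbf{Step 2: surjectivity onto $J(T)$.} Fix $R\in J(T)$ and write $R=gUg^{-1}$ with $g$ normalizing $T$. Take $\jmath=\{R\}$. Take $\iota$ to be the generic centralizer type on $R$, so that $T_{\jmath,\iota}$ is open dense in $R$. For a general $t\in R$, the centralizer $G_\iota=C_G(R)$ is a product $\mathrm{GL}_2^{\mathfrak n}$, so $\dim\mathcal B_\iota=\mathfrak n$. Now choose a component $C$ of $\mathcal X^\jmath$ of maximal dimension. We need $\dim\mathcal X^\jmath=\dim G_\iota-2\mathfrak n=4\mathfrak n-2\mathfrak n=2\mathfrak n$. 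Via Lemma \ref{kpt}, $\mathcal X^\jmath=\mathfrak T(\jmath,\mathrm S)/\mathrm S$ is the union over $N(t,U,T)$-pieces $G_\iota\cdot n\mathrm S$. Each such piece is $G_\iota/(G_\iota\cap n\mathrm S n^{-1})$. Here $n^{-1}G_\iota n=C_G(U)\cong\mathrm{GL}_2^{\mathfrak n}$, and $C_G(U)\cap\mathrm S=U\cdot\prod\mathbb G_i$ has dimension $2\mathfrak n$ by the proof of Proposition \ref{pro-bijec-phi}. So each orbit has dimension $4\mathfrak n-2\mathfrak n=2\mathfrak n$, and equality holds in Proposition \ref{pro-index-injection}(i). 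This produces $\delta\in\Delta_T$ with $T_\delta=R$. Combined with injectivity, $\mathrm z_T$ is a bijection, so $|\Delta_T|=|J(T)|=|\Phi_B|$, and the injection $\mathrm b_{T,B}$ is also a bijection.

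\textbf{Step 3: Frobenius compatibility.} By Definition \ref{def-Delta-T}, $F(\jmath,\iota,C)=(F\jmath,F\iota,FC)$. Since $T_\delta$ is the closure of $T_\jmath$ and $F(T_\jmath)=T_{F(\jmath)}$ (Remark \ref{rm-F-stable-parition}), we get $T_{F\delta}=F(T_\delta)$. A Frobenius-equivariant bijection restricts to a bijection on fixed points, giving $\Delta_T^F\cong J(T)^F$.

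The main obstacle is the dimension check in Step 2. It requires the precise identification $\dim(C_G(U)\cap\mathrm S)=2\mathfrak n$ and $\dim\mathcal B_\iota=\mathfrak n$ for the generic stratum. It also requires confirming that $T_{\jmath,\iota}$ is nonempty and open in $R$, i.e. that the generic element of $R$ lies in no other member of $J(T)$. That last point follows from Lemma \ref{lem-jus-singleton}, since intersections of distinct members have smaller dimension.
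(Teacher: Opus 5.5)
Your proof is correct, but it follows a different route from the paper's.

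\textbf{The paper's route.} Like you, the paper reduces to showing $|\Delta_T|\ge|\Phi_B|$, and it takes $|J(T)|=|\Phi_B|$ as evident. It then obtains the inequality indirectly. It specializes to $\mathrm T\subset\mathrm B^{op}$ with $F=F_{\mathrm{GL}}$ and computes $\mathfrak S_{\mathrm T,1}(1)$ in two ways:
\begin{itemize}
\item by Theorem \ref{thm-mul-main}, which gives $|\Delta_{\mathrm T}^F|$;
\item by the Mackey formula for $\mathrm{Ind}_{(\mathrm B^{op})^F}^{G^F}1$ together with Proposition \ref{pro-bijec-phi}, which gives $|\Phi|$.
\end{itemize}
Hence $|\Delta_T|\ge|\Delta_T^F|=|\Phi_B|$.

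\textbf{Your route.} You show directly that every $R\in J(T)$ equals some $T_\delta$. You check that the generic stratum, $\jmath=\{R\}$ with $G_\iota=C_G(R)\cong\mathrm{GL}_2^{\mathfrak n}$, attains equality in Proposition \ref{pro-index-injection}(i), using $\dim(C_G(U)\cap\mathrm S)=2\mathfrak n$ and $\dim\mathcal B_\iota=\mathfrak n$.

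\textbf{Comparison.} Your argument has several advantages:
\begin{itemize}
\item It is elementary and does not depend on the Frobenius structure.
\item It avoids the trace-formula and limit machinery behind Theorem \ref{thm-mul-main}.
\item It supplies the proof of $|J(T)|=|\Phi_B|$ that the paper only asserts.
\item It identifies the contributing strata explicitly.
\end{itemize}
The paper's argument avoids any explicit analysis of $\mathcal X^\jmath$. The price is that it relies on the full multiplicity theorem and on a Mackey count over $F$-points.

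\textbf{A simplification for Step 2.} Proposition \ref{pro-index-injection}(i) already gives $\dim\mathcal X^\jmath\le 2\mathfrak n$. So it suffices to exhibit one $G_\iota$-orbit of dimension $2\mathfrak n$, and you need not control how many pieces the union has. In fact you have $N_R=g_0N_G(U)$ and $N_G(U)=C_G(U)\cdot S_{\mathfrak n}^{\mathrm{diag}}$, where the diagonal $S_{\mathfrak n}$ lies in $L\cap\mathrm S$. Consequently $\mathcal X^\jmath=G_\iota g_0\mathrm S/\mathrm S$ is exactly one such orbit.
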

\begin{proof}
    We have already seen that $\mathrm b_{T,B}$ is an injection Corollary \ref{cor-index-injection}. At present stage, we note that:
    \begin{itemize}
        \item $\mathrm z_T$ is a well-defined injection by Corollary \ref{cor-Tdelta-vs-delta};
        \item the cardinality of $J(T)$ agrees with that of $\Phi_B$.
    \end{itemize}
    Consequently, in proving the first two statements of this proposition, it suffices to show the claim: the cardinality of $\Delta_T$ is no less than that of $\Phi_B$.

    In proving the above claim, by obvious symmetry, it suffices to deal with the case  that $T\subset B$ is the pair $\mathrm T\subset \mathrm B^{op}$ introduced in Definition \ref{def-various} (and we do assume so in the rest of this proof), and that the Frobenius structure $F=F_{\mathrm {GL}}$. Then by Theorem \ref{thm-mul-main}, we have 
    $$
    \mathfrak{S}_{T,1}(1)=|\Delta_T^F|.
    $$
    On the other hand, by Proposition \ref{pro-bijec-phi} and Mackey formula, 
    we have $$\mathfrak{S}_{T,1}(1)=|\Phi|=|\Phi_{\mathrm B^{op}}|.$$
(Note that $R_{\mathrm T,1}^G$ is the character of $\mathrm{Ind}_{ B^F}^{G^F}1_{ B^F}$.)

 This proves the above claim.
 
    The bijection $\mathrm z_T$ is obviously compatible with the action of Frobenius.
    
    This completes the proof.
\end{proof}

\begin{rmk}
    The bijectivity of $\mathrm b_{T,B}$ can be deduced using brutal force.
\end{rmk}

\begin{defn}
     Fix $R\in J(T)^F$, that is, the $F$-stable subtorus $R$ of $T$ is $G(\mathrm k)$-conjugate to $U$. We set $\mathrm t_{R,\chi}:=\langle \chi_{|R^F},1_{R^F}\rangle_{R^F}$.
\end{defn}

\begin{thm}\label{thm-mul-refined}
    We have the following equation.
      $$\mathfrak{S}_{T,\chi}(1)=\langle R_{T,\chi},\psi^{(1)}_{\mathrm S}\rangle_{\mathrm S^F}=\sum_{R\in J(T)^F}(-1)^{\sigma (T)+\sigma(G)}\mathrm t_{R,\chi}.$$
\end{thm}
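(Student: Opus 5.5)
This follows by re-indexing the sum in Theorem \ref{thm-mul-main} along the Frobenius-equivariant bijection $\mathrm z_T:\Delta_T\to J(T)$ of Proposition \ref{pro-bij-delta}. Theorem \ref{thm-mul-main} gives
$$\mathfrak{S}_{T,\chi}(1)=\sum_{\delta\in \Delta_T^F}(-1)^{\sigma (T)+\sigma(G)}\mathrm t_{\delta,\chi},$$
with $\mathrm t_{\delta,\chi}=\langle\chi_{|T_\delta^F},1_{T_\delta^F}\rangle_{T_\delta^F}$.

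By Proposition \ref{pro-bij-delta}, the assignment $\delta\mapsto T_\delta$ is a bijection $\Delta_T\to J(T)$. It commutes with $F$, because $F(T_\delta)=T_{F(\delta)}$: the closure of $T_{F(\jmath)}=F(T_\jmath)$ is $F$ of the closure of $T_\jmath$. So it restricts to a bijection $\Delta_T^F\to J(T)^F$. In particular, for $\delta\in\Delta_T^F$ the torus $T_\delta$ is $F$-stable, so $T_\delta^F$ makes sense and $T_\delta\in J(T)^F$. Hence $R=T_\delta$ is a legitimate index in the definition of $\mathrm t_{R,\chi}$.

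Directly from the definitions, $\mathrm t_{\delta,\chi}=\langle \chi_{|T_\delta^F},1\rangle_{T_\delta^F}=\mathrm t_{T_\delta,\chi}$. The sign $(-1)^{\sigma(T)+\sigma(G)}$ does not depend on $\delta$. Substituting $R=\mathrm z_T(\delta)$ therefore gives
$$\sum_{\delta\in\Delta_T^F}(-1)^{\sigma(T)+\sigma(G)}\mathrm t_{\delta,\chi}=\sum_{R\in J(T)^F}(-1)^{\sigma(T)+\sigma(G)}\mathrm t_{R,\chi},$$
which is the claimed formula.

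The only point needing care is the $F$-equivariance of $\mathrm z_T$, i.e. that the bijection of Proposition \ref{pro-bij-delta} descends to $F$-fixed points. All the substantive work (the limit computation and the counting argument that shows $\mathrm b_{T,B}$ and $\mathrm z_T$ are bijective) is already done in Theorem \ref{thm-mul-main} and Proposition \ref{pro-bij-delta}.
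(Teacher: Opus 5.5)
Your proposal is correct and is essentially the paper's own argument. The paper proves this theorem by combining Theorem \ref{thm-mul-main} with the Frobenius-compatible bijection $\mathrm z_T:\Delta_T^F\cong J(T)^F$, $\delta\mapsto T_\delta$, of Proposition \ref{pro-bij-delta}, using $\mathrm t_{\delta,\chi}=\mathrm t_{T_\delta,\chi}$; your added check that $F(T_\delta)=T_{F(\delta)}$ fills in the step the paper calls obvious.
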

\begin{proof}
    Combine Theorem \ref{thm-mul-main} and  Proposition \ref{pro-bij-delta}.
\end{proof}

\begin{rmk}
    In view of the bijections introduced in Proposition \ref{pro-bij-delta} and \ref{pro-bijec-phi}, Theorem \ref{thm-mul-refined} can be viewed as a variant of Mackey formula.
\end{rmk}

\section{multiplicity for irreducible representations}\label{sec-mul-general-irreducible}

Throughout this section, let $G$ be as introduced in Section \ref{sec-doublecoset}, and $F:G\to G$ be either $F_{\mathrm {GL}}$ or $F_{\mathrm U}$ introduced in Subsection \ref{subsec-intro-Fro}.

By Lusztig map and a theorem in \cite{LS}, we see that (the characters of) all irreducible representations of $G^F$ are indeed a linear combination of Deligne-Lusztig characters. In this section, we deduce a formula concerning the multiplicity of Shalika models for all irreducible representations of $G^F$, using the strategy above and Theorem \ref{thm-mul-refined}.

\subsection{Dual group and dual torus}

Let $G^*$ be the dual group of $G$ defined as in \cite[Definition 5.21]{DL}. By abuse of the notation, we denote the geometric Frobenius of $G^*$ again by $F$.

Fix an identification  $\mathrm{Cont}\left(\mathscr{T}(\mathbb G_m(\mathrm k)),\mathbb Q/\mathbb Z\right)\cong \mathrm k^\times$, where $\mathscr{T}(\mathbb G_m(\mathrm k))$ denotes the Tate module of $\mathbb G_m(\mathrm k)$ and $\mathrm{Cont}(\cdot,\cdot)$ denotes the group of continuous homomorphisms. Fix an inclusion of groups $\mathbb {Q}/\mathbb{Z}\hookrightarrow \Qlb^\times$.
By \cite[(5.21.5)]{DL}, there exist a natural bijection between $G^F$-conjugacy classes of pairs $(T,\theta)$ and $(G^*)^F$-conjugacy classes of pairs $(T',s)$, where:
\begin{itemize}
    \item $T$ is an $F$-stable maximal torus of $G$, and $\theta:T^F\to \Qlb^\times$ is a character;
    \item $T'$ is an $F$-stable maximal torus of $G^*$, and $s\in (T')^F$.
\end{itemize}

%\begin{rmk}
 For $F$-stable maximal tori $T$ and characters $\chi:T^F\to \Qlb^\times$, the virtual characters $R_{T,\chi}$ depend only on the $G^F$-conjugacy classes of the pairs $(T,\chi)$. 
 
 Consequently, for an $F$-stable maximal torus $T'$ of $G^*$ and $s\in (T')^F$, we may denote $R_{T',s}:=R_{T,\chi_s}$, where the $G^F$-conjugacy class of $(T,\chi_s)$ corresponds to the $(G^*)^F$-conjugacy class of $(T',s)$ under the bijection formulated in \cite[(5.21.5)]{DL}.
%\end{rmk}

The rest of this subsection is dedicated to reformulating Theorem \ref{thm-mul-refined} in terms of these dual data. We need some preliminaries.

Recall the $F$-stable torus $\mathrm T$ of $G$ introduced in \ref{def-various} and its subtorus $U$ introduced in Definition \ref{def-torusU}. Let $U^\heartsuit$ be the quotient torus of $\mathrm T$ fitting into the following exact sequence of tori:
$$
e\rightarrow U\rightarrow \mathrm T\rightarrow U^\heartsuit \rightarrow e.
$$
Let $U^\perp$ be the dual torus of $U^\heartsuit$. Passing to the dual, we get:
$$
e\rightarrow U^\perp \rightarrow \mathrm T^* \rightarrow U^*\rightarrow e.
$$

Note that $\mathrm T^*$ is a maximal torus of $G^*$. 

\begin{defn}\label{def-J*}
    Fix a maximal torus $T$ of $G^*$. Let $J^*(T)$ be the set of subtori $Z$ of $T$ such that $Z$ is $G^*(\mathrm k)$-conjugate to $U^\perp$. For $s\in T(\mathrm k)$, let $J^*_s(T):=\{Z\in J^*(T): s\in Z(\mathrm k)\}$. 

    Suppose that $T$ is $F$-stable and $s\in T^F$, then the sets $J^*(T)$ and $J^*_s(T)$ have been endowed with the natural action of Frobenius given by $Z\mapsto F(Z)$. We denote the fixed-point sets by $J^{*,F}(T)$ and $J^{*,F}_s(T)$ respectively. 
\end{defn}

It is easy to see that the following theorem rephrases Theorem \ref{thm-mul-refined}. 

\begin{thm}\label{thm-dual-main}
    Fix an $F$-stable maximal torus $T$ of $G^*$. Let $s\in T^F$. Then we have
    $$
    \langle R_{T,s},\psi_{\mathrm S}^{(1)}\rangle_{\mathrm S^F}=(-1)^{\sigma(T)+\sigma(G)}|J^{*,F}_s(T)|.
    $$
\end{thm}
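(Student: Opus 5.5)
The plan is to deduce the statement from Theorem \ref{thm-mul-refined} by transporting everything along the duality bijection of \cite[(5.21.5)]{DL}. Let $(T_G,\chi)$ be a pair in $G$ whose $G^F$-class corresponds to the $(G^*)^F$-class of $(T,s)$. By definition $R_{T,s}=R_{T_G,\chi}$, so Theorem \ref{thm-mul-refined} gives
\[
\langle R_{T,s},\psi^{(1)}_{\mathrm S}\rangle_{\mathrm S^F}=(-1)^{\sigma(T_G)+\sigma(G)}\sum_{R\in J(T_G)^F}\mathrm t_{R,\chi}.
\]
Since $T_G$ and $T$ are dual $F$-stable tori, they have the same $\mathbb F_q$-rank. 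Hence it suffices to produce an $F$-equivariant bijection $J(T_G)\xrightarrow{\sim}J^*(T)$, $R\mapsto R^\perp$, together with the property that $\mathrm t_{R,\chi}=1$ if and only if $s\in R^\perp$, and $\mathrm t_{R,\chi}=0$ otherwise.

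\textbf{Step 1: the bijection.} Fix an identification of $T_G$ with $\mathrm T$ via some $g\in G(\mathrm k)$, and of $T$ with $\mathrm T^*$ compatibly, so that $X_*(T_G)\cong X^*(T)$. A subtorus $R\subset T_G$ corresponds to a saturated sublattice $X_*(R)\subset X_*(T_G)=X^*(T)$. Its annihilator in $X_*(T)$ defines a subtorus $R^\perp\subset T$, which is exactly the kernel-torus of $T\to R^*$. For $R=U$ this recovers $U^\perp$, by the exact sequences defining $U^\heartsuit$ and $U^\perp$.

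The $G(\mathrm k)$-conjugates of $U$ lying in $T_G$ are precisely the images of $U$ under $W(T_G,G)$. This holds because two subtori of a maximal torus that are conjugate in $G$ are conjugate by the normalizer: apply the conjugacy of maximal tori inside $C_G(R)$. Likewise, $J^*(T)$ is the $W(T,G^*)$-orbit of $U^\perp$. The identification $W(T_G,G)\cong W(T,G^*)$ is compatible with $\perp$, so $R\mapsto R^\perp$ is a bijection $J(T_G)\to J^*(T)$. This construction is canonical on character and cocharacter lattices, and duality intertwines $F$ on $X_*(T_G)$ with the transpose action on $X^*(T)$. Therefore the bijection is $F$-equivariant and restricts to $J(T_G)^F\cong J^{*,F}(T)$.

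\textbf{Step 2: matching characters with points.} I will use the standard fact from \cite[\S5]{DL}: for an $F$-stable subtorus $R\subset T_G$, the restriction map $\mathrm{Irr}(T_G^F)\to\mathrm{Irr}(R^F)$ corresponds, under $\mathrm{Irr}(T_G^F)\cong T^F$, to the projection $T^F\to (R^*)^F$, where $T\to R^*$ is the dual of the inclusion $R\subset T_G$. Consequently $\chi|_{R^F}$ is trivial if and only if $s$ lies in the kernel of $T^F\to(R^*)^F$.

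That kernel is $R^\perp(\mathrm k)\cap T^F$ only if $\ker(T\to R^*)$ is connected, that is, equal to $R^\perp$. This holds because $X_*(R)$ is saturated, so $X^*(T)/X_*(R)$ is torsion-free and the kernel is a torus. Hence $\mathrm t_{R,\chi}=1$ exactly when $s\in R^\perp$, giving
\[
\sum_{R\in J(T_G)^F}\mathrm t_{R,\chi}=|J_s^{*,F}(T)|,
\]
which finishes the proof.

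\textbf{Main obstacle.} The delicate point is Step 2: the compatibility of restriction to $R^F$ with the dual projection, including the Frobenius twist in the unitary case $F=F_{\mathrm U}$. I would establish it functorially from the description $T^F\cong X_*(T_G)/(F-1)X_*(T_G)$ of \cite[(5.2)]{DL}. The argument is that the exact sequence $0\to X_*(R)\to X_*(T_G)\to X_*(T_G)/X_*(R)\to 0$ of $F$-modules, with torsion-free quotient, yields by Lang's theorem the corresponding sequence of finite groups. Dualizing that sequence identifies the annihilator of $R^F$ with the image of $(R^\perp)^F$ in $T^F$.
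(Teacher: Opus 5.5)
Your proposal is correct and follows the paper's route: the paper simply asserts that this theorem rephrases Theorem \ref{thm-mul-refined} under the duality of \cite[(5.21.5)]{DL}, and your two steps supply the omitted details. Those steps are the annihilator bijection $R\mapsto R^\perp$ and the naturality of $\mathrm{Irr}(T_G^F)\cong T^F$ under restriction to subtori. One harmless imprecision: with Deligne--Lusztig's conventions, $R\mapsto R^\perp$ intertwines the Frobenius action on $J(T_G)$ with the \emph{inverse} of the Frobenius action on $J^*(T)$, because passing to annihilators turns $A$ into $(A^{t})^{-1}$. Since only Frobenius-fixed subtori enter, the bijection $J(T_G)^F\cong J^{*,F}(T)$ that you need is unaffected.
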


\subsection{A multiplicity free representation}\label{subsec-canonical-rep} We retain the notation as in the previous subsection.
Throughout this subsection, we fix a semisimple element $s\in (G^*)^F$. Let $G^*_s$ denote the centralizer of $s$ in $G^*$. Note that $G^*_s$ is a connected reductive group defined over $\mathbb F_q$.
In this subsection, we define a representation of ``the" Weyl group of $G^*_s$, which will play important role in the desired multiplicity formula.

At first, we introduce some notions, which essentially reformulates \cite[Section 1.1]{DL}.

\begin{defn}\label{def-Cs}
    Let $\mathfrak C_s$ be the groupoid (indeed, a setoid) characterized by the following data:
    \begin{itemize}
        \item The set of objects consists of Borel pairs $(B,T)$ of $G^*_s$;
        \item For any two Borel pairs $p_1=(B_1,T_1)$ and $p_2=(B_2,T_2)$, there exists a \textbf{unique} morphism $m(p_1,p_2)$ from $p_1$ to $p_2$. 
    \end{itemize}
\end{defn}

\begin{rmk}\label{rm-canonical-Ws}
Let $\mathcal {GP}$ denote the category of groups.
    There exists a functor $WG:\mathfrak{C}_s\to \mathcal{GP}$, characterized by the following conditions:
    \begin{itemize}
        \item For a Borel pair $p=(B,T)\in \mathfrak{C}_s$, we set $WG(p)=W_s(T)$, where $W_s(T)=N_{G^*_s}(T)/T$ denotes the Weyl group of $T$ in $G^*_s$;
        \item For Borel pairs $p_1=(B_1,T_1)$ and $p_2=(B_2,T_2)$, the functor $WG$ transforms the morphism $m(p_1,p_2)$ into the isomorphism $W(T_1)\to W(T_2)$ given by $w T_1\mapsto g^{-1}wg T_2$, where $g\in G^*_s(\mathrm k)$ satisfies $g^{-1}T_1g=T_2$ and $g^{-1}B_1g=B_2$. (We verify that this isomorphism is independent of the choice of such $g$, since such $g$ form a $T_1$-torsor under the left action of $T_1$.) 
    \end{itemize}
    
\end{rmk}

\begin{defn}
    Let the notation be as in Remark \ref{rm-canonical-Ws}. We say that the limit $\mathbb W_s$ of $WG$ is the Weyl group of $G^*_s$. (This coincides with the definition adopted in \cite[Section 1.1]{DL}. In particular, the Weyl group $\mathbb W_s$ has been endowed with an action of the Frobenius $F$.)
\end{defn}

\begin{rmk}\label{rm-canonical-J*}
    Let $\mathcal {Set}$ denote the category of sets. There exists a functor $\mathcal J^*_s:\mathfrak{C}_s\to \mathcal{Set}$, characterized by the following conditions:
    \begin{itemize}
        \item For a Borel pair $p=(B,T)$ of $G^*_s$, we have $\mathcal J^*_s(p)=J^*_s(T)$, where $J^*_s(T)$ is defined in Definition \ref{def-J*};
        \item For Borel pairs $p_1=(B_1,T_1)$ and $p_2=(B_2,T_2)$, the functor $\mathcal J^*_s$ transforms the morphism $m(p_1,p_2)$ into the isomorphism $J_s^*(T_1)\to J^*_s(T_2)$ given by $Z\mapsto g^{-1} Zg$ for $Z\in J^*_s(T_1)$, where $g\in G^*_s(\mathrm k)$ satisfies $g^{-1}T_1g=T_2$ and $g^{-1}B_1g=B_2$. (Again, we verify that this isomorphism is independent of the choice of such $g$.)
    \end{itemize}
\end{rmk}

\begin{defn}\label{def-caonical-J*}
    Let the notation be as in Remark \ref{rm-canonical-J*}, we define the set $\mathbb J^*_s$ to be the limit of the functor $\mathcal{J}_s^*$.
\end{defn}

\begin{rmk}\label{rm-action-canonical}
    By the construction of $\mathbb J^*_s$, it has been endow with an action of the Weyl group $\mathbb W_s$, and an action of the Frobenius, characterized by the following conditions:
    \begin{itemize}
        \item Fix an $F$-stable Borel pair $(B,T)$ of $G^*_s$. The following diagram commutes
        \begin{equation}
            \xymatrix{
\mathbb W_s \times \mathbb J^*_s\ar[r]\ar[d]&\mathbb J^*_s\ar[d]\\
      W_s(T)\times  J^*_s(T)\ar[r]& J^*_s(T)      
            }
        \end{equation}  where the vertical arrows are the projections (such projections are all isomorphisms) given by the construction of $\mathbb W_s$ and $\mathbb J_s$, and the horizontal arrows are the actions.
    \item %A similar but more simple diagram commutes for the action of the Frobenius.
    Fix an $F$-stable Borel pair $(B,T)$ of $G^*_s$. The following diagram commutes
        \begin{equation}
            \xymatrix{
 \mathbb J^*_s\ar[r]\ar[d]&\mathbb J^*_s\ar[d]\\
       J^*_s(T)\ar[r]& J^*_s(T)      
            }
        \end{equation}
        where the  vertical arrows are the projections given by the construction of $\mathbb J_s^*$ and the horizontal arrows are the actions of the Frobenius.
    \end{itemize}
\end{rmk}

\begin{defn}\label{def-ext-caonical-weyl}Let $\langle \mathfrak F\rangle$ denote the free abelian group with the generator $\mathfrak F$.
    Let $\wt {\mathbb W}_s:=\mathbb W_s \rtimes \langle \mathfrak F\rangle$ be the semidirect product, where we have $ \mathfrak F w\mathfrak F^{-1}=F(w)$ for $w\in \mathbb W_s$.
\end{defn}

It is clear that the two actions introduced in Remark \ref{rm-action-canonical} extend, or combine, to give an action of $\wt{\mathbb W}_s$ on $\mathbb J^*_s$, with $\mathfrak{F}$ acting as the Frobenius.

\begin{defn}\label{def-caonnical-rep-pis}
    Let $\wt \Pi_s$ be the representations of $\wt{\mathbb W}_s$, given by the $\Qlb$-span of the set $\mathbb J^*_s$. Note that $\mathfrak{F}$ acts as an automorphism of finite order.
\end{defn}

We have the following obvious lemma.
\begin{lem}
    Let $T$ be an $F$-stable maximal torus of $G^*_s$: that is, the torus $T$ is an $F$-stable maximal torus of $G^*$ and $s\in T^F$. Then we have $$|J^{*,F}_s(T)|=\mathrm {Tr}(w\mathfrak{F},\wt \Pi_s),$$ where $w\in \mathbb W_s$ is a representative of the $F$-conjugacy class corresponding to $T$ in the sense of \cite[Corollary 1.14]{DL}.
\end{lem}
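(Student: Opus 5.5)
The claim is a permutation-representation trace identity, so I would check it by unwinding the definitions.

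By Definition~\ref{def-caonnical-rep-pis}, $\wt\Pi_s$ is the permutation representation on the set $\mathbb J^*_s$. Hence for any $x\in\wt{\mathbb W}_s$, the trace $\mathrm{Tr}(x,\wt\Pi_s)$ equals the number of fixed points of $x$ on $\mathbb J^*_s$. So it suffices to produce a bijection between $J^{*,F}_s(T)$ and the fixed-point set of $w\mathfrak F$ on $\mathbb J^*_s$.

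First I fix an $F$-stable Borel pair $(B_0,T_0)$ of $G^*_s$, which exists by Lang's theorem since $G^*_s$ is connected. Through the projection of Remark~\ref{rm-action-canonical}, I identify $\mathbb J^*_s$ with $J^*_s(T_0)$ and $\mathbb W_s$ with $W_s(T_0)$. Under this identification, $\mathfrak F$ acts by $Z\mapsto F(Z)$ and $w$ acts by conjugation by a representative $\dot w\in N_{G^*_s}(T_0)$.

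Next I recall \cite[Corollary 1.14]{DL}. The torus $T$ is $G^*_s$-conjugate to $T_0$, say $T=gT_0g^{-1}$ with $g\in G^*_s(\mathrm k)$. Its $F$-conjugacy class is represented by the image $w$ of $g^{-1}F(g)\in N_{G^*_s}(T_0)$; depending on conventions this may be $w^{-1}$, which gives the same trace count after relabeling. Conjugation $Z\mapsto g^{-1}Zg$ is a bijection $J^*(T)\to J^*(T_0)$. It carries $J^*_s(T)$ onto $J^*_s(T_0)$, because $g$ centralizes $s$ and so $s\in Z$ if and only if $s\in g^{-1}Zg$.

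The remaining step is to track the Frobenius under this bijection. For $Z\subset T$, write $Z_0=g^{-1}Zg$. Then
\[
F(Z)=Z \iff F(g)F(Z_0)F(g)^{-1}=gZ_0g^{-1} \iff (g^{-1}F(g))\,F(Z_0)\,(g^{-1}F(g))^{-1}=Z_0,
\]
that is, $w\mathfrak F\cdot Z_0=Z_0$ in the action of Remark~\ref{rm-action-canonical}. Therefore $J^{*,F}_s(T)$ corresponds bijectively to the fixed points of $w\mathfrak F$ on $\mathbb J^*_s$, which gives the identity.

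The only delicate point is the bookkeeping of conventions: whether $w$ or $w^{-1}$ appears, and the compatibility of the canonical identification with the chosen $F$-stable pair. Both are settled by Remark~\ref{rm-action-canonical} together with the observation that $\mathbb W_s$ acts through $N_{G^*_s}(T_0)/T_0$ and $T_0$ acts trivially on subtori of $T_0$.
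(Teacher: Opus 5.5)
Your proof is correct, and it is the routine unwinding the paper leaves implicit when it calls this lemma obvious (the paper gives no proof). The trace of the permutation representation $\wt\Pi_s$ counts fixed points, and transporting $J^*_s(T)$ to $J^*_s(T_0)$ via $g$ with $T=gT_0g^{-1}$ turns $F$-stability into fixedness under $w\mathfrak F$, where $w$ is the image of $g^{-1}F(g)$.

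Your hedge about $w^{-1}$ is unnecessary. Deligne--Lusztig's convention, the relative position of $B=gB_0g^{-1}$ and $F(B)$, gives exactly the image of $g^{-1}F(g)$. Independence of the chosen representative of the $F$-class follows from $\mathrm{Tr}(xwF(x)^{-1}\mathfrak F,\wt\Pi_s)=\mathrm{Tr}(x\,w\mathfrak F\,x^{-1},\wt\Pi_s)$.
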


Consequently, we have the following by Theorem \ref{thm-dual-main}.
\begin{thm}\label{thm-dual-canonical-main}
    Let $T$ be an $F$-stable maximal torus of $G^*_s$: that is, the torus $T$ is an $F$-stable maximal torus of $G^*$ and $s\in T^F$. Then we have
    $$
    \langle R_{T,s},\psi_{\mathrm S}^{(1)}\rangle_{\mathrm S^F}=\mathrm {sgn}(w)(-1)^{\sigma(G^*_s)+\sigma(G)}\mathrm
    {Tr}(w\mathfrak{F},\wt \Pi_s),
    $$
    where $w\in \mathbb W_s$ is a representative of the $F$-conjugacy class corresponding to $T$ in the sense of \cite[Corollary 1.14]{DL}, and $\mathrm {sgn}$ is (the character of) the sign representation of $\mathbb W_s$. %(see Remark \ref{rm-sign-res} below).
\end{thm}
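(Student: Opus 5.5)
The statement is Theorem \ref{thm-dual-canonical-main}. I would obtain it directly from Theorem \ref{thm-dual-main} combined with the preceding lemma, which identifies $|J^{*,F}_s(T)|$ with $\mathrm{Tr}(w\mathfrak{F},\wt\Pi_s)$. Theorem \ref{thm-dual-main} gives
\[
\langle R_{T,s},\psi_{\mathrm S}^{(1)}\rangle_{\mathrm S^F}=(-1)^{\sigma(T)+\sigma(G)}|J^{*,F}_s(T)|.
\]
Substituting the lemma, it only remains to compare signs, that is, to show
\[
(-1)^{\sigma(T)}=\mathrm{sgn}(w)(-1)^{\sigma(G^*_s)},
\]
where $w\in\mathbb W_s$ represents the $F$-conjugacy class of $T$ inside $G^*_s$.

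For the sign identity I would use the standard fact (Deligne--Lusztig, or Carter's book) that for an $F$-stable maximal torus $T_w$ of a connected reductive group $H$ obtained from a maximally split torus $T_0$ by twisting with $w$, one has $(-1)^{\sigma(H)-\sigma(T_w)}=\varepsilon(w)$, with $\varepsilon$ the sign on the Weyl group. The argument: $\sigma(T_w)$ is the multiplicity of the eigenvalue $q$ (equivalently, of $1$ after normalizing) of $F$ on $X(T_w)\otimes\mathbb R$, which is $w$ composed with the $q$-scaled action of $F$ on $X(T_0)\otimes\mathbb R$. Comparing determinants, or parities of the dimensions of fixed spaces of the finite-order isometries $w\tau$ and $\tau$, gives the parity relation. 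I would apply this with $H=G^*_s$, which is connected since $G^*\cong\mathrm{GL}_{2\mathfrak n}$ has connected centralizers of semisimple elements, and with $T\subset G^*_s$. The $\mathbb F_q$-rank of the torus $T$ does not depend on whether it is regarded in $G$, $G^*$ or $G^*_s$, and it equals the rank of the dual torus; so the $\sigma(T)$ appearing in Theorem \ref{thm-dual-main} is the same as $\sigma(T)$ in $G^*_s$.

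The only real care point is checking that the sign convention $\mathrm{sgn}$ on the canonical $\mathbb W_s$ and the parametrization of \cite[Corollary 1.14]{DL} agree, particularly in the unitary case. There $F$ acts on $\mathbb W_s$ nontrivially, and the relation reads $(-1)^{\sigma(G^*_s)-\sigma(T_w)}=\mathrm{sgn}(w)$ with $\sigma(G^*_s)$ the rank of a maximally split torus. This holds because the determinant of the twisting automorphism $\tau$ cancels in the ratio of the determinants of $w\tau$ and $\tau$, and $\mathrm{sgn}$ is $F$-invariant. With this, the theorem follows.
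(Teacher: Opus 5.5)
Your proposal is correct and follows the paper's route: the paper obtains Theorem~\ref{thm-dual-canonical-main} directly from Theorem~\ref{thm-dual-main} and the preceding lemma $|J^{*,F}_s(T)|=\mathrm{Tr}(w\mathfrak{F},\wt\Pi_s)$. The paper leaves implicit two facts that you supply correctly: the standard sign identity $(-1)^{\sigma(G^*_s)+\sigma(T)}=\mathrm{sgn}(w)$, which you prove with a valid determinant and fixed-space parity argument, and the invariance of $\sigma(T)$ under passage to the dual torus.
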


\begin{comment}
    
\begin{rmk}\label{rm-sign-res}
    Let $\mathbb W=\mathbb W_1$ be the Weyl group of $G^*$. Fix a Borel pair $(B_s,T)$ of $G^*_s$. Let $B$ be a Borel subgroup of $G^*$ such that $B\cap G^*_s=B_s$. Then we have an inclusion of $i_s:\mathbb W_s\hookrightarrow \mathbb W$ given by the composition $\mathbb W_s \xrightarrow[(B_s,T)]{\sim} W_{G^*_s}(T)\hookrightarrow W_{G^*}(T)\xrightarrow[(B,T)]{\sim}\mathbb W$,
    where $W_{G^*_s}(T)$ and $W_{G^*}(T)$ denote the Weyl groups of $T$ in the corresponding reductive groups, the middle arrow is the natural inclusion given by definition, and the first (resp. the last) arrow  is the isomorphism given by the projection (resp. the inverse of the projection) at the corresponding Borel pair. We verify that the pullback of the sign representation of $\mathbb W$ along $i_s$ is precisely the sign representation of $\mathbb W_s$.
\end{rmk}
\end{comment}

\begin{comment}
\begin{rmk}\label{rm-F-rank-nonempty-J}
Fix semisimle $s\in (G^*)^F$.
    Suppose that $\mathbb J^*_s$ is nonempty, we verify that $\sigma(G^*_s)$ is even.
\end{rmk}
\end{comment}

\begin{rmk}
   We will see later in Remark \ref{rm-mul-one-induced} that the restriction of $\wt \Pi_s$ to $\mathbb W_s$ is a multiplicity free representation.
\end{rmk}
\subsection{The reductive group $G^*_s$}\label{subsec-G*s-structure;mul-one-canonical-rep}
We retain the notation as in the previous subsection. In particular, we fix a semisimple element $s\in (G^*)^F$. 

Recall the torus $\mathrm T$ introduced in Definition \ref{def-various} and the basis $\mathbb B$ of $V$. A typical element $t=(t_1,\ldots,t_{2\mathfrak{n}})\in \mathrm T(\mathrm k)$ sends $v_i$ to $t_i\cdot v_i$ for $1\leq i\leq 2\mathfrak{n}$, where $t_i\in \mathrm k^\times$. For $1\leq i\leq 2\mathfrak{n}$, let $x_i:\mathrm T \to \mathbb G_m$ be the character of $\mathrm T$ given by $t\mapsto t_i$. 

Since $\mathrm T^*$ is the dual torus of $\mathrm T$, each $x_i$ introduced above induces a cocharacter $y_i:\mathbb G_m\to \mathrm T^*$. 

Let $V'$ be a $2\mathfrak{n}$ dimensional vector space over $\mathrm k$ with basis $\mathbb B'=\{u_i\}_{1\leq i\leq 2\mathfrak{n}}$. In the rest of this paper, we fix an isomorphism $\mathrm i_{G^*}:G^*\xrightarrow[]{\sim}\mathrm {GL}(V')$ of algebraic groups defined over $\mathrm k$ satisfying the following properties:
\begin{itemize}
    \item $\mathfrak{i}_{G^*}(\mathrm T^*)$ is the maximal torus of $\mathrm {GL}(V')$ that possesses $u_i$ for $1\leq i\leq 2\mathfrak{n}$ as eigenvectors;
    \item Fix an integer $1\leq i \leq 2\mathfrak{n}$. For $z\in \mathbb G_m(\mathrm k)=\mathrm k^\times$, the automorphism  $\mathrm{i}_{G^*} \left(y_i(z)\right)$ fixes $u_j$ for $j\neq i$ and sends $u_i$ to $z\cdot u_i$.
\end{itemize}
It is not hard to see that such an identification exists. In the rest of this paper, we will implicitly identify $G^*$ with $\mathrm {GL}(V')$ via $\mathrm i_{G^*}$. In particular, for the semisimple element $s\in (G^*)^F$, it makes sense to speak of its eigenvalues.

The rest of this subsection is dedicated to an explicit description of the various notions introduced in the previous subsections.

\begin{rmk}\label{rm-Uperp-explicit}
    Under the identification exhibited above, we may rephrase the algebraic subgroup $U^\perp$ of $G^*$ as follows:
    \begin{itemize}
        \item It is isomorphic to $\mathbb G_m^{\mathfrak n}$ (the $\mathfrak n$-fold copy of $\mathbb G_m$), of which   we denote by $t=(t_1,\ldots,t_\mathfrak n)$ a typical element;
        \item $t\cdot u_i=t_i u_i$, if $i\leq \mathfrak{n}$; and $t\cdot u
        _j=t_{j-\mathfrak{n}}^{-1}u_j$ if $\mathfrak{n}+1\leq j\leq 2\mathfrak{n}$.
    \end{itemize}
   For $1\leq i\leq \mathfrak{n}$, let $y^U_i:\mathbb G_m\to U^\perp$ be the cocharacter given by $k\mapsto (k_1,\ldots,k_{\mathfrak{n}})$, where $k_i=k$ and $k_j=1$ for $j\neq i$.
\end{rmk}

\begin{rmk}\label{rm-s-effective} Fix an $F$-stable maximal torus $R$ of $G^*_s$. 
    By Remark \ref{rm-Uperp-explicit}, it is elementary to see the following. 
    
    The set $J^*_s(R)$ is nonempty if and only if the following conditions hold:
    \begin{itemize}
        \item the multiplicities of the eigenvalues $\pm 1$ are even (possibly being $0$);
        \item if $k\in \mathrm k^\times$ is an eigenvalue of $s$, then $k^{-1}$ is likewise an eigenvalue of $s$.
    \end{itemize} 
\end{rmk}

For each eigenvalue $r\in \mathrm k^\times$ of $s$, let $V'_r$ be the eigenspace of $s$ corresponding to $r$. Note that $V'_r$ is not necessarily defined over $\mathbb F_q$.

\begin{rmk}\label{rm-G*s-decomp-expli}
    We have a natural identification $$G^*_s\cong\prod_{r} \mathrm {GL}(V'_r)$$ of algebraic groups over $\mathrm k$, where $r$ ranges over the set of eigenvalues of $s$.
\end{rmk}

\begin{prop}\label{pro-mul-one-induced}Suppose in this proposition that $\mathbb J^*_s$ is nonempty. Then, via the action introduced in Remark \ref{rm-action-canonical}, the Weyl group $\mathbb W_s$ acts transitively on the set $\mathbb J^*_s$. Fix any $x\in \mathbb J^*_s$. Let $\mathrm{St}_x$ be the stabilizer of $x$ under the action of $\mathbb W_s$. Then $(\mathbb W_s, \mathrm {St}_x)$ is a Gelfand pair: that is, the induced representation $\mathrm {Ind}_{\mathrm {St}_x}^{\mathbb W_s}(1_{\mathrm {St}_x})$ is multiplicity-free.
\end{prop}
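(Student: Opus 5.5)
The plan is to make $\mathbb J^*_s$ explicit on one convenient maximal torus, and then reduce to two classical Gelfand pairs. By Remark~\ref{rm-action-canonical}, both the action of $\mathbb W_s$ on $\mathbb J^*_s$ and the stabilizers can be computed after fixing a Borel pair $(B,T)$ of $G^*_s$; no Frobenius is involved in the statement. Using Remark~\ref{rm-G*s-decomp-expli}, I take $T$ to be a diagonal torus of $\mathrm{GL}(V')$ adapted to a basis $\{e_1,\dots,e_{2\mathfrak n}\}$ of $V'$ made of eigenvectors of $s$. Then $W_s(T)=\prod_r S_{m_r}$, where $m_r=\dim V'_r$ and $S_{m_r}$ permutes the basis vectors lying in $V'_r$.

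\textbf{Step 1 (identify $J^*_s(T)$).} Let $Z=gU^\perp g^{-1}\subset T$. By Remark~\ref{rm-Uperp-explicit}, $U^\perp$ acts on $V'$ through $2\mathfrak n$ pairwise distinct characters $\pm y_i^U$-duals, i.e.\ $t\mapsto t_i^{\pm1}$. Hence the weight lines of $Z$ on $V'$ are lines stable under $T$, so they are exactly the coordinate lines $\mathrm k e_i$. Pairing each line with the line carrying the inverse character gives a fixed-point-free involution $\tau$ of $\{1,\dots,2\mathfrak n\}$, and
\[
Z=Z_\tau:=\{t\in T:\ t_{\tau(i)}=t_i^{-1}\ \text{for all } i\}.
\]
Conversely, every fixed-point-free involution $\tau$ yields such a torus $Z_\tau$, conjugate to $U^\perp$ by a permutation matrix. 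So $J^*(T)$ is in bijection with the set of perfect matchings of $\{1,\dots,2\mathfrak n\}$. Moreover $s\in Z_\tau$ if and only if $\tau$ sends each index of eigenvalue $r$ to an index of eigenvalue $r^{-1}$. This bijection is visibly equivariant for $W_s(T)\subset S_{2\mathfrak n}$ acting by conjugation on $\tau$. I expect this step to be the main point needing care, specifically the claim that weight lines must be coordinate lines because the $2\mathfrak n$ weights are distinct; the rest is combinatorics.

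\textbf{Step 2 (orbits and stabilizers).} An admissible $\tau$ decomposes as follows. For each pair $\{r,r^{-1}\}$ with $r\neq\pm1$, it gives a bijection between the index sets of $V'_r$ and $V'_{r^{-1}}$, which have the same size $m_r$ since $\mathbb J^*_s\neq\emptyset$ (Remark~\ref{rm-s-effective}). For $r=\pm1$, it gives a fixed-point-free involution on the $m_{\pm1}$ indices of $V'_{\pm 1}$, with $m_{\pm1}$ even. The group $S_{m_r}\times S_{m_{r^{-1}}}$ acts simply transitively on bijections modulo the stabilizer, which is a diagonal $S_{m_r}$. The group $S_{2k}$ acts transitively on perfect matchings of $2k$ points, with stabilizer the hyperoctahedral group $S_2\wr S_k$. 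The action of $\mathbb W_s=\prod_r S_{m_r}$ factors accordingly. Hence the action is transitive and
\[
\mathrm{St}_x\cong\prod_{\{r,r^{-1}\},\,r\neq\pm1}\Delta S_{m_r}\times\prod_{r=\pm1}(S_2\wr S_{m_r/2})\subset\prod_{\{r,r^{-1}\},\,r\neq\pm1}(S_{m_r}\times S_{m_r})\times\prod_{r=\pm1}S_{m_r}.
\]
Because the action is transitive, the choice of $x$ is immaterial up to conjugation.

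\textbf{Step 3 (Gelfand property).} The induced representation of a product of pairs is the outer tensor product of the factor inductions, so a product of Gelfand pairs is Gelfand. It therefore suffices to treat the two types of factor. For $(S_m\times S_m,\Delta S_m)$, the induced representation is $\bigoplus_\lambda \rho_\lambda\boxtimes\rho_\lambda^\vee$, which is multiplicity free. For $(S_{2k},S_2\wr S_k)$, the induced representation is $\bigoplus_{\lambda}\rho_{2\lambda}$, the sum over partitions $\lambda$ of $k$ of the irreducible indexed by the partition with all parts doubled (Macdonald, Symmetric Functions, VII.2). This is also multiplicity free. This concludes the proof of Proposition~\ref{pro-mul-one-induced}, and it also exhibits $\mathrm{St}_x$ as the Weyl group of $C_{\mathrm{Sp}_{2\mathfrak n}}(s)$, as anticipated in Remark~\ref{618}.
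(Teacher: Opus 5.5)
Your proposal is correct and follows essentially the same route as the paper: compute at a single Borel pair of $G^*_s$, split $\mathbb J^*_s$, $\mathbb W_s$ and the stabilizer along the orbits $\{r,r^{-1}\}$ of eigenvalues of $s$, and conclude from the Gelfand pairs $(S_m\times S_m,\Delta S_m)$ and $(S_{2k},S_2\wr S_k)$. The difference is only presentational: you identify $J^*(T)$ with fixed-point-free involutions via the weight lines of $Z$, which also covers characteristic $2$ without the paper's simplifying assumption $1\neq -1$, whereas the paper reaches the same decomposition through the cocharacters $y_i^Z$ and a dimension count.
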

\begin{proof}Let $s$ be as in the proposition. We do not exclude the case that the characteristic is $2$, but assume $1\neq -1$ for simplicity. The proof essentially consists of linear algebra computation.

  In this proof, we retain the notation in Remark \ref{rm-G*s-decomp-expli}.  Let $\mathrm e_s$ be the set of eigenvalues of $s$. By Remark \ref{rm-s-effective}, we have an involution on $\mathrm e_s$ given by $r\mapsto r^{-1}$. Let $[\mathrm e_s]$ denote the set of orbits with respect to this involution.
For each $o\in[\mathrm e_s]$, we define the group $G^*_{s,o}$ to be the product of $\mathrm {GL}(V'_r)$ and $\mathrm {GL}(V'_{r^{-1}})$, if $o=\{r,r^{-1}\}$, and to be $\mathrm {GL}(V'_1)$ (resp. $G^*_{s,-1}=\mathrm {GL}(V'_{-1})$) if $o=\{1\}$ (resp. $o=\{-1\}$). Hence we have 
\begin{equation}\label{eq-decompo-G*s}
G^*_s \cong \prod_{o\in [\mathrm e_s]} G^*_{s,o}.
\end{equation}

 In proving this proposition, we have nothing to do with the Frobenius action. Consequently, we may assume that $G^*_s$ contains a $(G^*)^F$-conjugate of $\mathrm T^*$ by replacing $\mathbb F_q$ by an extension. We may in turn assume that $s\in \mathrm (\mathrm T^*)^F$ and $U^\perp \in J^*_s(\mathrm T^*)$ by replacing $\mathbb F_q$ by an extension and applying a conjugation. We do assume so in the rest of this proof. Fix a Borel subgroup $B_s$ of $G^*_s$ that contains $\mathrm T^*$, we identify $\mathbb W_s$ (resp. $\mathbb J^*_s$) with $W_{G^*_s}(\mathrm T^*)$ (resp. $J^*_s(\mathrm T^*)$).  

Fix $Z\in J^*_s(\mathrm T^*)$ in this paragraph. By definition, there exists $g\in G^*(\mathrm k)$ such that $g U^\perp g^{-1}=Z$. Let $y_i^Z:\mathbb G_m\to Z$ be the cocharacter of $Z$ given by $\mathrm k^\times\ni k\mapsto gy_i^U(k)g^{-1}$, where $y_i$ is the cocharacter of $U^\perp$ introduced in the Remark \ref{rm-Uperp-explicit}. Since $Z\subset \mathrm T^*$, we may assume $g\in N_{G^*}(\mathrm T^*)$ (fix this $g$ in the rest of this paragraph). This shows that the image of $y_i^Z$ has the fixed vector space of dimension $2\mathfrak{n}-2$, and has at most $2$ eigenvalues that $\neq 1$ (such two eigenvalues must be mutual inverse of each other). By definition, the semisimple element $s$ can be written as a product of elements $s_i\in y^Z_i(\mathrm k^\times)$ for $1\leq i\leq \mathfrak{n}$. This shows that each $y_i^Z$ factors through a group of the form $G^*_{s,o}$ for some orbit $o\in[\mathrm e_s]$. Counting the dimension, we see that:
\begin{itemize}
    \item if $o=\{r,r^{-1}\}$, there are precisely $\dim V'_r$ many $i$ %$1\leq i \leq \mathfrak{n}$
    such that $y^{Z}_i$ factors through $G^*_{s,o}$;
    \item if $o=\{1\}$ (resp. $o=\{-1\}$), there are precisely $\frac{\dim V'_{1}}{2}$ (resp. $\frac{\dim V'_{-1}}{2}$) many $i$ % $1\leq i \leq \mathfrak{n}$ 
    such that $y^{Z}_i$ factors through $G^*_{s,o}$.
\end{itemize}
Hence we have 
\begin{equation}\label{eq-decomp-Z}
    Z=\prod_{o\in [\mathrm e_s]}Z_o;
\end{equation}
where $Z_o$ is the subgroup of both $Z$ and $G^*_{s,o}$ generated by the cocharacters $y_i^Z$ that factor through $G^*_{s,o}$. Observe by the above argument that $\dim Z_0=\frac{\rank (G^*_{s,o})}{2}$, where $\rank(\cdot)$ denotes the absolute rank.

The decomposition in Remark \ref{rm-G*s-decomp-expli} exhibits a decomposition of the Weyl group \begin{equation}\label{eq-decomp-Weyl}
    \mathbb W_s \cong \prod_{o\in [\mathrm e_s]} \mathbb W_{o},
\end{equation}
where $\mathbb W_{o}$ denotes the Weyl group of $G^*_{s,o}$. 
Hence we have $\mathbb W_o\cong S_{\dim V'_r}\times S_{\dim V'_{r}}$ if $o=\{r,r^{-1}\}$ and $\mathbb W_o \cong S_{\dim V'_r}$ if $o=\{r\}$.

By the decomposition (\ref{eq-decomp-Z}) and (\ref{eq-decomp-Weyl}), it is easy to see that the set $\mathbb J^*_s\cong J^*_s(\mathrm T^*)$ consists of the subtori of $\mathrm T^*$ of the form $wU^\perp w^{-1}$ for $w\in W_{G^*_s}(\mathrm T^*)$. The conjugation action of $W_{G_s^*}(\mathrm T^*)$ coincide with the action of $\mathbb W_s$ on $\mathbb J^*_s$ by construction (see Remark \ref{rm-action-canonical}). Hence the transitivity is clear. It remains to show that $(\mathbb W_s,\mathrm {St}_x)$ is a Gelfand pair, for every $x\in \mathbb J^*_s$.

Without loss of generality, we may assume that $x=U^\perp$ (we assume so in the rest of this proof). By the decomposition (\ref{eq-decomp-Z}), we have the corresponding 
$$
U^\perp \cong \prod_{o\in [\mathrm e_s]}Z^U_o.
$$

Let $\mathbb J_{s,o}^*$ be the set of subtori of the form $w Z_o^Uw^{-1}$ for $w\in \mathbb W_o$.
In view of the decompositions (\ref{eq-decompo-G*s}), (\ref{eq-decomp-Z}) and the transitivity assertion, we see that $$\mathbb J^*_s\cong \prod \mathbb J^*_{s,o}.$$
This identification is compatible with the decomposition (\ref{eq-decomp-Weyl}).
For $o\in [\mathrm e_s]$, let $\mathrm {St}_{o}^x$ be the stabilizer of $Z_o^U$ with respect to the action of $\mathbb W_o$ on $\mathbb J^*_{s,o}$. It suffices to show that $(\mathbb W_o, \mathrm {St}_{o}^x)$ is a Gelfand pair for each $o\in[\mathrm {e}_s]$. %Note that we have $ \mathrm {St}_{s,o} \cong S_{\dim V'_r}\xrightarrow[]{diag}S_{\dim V'_r}\times S_{\dim V'_{r}} \cong \mathbb W_o$ if $o=\{r,r^{-1}\}$, and 

In down to earth terms, we are reduced to show:
\begin{itemize}
    \item[(1)] The pair $(S_{\dim V'_r}\times S_{\dim V'_r},S_{\dim V'_r})$ is a Gelfand pair, where the inclusion is given by the diagonal map; (These correspond to the case $o=\{r,r^{-1}\}$)
    \item[(2)] The pair $(S_{2n}, (\mathbb Z/2\mathbb Z)^{n}\rtimes S_n)$ is a Gelfand pair, where $n$ is a positive integer. (These correspond to the case $o=\{1\}$ or $o=\{-1\}$.)
\end{itemize}
The pair in (1) is clearly a symmetric Gelfand pair. By \cite[Proposition 2.11]{GM}, we see that the pair in (2) is a Gelfand pair. This completes the proof.
\end{proof}

\begin{rmk}\label{rm-mul-one-induced}
    In view of Definition \ref{def-caonnical-rep-pis}, the restriction of $\wt \Pi_s$ to $\mathbb W_s$ is isomorphic to the induced representation $\mathrm {Ind}_{\mathrm {St}_x}^{\mathbb W_s}(1_{\mathrm {St}_x})$ introduced in Proposition  \ref{pro-mul-one-induced}, which is multiplicity-free and self-dual.
\end{rmk}

\begin{rmk}\label{618}
Let the assumptions and the notation be as in Proposition \ref{pro-mul-one-induced}. Following the proof of Proposition \ref{pro-mul-one-induced}, we may informally describe the stabilizer $\mathrm {St}_x$ as follows. Note that all algebraic groups mentioned below are defined over $\mathrm k$.

Recall the decomposition (\ref{eq-decompo-G*s}). We may identify $G^*_{s,o}$ with $\mathrm{GL}_{\dim V'_r}$ when $o=\{r\}$, and with $\mathrm{GL}_{\dim V'_{r}}\times \mathrm {GL}_{\dim V'_r}$ when $o=\{r,r^{-1}\}$. Note that the group $\mathbb W_o$ introduced in (\ref{eq-decomp-Weyl}) is the Weyl group of $G^*_{s,o}$. For each $o\in [\mathrm e_s]$ (see the proof of \ref{pro-mul-one-induced} for notation), we attach the subgroup $\mathrm{St}_{o}^x$ of $\mathbb W_o$:
\begin{itemize}
    \item When $o=\{r\}$, the group $\mathbb W_o$ is identified with the Weyl group of $\mathrm {GL}_{\dim V'_r}$. Note that we have $r=\pm 1$ and the dimension $\dim V'_r$ is even by Remark \ref{rm-s-effective}. The subgroup $\mathrm{St}_{o}^x$ of $\mathbb W_o$ is the Weyl group of the symplectic group $\mathrm {Sp}_{\dim V'_r}$;
    \item When $o=\{r,r^{-1}\}$, the group $\mathbb W_o$ is identified with the Weyl group $S_{\dim V'_r}\times S_{\dim V'_r}$ of $\mathrm{GL}_{\dim V'_{r}}\times \mathrm {GL}_{\dim V'_r}$. The subgroup $\mathrm{St}_{o}^x$ is $S_{\dim V'_r}$ embedded diagonally into $\mathbb W_o\cong S_{\dim V'_r}\times S_{\dim V'_r}$.
\end{itemize}
Then we have $$\mathrm {St}_x\cong \prod_{o\in [\mathrm e_s]}\mathrm {St}_{o}^x,$$
up to conjugation.
In particular, by Remark \ref{rm-s-effective}, there exists a subgroup $\mathrm {Sp}_{2\mathfrak n}$ of $G^*$ through which $s$ factors. (We view $s$ as an element of $\mathrm{Sp}_{2\mathfrak{n}}$ in what follows.) And the stabilizer $\mathrm {St}_x$ is conjugate to the Weyl group of the centralizer $C_{\mathrm {Sp}_{2\mathfrak{n}}}(s)$.
\end{rmk}

\subsection{Recollection of \cite{LS}}\label{subsec-review-LS}
We retain the notation as in Subsection \ref{subsec-G*s-structure;mul-one-canonical-rep}. In particular, we fix a semisimple $s\in (G^*)^F$. By Remark \ref{rm-G*s-decomp-expli}, the Coxeter diagram of $G^*_s$ is a disjoint union of connected ones of type A.
Recall the group $\wt{\mathbb W}_s=\mathbb W_s\rtimes \langle\mathfrak{F}\rangle$ of $G^*_s$ defined in Definition \ref{def-ext-caonical-weyl}, which contains the Weyl group $\mathbb W_s$ of $G^*_s$ as a normal subgroup.

\begin{defn}\label{def-Ws-dual}
    We define $\mathbb W_s^\vee$ to be the set of (isomorphism classes of) irreducible representations of $\mathbb W_s$. Since the Frobenius $F$ acts on $\mathbb W_s$, it permutes $\mathbb W_s^\vee$. Let $(\mathbb W_s^\vee)^F$ be the fixed-point set.
\end{defn}

For $w\in \mathbb W_s$, let $R^s_w$ denote the unipotent Deligne-Lusztig character of $(G^*_s)^F$ corresponding to $w$. The following theorem rephrases \cite[Theorem 2.2]{LS}.

\begin{thm}\label{thm-G*s-unipotente-rep}
    For $\rho\in (\mathbb W_s^\vee)^F$, there exists a unique irreducible representation $\wt \rho$ (up to isomorphism) of $\wt{\mathbb W}_s$, characterized by the following conditions:
    \begin{itemize}
        \item the restriction of $\wt \rho$ to $\mathbb W_s$ is isomorphic to $\rho$;
        \item the element $\mathfrak{F}$ acts as an automorphism of finite order;
        \item the following virtual character of $(G^*_s)^F$
        \begin{equation}\label{eq-unipotent-linear-uniform}
            |\mathbb W_s|^{-1}\sum_{w\in \mathbb W_s} \mathrm{Tr}(w\mathfrak{F},\wt\rho) R_w^s
        \end{equation}
        is the character of a unipotent irreducible representation $\hat{\pi}_{s,\rho}$.
    \end{itemize}
    Moreover, the assignment $\rho \mapsto \hat{\pi}_{s,\rho}$ exhibits a bijection between $(\mathbb W_s^\vee)^F$ and the set of (isomorphism classes of) unipotent irreducible representations of $(G^*_s)^F$.
\end{thm}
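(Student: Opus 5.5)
The plan is to reduce to the simple factors of $G^*_s$, treat the split and the unitary cases separately, and rely on Lusztig--Srinivasan \cite{LS} for the single hard input (that the resulting uniform functions are irreducible in the unitary case). Uniqueness and the count of unipotent representations can then be handled formally.

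\emph{Reduction to one $F$-orbit of factors.} By Remark \ref{rm-G*s-decomp-expli}, $G^*_s\cong\prod_r \mathrm{GL}(V'_r)$, and $F$ permutes the factors. Group them into $F$-orbits. An orbit of length $d$ gives a factor $H\cong\mathrm{GL}_m^{d}$ with $F$ acting as a cyclic shift composed with either $F_{\mathrm{GL}}$ or $F_{\mathrm U}$ (the latter up to $-w_0$). Then $H^F$ is $\mathrm{GL}_m(\mathbb F_{q^d})$ or $\mathrm U_m(\mathbb F_{q^d})$. Both $R^s_w$ and the sums in \eqref{eq-unipotent-linear-uniform} factor over orbits, and unipotent irreducibles of a product are exterior tensor products. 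So it suffices to treat a single orbit.

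For one orbit, $\mathbb W_s=S_m^d$, and the $F$-stable irreducibles are $\rho_0^{\otimes d}$. Extend by letting $\mathfrak F$ act as the cyclic permutation of tensor factors, composed with an operator $A$ on the last factor. Then $\mathrm{Tr}(w\mathfrak F,\wt\rho)$ equals $\mathrm{Tr}(N(w)A,\rho_0)$, where $N(w)$ is the ``norm'' product of the components of $w$. On the other side, $R^s_w$ is the Deligne--Lusztig character of $H^F$ attached to the $F^d$-class of $N(w)$. Averaging over $S_m^d$ collapses to an average over $S_m$. This reduces everything to the case $d=1$ for $\mathrm{GL}_m(\mathbb F_{q'})$ or $\mathrm U_m(\mathbb F_{q'})$.

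\emph{The case $d=1$.} In the split case, $\mathfrak F$ acts trivially on $S_m$. I take $\wt\rho=\rho$ with $\mathfrak F$ acting as the identity. Then $|W|^{-1}\sum_w\rho(w)R_w$ is the classical Lusztig--Srinivasan/Green unipotent character $\pm\chi_\rho$, and the sign convention fixes which extension is meant. In the unitary case, $F$ acts on $S_m$ by conjugation by $w_0$. I take $\mathfrak F$ to act by $\pm\rho(w_0)$, with the sign chosen as in \cite{LS}. The irreducibility of the resulting uniform function is exactly the content of \cite[Theorem 2.2]{LS}, proved via Ennola duality $q\mapsto -q$ from the $\mathrm{GL}$ case. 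I expect this step (the correct sign, and the fact that the result is irreducible rather than merely of norm one) to be the main obstacle. I would quote \cite{LS} for it rather than reprove it.

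\emph{Uniqueness and bijectivity.} Two extensions $\wt\rho$ of $\rho$ with $\mathfrak F$ of finite order differ by a root of unity $\zeta$ (Schur's lemma). This multiplies the virtual character by $\zeta$, and only one choice of $\zeta$ gives a genuine character, since its degree must be positive. For distinctness, orthogonality of the $R^s_w$ (norm computations via \cite{DL}) gives that different $\rho$ yield orthogonal class functions. For surjectivity, the number of unipotent irreducibles of each factor equals the number of partitions of $m$, which is $|(\mathbb W_s^\vee)^F|$ orbitwise. Hence the injection $\rho\mapsto\hat\pi_{s,\rho}$ is a bijection.
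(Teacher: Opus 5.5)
Your proposal is correct and follows essentially the same route as the paper, which gives no argument of its own: after observing that the Coxeter diagram of $G^*_s$ is a disjoint union of type~A diagrams, it simply invokes \cite[Theorem 2.2]{LS}. Your reduction to a single $F$-orbit of factors via the norm map, the Schur-lemma uniqueness of $\wt\rho$, and the orthogonality and counting argument for bijectivity are sound unpackings of that citation. One clarification: the input you are really borrowing from \cite{LS} is that the uniform function is a $\mathbb Z$-combination of irreducible characters, since a virtual character of norm one is then automatically $\pm$ an irreducible character.
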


\subsection{Lusztig map}\label{subsec-Lusztigmap}

For semisimple $s\in (G^*)^F$, let $\mathbb{L}_s$ denote the Lusztig map.

For $\rho\in (\mathbb W_s^\vee)^F$, let $\pi_{s,\rho}$ be such that $\mathbb L_s({\pi}_{s,\rho})=\hat{\pi}_{s,\rho}$. \begin{comment}We have
\begin{equation}\label{0eq-general-irreducible-linear-uniform-0}
    \pi_{s,\rho}=(-1)^{\sigma(G^*_s)+\sigma(G)}|\mathbb W_s|^{-1}\sum_{w\in \mathbb W_s} \mathrm{Tr}(w\mathfrak{F},\wt\rho) R_{T_w^*,s}.
\end{equation}
%Suppose (in this paragraph) that $\mathbb J^*_s$ is nonempty, then $\sigma(G^*_s)$ is even. %Hence by Equation (\ref{eq-unipotent-linear-uniform}) and the property of $\mathbb L_s$, 
\end{comment}
We have
\begin{equation}\label{eq-general-irreducible-linear-uniform}
   (-1)^{\sigma(G)+\sigma(G^*_s)} \pi_{s,\rho}=|\mathbb W_s|^{-1}\sum_{w\in \mathbb W_s} \mathrm{Tr}(w\mathfrak{F},\wt\rho) R_{T_w^*,s},
\end{equation}
where $\wt \rho$ is the representation of $\wt{\mathbb W}_s$ corresponding to $\rho$ in the sense of Theorem \ref{thm-G*s-unipotente-rep}, and $T_w^*$ is the $F$-stable maximal torus of $G^*_s$ corresponding to $w$ in the sense of \cite[Corollary 1.14]{DL}. 

Recall the representation $\wt \Pi_s$ introduced in Definition \ref{def-caonnical-rep-pis}.
\begin{thm}\label{thm-general-irreducible-main}
    We have
    $$
   S(\pi_{s,\rho})= \langle \pi_{s,\rho},\psi_{\mathrm S}^{(1)}\rangle_{\mathrm S^F}=\langle \rho,\sgn \otimes\Pi_s\rangle_{\mathbb W_s},
    $$
    where $\Pi_s$ is the restriction of $\wt \Pi_s$ to $\mathbb W_s$, and $\sgn$ is the sign representation of $\mathbb W_s$.
\end{thm}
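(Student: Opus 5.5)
We expand $\pi_{s,\rho}$ by \eqref{eq-general-irreducible-linear-uniform} and use the linearity of $S(\cdot)$ in virtual characters. This gives
\[
(-1)^{\sigma(G)+\sigma(G^*_s)}S(\pi_{s,\rho})=|\mathbb W_s|^{-1}\sum_{w\in\mathbb W_s}\mathrm{Tr}(w\mathfrak F,\wt\rho)\,\langle R_{T_w^*,s},\psi_{\mathrm S}^{(1)}\rangle_{\mathrm S^F}.
\]
We then substitute Theorem \ref{thm-dual-canonical-main} for each term, taking $T=T_w^*$; this torus is an $F$-stable maximal torus of $G^*_s$ containing $s$ and corresponds to $w$. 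Each multiplicity becomes $\mathrm{sgn}(w)(-1)^{\sigma(G^*_s)+\sigma(G)}\mathrm{Tr}(w\mathfrak F,\wt\Pi_s)$. The two sign prefactors $(-1)^{\sigma(G)+\sigma(G^*_s)}$ cancel, and we are left with
\[
S(\pi_{s,\rho})=|\mathbb W_s|^{-1}\sum_{w\in\mathbb W_s}\mathrm{sgn}(w)\,\mathrm{Tr}(w\mathfrak F,\wt\rho)\,\mathrm{Tr}(w\mathfrak F,\wt\Pi_s).
\]

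The key step is to identify this twisted average with the ordinary inner product $\langle\rho,\mathrm{sgn}\otimes\Pi_s\rangle_{\mathbb W_s}$. Since $\mathfrak F$ acts with finite order on both $\wt\rho$ and $\wt\Pi_s$, we may pass to the finite quotient $\mathbb W_s\rtimes\langle\mathfrak F\rangle/\langle\mathfrak F^m\rangle$ for a suitable $m$, so that both become genuine representations of a finite group. The sign character extends to this group, since $F$ preserves simple reflections and hence $\mathrm{sgn}\circ F=\mathrm{sgn}$; we let it extend trivially on $\mathfrak F$. The sum above is then $|\mathbb W_s|^{-1}\sum_{w}\mathrm{Tr}\bigl(w\mathfrak F,\wt\rho^\vee\otimes\mathrm{sgn}\otimes\wt\Pi_s\bigr)$, where we use that $\wt\Pi_s$ is a permutation representation and so has real traces; equivalently, we may use the self-duality of $\wt\rho$, since symmetric-group representations are realizable over $\mathbb Q$ and $\mathfrak F$ has finite order. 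This is the trace of $\mathfrak F$ on the space $\mathrm{Hom}_{\mathbb W_s}(\wt\rho,\mathrm{sgn}\otimes\wt\Pi_s)$, computed via the projector $|\mathbb W_s|^{-1}\sum_w w$.

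By Proposition \ref{pro-mul-one-induced} and Remark \ref{rm-mul-one-induced}, $\Pi_s$ is multiplicity-free. Tensoring with sgn preserves multiplicity-freeness, so this Hom space is either $0$ or one-dimensional.

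This is where the main obstacle lies. In the one-dimensional case, $\mathfrak F$ acts on the line by a root of unity $\epsilon$, and we must show that $\epsilon=1$. I would argue as follows. Since $\rho\cong\rho\circ F$ occurs in $\mathrm{sgn}\otimes\Pi_s$, and $\wt\rho$ is pinned down by the normalization of Theorem \ref{thm-G*s-unipotente-rep}, it suffices to check that $\mathfrak F$ acts trivially on the $\rho$-isotypic line of $\mathrm{sgn}\otimes\wt\Pi_s$. For this I would use that $\wt\Pi_s$ is the permutation representation on $\mathbb J^*_s$, on which $\mathfrak F$ acts by permutations and fixes a point $x$ (assuming $J^{*,F}_s\neq\emptyset$). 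Then $\mathrm{sgn}\otimes\wt\Pi_s$ is induced from the character $\mathrm{sgn}$ of $\mathrm{St}_x\rtimes\langle\mathfrak F\rangle$, with $\mathfrak F$ acting trivially. By Frobenius reciprocity, $\mathfrak F$ acts on the line through the action of $\wt\rho$'s $\mathfrak F$ on the $(\mathrm{St}_x,\mathrm{sgn})$-isotypic vector of $\rho$, and this vector spans a line because of the Gelfand property. Comparing with the explicit Lusztig–Srinivasan normalization (for type A, $\wt\rho$ is the extension in which $\mathfrak F$ acts through the standard action on the Specht module, or by its negative-longest-element twist in the unitary case), I would verify case by case, for the factors $S_a\times S_a\supset S_a$ and $S_{2n}\supset$ hyperoctahedral, that this eigenvalue is $1$. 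Here the unitary twist by $w_0$ acts on both sides compatibly with sgn.

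Finally, if $s$ is not symplectic then $\mathbb J^*_s=\emptyset$ by Remark \ref{rm-s-effective}, so $\Pi_s=0$ and both sides vanish. If $s$ is symplectic, Remark \ref{618} identifies $\Pi_s$ with $\mathrm{Ind}_{\mathbb W_s^{\mathrm{Sp}}}^{\mathbb W_s}1$, which recovers the form stated in Theorem \ref{main1}.
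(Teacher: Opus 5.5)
Up to the last step your argument is the paper's. You expand $\pi_{s,\rho}$ via \eqref{eq-general-irreducible-linear-uniform}, insert Theorem~\ref{thm-dual-canonical-main}, cancel the signs $(-1)^{\sigma(G)+\sigma(G^*_s)}$, and read $|\mathbb W_s|^{-1}\sum_w \mathrm{Tr}(w\mathfrak F,\cdot)$ as the trace of $\mathfrak F$ on a space of dimension $d=\langle\rho,\sgn\otimes\Pi_s\rangle_{\mathbb W_s}\le 1$. On that space $\mathfrak F$ acts by a root of unity $\epsilon$.

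The gap is the step you flag yourself: you never prove $\epsilon=1$. You only promise a case-by-case comparison with the Lusztig--Srinivasan normalization, and that check is neither carried out nor correctly scoped. $\mathfrak F$ need not preserve the factors $\mathbb W_o$: it moves eigenvalues of $s$ along Frobenius orbits and can swap $r$ and $r^{-1}$ within an orbit $o$. So reducing to the two local pairs with a ``standard'' action on Specht modules does not cover the general case. In addition, the $\rho$-isotypic part of $\sgn\otimes\wt\Pi_s$ has dimension $\dim\rho$, not one, and the scalar depends on how $\mathfrak F$ acts on both $\wt\rho$ and $\wt\Pi_s$. Your Frobenius-reciprocity step also rests on the unproved assumption that $\mathfrak F$ fixes a point of $\mathbb J^*_s$.

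The missing idea is that no normalization needs to be tracked by hand. The left-hand side $S(\pi_{s,\rho})$ is the dimension of a Hom space, so it is a nonnegative integer. You have shown it equals $\epsilon d$, with $d\in\{0,1\}$ and $\epsilon$ a root of unity because $\mathfrak F$ has finite order on $\wt\rho$ and on $\wt\Pi_s$. If $d=0$, both sides vanish. If $d=1$, then $\epsilon$ is a root of unity lying in $\mathbb Z_{\ge 0}$, so $\epsilon=1$.

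This is exactly how the paper concludes. It works with the $\mathbb W_s$-invariants of $\wt\rho\otimes\sgn\otimes\wt\Pi_s$ and uses the self-duality of $\Pi_s$ (Remark~\ref{rm-mul-one-induced}) to identify their dimension with $\langle\rho,\sgn\otimes\Pi_s\rangle_{\mathbb W_s}$. That also makes your $\wt\rho$ versus $\wt\rho^\vee$ discussion unnecessary. The normalization of $\wt\rho$ in Theorem~\ref{thm-G*s-unipotente-rep} enters only implicitly: it is what makes $\pi_{s,\rho}$ an honest representation, and hence what makes $S(\pi_{s,\rho})\ge 0$.
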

\begin{proof} Fix a semisimple $s\in (G^*)^F$ and a representation $\rho\in (\mathbb W_s^\vee)^F$ throughout this proof.
    By definition, the number $ S(\pi_{s,\rho})= \langle \pi_{s,\rho},\psi_{\mathrm S}^{(1)}\rangle_{\mathrm S^F}$ is a nonnegative integer. 

In what follows, we assume that $\mathbb J^*_s$ is nonempty. (The case for empty $\mathbb J^*_s$  is similar but easier.)

    In view of Equation (\ref{eq-general-irreducible-linear-uniform}) and Theorem \ref{thm-dual-canonical-main}, we have
    \begin{equation}\label{eq-F-twisted-summation}
      \langle \pi_{s,\rho},\psi_{\mathrm S}^{(1)}\rangle_{\mathrm S^F}=|\mathbb W_s|^{-1}\sum_{w\in \mathbb W_s}\mathrm {Tr}(w\mathfrak{F},\wt \rho\otimes \sgn \otimes \wt \Pi_s)=|\mathbb W_s|^{-1}\sum_{w\in \mathbb W_s}\mathrm {Tr}(\mathfrak{F}w,\wt \rho\otimes \sgn \otimes \wt \Pi_s),
    \end{equation}
    where:
    \begin{itemize}\item $\sgn$ denotes the sign representation on which $\mathfrak{F}$ acts trivially;
    \item $\wt \rho$ is the representation of $\wt{\mathbb W}_s$ corresponding to $\rho$ in the sense of Theorem \ref{thm-G*s-unipotente-rep};
        %\item $(\wt \rho)^\vee$ is the dual representation of $\wt \rho$;
        \item we view $\wt \rho\otimes \sgn \otimes \wt \Pi_s$ as a representation that $\wt{\mathbb W}_s$ acts via the diagonal.
    \end{itemize}
   % (Note that $\sigma(G)=2\mathfrak{n}$ is even.)

    Let $V^\tau$ be the underlying vector space of $\tau:=\wt \rho\otimes \sgn \otimes \wt \Pi_s$.
     Let $$
     V^\tau= \bigoplus_{\kappa} V_{\kappa},
     $$
     be the decomposition of $\wt \rho\otimes \sgn \otimes \wt \Pi_s$ into isotypic parts with respect to $\mathbb W_s$, where $\kappa$ ranges over the isomorphism classes of irreducible representations of $\mathbb W_s$.
     By Remark \ref{rm-mul-one-induced}, for $\kappa=1_{\mathbb W_s}$ the trivial representation, the vector space $V_{1_{\mathbb W_s}}$ is of dimension $\langle \rho^\vee,\sgn \otimes\Pi_s\rangle_{\mathbb W_s}=\langle \rho,\sgn \otimes\Pi_s\rangle_{\mathbb W_s}\leq 1$, where $\rho^\vee$ denotes the dual representation of $\rho$ (the equality is due to the fact that $\Pi_s$ is self-dual by Remark \ref{rm-mul-one-induced}).  
     Let $P_1:V^\tau\to V^\tau$ be the projection to $V_{1_{\mathbb W_s}}$. The right-hand side of Equation (\ref{eq-F-twisted-summation}) can be written as
     $$
     \mathrm{Tr}(\mathfrak{F}\circ P_1,V^\tau).
     $$
     Since $\mathfrak{F}$ normalizes $\mathbb W_s$, it stabilizes $V_{1_{\mathbb W_s}}$. By Theorem \ref{thm-G*s-unipotente-rep} and the last statement of Definition \ref{def-caonnical-rep-pis}, the automorphism $\mathfrak{F}$ acts on $V_{1_{\mathbb W_s}}$ as a multiplication by a root of unity. (Note: if $V_{1_{\mathbb W_s}}=0$ is the trivial vector space, this statement trivially holds.)

     Consequently, we have that $
     \mathrm{Tr}(\mathfrak{F}\circ P_1,V^\tau)
     $ equals $\langle \rho,\sgn \otimes\Pi_s\rangle_{\mathbb W_s}$ up to multiplication by a root of unity. Since both $\mathrm{Tr}(\mathfrak{F}\circ P_1,V^\tau)=\langle \pi_{s,\rho},\psi_{\mathrm S}^{(1)}\rangle_{\mathrm S^F}$ and $\langle \rho,\sgn \otimes\Pi_s\rangle_{\mathbb W_s}$ are nonnegative integers (by the first paragraph of this proof), we have $
     \mathrm{Tr}(\mathfrak{F}\circ P_1,V^\tau)=\langle \rho,\sgn \otimes\Pi_s\rangle_{\mathbb W_s}
     $ as desired. This completes the proof.
\end{proof}

\begin{cor}\label{shalika-one}
    Shalika model has multiplicity-one: that is, for every irreducible representation $\pi$ of $G^F$, we have $S(\pi)=\langle \pi,\psi_{\mathrm S}^{(1)}\rangle_{\mathrm S^F}\leq 1$.
\end{cor}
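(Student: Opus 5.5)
Every irreducible representation of $G^F$ lies in some Lusztig series $\mathcal{E}(G,s)$ and is of the form $\pi=\pi_{s,\rho}$ with $\rho\in(\mathbb W_s^\vee)^F$. So we fix such $s$ and $\rho$ and apply Theorem~\ref{thm-general-irreducible-main}, which gives
\[
S(\pi_{s,\rho})=\langle \rho,\sgn\otimes\Pi_s\rangle_{\mathbb W_s}.
\]
The task is therefore to bound this Weyl-group multiplicity by $1$.

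First, suppose $\mathbb J^*_s$ is empty. Then $\wt\Pi_s=0$, so the right-hand side vanishes; equivalently, Theorem~\ref{thm-dual-main} shows every $\langle R_{T,s},\psi^{(1)}_{\mathrm S}\rangle$ is $0$. Either way $S(\pi)=0$.

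Next, suppose $\mathbb J^*_s$ is nonempty. By Remark~\ref{rm-mul-one-induced} we have $\Pi_s\cong \mathrm{Ind}_{\mathrm{St}_x}^{\mathbb W_s}1$, and Proposition~\ref{pro-mul-one-induced} says $(\mathbb W_s,\mathrm{St}_x)$ is a Gelfand pair, so $\Pi_s$ is multiplicity-free. Tensoring with the one-dimensional character $\sgn$ permutes the irreducible representations of $\mathbb W_s$ bijectively via $\kappa\mapsto\sgn\otimes\kappa$, so $\sgn\otimes\Pi_s$ is also multiplicity-free. Since $\rho$ is irreducible, it follows that $\langle\rho,\sgn\otimes\Pi_s\rangle_{\mathbb W_s}\le 1$.

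All the real content is in the earlier results, in particular the Gelfand property of the pair $(S_{2n},(\mathbb Z/2)^n\rtimes S_n)$ cited from \cite{GM}. Given those, this corollary is immediate; the only points to check are that every $\pi$ arises as some $\pi_{s,\rho}$ and that the empty case gives $0$.
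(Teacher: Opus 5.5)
Your proposal is correct and matches the paper's argument: the paper's proof likewise just combines Theorem~\ref{thm-general-irreducible-main} with Remark~\ref{rm-mul-one-induced}, which says $\Pi_s\cong\mathrm{Ind}_{\mathrm{St}_x}^{\mathbb W_s}1$ is multiplicity-free. You additionally spell out the routine points the paper leaves implicit: that every $\pi$ is some $\pi_{s,\rho}$, that $\wt\Pi_s=0$ when $\mathbb J^*_s$ is empty, and that twisting by $\sgn$ preserves multiplicity-freeness.
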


\begin{proof}
     Combine Remark \ref{rm-mul-one-induced} and Theorem \ref{thm-general-irreducible-main}.
\end{proof}

\end{document}